\newcommand{\ra}[1]{\renewcommand{\arraystretch}{#1}}
\titleformat{\chapter}[display]
{\normalfont\huge\bfseries}{\chaptertitlename\\thechapter}{20pt}{\Huge}
\titleformat{\subsubsection}[runin]
{\normalfont\normalsize\bfseries}{\thesubsubsection}{1em}{}
\titleformat{\paragraph}[runin]
{\normalfont\normalsize\bfseries}{\theparagraph}{1em}{}
\titleformat{\subparagraph}[runin]
{\normalfont\normalsize\bfseries}{\thesubparagraph}{1em}{}
\titlespacing*{\chapter} {0pt}{50pt}{40pt}
\titlespacing*{\section} {0pt}{3.5ex plus 1ex minus .2ex}{2.3ex plus .2ex}
\titlespacing*{\subsection} {0pt}{3.25ex plus 1ex minus .2ex}{1.5ex plus .2ex}
\titlespacing*{\subsubsection}{0pt}{3.25ex plus 1ex minus .2ex}{1.5ex plus .2ex}
\titlespacing*{\paragraph} {0pt}{3.25ex plus 1ex minus .2ex}{1em}
\titlespacing*{\subparagraph} {\parindent}{3.25ex plus 1ex minus .2ex}{1em}
\newcommand{\hs}{\hspace{-0.5pt}}
\def\xcirc{\hs\circ\hs}
\newtheorem{theorem}{Theorem}[section]
\newtheorem{lemma}[theorem]{Lemma}
\newtheorem{proposition}[theorem]{Proposition}
\newtheorem{corollary}[theorem]{Corollary}
\theoremstyle{definition}
\newtheorem{definition}[theorem]{Definition}
\newtheorem{notation}[theorem]{Notation}
\newtheorem{example}[theorem]{Example}
\theoremstyle{remark}
\newtheorem{remark}[theorem]{Remark}
\DeclareMathOperator{\Aut}{Aut}
\DeclareMathOperator{\siq}{SIQC}
\DeclareMathOperator{\CH}{CH}
\DeclareMathOperator{\ord}{ord}
\DeclareMathOperator{\J}{J}
\DeclareMathOperator{\Jac}{Jac}
\DeclareMathOperator{\ide}{id}
\DeclareMathOperator{\Ext}{Ext}
\DeclareMathOperator{\GL}{GL}
\DeclareMathOperator{\Supp}{Supp}
\DeclareMathOperator{\Z}{Z}
\DeclareMathOperator{\en}{en}
\DeclareMathOperator{\En}{En}
\DeclareMathOperator{\factors}{factors}
\DeclareMathOperator{\N}{N}
\DeclareMathOperator{\Ss}{S}
\DeclareMathOperator{\st}{st}
\DeclareMathOperator{\St}{St}
\DeclareMathOperator{\Cone}{Cone}
\DeclareMathOperator{\PE}{PE}
\DeclareMathOperator{\val}{val}
\DeclareMathOperator{\dir}{dir}
\DeclareMathOperator{\Dirsup}{Valsup}
\DeclareMathOperator{\Dirinf}{Valinf}
\DeclareMathOperator{\Succ}{Succ}
\DeclareMathOperator{\Pred}{Pred}
\DeclareMathOperator{\Dir}{Dir}
\DeclareMathOperator{\HH}{H}
\DeclareMathOperator{\linspan}{linspan}
\DeclareMathOperator{\lcm}{lcm}
\DeclareMathOperator{\End}{End}
\newcommand{\ov}{\overline}
\newcommand{\ot}{\otimes}
\newcommand{\wh}{\widehat}
\newcommand{\wt}{\widetilde}
\newcommand{\ep}{\epsilon}
\newcommand{\De}{\Delta}
\newcommand{\urho}{\underline{\rho}}
\newcommand{\usigma}{\underline{\sigma}}
\newcommand{\uDir}{\underline{\Dir}}
\newcommand{\BigFig}[1]{\parbox{12pt}{\Huge #1}}
\newcommand{\BigZero}{\BigFig{0}}
\newcommand{\dpu}{\mathbin{:}}
\renewcommand{\theequation}{\thesection.\arabic{equation}}
\begin{document}
\title[Bi-traceable graphs, three longest paths and Hippchen's conjecture]{Bi-traceable graphs, the intersection of three longest paths and 
Hippchen's conjecture}

\author[Juan Gutierrez]{Juan Gutierrez$^{1}$}
\address{$^1$ Departamento de Ciencia de la Computaci\'on
Universidad de Ingenier\'ia y Tecnolog\'ia (UTEC)}
\email{jgutierreza@utec.edu.pe}

\author[Christian Valqui]{Christian Valqui$^{2,}$$^3$}
\address{$^2$Pontificia Universidad Cat\'olica del Per\'u, Secci\'on Matem\'aticas, PUCP, Av. Universitaria 1801, San Miguel, Lima 32, Per\'u.}
\address{$^3$Instituto de Matem\'atica y Ciencias Afines (IMCA) Calle Los Bi\'ologos 245. Urb San C\'esar.
La Molina, Lima 12, Per\'u.}
\email{cvalqui@pucp.edu.pe}
\thanks{Christian Valqui was supported by PUCP-DGI-CAP-2020-818.}

\subjclass[2020]{primary 05C38; secondary 05C45}
\keywords{Hippchen's conjecture, three longest paths, traceable graph, intersection of longest paths}

\maketitle

\begin{abstract}
  Let $P,Q$ be longest paths in a simple graph. We analyze the possible connections between the components of
   $P\cup Q\setminus (V(P)\cap V(Q))$ and introduce the notion of a bi-traceable graph. 
   We use the results for all the possible configurations
   of the intersection points when $\#V(P)\cap V(Q)\le 5$ in order to prove that if the intersection of three longest paths $P,Q,R$ is empty, then
   $\#(V(P)\cap V(Q))\ge 6$. We also prove Hippchen's conjecture for $k\le 6$: If a graph $G$ is $k$-connected for $k\le 6$, and $P$ and $Q$ 
   are longest paths in $G$, then $\#(V(P)\cap V(Q))\ge 6$.
\end{abstract}

\tableofcontents

\section*{Introduction}
Different configurations of the intersection points of longest paths in simple graphs have been analyzed by various authors with different purposes.
Axenovich in~\cite{A} used two configurations $Q_1$ and $Q_2$ that described forbidden connections in $G\setminus V(P)\cap V(Q)$, in order to prove
that in outerplanar graphs every three longest paths share a common point.  Hence, for outerplanar graphs the following conjecture is
true.\vspace{0.3cm}

{\bf Conjecture 1:} {\it For every connected graph, any three of its longest paths have a common vertex.\vspace{0.3cm}}

In~\cite{dR} this conjecture is proven for a broader class of graphs and in~\cite{FFNO} an approach to this conjecture using certain distance
functions is given. It is known that
two longest paths in a connected graph have necessarily a common vertex, but there are connected graphs, where seven longest paths do not have a
common vertex (See for example~\cite{S}). The equivalent question for 3,4,5 and 6 longest paths is still open. See~\cite{SZZ} for a survey on this 
and similar problems.

On the other hand Hippchen in~\cite{H} analyzed forbidden connections in $G\setminus V(P)\cap V(Q)$ when $\#(V(P)\cap V(Q))=2$ in order to prove 
that  in $3$-connected  graphs the intersection of two longest path contains at least 3 vertices. He then stated the following
conjecture:\vspace{0.3cm}

{\bf Conjecture 2:} {\it The intersection of two longest paths in a $k$-connected simple graph has cardinality at least $k$.\vspace{0.3cm}}

This conjecture was an adaptation of a similar conjecture for longest cycles instead of paths, which appeared first in~\cite{Gr}*{p.188} and was
attributed to Scott Smith, and where some partial results are known (See for example~\cite{G}, \cite{ST} and~\cite{Ch}).

In~\cite{G} the first author proved Hippchen's conjecture for $k=4$, and in~\cite{CCP} Hippchen's conjecture is proven for $k=5$. In both cases the
possible configurations and forbidden connections of the graph induced by $V(P)\cap V(Q)$ is analyzed.

The configuration of the intersection vertices of longest cycles (instead of paths) has been considered for example  in~\cite{B79},  
\cite{Gr} and~\cite{ST}. Babai~\cite{B79}*{Lemma 2} uses a forbidden connection in order to prove that in a 3-connected graph, any 
two longest cycles have at least three points in common. Groetschel in~\cite{Gr}  proved that the complement of the intersection 
of two longest cycles is  not connected when there are at most 5 intersection points, Steward and al. used a computer program
in~\cite{ST} to analyze the different configurations of the intersection points in order to show that this remains true for  
$k=6,7$ (for $k=8$ there is a counterexample).

In the present paper we will make a complete analysis of the possible configurations of the intersection points of two longest paths $P$ and $Q$, 
when there are $\ell\le 5$ intersection points. We will analyze the different possibilities for the sequential order of the intersection vertices in
each of the two longest paths $P$ and $Q$, which is given by a permutation of the $\ell$ intersection vertices. Two permutations give the same
configuration  if they can be transformed into each other by exchanging $P$ and $Q$, or by changing the direction in which we travel through the
points in each path.

If the intersection is a single point or has two points, there is only one configuration. If the intersection has three points, there are two
configurations, already described
in~\cite{G}. If the intersection has 4 points, then the 24 different permutations give 7 different configurations, which correspond to 7 different
cases described in~\cite{CCP}.  When the intersection has 5 points, we find 23 different cases arising from the 120 different permutations.

For each configuration we analyze in detail the possible connections between the components of the graph $(P\cup Q)\setminus (V(P)\cap V(Q))$. We 
find a large class of graphs $P\cup Q$, such that in each exterior swap unit (see Definition~\ref{def ESU}) $P\setminus (V(P)\cap V(Q))$ cannot be
 connected with $Q\setminus (V(P)\cap V(Q))$ outside $V(P)\cap V(Q)$ (See Proposition~\ref{prop no path exists} and Remark~\ref{large class}).  In
  these graphs, $V(P)\cap V(Q)$ is a separator, if we additionally assume that $V(P)\ne V(Q)$. For graphs not in this class we analyze in detail
  whether two components of $(P\cup Q)\setminus (V(P)\cap V(Q))$ can be connected such that $P$ and $Q$ are still longest paths.

We will use our results in order to approach three different problems on longest paths, the intersection of three longest paths, 
Hippchen's conjecture and a path version of Groetschel's result on the connectedness of the complement of the intersection
of two longest paths.  One of our main results is the following result on a possible counterexample of Conjecture 1 in 
Theorem~\ref{teorema three paths mayor que 5}:\vspace{0.3cm}

{\it If the intersection of three longest paths $P,Q,R$ is empty, then $\#(V(P)\cap V(Q))\ge 6$.\vspace{0.3cm}}

\noindent By symmetry, this result implies that in this case we also have $\#(V(R)\cap V(Q))\ge 6$ and $\#(V(P)\cap V(R))\ge 6$. Hence we can 
rephrase our result in the following way: \vspace{0.3cm}

{\it Let $P,Q,R$ be three longest paths in a graph. If one of $V(P)\cap V(Q)$, $V(P)\cap V(R)$ or $V(Q)\cap V(R)$ has less than 6 points, then the
three paths have a common vertex.\vspace{0.3cm}}

On the other hand,   if Conjecture 1 is false and there exist three longest paths in a graph with empty intersection, then with our methods one can
carry out a systematic search for a counterexample.

As second main result of the present paper we prove Hippchen's conjecture for $k=6$ (Corollary~\ref{Corollary Hippchen 6}) and give a new  proof for
$k=5$ (Coro\-llary~\ref{Corollary Hippchen 5}). The same methods should be useful in order to prove Hippchen's conjecture for higher $k$, or to prove
it completely.

The third problem to which we apply our methods is the path-version of~\cite{Gr}*{Theorem 1.2(a)} (see also~\cite{ST}). We arrive at the
following results (Theorem~\ref{teorema principal} and Corollary~\ref{Corollary Gr4}):\vspace{0.3cm}

\noindent {\it $[Gr(\ell\le 5)]$ Assume that $P$ and $Q$ are two longest paths in a simple graph $G$. If $V(Q)\ne V(P)$ and $V(P)\cap V(Q)$ has
cardinality $\ell \le 5$, then
it is a separator (called an articulation set in~\cite{Gr}), which means that the complement is not connected.\vspace{0.3cm}

\noindent $[Gr(n\le 7)]$ If $V(Q)\ne V(P)$ and $n=|V(G)|\le 7$ then $V(Q)\cap V(P)$ is a separator.\vspace{0.3cm}}

\noindent Open question: Which are the maximal $\ell$ and $n$ such that the above results remain true?
In an hypotraceable graph we can choose two longest paths $P$ and $Q$ that leave out two connected vertices, and so $V(Q)\cap V(P)$ is a not separator
in that case. Since there exists a hypotraceable graph on 34 vertices (see~\cite{Th}), we know automatically
that $n_{max}\le 33$ and $\ell_{max}\le 31$.

Now consider the
following simple graph with 11 vertices, that has two longest path $P$ and $Q$ of length 9, which satisfy $V(Q)\ne V(P)$ and
moreover, the complement of $V(Q)\cap V(P)$ is connected. Since $\# (V(P)\cap V(Q))=9$ we have $n=11$ and $\ell=9$.

  \begin{tikzpicture}[scale=1]
  \draw(3,0.2) node {Simple graph, $n=11$ vertices};
\draw(6,2.3) node {};
\filldraw [black]  (0,3)    circle (2pt)
[black]  (2,0.8)    circle (2pt)
[black]  (1,3)    circle (2pt)
[black]  (2,2)    circle (2pt)
[black]  (2,3)    circle (2pt)
[black]  (3,2)    circle (2pt)
[black]  (4,2)    circle (2pt)
[black]  (4,3)    circle (2pt)
[black]  (4,0.8)    circle (2pt)
[black]  (5,3)    circle (2pt)
[black]  (6,3)    circle (2pt);
\draw[-] (0,3)--(1,3);
\draw[-] (1,3)--(2,3);
\draw[-] (2,0.8)--(1,3);
\draw[-] (2,0.8)--(4,2);
\draw[-] (2,2)--(2,3);
\draw[-] (2,2)--(4,2);
\draw[-,white,line width=2pt] (2.5,1.7)--(3.5,1.1);
\draw[-] (2,2)--(4,0.8);
\draw[-] (4,2)--(4,3);
\draw[-] (4,3)--(5,3);
\draw[-] (5,3)--(6,3);
\draw[-] (4,0.8)--(5,3);
\draw[-] (2,3)--(4,3);
\draw[-] (2,0.8)--(4,0.8);
\draw (2,3.5) node {};
\end{tikzpicture}
%2
  \begin{tikzpicture}[scale=1]
  \draw(3,0.2) node {Two longest paths, $\ell=\#(V(P)\cap V(Q))=9$};
\draw(6,2.3) node {};
\draw[-] (0,3)--(1,3);
\draw[-] (1,3)--(2,3);
\draw[-] (2,0.8)--(1,3);
\draw[-] (2,0.8)--(4,2);
\draw[-] (2,2)--(2,3);
\draw[-] (2,2)--(4,2);
\draw[-] (4,2)--(4,3);
\draw[-] (4,3)--(5,3);
\draw[-] (5,3)--(6,3);
\draw[-] (4,0.8)--(5,3);
\draw[-] (2,3)--(4,3);
\draw[-] (2,0.8)--(4,0.8);
\draw[-,red] (0,2.95)--(1,2.95);
\draw[-,red] (2.03,0.83)--(1.03,3.03);
\draw[-,red] (2,2.05)--(4,2.05);
\draw[-,green] (2,0.85)--(4,2.05);
\draw[-,white,line width=3pt] (2.5,1.68)--(3.5,1.08);
\draw[-] (2,2)--(4,0.8);
\draw[-,red] (2,1.95)--(4,0.75);
\draw[-,red] (3.95,2)--(3.95,3);
\draw[-,red] (4,3.05)--(5,3.05);
\draw[-,red] (5,3.05)--(6,3.05);
\draw[-,red] (2,0.85)--(4,0.85);
\draw[-,green] (0,3.05)--(1,3.05);
\draw[-,green] (1,3.05)--(2,3.05);
\draw[-,green] (2,0.75)--(4,0.75);
\draw[-,green] (1.95,2)--(1.95,3);
\draw[-,green] (2,1.95)--(4,1.95);
\draw[-,green] (5,2.95)--(6,2.95);
\draw[-,green] (4.05,0.8)--(5.05,3);
\filldraw[red]  (4,3)    circle (2pt);
\filldraw [green]  (0,3)    circle (2pt)
[green]  (2,0.8)    circle (2pt)
[green]  (1,3)    circle (2pt)
[green]  (2,2)    circle (2pt)
[green]  (2,3)    circle (2pt)
[green]  (3,2)    circle (2pt)
[green]  (4,2)    circle (2pt)
[green]  (4,0.8)    circle (2pt)
[green]  (5,3)    circle (2pt)
[green]  (6,3)    circle (2pt);
\draw[red,fill=red] (0.05,3.05) arc (45:225:.07cm);
\draw[red,fill=red] (1.05,3.05) arc (45:225:.07cm);
\draw[red,fill=red] (2.05,0.85) arc (45:225:.07cm);
\draw[red,fill=red] (2.05,2.05) arc (45:225:.07cm);
\draw[red,fill=red] (3.05,2.05) arc (45:225:.07cm);
\draw[red,fill=red] (4.05,0.85) arc (45:225:.07cm);
\draw[red,fill=red] (4.05,2.05) arc (45:225:.07cm);
\draw[red,fill=red] (5.05,3.05) arc (45:225:.07cm);
\draw[red,fill=red] (6.05,3.05) arc (45:225:.07cm);
\draw (2,3.5) node {};
\end{tikzpicture}

\noindent Thus $n_{max}\le 10$ and $\ell_{max}\le 8$. Moreover,
we have verified all cases up to $n=10$ and will write down the lengthy computations in a systematic way in a forthcoming article.
This implies that $n_{max}=10$ and by the above result we already know that $5\le \ell_{max} \le 8$.

The article is organized as follows. In the first section we prove Theorem~\ref{resultado caso 1}, which illustrates our method in a simple case.
The theorem says that if the intersections vertices of two longest path
$P$ and $Q$ have the same sequential order in $P$ and in $Q$, then $V(P)\cap V(Q)$ is a separator, thus $P$ and $Q$ cannot
be a counterexample to the Hippchen conjecture.

In the second section we introduce our main new tools:
bi-traceable (BT) graphs, the standard representations $BT(P,Q)$ of these graphs and what we call exterior swap units (ESU)
(see Definition~\ref{def BT}). We also introduce in Definitions~\ref{def NDC}, \ref{def NC} and \ref{def LNC} three different conditions on how you
can connect, or rather on how you cannot connect,
two components of $(P\cup Q)\setminus (V(P)\cap V(Q))$. They can be not directly connectable (NDC), non connectable (NC) or
locally non connectable (LNC).
We also prove that if $V(P)\ne V(Q)$ and if all ESU's in $BT(P,Q)$ are NC, then $V(P)\cap V(Q)$ is a separator.
In the third section we prove that for every graph $BT(P,Q)$ with $\ell\le 4$ all ESU's are NC, which reproves the
Hippchen conjecture for $k=5$. In section 4 we analyze the case $\# V(P)\cap V(Q))=5$ and find that in all but three exceptional cases
all ESU's are NC. In section 5 we prove that in these three cases $V(P)\cap V(Q)$ is a separator, proving the Hippchen conjecture for
$k=6$. In the last section we prove that in none of the three cases there can be a third longest path $R$ with
$P\cap Q\cap R=\emptyset$, which proves that if the intersection of three longest paths $P,Q,R$ is empty, then $\#(V(P)\cap V(Q))\ge 6$.

If one wants to continue our analysis for the case where the intersection consists of 6 points, it will be necessary to make a computer assisted 
analysis of the 720 permutations and the approximately 100 configurations resulting from them, and find the exceptional configurations.
The methods developed in the present paper will be useful in order to treat exceptional cases that will arise for $\ell\ge 6$.
The complete tables of possible configurations can be useful for other problems involving intersections of longest paths.

\section{Preliminaries and an illustrative example}

Along the present paper $G$ is a simple graph, and $P,Q$ are longest paths. By definition, a path cannot pass twice through the same vertex, and the
length of a path is the number of its edges. We will construct a large class of graphs $P\cup Q$, such that in each exterior swap unit
(see Definition~\ref{def ESU})
$P\setminus (V(P)\cap V(Q))$ cannot be connected with $Q\setminus (V(P)\cap V(Q))$ outside $V(P)\cap V(Q)$
(See Proposition~\ref{prop no path exists} and Remark~\ref{large class}).
As usual, removing the vertices also removes the incident edges. In these graphs,
$V(P)\cap V(Q)$ is a separator, if we additionally assume that $V(P)\ne V(Q)$.

We define an operation $'' + ''$ on two paths $P$, $Q$, which is defined only
  if $P$ and $Q$ share exactly one endpoint, and $P+Q$ is simply the union of both paths. If we write $P+Q+R$, then $P$ and $Q$ share one endpoint,
  and the other endpoint of $Q$ is also an endpoint of $R$, and $P+Q+R$ is the union of the three paths.

\begin{notation}\label{notation permutation}
  Along this work we assume that $\# V(P)\cap V(Q)=\ell$ for some $\ell\ge 1$.
  If we write $P=P_0+P_1+\dots+P_\ell$ such that $P_i\cap P_{i-1}=\{a_i\}$, and $\{ a_1,\dots,a_{\ell}\}=V(P)\cap V(Q)$,
  and we write $Q=Q_0+Q_1+\dots+Q_\ell$ such that $Q_i\cap Q_{i-1}=\{b_i\}$, and $\{ b_1,\dots,b_{\ell}\}=V(P)\cap V(Q)$,
  then the two paths $P,Q$ determine a permutation $\sigma$ on $\{1,2,\dots,\ell\}$ given by $b_j=a_{\sigma(j)}$.
\end{notation}

Write $Q_i':=Q_i\setminus (V(P)\cap V(Q))$ and $P_i':=P_i\setminus (V(P)\cap V(Q))$. These are the
components of $P\cup Q \setminus (V(P)\cap V(Q))$.
Note that $P_0'=\emptyset$ if and only if $P_0=\{a_1\}$, i.e., $L(P_0)=0$. The same holds for $Q_0'$, $P_\ell'$ and $Q_\ell'$. On the other hand
if $i\notin \{0,\ell\}$, then $P_i'=\emptyset$ if and only if $L(P_i)=1$ and the same holds for $Q_i'$.
Let
$$
X,Y \in \{P_i',Q_i': P_i'\ne \emptyset, Q_i'\ne \emptyset\}.
$$
A direct connection between $X$ and $Y$ is a path $R$ from $X$ to $Y$ internally disjoint from $V(P)\cup V(Q)$. We write
$X\sim Y$ or $X\sim_R Y$ (Note that this is not an equivalence relation). Clearly
such a path $R$ cannot be part of a graph where $P$ and $Q$ are longest
paths, if there is a path in $P\cup Q\cup R$, which is longer than $P$ (or $Q$).

The case in which $\sigma=Id$, is the prototype of the large class of graphs mentioned above.
Our principal ideas and methods are already present in this case, so we will give a detailed proof and describe an example with $\ell=7$.

\begin{theorem} \label{resultado caso 1}
  Let $G$ be a simple graph, and let $P,Q$ be longest paths in $G$.
  Assume that $a_i=b_i$ for all $i$ (which means that $\sigma=Id$) and that $V(P)\ne V(Q)$. Then the complement of
  $V(P)\cap V(Q)$ cannot be connected.
\end{theorem}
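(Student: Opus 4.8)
The plan is to argue by contradiction: assume the complement $G\setminus S$ is connected, where $S:=V(P)\cap V(Q)$, and produce a path strictly longer than $P$. Write $p_i:=L(P_i)$ and $q_i:=L(Q_i)$.

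First I would record the two facts that make $\sigma=\mathrm{Id}$ special. Since $a_i=b_i$, for every interior $i$ the segments $P_i$ and $Q_i$ have the \emph{same} pair of endpoints $a_i,a_{i+1}$ (and $P_0,Q_0$ share $a_1$, while $P_\ell,Q_\ell$ share $a_\ell$), and the sets $P_i',Q_i'$ are pairwise disjoint. Hence I may freely substitute $Q_i$ for $P_i$ inside $P$: the walk $P_0+\dots+P_{i-1}+Q_i+P_{i+1}+\dots+P_\ell$ is again a simple path with the same endpoints as $P$, of length $L(P)-p_i+q_i$. Maximality of $P$ gives $q_i\le p_i$, and the symmetric substitution in $Q$ gives $p_i\le q_i$; treating the end segments the same way I conclude $p_i=q_i$ for all $i$. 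This equal-length property is the engine of the argument, since it lets me replace any block of segments of one path by the corresponding block of the other without changing the total length.

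Next, because $P$ and $Q$ are both longest we have $\#V(P)=\#V(Q)$, so $V(P)\ne V(Q)$ forces both $V(P)\setminus S\ne\emptyset$ and $V(Q)\setminus S\ne\emptyset$ (if one were empty, equal cardinality would force $V(P)=V(Q)$). If $G\setminus S$ were connected it would contain a path $\rho$ between these two nonempty sets; a shortest such $\rho$ is internally disjoint from $V(P)\cup V(Q)$ (its interior avoids $S$ because $\rho\subseteq G\setminus S$, and it avoids $V(P)\setminus S$, $V(Q)\setminus S$ by minimality), so it is a direct connection $P_a'\sim_R Q_b'$ for some $a,b$. Thus it suffices to rule out every direct connection between a segment of $P$ and a segment of $Q$.

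The clean heart is the case $a=b$. If $R$ runs from $x\in P_a'$ to $y\in Q_a'$, with $x$ at distance $\alpha$ from $a_a$ along $P_a$ and $y$ at distance $\beta$ from $a_a$ along $Q_a$, consider the two \emph{mirror} reroutes that replace the block of $P$ between $a_a$ and $a_{a+1}$ by either $a_a\to x$ (along $P_a$), then $R$, then $y\to a_{a+1}$ (along $Q_a$), or the opposite splice. Using $p_a=q_a$ these are simple paths with the endpoints of $P$, of lengths $L(P)+(\alpha-\beta+L(R))$ and $L(P)+(\beta-\alpha+L(R))$. Their maximum is $L(P)+L(R)+|\alpha-\beta|\ge L(P)+1>L(P)$, the desired contradiction; here $\sigma=\mathrm{Id}$ enters only through the shared endpoints $a_a,a_{a+1}$.

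The main obstacle is the case $a\ne b$ (together with confirming that the connecting path can indeed be reduced to a \emph{direct} connection, i.e. that detours through other components are handled). When $a$ and $b$ are far apart the naive splice ``front of $P$ up to $x$, cross by $R$, leave along $Q$ from $y$'' strands the intermediate intersection vertices $a_{\min+1},\dots,a_{\max}$ and discards the matching tail, so it need not be longer; simple attempts to reinsert the stranded block by running it through the opposite path tend to repeat an intersection vertex and dead-end. The resolution must orient the splice according to the positions of $x,y$ relative to the $a_i$ and recover the stranded length via the swaps permitted by $p_i=q_i$, all while keeping the path simple and preserving the surplus $L(R)\ge 1$. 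Verifying that such a longer path can always be built is the technical crux, and it is exactly the bookkeeping that the later notion of exterior swap unit is designed to systematize.
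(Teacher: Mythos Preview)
Your reduction to a single direct connection $P_a'\sim_R Q_b'$ is sound, and your handling of the case $a=b$ via the two mirror splices is exactly right. The genuine gap is the case $a\ne b$, which you leave unresolved; and this is not mere bookkeeping. A direct connection between $P_a'$ and $Q_b'$ with $|a-b|\ge 2$ does not obviously yield a longer path by any single splice-and-swap, because the intermediate intersection vertices $a_{a+1},\dots,a_{b}$ get stranded exactly as you describe, and there is no evident way to reinsert them without repeating a vertex. Trying to rule out every such direct connection individually is attacking a harder statement than the theorem requires.

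The paper sidesteps this entirely by \emph{not} reducing to a direct connection. Since $p_{i_0}=q_{i_0}$ you also have $Q_{i_0}'\ne\emptyset$, so connectedness of $G\setminus S$ gives a chain $X_1=P_{i_0}'\sim X_2\sim\cdots\sim X_r=Q_{i_0}'$ of direct connections through the components $P_i',Q_j'$. Now use the swaps up front rather than as a repair: whenever some $X_j$ is a $Q_k'$, interchange $P_k\leftrightarrow Q_k$ (same endpoints, same length, so $P,Q$ stay longest). Keep doing this along the chain until you first reach a component $X_{j_0}=Q_{i_1}'$ whose partner $P_{i_1}'$ already occurred as some earlier $X_m$. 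All of $X_m,\dots,X_{j_0-1}$ are then $P$-components, so concatenating the direct connections from $X_m$ to $X_{j_0}$ gives a path $R$ from $P_{i_1}'$ to $Q_{i_1}'$ that is internally disjoint from the (new) $Q$ --- it may wander through other $P_k'$, but that is harmless. Your two mirror splices, performed inside $Q$ at index $i_1$, now apply verbatim and their lengths sum to $2L(Q)+2L(R)$. The point you were missing is that the swap should be used to \emph{force} the crossing to be same-index, not to salvage a cross-index splice after the fact.
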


\begin{proof}
  Assume by contradiction that it is connected. Since $V(P)\ne V(Q)$, there exists $i_0$ such that $P_{i_0}'\ne \emptyset$, and it follows that
  $Q_{i_0}'\ne \emptyset$, since $L(P_{i_0})=L(Q_{i_0})$. Consequently there exist
  $$
  X_1,\dots,X_r\in \{P_i',Q_i': P_i'\ne \emptyset, Q_i'\ne \emptyset\},\quad \text{such that $X_i\sim X_{i+1}$ and $X_1=P_{i_0}'$ and $X_r=Q_{i_0}'$}.
  $$ Let
  $\widehat X_i=\begin{cases}
              P_j, & \mbox{if } X_i=Q_j \\
              Q_j, & \mbox{if } X_i=P_j
            \end{cases}$.
  Set
  $$
  j_0=\min\{j>1, \widehat X_j\in\{X_1,\dots,X_{j-1}\}\}.
  $$
  We can assume that $X_j\subset P$ for $j<j_0$. In fact, if $X_j=Q_i'$, we redefine $P$ as $P_{<i}+Q_i+P_{>i}$ and $Q$ as $Q_{<i}+P_i+Q_{>i}$.

  There exists $i_1$ such that $X_{j_0}=Q_{i_1}'$ and so there is a path $R$ from $P_{i_1}'$ to $Q_{i_1}'$
  internally disjoint from $Q$ and from $P_{i_1}$. The endpoints of $R$ split the subpaths
  $Q_{i_1}$ and $P_{i_1}$ into two subpaths each, which we name  $P_{i_1,1}$, $P_{i_1,2}$,$Q_{i_1,1}$,$Q_{i_1,2}$.
  The two paths
  $$
  Q_{<i_1}+Q_{i_1,1}+R+P_{i_1,2}+Q_{>i_1}\quad\text{and}\quad Q_{<i_1}+P_{i_1,1}+R+Q_{i_1,2}+Q_{>i_1}
  $$

  \begin{tikzpicture}[scale=1]
    \draw(5,-0.3) node {};
    \draw[-] (0,2.6)..controls (0.3,2.6)..(1,2);
    \draw[-] (2,2.6)..controls (1.7,2.6)..(1,2);
    \draw[-] (3,2.6)..controls (3.3,2.6)..(4,2);
    \draw[-] (4,2)..controls (5,2.8)..(6,2);
    \draw[-,red] (5,2.55)..controls (5.3,2.5)..(6,1.95);

    \draw[-] (7,2.6)..controls (6.7,2.6)..(6,2);
    \draw[-] (9,2)..controls (8.3,2.6)..(8,2.6);
    \draw[-] (9,2)..controls (9.7,2.6)..(10,2.6);
    \draw[-] (0,1.4)..controls (0.3,1.4)..(1,2);
    \draw[-] (2,1.4)..controls (1.7,1.4)..(1,2);
    \draw[-] (3,1.4)..controls (3.3,1.4)..(4,2);
    \draw[-] (4,2)..controls (5,1.2)..(6,2);
    \draw[-] (7,1.4)..controls (6.7,1.4)..(6,2);
    \draw[-] (9,2)..controls (9.7,1.4)..(10,1.4);
    \draw[-] (9,2)..controls (8.3,1.4)..(8,1.4);
    \draw[-,red] (0,1.35)..controls (0.3,1.35)..(1,1.95);
    \draw[-,red] (2,1.35)..controls (1.7,1.35)..(1,1.95);
    \draw[-,red] (3,1.35)..controls (3.3,1.35)..(4,1.95);
    \draw[-,red] (4,1.95)..controls (4.7,1.4)..(5,1.35);
    \draw[-,red] (7,1.35)..controls (6.7,1.35)..(6,1.95);
    \draw[-,red] (9,1.95)..controls (9.7,1.35)..(10,1.35);
    \draw[-,red] (9,1.95)..controls (8.3,1.35)..(8,1.35);
    \filldraw [black]  (1,2)    circle (2pt)
    [black]  (4,2)    circle (2pt)
    [black]  (6,2)    circle (2pt)
    [black]  (9,2)    circle (2pt);
    \filldraw [red]  (5,1.4)    circle (1.5pt)
    [red]  (5,2.6)    circle (1.5pt);
    \draw (0.5,2.8) node {$ P_1$};
    \draw (3.3,2.9) node {$P_{i_1-1}$};
    \draw (5.8,2.6) node {$P_{i_1,2}$};
    \draw (8,2.8) node {$P_{\ell-1}$};
    \draw (9.5,2.8) node {$ P_\ell$};
    \draw (0.5,1.2) node {$ Q_1$};
    \draw (3.2,1.1) node {$Q_{i_1-1}$};
    \draw (4.5,1.3) node {$Q_{i_1,1}$};
    \draw (8,1.1) node {$Q_{\ell-1}$};
    \draw (9.5,1.2) node {$ Q_\ell$};
    \draw[red] (10.7,2.2) node {$R$};
    \draw (2.5,1.4) node {$\dots$};
    \draw[red] (2.5,1.35) node {$\dots$};
    \draw (2.5,2.6) node {$\dots$};
    \draw (7.5,1.4) node {$\dots$};
    \draw[red] (7.5,1.35) node {$\dots$};
    \draw[red] (7.5,3.2) node {$\dots$};
    \draw (7.5,2.6) node {$\dots$};
    \draw[red] (7.5,0.55) node {$\dots$};
    \draw[-,red] (5,2.6)..controls (4.8,2.7)..(3.6,2.3);
    \draw[-,red] (6.5,2.4)..controls (6.2,2.8)and (5,3.5)..(3.6,2.3);
    \draw[-,red] (6.5,2.45)..controls (6.8,3.2)..(7,3.2);
    \draw[-,red] (8.7,2.25)..controls (8.7,2.8)and (8.2,3.2)..(8,3.2);
    \draw[red] (8.7,2.25) arc(180:(-90):1.2cm and 1.7cm);
    \draw[red] (9.9,0.55)--(8,0.55);
    \draw[red] (5,1.4)..controls (5.5,0.55)..(7,0.55);
  \end{tikzpicture}

  \begin{tikzpicture}[scale=1]
    \draw[-] (0,2.6)..controls (0.3,2.6)..(1,2);
    \draw[-] (2,2.6)..controls (1.7,2.6)..(1,2);
    \draw[-] (3,2.6)..controls (3.3,2.6)..(4,2);
    \draw[-] (4,2)..controls (5,2.8)..(6,2);
    \draw[-,red] (4,1.95)..controls (4.7,2.5)..(5,2.55);

    \draw[-] (7,2.6)..controls (6.7,2.6)..(6,2);
    \draw[-] (9,2)..controls (8.3,2.6)..(8,2.6);
    \draw[-] (9,2)..controls (9.7,2.6)..(10,2.6);
    \draw[-] (0,1.4)..controls (0.3,1.4)..(1,2);
    \draw[-] (2,1.4)..controls (1.7,1.4)..(1,2);
    \draw[-] (3,1.4)..controls (3.3,1.4)..(4,2);
    \draw[-] (4,2)..controls (5,1.2)..(6,2);
    \draw[-] (7,1.4)..controls (6.7,1.4)..(6,2);
    \draw[-] (9,2)..controls (9.7,1.4)..(10,1.4);
    \draw[-] (9,2)..controls (8.3,1.4)..(8,1.4);
    \draw[-,red] (0,1.35)..controls (0.3,1.35)..(1,1.95);
    \draw[-,red] (2,1.35)..controls (1.7,1.35)..(1,1.95);
    \draw[-,red] (3,1.35)..controls (3.3,1.35)..(4,1.95);
    \draw[-,red] (5,1.35)..controls (5.3,1.4)..(6,1.95);
    \draw[-,red] (7,1.35)..controls (6.7,1.35)..(6,1.95);
    \draw[-,red] (9,1.95)..controls (9.7,1.35)..(10,1.35);
    \draw[-,red] (9,1.95)..controls (8.3,1.35)..(8,1.35);
    \filldraw [black]  (1,2)    circle (2pt)
    [black]  (4,2)    circle (2pt)
    [black]  (6,2)    circle (2pt)
    [black]  (9,2)    circle (2pt);
    \filldraw [red]  (5,1.4)    circle (1.5pt)
    [red]  (5,2.6)    circle (1.5pt);
    \draw (0.5,2.8) node {$ P_1$};
    \draw (3.3,2.9) node {$P_{i_1-1}$};
    \draw (4.6,2.05) node {$P_{i_1,1}$};
    \draw (5.8,1.35) node {$Q_{i_1,2}$};
    \draw (8,2.8) node {$P_{\ell-1}$};
    \draw (9.5,2.8) node {$ P_{\ell}$};
    \draw (0.5,1.2) node {$ Q_1$};
    \draw (3.2,1.1) node {$Q_{i_1-1}$};
    \draw (8,1.1) node {$Q_{\ell-1}$};
    \draw (9.5,1.2) node {$ Q_{\ell}$};
    \draw[red] (10.7,2.2) node {$R$};
    \draw (2.5,1.4) node {$\dots$};
    \draw[red] (2.5,1.35) node {$\dots$};
    \draw (2.5,2.6) node {$\dots$};
    \draw (7.5,1.4) node {$\dots$};
    \draw[red] (7.5,1.35) node {$\dots$};
    \draw[red] (7.5,3.2) node {$\dots$};
    \draw (7.5,2.6) node {$\dots$};
    \draw[red] (7.5,0.55) node {$\dots$};
    \draw[-,red] (5,2.6)..controls (4.8,2.7)..(3.6,2.3);
    \draw[-,red] (6.5,2.4)..controls (6.2,2.8)and (5,3.5)..(3.6,2.3);
    \draw[-,red] (6.5,2.45)..controls (6.8,3.2)..(7,3.2);
    \draw[-,red] (8.7,2.25)..controls (8.7,2.8)and (8.2,3.2)..(8,3.2);
    \draw[red] (8.7,2.25) arc(180:(-90):1.2cm and 1.7cm);
    \draw[red] (9.9,0.55)--(8,0.55);
    \draw[red] (5,1.4)..controls (5.5,0.55)..(7,0.55);
  \end{tikzpicture}

  \noindent  have lengths that sum $2L(Q)+2L(R)$, a contradiction that concludes the proof.
\end{proof}

\begin{example}
  We will illustrate the proof of the theorem in an example with $\ell=7$. The green path is $P$, the red path is $Q$ and the blue paths are
   in $G\setminus (P\cup Q)$, with endpoints in $P\cup Q\setminus (V(P)\cap V(Q))$.

  \begin{tikzpicture}[scale=1]
    \draw[-,green] (1,2.6)..controls (1.3,2.6)..(2,2);
    \draw[-,green] (2,2)..controls (3,2.8)..(4,2);
    \draw[-,green] (4,2)..controls (5,2.8)..(6,2);
    \draw[-,green] (6,2)..controls (7,2.8)..(8,2);
    \draw[-,green] (8,2)..controls (9,2.8)..(10,2);
    \draw[-,green] (10,2)..controls (11,2.8)..(12,2);
    \draw[-,green] (12,2)..controls (13,2.8)..(14,2);
    \draw[-,green] (14,2)..controls (14.7,2.6)..(15,2.6);
    \draw[-,red] (1,1.4)..controls (1.3,1.4)..(2,2);
    \draw[-,red] (2,2)..controls (3,1.2)..(4,2);
    \draw[-,red] (4,2)..controls (5,1.2)..(6,2);
    \draw[-,red] (6,2)..controls (7,1.2)..(8,2);
    \draw[-,red] (8,2)..controls (9,1.2)..(10,2);
    \draw[-,red] (10,2)..controls (11,1.2)..(12,2);
    \draw[-,red] (12,2)..controls (13,1.2)..(14,2);
    \draw[-,red] (14,2)..controls (14.7,1.4)..(15,1.4);
    \draw[-,blue] (5,2.6)..controls (5,4)and(9,4)..(9,2.6);
    \draw[-,white, line width=3pt] (7,2.6)..controls (7,4)and(11,4)..(11,2.7);
    \draw[-,blue] (7,2.6)..controls (7,4)and(11,4)..(11,2.6);
    \draw[-,blue] (7.5,2.4)..controls (7.5,3)and(8.5,3)..(8.5,2.4);
    \draw[-,blue] (5,1.4)..controls (5,0)and(8.5,0)..(8.5,1.6);
    \draw[-,blue] (9,1.4)..controls (9,0)and(13,0)..(13,1.4);
    \draw[-,white,line width=3pt] (11,2.6)..controls (10.5,2.1)and(12,1.1)..(12.5,1.6);
    \draw[-,blue] (11,2.6)..controls (10.5,2.1)and(12,1.1)..(12.5,1.6);
    \filldraw [blue]  (5,2.6)    circle (1.5pt)
    [blue]  (5,1.4)    circle (1.5pt)
    [blue]  (7,2.6)    circle (1.5pt)
    [blue]  (7.5,2.4)    circle (1.5pt)
    [blue]  (8.5,2.4)    circle (1.5pt)
    [blue]  (9,2.6)    circle (1.5pt)
    [blue]  (8.5,1.6)    circle (1.5pt)
    [blue]  (9,1.4)    circle (1.5pt)
    [blue]  (11,2.6)    circle (1.5pt)
    [blue]  (12.5,1.6)    circle (1.5pt)
    [blue]  (13,1.4)    circle (1.5pt);
    \filldraw [black]  (2,2)    circle (2pt)
    [black]  (4,2)    circle (2pt)
    [black]  (6,2)    circle (2pt)
    [black]  (8,2)    circle (2pt)
    [black]  (10,2)    circle (2pt)
    [black]  (12,2)    circle (2pt)
    [black]  (14,2)    circle (2pt);
    \draw(1.3,2.9)node{$P_0$};
    \draw(1.3,1.1)node{$Q_0$};
    \draw(5,2.3)node{$P_2$};
    \draw(7,2.3)node{$P_3$};
    \draw(9,2.3)node{$P_4$};
    \draw(11.5,2.7)node{$P_5$};
    \draw(13,2.9)node{$P_6$};
    \draw(13,1.7)node{$Q_6$};
    \draw(4.5,1.3)node{$Q_2$};
    \draw(9.5,1.3)node{$Q_4$};
  \end{tikzpicture}

\noindent  In this example $i_0=2$, since the additional blue paths connect $P_2$ with $Q_2$. Moreover
  $$
    X_1=P_2',\quad X_2=P_4',\quad X_3=P_3',\quad X_4=P_5',\quad X_5=Q_6',\quad X_6=Q_4',\quad X_7=Q_2',
  $$
  and so
  $$
    \widehat{X}_1=Q_2',\quad \widehat{X}_2=Q_4',\quad \widehat{X}_3=Q_3',\quad \widehat{X}_4=Q_5',\quad \widehat{X}_5=P_6',\quad
    \widehat{X}_6=P_4',\quad \widehat{X}_7=P_2'.
  $$
  We have $j_0=6$, since
  $$
    \widehat{X}_2\notin \{X_1\},\quad \widehat{X}_3\notin \{X_1,X_2\},\quad \widehat{X}_4\notin \{X_1,X_2,X_3\},\quad
    \widehat{X}_5\notin \{X_1,X_2,X_3,X_4\},
  $$
  but
  $$
    \widehat X_6=P_4'=X_2\in \{X_1,X_2,X_3,X_4,X_5\}.
  $$
  Note that we also have $\widehat X_7=P_2'=X_1\in \{X_1,X_2,X_3,X_4,X_5,X_6\}$.
   Since $X_5=Q_6'$, we redefine $P$ and $Q$, swapping $Q_6$ with $P_6$.

    \begin{tikzpicture}[scale=1]
    \draw[-,green] (1,2.6)..controls (1.3,2.6)..(2,2);
    \draw[-,green] (2,2)..controls (3,2.8)..(4,2);
    \draw[-,green] (4,2)..controls (5,2.8)..(6,2);
    \draw[-,green] (6,2)..controls (7,2.8)..(8,2);
    \draw[-,green] (8,2)..controls (9,2.8)..(10,2);
    \draw[-,green] (10,2)..controls (11,2.8)..(12,2);
    \draw[-,red] (12,2)..controls (13,2.8)..(14,2);
    \draw[-,green] (14,2)..controls (14.7,2.6)..(15,2.6);
    \draw[-,red] (1,1.4)..controls (1.3,1.4)..(2,2);
    \draw[-,red] (2,2)..controls (3,1.2)..(4,2);
    \draw[-,red] (4,2)..controls (5,1.2)..(6,2);
    \draw[-,red] (6,2)..controls (7,1.2)..(8,2);
    \draw[-,red] (8,2)..controls (9,1.2)..(10,2);
    \draw[-,red] (10,2)..controls (11,1.2)..(12,2);
    \draw[-,green] (12,2)..controls (13,1.2)..(14,2);
    \draw[-,red] (14,2)..controls (14.7,1.4)..(15,1.4);
    \draw[-,blue] (5,2.6)..controls (5,4)and(9,4)..(9,2.6);
    \draw[-,white, line width=3pt] (7,2.6)..controls (7,4)and(11,4)..(11,2.7);
    \draw[-,blue] (7,2.6)..controls (7,4)and(11,4)..(11,2.6);
    \draw[-,blue] (7.5,2.4)..controls (7.5,3)and(8.5,3)..(8.5,2.4);
    \draw[-,blue] (5,1.4)..controls (5,0)and(8.5,0)..(8.5,1.6);
    \draw[-,blue] (9,1.4)..controls (9,0)and(13,0)..(13,1.4);
    \draw[-,white,line width=3pt] (11,2.6)..controls (10.5,2.1)and(12,1.1)..(12.5,1.6);
    \draw[-,blue] (11,2.6)..controls (10.5,2.1)and(12,1.1)..(12.5,1.6);
    \filldraw [blue]  (5,2.6)    circle (1.5pt)
    [blue]  (5,1.4)    circle (1.5pt)
    [blue]  (7,2.6)    circle (1.5pt)
    [blue]  (7.5,2.4)    circle (1.5pt)
    [blue]  (8.5,2.4)    circle (1.5pt)
    [blue]  (9,2.6)    circle (1.5pt)
    [blue]  (8.5,1.6)    circle (1.5pt)
    [blue]  (9,1.4)    circle (1.5pt)
    [blue]  (11,2.6)    circle (1.5pt)
    [blue]  (12.5,1.6)    circle (1.5pt)
    [blue]  (13,1.4)    circle (1.5pt);
    \filldraw [black]  (2,2)    circle (2pt)
    [black]  (4,2)    circle (2pt)
    [black]  (6,2)    circle (2pt)
    [black]  (8,2)    circle (2pt)
    [black]  (10,2)    circle (2pt)
    [black]  (12,2)    circle (2pt)
    [black]  (14,2)    circle (2pt);
    \draw(1.3,2.9)node{$P_0$};
    \draw(1.3,1.1)node{$Q_0$};
    \draw(5,2.3)node{$P_2$};
    \draw(7,2.3)node{$P_3$};
    \draw(9,2.3)node{$P_4$};
    \draw(11.5,2.7)node{$P_5$};
    \draw(13,2.9)node{$Q_6$};
    \draw(13,1.7)node{$P_6$};
    \draw(4.5,1.3)node{$Q_2$};
    \draw(9.5,1.3)node{$Q_4$};
  \end{tikzpicture}

  Then $i_1=4$, since $X_{j_0}=X_6=Q_4'$, and we arrive at the following situation, where $P_{i_1}'=P_4'$ is connected with $Q_{i_1}'=Q_4'$
  via a path $R$ that is internally disjoint from $Q$ and from $P_4\cup Q_4$.

  \begin{tikzpicture}[scale=1]
    \draw[-,green] (1,2.6)..controls (1.3,2.6)..(2,2);
    \draw[-,green] (2,2)..controls (3,2.8)..(4,2);
    \draw[-,green] (4,2)..controls (5,2.8)..(6,2);
    \draw[-,green] (6,2)..controls (7,2.8)..(8,2);
    \draw[-,green] (8,2)..controls (9,2.8)..(10,2);
    \draw[-,green] (10,2)..controls (11,2.8)..(12,2);
    \draw[-,red] (12,2)..controls (13,2.8)..(14,2);
    \draw[-,green] (14,2)..controls (14.7,2.6)..(15,2.6);
    \draw[-,red] (1,1.4)..controls (1.3,1.4)..(2,2);
    \draw[-,red] (2,2)..controls (3,1.2)..(4,2);
    \draw[-,red] (4,2)..controls (5,1.2)..(6,2);
    \draw[-,red] (6,2)..controls (7,1.2)..(8,2);
    \draw[-,red] (8,2)..controls (9,1.2)..(10,2);
    \draw[-,red] (10,2)..controls (11,1.2)..(12,2);
    \draw[-,green] (12,2)..controls (13,1.2)..(14,2);
    \draw[-,red] (14,2)..controls (14.7,1.4)..(15,1.4);
    \draw[-,white, line width=3pt] (7,2.6)..controls (7,4)and(11,4)..(11,2.7);
    \draw[-,blue,line width=2pt] (7,2.6)..controls (7,4)and(11,4)..(11,2.6);
    \draw[-,blue,line width=2pt] (7.5,2.4)..controls (7.5,3)and(8.5,3)..(8.5,2.4);
    \draw[-,blue,line width=2pt] (9,1.4)..controls (9,0)and(13,0)..(13,1.4);
    \draw[-,white,line width=3pt] (11,2.6)..controls (10.5,2.1)and(12,1.1)..(12.5,1.6);
    \draw[-,blue,line width=2pt] (11,2.6)..controls (10.5,2.1)and(12,1.1)..(12.5,1.6);
    \draw[-,blue,line width=2pt] (7,2.6)..controls (7.25,2.56)..(7.5,2.4);
    \draw[-,blue,line width=2pt] (12.5,1.6)..controls (12.75,1.44)..(13,1.4);
    \filldraw     [blue]  (7,2.6)    circle (1.5pt)
    [blue]  (7.5,2.4)    circle (1.5pt)
    [blue]  (8.5,2.4)    circle (1.5pt)
    [blue]  (9,1.4)    circle (1.5pt)
    [blue]  (11,2.6)    circle (1.5pt)
    [blue]  (12.5,1.6)    circle (1.5pt)
    [blue]  (13,1.4)    circle (1.5pt);
    \filldraw [black]  (2,2)    circle (2pt)
    [black]  (4,2)    circle (2pt)
    [black]  (6,2)    circle (2pt)
    [black]  (8,2)    circle (2pt)
    [black]  (10,2)    circle (2pt)
    [black]  (12,2)    circle (2pt)
    [black]  (14,2)    circle (2pt);
    \draw(1.3,2.9)node{$P_0$};
    \draw(1.3,1.1)node{$Q_0$};
    \draw(5,2.3)node{$P_2$};
    \draw(7,2.3)node{$P_3$};
    \draw(9,2.3)node{$P_4$};
    \draw(11.5,2.7)node{$P_5$};
    \draw(13,2.9)node{$Q_6$};
    \draw(13,1.7)node{$P_6$};
    \draw(4.5,1.3)node{$Q_2$};
    \draw(9.5,1.3)node{$Q_4$};
    \draw[blue](6.8,3.2)node{{\huge\bf{$R$}}};
  \end{tikzpicture}

  As in the proof of the theorem, now we can construct the two paths,
  whose length sum $2L(Q)+2L(R)$.
\end{example}

We want to generalize the theorem in order to apply to other permutations, and not only for $\sigma=Id$.
Note that in the proof of the theorem we use the following properties of the pair $\{P_i,Q_i\}$:
\begin{itemize}
  \item $L(P_i)=L(Q_i)$,
  \item They share both endpoints, when $1<i<\ell$, and one endpoint when $i\in \{0,\ell\}$.
  \item Thus they can be interchanged, i.e.,  $P_{<i}+Q_i+P_{>i}$ and $Q_{<i}+P_i+Q_{>i}$ are two longest paths having the same intersection and 
  the same  union as $P$ and $Q$.
  \item An internally disjoint path $R$ between $P_i'$ and $Q_i'$ when $0<i<\ell$, enables to construct the two paths
  $P_{i,1}+R+Q_{i,2}$ and $Q_{i,1}+R+P_{i,2}$ such that both paths connect the endpoints of $P_i$.
  Moreover, the lengths of the new paths sum $2L(P_i)+2R>L(Q_i)+L(P_i)$.
\end{itemize}

\section{Bi-traceable graphs}

We want to consider the graph which is the union of the two longest paths $P$ and $Q$. In this section we assume the same notations as in
Notation~\ref{notation permutation}.
\begin{definition}\label{def BT}
  \begin{enumerate}
    \item A \textbf{bi-traceable graph (BT-graph)} is a simple graph that has two longest paths such that the union is the whole graph.
    \item The \textbf{representation of a bi-traceable graph associated to $P$ and $Q$} is a two colored graph $BT(P,Q)$,
      obtained from the BT-graph $P\cup Q$ by the following process.
      First we assign to all the edges of $P$ one color. Then the edges of $Q$ are colored with the other color. If the paths
      share an edge, then this edge is duplicated, and each copy is colored with one of the colors. The vertices are not colored.

\noindent  \begin{tikzpicture}[scale=0.7]
\draw[-] (0,1)--(1,1);
\draw[-] (1,1)--(2,2);
\draw[-] (2,2)--(3,2);
\draw[-] (3,0)--(3,2);
\draw[-] (4,0)--(4,2);
\draw[-] (5,0)--(5,2);
\draw[-] (5,2)--(6,2);
\draw[-] (6,2)--(7,1);
\draw[-] (7,1)--(8,1);
\draw[-] (3,2)--(5,2);
\draw[-] (3,0)--(5,0);
\draw[-] (1,1)--(3,0);
\draw[-] (5,0)--(7,1);
\filldraw [black]  (0,1)    circle (2pt)
[black]  (1,1)    circle (2pt)
[black]  (2,2)    circle (2pt)
[black]  (3,0)    circle (2pt)
[black]  (3,1)    circle (2pt)
[black]  (3,2)    circle (2pt)
[black]  (4,0)    circle (2pt)
[black]  (4,2)    circle (2pt)
[black]  (5,0)    circle (2pt)
[black]  (5,1)    circle (2pt)
[black]  (5,2)    circle (2pt)
[black]  (6,2)    circle (2pt)
[black]  (7,1)    circle (2pt)
[black]  (8,1)    circle (2pt);
\draw (4,-0.5) node {Bi-traceable graph};
\draw (3.4,2.8) node {};
\draw (9,-0.8) node {};
\end{tikzpicture}
  \begin{tikzpicture}[scale=0.7]
\draw[-,green] (0,1)..controls (0.5,0.8)..(1,1);
\draw[-,red] (0,1)..controls (0.5,1.2)..(1,1);
\draw[-,red] (1,1)--(2,2);
\draw[-,red] (2,2)--(3,2);
\draw[-,green] (3,0)..controls (2.8,0.5)..(3,1);
\draw[-,green] (3,1)..controls (2.8,1.5)..(3,2);
\draw[-,red] (3,0)..controls (3.2,0.5)..(3,1);
\draw[-,red] (3,1)..controls (3.2,1.5)..(3,2);
\draw[-,green] (4,0)..controls (3.7,1)..(4,2);
\draw[-,red] (4,0)..controls (4.3,1)..(4,2);
\draw[-,green] (5,0)..controls (4.8,0.5)..(5,1);
\draw[-,green] (5,1)..controls (4.8,1.5)..(5,2);
\draw[-,red] (5,0)..controls (5.2,0.5)..(5,1);
\draw[-,red] (5,1)..controls (5.2,1.5)..(5,2);
\draw[-,green] (7,1)..controls (7.5,0.8)..(8,1);
\draw[-,red] (7,1)..controls (7.5,1.2)..(8,1);
\draw[-,green] (5,2)--(6,2);
\draw[-,green] (6,2)--(7,1);
\draw[-,green] (3,2)--(4,2);
\draw[-,red] (4,2)--(5,2);
\draw[-,red] (3,0)--(4,0);
\draw[-,green] (4,0)--(5,0);
\draw[-,green] (1,1)--(3,0);
\draw[-,red] (5,0)--(7,1);
\filldraw [black] (1,1)    circle (2pt)
[black]  (2,2)    circle (2pt)
[black]  (0,1)    circle (2pt)
[black]  (8,1)    circle (2pt)
[black]  (3,0)    circle (2pt)
[black]  (3,1)    circle (2pt)
[black]  (3,2)    circle (2pt)
[black]  (4,0)    circle (2pt)
[black]  (4,2)    circle (2pt)
[black]  (5,0)    circle (2pt)
[black]  (5,1)    circle (2pt)
[black]  (5,2)    circle (2pt)
[black]  (6,2)    circle (2pt)
[black]  (7,1)    circle (2pt);
\draw (4,-0.5) node {$BT(P,Q)$ with $P$ in green and $Q$ in red};
\draw (3.4,-0.8) node {};
\end{tikzpicture}

     \noindent  Note that the resulting graph is no longer simple, but $P$ and $Q$ are still longest paths in it. We also have $BT(P,Q)=BT(Q,P)$.
    \item The subpaths $P_i$, $Q_i$ are called the \textbf{completed components} of $BT(P,Q)$, or simply the completed components, if $P$ and $Q$
        are clear
        from the context. Note that each of the extremal completed components $P_0$, $Q_0$, $P_{\ell}$ and $Q_{\ell}$ may be only a point.
  \end{enumerate}
\end{definition}

Note that some components $P_i'$ or $Q_i'$ might be empty.

\begin{definition} \label{def NDC}
  Two components $X,Y$ are said to be \textbf{not directly connectable (NDC)}, if one of them is empty, or if $X\sim_R Y$ implies that
  there exist two paths $\widehat{P}$ and $\widehat{Q}$ in $BT(P,Q)\cup R$ such that
  $$
  L(\widehat{P})+L(\widehat{Q})=2L(R)+ L(P)+L(Q)=2 L(P)+2 L(R).
  $$
\end{definition}

The following result is one of the main ingredients in the proof of the Hippchen conjecture for $k=4$ in~\cite{G}, and the idea was already present
in~\cite{H}*{Lemma 2.2.3}.

\begin{lemma}[\cite{G}*{Lemma 4.1}] \label{lema 4.1}
  If two completed components of different colors are adjacent (which means that they
  have a common vertex), then they are NDC.
\end{lemma}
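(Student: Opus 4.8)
The plan is to first dispose of the trivial case and then, when both components are nonempty, to build the two paths $\wh P,\wh Q$ of Definition~\ref{def NDC} by splicing $P$, $Q$ and a hypothetical direct connection $R$ together at the common vertex. If $P_i'=\emptyset$ or $Q_j'=\emptyset$ the pair is NDC by definition, so I assume both are nonempty and that $P_i'\sim_R Q_j'$ with $R$ running from $x\in P_i'$ to $y\in Q_j'$.

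Since the interior vertices of $P_i$ lie in $V(P)\setminus V(Q)$ and those of $Q_j$ in $V(Q)\setminus V(P)$, the common vertex $v$ of the adjacent components $P_i,Q_j$ must be a common \emph{endpoint}, i.e. an intersection point in $\{a_i,a_{i+1}\}\cap\{b_j,b_{j+1}\}$. According to which endpoints coincide there are four configurations, but the symmetries of a configuration allowed in this paper — reversing the direction of travel along $P$ and/or along $Q$ — permute them transitively and preserve the NDC property, so it suffices to treat one, say $v=a_{i+1}=b_j$, where $P_i$ ends at $v$ and $Q_j$ begins at $v$.

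In this normalized case I split $P_i=P_{i,1}+P_{i,2}$ at $x$ (with $P_{i,1}$ from $a_i$ to $x$ and $P_{i,2}$ from $x$ to $v$) and $Q_j=Q_{j,1}+Q_{j,2}$ at $y$ (with $Q_{j,1}$ from $v$ to $y$ and $Q_{j,2}$ from $y$ to $b_{j+1}$), and set
\[
\wh P=P_{<i}+P_{i,1}+R+Q_{j,1}+P_{>i},\qquad
\wh Q=Q_{<j}+P_{i,2}+R+Q_{j,2}+Q_{>j},
\]
so that $\wh P$ runs from one end of $P$ through $x,y,v$ to the other end of $P$, while $\wh Q$ runs from one end of $Q$ through $v,x,y,b_{j+1}$ to the other end of $Q$. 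Together these two paths use every edge of $P$ once, every edge of $Q$ once, and the edges of $R$ twice, whence $L(\wh P)+L(\wh Q)=L(P)+L(Q)+2L(R)=2L(P)+2L(R)$, which is exactly the equality required by Definition~\ref{def NDC}.

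The step I expect to be the crux is checking that $\wh P$ and $\wh Q$ are genuinely simple paths rather than mere walks, because an arbitrary splicing of $P$ and $Q$ usually revisits some shared intersection vertex $a_k=b_m$. The chosen routing is designed to prevent this: in $\wh P$ the two $P$-pieces $P_{<i}+P_{i,1}$ and $P_{>i}$ occupy disjoint stretches of the simple path $P$ (everything up to $x$, and everything from $v$ on), and the only $Q$-piece used is $Q_{j,1}$, which has no interior intersection vertex and meets the $P$-pieces only at the junction $v$; the analogous bookkeeping, with the roles of $P$ and $Q$ interchanged (now $P_{i,2}$ is the single cross-colour piece), shows $\wh Q$ is simple. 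Here one uses that $x\in P_i'$ and $y\in Q_j'$ are not intersection vertices and that $R$ is internally disjoint from $V(P)\cup V(Q)$. Finally I would note that the degenerate situations — $v$ an endpoint of $P$ or $Q$, or one of $P_{<i},P_{>i},Q_{<j},Q_{>j}$ trivial — are covered by the same formulas with the corresponding length set to $0$, and that the remaining three configurations follow by the reversals used to normalize the case.
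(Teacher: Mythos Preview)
Your proposal is correct and follows essentially the same construction as the paper's diagrammatic proof: split $P_i$ at $x$ and $Q_j$ at $y$, then reroute through the common intersection vertex $v$ to form $\wh P$ and $\wh Q$ using each edge of $P\cup Q$ once and $R$ twice. The paper leaves the simplicity verification and the symmetry reduction implicit in the pictures, whereas you spell both out; otherwise the arguments coincide.
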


\begin{proof}
  See the diagram, or see also~\cite{G}*{Lemma 4.1}.

    \begin{tikzpicture}[scale=1]
\draw (6.5,3.2) node{};
\draw[-,green] (3,1.4)..controls (3.3,1.4)and (4.7,2.6)..(5,2.6);
\draw[-,red] (3,2.6)..controls (3.3,2.6)and (4.7,1.4)..(5,1.4);
\draw[-] (3.5,2.35)..controls (3.5,2.6)and (4.5,2.6)..(4.5,2.35);
\draw[red] (4.8,1.1) node {$Q$};
\draw[green] (4.8,2.8) node {$ P$};
\draw (3.7,2.7) node {$R$};
\draw[green] (2.5,1.4) node {$\dots$};
\draw[red] (2.5,2.6) node {$\dots$};
\draw[red] (5.5,1.4) node {$\dots$};
\draw[green] (5.5,2.6) node {$\dots$};
\end{tikzpicture}
%2
    \begin{tikzpicture}[scale=1]
\draw (6.5,3.2) node{};
\draw[-] (3,1.4)..controls (3.3,1.4)and (4.7,2.6)..(5,2.6);
\draw[-] (3,2.6)..controls (3.3,2.6)and (4.7,1.4)..(5,1.4);
\draw[-,line width=2pt,red] (3.5,2.35)..controls (3.5,2.6)and (4.5,2.6)..(4.5,2.35);
\draw[-,line width=2pt,red] (4.5,2.35)--(4,2);
\draw[-,line width=2pt,red] (3.5,2.35)..controls  (3.1,2.6)..(3,2.6);
\draw[-,line width=2pt,red] (4,2)..controls (4.85,1.4) ..(5,1.4);
\draw[red] (4.8,1.1) node {$\widehat Q$};
\draw (2.5,1.4) node {$\dots$};
\draw[red] (2.5,2.6) node {$\dots$};
\draw[red] (5.5,1.4) node {$\dots$};
\draw (5.5,2.6) node {$\dots$};
\end{tikzpicture}
%3
    \begin{tikzpicture}[scale=1]
\draw[-] (3,1.4)..controls (3.3,1.4)and (4.7,2.6)..(5,2.6);
\draw[-] (3,2.6)..controls (3.3,2.6)and (4.7,1.4)..(5,1.4);
\draw[-,line width=2pt,red] (3.5,2.35)..controls (3.5,2.6)and (4.5,2.6)..(4.5,2.35);
\draw[-,line width=2pt,red] (3.5,2.35)--(4,2);
\draw[-,line width=2pt,red] (4.5,2.35)..controls  (4.9,2.6)..(5,2.6);
\draw[-,line width=2pt,red] (4,2)..controls (3.15,1.4) ..(3,1.4);
\draw[red] (4.8,2.9) node {$\widehat{P}$};
\draw[red] (2.5,1.4) node {$\dots$};
\draw (2.5,2.6) node {$\dots$};
\draw (5.5,1.4) node {$\dots$};
\draw[red] (5.5,2.6) node {$\dots$};
\draw[red] (5.5,0.95) node {};

\end{tikzpicture}
\end{proof}

\begin{lemma}\label{conexion entre extremos}
  Two extremal components of different colors are NDC. The extremal components
   are $P_0',P_\ell',Q_0',Q_\ell'$.
\end{lemma}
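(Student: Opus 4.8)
The plan is to verify the defining condition of NDC directly. So I would assume that a direct connection $R$ between two extremal components of different colors exists and exhibit two paths $\wh P,\wh Q$ in $BT(P,Q)\cup R$ with $L(\wh P)+L(\wh Q)=L(P)+L(Q)+2L(R)=2L(P)+2L(R)$. If one of the two components is empty the statement is immediate from Definition~\ref{def NDC}, so I may assume both are nonempty. Next I would cut down the number of cases: the pairs of extremal components of different colors are $(P_0',Q_0')$, $(P_0',Q_\ell')$, $(P_\ell',Q_0')$ and $(P_\ell',Q_\ell')$, and reversing the orientation of $P$ interchanges $P_0'\leftrightarrow P_\ell'$ while reversing $Q$ interchanges $Q_0'\leftrightarrow Q_\ell'$. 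Each reversal leaves $P$ (resp. $Q$) a longest path with the same intersection set $V(P)\cap V(Q)$ and the same graph $BT(P,Q)$, and NDC depends only on the unordered pair and the graph; hence all four cases reduce to the single case $P_0'\sim_R Q_0'$, which I will treat.

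For the construction, write the endpoints of $P$ as $p_0,p_\ell$ and of $Q$ as $q_0,q_\ell$, with $P_0$ running from $p_0$ to $a_1$ and $Q_0$ from $q_0$ to $b_1$, and let $R$ join $u\in P_0'$ to $v\in Q_0'$. I would split $P_0=P_{0,1}+P_{0,2}$ at $u$ (so $P_{0,1}\colon p_0\to u$ and $P_{0,2}\colon u\to a_1$) and $Q_0=Q_{0,1}+Q_{0,2}$ at $v$ (so $Q_{0,1}\colon q_0\to v$ and $Q_{0,2}\colon v\to b_1$). The two candidate paths are, writing $\ov{(\cdot)}$ for reversal,
\[
\wh P=\ov{P_1+\dots+P_\ell}+\ov{P_{0,2}}+R+\ov{Q_{0,1}},\qquad \wh Q=P_{0,1}+R+Q_{0,2}+Q_1+\dots+Q_\ell,
\]
so that $\wh P$ runs from $p_\ell$ to $q_0$ and $\wh Q$ from $p_0$ to $q_\ell$. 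A direct count, using $L(P_{0,1})+L(P_{0,2})=L(P_0)$ and $L(Q_{0,1})+L(Q_{0,2})=L(Q_0)$, gives $L(\wh P)+L(\wh Q)=\bigl(\sum_{i=1}^{\ell}L(P_i)+L(P_0)\bigr)+\bigl(\sum_{i=1}^{\ell}L(Q_i)+L(Q_0)\bigr)+2L(R)=L(P)+L(Q)+2L(R)$, as required.

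The only real obstacle, and the step I expect to be delicate, is checking that $\wh P$ and $\wh Q$ are genuine vertex-simple paths. The key observation is that $P_0$ meets $V(P)\cap V(Q)$ only at its inner endpoint $a_1$ and $Q_0$ only at $b_1$; since $u\in P_0'$ and $v\in Q_0'$ lie outside the intersection, the outer pieces $P_{0,1}$ and $Q_{0,1}$ contain no intersection vertex at all. Because any two subpaths of $P$ and of $Q$ can share only vertices of $V(P)\cap V(Q)$, it follows that $Q_{0,1}$ is disjoint from every subpath of $P$, so in $\wh P$ the $P$-part, $R$ and $\ov{Q_{0,1}}$ meet only at the splice vertices $u$ and $v$; the analogous bookkeeping shows $\wh Q$ is simple (note this works even in the degenerate subcase $a_1=b_1$, since the offending vertex then lies in $P_{0,2}$ and $Q_{0,2}$, never in the outer pieces). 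This disjointness is exactly the feature distinguishing extremal from interior components and is what makes the concatenations legitimate, completing the verification of NDC.
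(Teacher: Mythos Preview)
Your proof is correct and follows essentially the same approach as the paper: split each extremal segment at the endpoint of $R$ and splice the outer tip of one color onto the bulk of the other path through $R$. The only cosmetic difference is that you reduce by symmetry to the pair $(P_0',Q_0')$ whereas the paper illustrates with $(P_0',Q_\ell')$, and you make the simplicity verification explicit while the paper leaves it to the diagrams.
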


\begin{proof}
  Assume for example $P_0'\sim_R Q_\ell'$.
  Let $P_{0,1}+P_{0,2}=P_0$  such that $P_{0,1}\cap P_{0,2}$ is one endpoint of $R$ and such that $P_{0,1}$ and $P$ share one endpoint. Similarly
   define $Q_{\ell,1}$ and $Q_{\ell,2}$, with $Q_{\ell,2}$ having a common endpoint with $Q$.
  Then
  $$
    \widehat{P}=Q_{\ell,2}+R+P_{0,2}+P_1+\dots+P_\ell \quad\text{and}\quad \widehat{Q}= P_{0,1}+R+Q_{\ell,1}+Q_{\ell-1}+\dots+Q_1+Q_0
  $$
  \begin{tikzpicture}[scale=0.8]
     \draw[-,green] (0,2)..controls (0.5,2.5)and (1,1.5)..(1.5,2);
     \draw[-,green] (1.5,2)..controls (1.7,3)and (2,1)..(2.3,2);
     \draw[-,green] (2.3,2)..controls (2.3,4)and (4,4)..(4,2);
     \draw[-,green] (4,2)..controls (4,1.5)and (2,1)..(2,0.5);
     \draw[-,green] (2,0.5)..controls (2,0)and (2.5,0)..(3,0.4);
     \draw[-,green] (3,0.4)..controls (4,1.2)and (4.5,1.3)..(4.5,1);
     \draw[-,red] (0.5,1.5)..controls (1,2.5)and (2.5,3)..(2.5,2.5);
     \draw[-,red] (2.5,2.5)..controls (2.5,2)and (1.7,1.7)..(1.7,1.3);
     \draw[-,red] (1.7,1.3)--(1.7,0.5);
     \draw[-,red] (1.7,0.5)..controls (1.7,-0.5)and (3,-0.5)..(3,0);
     \draw[-,red] (3,0)..controls (3,2.5)..(4.8,2.5);
     \draw[-,blue] (0.4,2.14)..controls (0.4,5)and(4.4,5)..(4.4,2.5);
     \filldraw [blue]  (0.4,2.14)    circle (1.5pt)
    [blue]  (4.4,2.5)    circle (1.5pt);
\filldraw [black]  (0.83,1.97)    circle (2pt)
    [black]  (2.39,2.67)    circle (2pt)
    [black]  (2.3,2.07)    circle (2pt)
    [black]  (2.02,1.78)    circle (2pt)
    [black]  (3,0.42)    circle (2pt)
    [black]  (3,1.27)    circle (2pt)
    [black]  (3.95,2.5)    circle (2pt);
    \draw(-0.1,2.4)node{$P_{0,1}$};
    \draw(4.8,2.8)node{$Q_{\ell,2}$};
    \draw(2.5,-1)node{$P$ in green, $Q$ in red, $R$ in blue};
  \end{tikzpicture}
%2
  \begin{tikzpicture}[scale=0.8]
     \draw[-,green] (0,2)..controls (0.5,2.5)and (1,1.5)..(1.5,2);
     \draw[-,blue,line width=2pt] (0.4,2.14)..controls (0.5,2.14)and (1.3,1.55)..(1.5,2);
     \draw[-,blue,line width=2pt] (1.5,2)..controls (1.7,3)and (2,1)..(2.3,2);
     \draw[-,blue,line width=2pt] (2.3,2)..controls (2.3,4)and (4,4)..(4,2);
     \draw[-,blue,line width=2pt] (4,2)..controls (4,1.5)and (2,1)..(2,0.5);
     \draw[-,blue,line width=2pt] (2,0.5)..controls (2,0)and (2.5,0)..(3,0.4);
     \draw[-,blue,line width=2pt] (3,0.4)..controls (4,1.2)and (4.5,1.3)..(4.5,1);
     \draw[-,red] (0.5,1.5)..controls (1,2.5)and (2.5,3)..(2.5,2.5);
     \draw[-,red] (2.5,2.5)..controls (2.5,2)and (1.7,1.7)..(1.7,1.3);
     \draw[-,red] (1.7,1.3)--(1.7,0.5);
     \draw[-,red] (1.7,0.5)..controls (1.7,-0.5)and (3,-0.5)..(3,0);
     \draw[-,red] (3,0)..controls (3,2.5)..(4.8,2.5);
     \draw[-,blue,line width=2pt] (4.4,2.5)--(4.8,2.5);
     \draw[-,blue,line width=2pt] (0.4,2.14)..controls (0.4,5)and(4.4,5)..(4.4,2.5);
     \filldraw [blue]  (0.4,2.14)    circle (1.5pt)
    [blue]  (4.4,2.5)    circle (1.5pt);
\filldraw [black]  (0.83,1.97)    circle (2pt)
    [black]  (2.39,2.67)    circle (2pt)
    [black]  (2.3,2.07)    circle (2pt)
    [black]  (2.02,1.78)    circle (2pt)
    [black]  (3,0.42)    circle (2pt)
    [black]  (3,1.27)    circle (2pt)
    [black]  (3.95,2.5)    circle (2pt);
    \draw(2.5,-1)node{$\widehat P$ in blue};
  \end{tikzpicture}
  %3
    \begin{tikzpicture}[scale=0.8]
     \draw[-,green] (0,2)..controls (0.5,2.5)and (1,1.5)..(1.5,2);
     \draw[-,blue,line width=2pt] (0,2)..controls (0.2,2.14)..(0.4,2.14);
     \draw[-,green] (1.5,2)..controls (1.7,3)and (2,1)..(2.3,2);
     \draw[-,green] (2.3,2)..controls (2.3,4)and (4,4)..(4,2);
     \draw[-,green] (4,2)..controls (4,1.5)and (2,1)..(2,0.5);
     \draw[-,green] (2,0.5)..controls (2,0)and (2.5,0)..(3,0.4);
     \draw[-,green] (3,0.4)..controls (4,1.2)and (4.5,1.3)..(4.5,1);
     \draw[-,blue,line width=2pt] (0.5,1.5)..controls (1,2.5)and (2.5,3)..(2.5,2.5);
     \draw[-,blue,line width=2pt] (2.5,2.5)..controls (2.5,2)and (1.7,1.7)..(1.7,1.3);
     \draw[-,blue,line width=2pt] (1.7,1.3)--(1.7,0.5);
     \draw[-,blue,line width=2pt] (1.7,0.5)..controls (1.7,-0.5)and (3,-0.5)..(3,0);
     \draw[-,blue,line width=2pt] (3,0)..controls (3,2.5)..(4.4,2.5);
     \draw[-,red] (4.4,2.5)--(4.8,2.5);
     \draw[-,blue,line width=2pt] (0.4,2.14)..controls (0.4,5)and(4.4,5)..(4.4,2.5);
     \filldraw [blue]  (0.4,2.14)    circle (1.5pt)
    [blue]  (4.4,2.5)    circle (1.5pt);
\filldraw [black]  (0.83,1.97)    circle (2pt)
    [black]  (2.39,2.67)    circle (2pt)
    [black]  (2.3,2.07)    circle (2pt)
    [black]  (2.02,1.78)    circle (2pt)
    [black]  (3,0.42)    circle (2pt)
    [black]  (3,1.27)    circle (2pt)
    [black]  (3.95,2.5)    circle (2pt);
    \draw(2.5,-1)node{The path $\widehat Q$ in blue};
  \end{tikzpicture}

\noindent  are two paths whose lengths sum $2L(P)+2L(R)$, as desired.
\end{proof}

Let $G$ be a graph, $P$ and $Q$ be longest paths. Assume that $BT(P,Q)$ has at least two non empty components of different colors.
If all pairs of different colors in $BT(P,Q)$ are NDC, then $V(P)\cap V(Q)$ is a separator of $G$.
Verifying this condition for all pairs of different colors in small graphs is a
manageable task, but in order to obtain results generalizing Theorem~\ref{resultado caso 1}, we need a notion of ``connectable'' with a broader scope.
For this we formalize the process of swapping colors of some completed components in the proof of Theorem~\ref{resultado caso 1}, and we analyze the
block structure of $BT(P,Q)$.

\begin{definition}
   A \textbf{swap unit in $BT(P,Q)$} is a set of completed components
   of two colors, such that if we swap the colors in the given set of completed components,
   then we obtain a new representation $BT(\widetilde P,\widetilde Q)$ of the original BT-graph.
\end{definition}
\begin{definition}
  An \textbf{internal building block (IBB)} of $BT(P,Q)$ is a 2-connected subgraph which is the union of two subpaths $\widetilde P$ of $P$ and
  $\widetilde Q$ of $Q$,
  such that $\widetilde P$ and $\widetilde Q$ have the same endpoints, and don't intersect other subpaths of $P$ and $Q$
  other than in these endpoints, which are called the endpoints of the IBB.

  An \textbf{elementary IBB} is the union of two completed components that share both endpoints.
\end{definition}

For example, if the $P$ and $Q$ share one edge, then the duplicated edge is an elementary IBB.

\noindent  \begin{tikzpicture}[scale=1]
\draw[-,green] (0,1)--(1,2);
\draw[-,green] (1,2)--(2,2);
\draw[-,green] (2,2)--(3,1);
\draw[-,red] (0,1)--(1,0);
\draw[-,red] (1,0)--(2,0);
\draw[-,red] (2,0)--(3,1);
\filldraw [black]  (1,0)    circle (2pt)
[black]  (2,0)    circle (2pt)
[black]  (2,2)    circle (2pt)
[black]  (0,1)    circle (2pt)
[black]  (1,2)    circle (2pt)
[black]  (3,1)    circle (2pt);
\draw (1.5,-0.4) node {Elementary IBB};
\draw (3.5,2.4) node {};
\end{tikzpicture}
%2
  \begin{tikzpicture}[scale=1]
\draw[-,green] (1,0)..controls (0.8,0.5)..(1,1);
\draw[-,red] (1,0)..controls (1.2,0.5)..(1,1);
\draw[-,green] (0,0.5)--(1,1);
\draw[-,red] (1,1)--(2,0.5);
\draw[-,red] (0,0.5)--(1,0);
\draw[-,green] (1,0)--(2,0.5);
\filldraw [black]  (1,0)    circle (2pt)
[black]  (1,1)    circle (2pt)
[black]  (0,0.5)    circle (2pt)
[black]  (2,0.5)    circle (2pt);
\draw (1,-0.4) node {IBB with embedded};
\draw (1,-1) node {elementary IBB};
\draw (3,0.4) node {};
\end{tikzpicture}
%3
  \begin{tikzpicture}[scale=1]
\draw[-,red] (1,1)--(2,2);
\draw[-,red] (2,2)--(3,2);
\draw[-,green] (3,0)..controls (2.8,0.5)..(3,1);
\draw[-,green] (3,1)..controls (2.8,1.5)..(3,2);
\draw[-,red] (3,0)..controls (3.2,0.5)..(3,1);
\draw[-,red] (3,1)..controls (3.2,1.5)..(3,2);
\draw[-,green] (4,0)..controls (3.7,1)..(4,2);
\draw[-,red] (4,0)..controls (4.3,1)..(4,2);
\draw[-,green] (5,0)..controls (4.8,0.5)..(5,1);
\draw[-,green] (5,1)..controls (4.8,1.5)..(5,2);
\draw[-,red] (5,0)..controls (5.2,0.5)..(5,1);
\draw[-,red] (5,1)..controls (5.2,1.5)..(5,2);
\draw[-,green] (5,2)--(6,2);
\draw[-,green] (6,2)--(7,1);
\draw[-,green] (3,2)--(4,2);
\draw[-,red] (4,2)--(5,2);
\draw[-,red] (3,0)--(4,0);
\draw[-,green] (4,0)--(5,0);
\draw[-,green] (1,1)--(3,0);
\draw[-,red] (5,0)--(7,1);
\filldraw [black] (1,1)    circle (2pt)
[black]  (2,2)    circle (2pt)
[black]  (3,0)    circle (2pt)
[black]  (3,1)    circle (2pt)
[black]  (3,2)    circle (2pt)
[black]  (4,0)    circle (2pt)
[black]  (4,2)    circle (2pt)
[black]  (5,0)    circle (2pt)
[black]  (5,1)    circle (2pt)
[black]  (5,2)    circle (2pt)
[black]  (6,2)    circle (2pt)
[black]  (7,1)    circle (2pt);
\draw (4,-0.4) node {IBB with 5 embedded IBB's};
%\draw (3.4,0.4) node {};
\end{tikzpicture}

\noindent Note that every IBB is a swap unit. In fact, write $P=P_1+P_2+P_3$ and $Q=Q_1+Q_2+Q_3$,
where $P_2$ and $Q_2$ are the subpaths spanning the building block. Swapping the colors generates two longest paths
$$
\widetilde P= P_1+Q_2+P_3\quad \text{and}\quad \widetilde Q=Q_1+P_2+Q_3,
$$
such that the union is still $BT(P,Q)$ and so we obtain a new representation $BT(\widetilde P, \widetilde Q)$.
Hence, if $B$ is an IBB, the subpath of $P$ and the
  subpath of $Q$ have the same length, which we call the
  length of the block $L(B)$.

\begin{remark}
  An IBB with endpoints $a$ and $b$, can be defined independently of $BT(P,Q)$ as the representation of a bitraceable graph with fixed
  endpoints $a$ and $b$, where such a graph is generated by two paths from $a$ to $b$ that have the same length, and such that there is no longer path
  from $a$ to $b$.

  If the intersection of two IBB's is one common endpoint, then the concatenation of the IBB's is their union, and similarly we can
  concatenate three or more IBB's.
\end{remark}

\begin{definition} \label{def EBB BB}
  An \textbf{extremal building unit} of $BT(P,Q)$ is a subgraph which is the union of two extremal subpaths $\widetilde P$ of $P$ and
  $\widetilde Q$ of $Q$, such that $\widetilde P$ and $\widetilde Q$ have exactly one endpoint in common,
  and this common endpoint is neither an endpoint of $P$ nor of $Q$.
  We also require that $\widetilde P$ and $\widetilde Q$ don't intersect other subpaths of $P$ and $Q$.

  An \textbf{extremal building block (EBB)} of $BT(P,Q)$, is a minimal extremal building unit. This means that it is an extremal
  building unit, which is not the concatenation of one or more
  IBB's with another extremal building unit.

  An \textbf{elementary EBB} is an EBB, which is the union of two extremal completed
   components that share one endpoint.

  A \textbf{building block (BB)} is an IBB, an EBB or all of $BT(P,Q)$, if $BT(P,Q)$ is not the concatenation of IBB's and/or EBB's,
  and $P$ and $Q$ have no common endpoints.
\end{definition}

\noindent \begin{tikzpicture}[scale=0.8]
\draw[-,green] (2,2)..controls (3,2.5)..(4,2);
\draw[-,red] (0,3)..controls (1.4,2)..(2,2);
\draw[-,red] (2,2)..controls (3,1.5)..(4,2);
\draw[-,green] (2,4)..controls (1.4,4)..(0,3);
\draw[-,red] (2,4)..controls (3,3.5)..(4,4);
\draw[-,green] (5,2.3)..controls (4.5,2)..(4,2);
\draw[-,green] (2,4)..controls (3,4.5)..(4,4);
\draw[-,red] (2,4)..controls (1.5,4.6)..(1,4.4);
\draw[-,red] (4,2)--(4,4);
\draw[-,green] (2,2)..controls (2,2.6) and (4,3.4)..(4,4);
\filldraw [black]  (2,2)    circle (2pt)
[black]  (4,2)    circle (2pt)
[black]  (2,4)    circle (2pt)
[black]  (0,3)    circle (2pt)
[black]  (4,4)    circle (2pt)
[black]  (5,2.3)    circle (2pt)
[black]  (1,4.4)    circle (2pt);
\draw(2.5,1.3)node{EBB with two embedded IBB's};
\draw(6.5,1.3)node{};
\end{tikzpicture}
%2
\begin{tikzpicture}[scale=0.8]
\draw[-,red] (0,3)..controls (1.4,2)..(2,2);
\draw[-,green] (2,4)..controls (1.4,4)..(0,3);
\filldraw [black]  (2,2)    circle (2pt)
[black]  (2,4)    circle (2pt)
[black]  (0,3)    circle (2pt);
\draw(1,1.3)node{Elementary EBB};
\draw(3.5,2)node{};
\end{tikzpicture}
%3
\begin{tikzpicture}[scale=0.8]
\draw[-,green] (2,2)..controls (3,2.8)..(4,2);
\draw[-,red] (1,1.4)..controls (1.3,1.4)..(2,2);
\draw[-,red] (2,2)..controls (3,1.2)..(4,2);
\draw[-,green] (4,2)..controls (4.7,1.4)..(5,1.4);
\draw[-,red] (2,4)..controls (3,3.2)..(4,4);
\draw[-,red] (1,4.6)..controls (1.3,4.6)..(2,4);
\draw[-,green] (2,4)..controls (3,4.8)..(4,4);
\draw[-,green] (4,4)..controls (4.7,4.6)..(5,4.6);
\draw[-,red] (4,2)--(4,4);
\draw[-,green] (2,2)--(2,4);
\filldraw [black]  (2,2)    circle (2pt)
[black]  (4,2)    circle (2pt)
[black]  (2,4)    circle (2pt)
[black]  (5,4.6)    circle (2pt)
[black]  (5,1.4)    circle (2pt)
[black]  (1,4.6)    circle (2pt)
[black]  (1,1.4)    circle (2pt)
[black]  (4,4)    circle (2pt);
\draw(3,0.5)node{$BT(P,Q)$ is a BB};
\end{tikzpicture}

Note that an IBB can be contained in another BB, and that BB's are also swap units.
Note also that in the case $\sigma=Id$, for each $0<i<\ell$ the union of $P_i$ and $Q_i$ is an IBB, and
 the union of $P_0$ and $Q_0$ and the union of $P_\ell$ and $Q_\ell$ are the EBB's.

\begin{definition} \label{def ESU}
  Given a building block, the \textbf{exterior swap unit (ESU)} associated with the building block, is the union of all
  completed components in the building block, that are not contained in any embedded IBBs.
\end{definition}

Note that the ESU of an elementary IBB is the whole IBB and the same holds for an elementary EBB.

The following representation of a BT-graph is the concatenation of three IBB's, two of them are elementary IBB's and the middle one
contains five elementary IBBs. Hence $BT(P,Q)$ has
seven ESU's of two completed components each and one ESU consisting of 8 completed components (it has ten edges).

\noindent  \begin{tikzpicture}[scale=1]
\draw[-] (1,0)--(1,1);
\draw[-] (1,1)--(2,2);
\draw[-] (2,2)--(2,3);
\draw[-] (0,3)--(2,3);
\draw[-] (0,4)--(2,4);
\draw[-] (0,5)--(2,5);
\draw[-] (2,5)--(2,6);
\draw[-] (2,6)--(1,7);
\draw[-] (1,7)--(1,8);
\draw[-] (2,3)--(2,5);
\draw[-] (0,3)--(0,5);
\draw[-] (1,1)--(0,3);
\draw[-] (0,5)--(1,7);
\filldraw [black]  (1,0)    circle (2pt)
[black]  (1,1)    circle (2pt)
[black]  (2,2)    circle (2pt)
[black]  (0,3)    circle (2pt)
[black]  (1,3)    circle (2pt)
[black]  (2,3)    circle (2pt)
[black]  (0,4)    circle (2pt)
[black]  (2,4)    circle (2pt)
[black]  (0,5)    circle (2pt)
[black]  (1,5)    circle (2pt)
[black]  (2,5)    circle (2pt)
[black]  (2,6)    circle (2pt)
[black]  (1,7)    circle (2pt)
[black]  (1,8)    circle (2pt);
\draw (1,-0.4) node {BT- graph};
\draw (3.4,0.4) node {};
\end{tikzpicture}
  \begin{tikzpicture}[scale=1]
\draw[-,green] (1,0)..controls (0.8,0.5)..(1,1);
\draw[-,red] (1,0)..controls (1.2,0.5)..(1,1);
\draw[-,red] (1,1)--(2,2);
\draw[-,red] (2,2)--(2,3);
\draw[-,green] (0,3)..controls (0.5,2.8)..(1,3);
\draw[-,green] (1,3)..controls (1.5,2.8)..(2,3);
\draw[-,red] (0,3)..controls (0.5,3.2)..(1,3);
\draw[-,red] (1,3)..controls (1.5,3.2)..(2,3);
\draw[-,green] (0,4)..controls (1,3.7)..(2,4);
\draw[-,red] (0,4)..controls (1,4.3)..(2,4);
\draw[-,green] (0,5)..controls (0.5,4.8)..(1,5);
\draw[-,green] (1,5)..controls (1.5,4.8)..(2,5);
\draw[-,red] (0,5)..controls (0.5,5.2)..(1,5);
\draw[-,red] (1,5)..controls (1.5,5.2)..(2,5);
\draw[-,green] (2,5)--(2,6);
\draw[-,green] (2,6)--(1,7);
\draw[-,green] (1,7)..controls (0.8,7.5)..(1,8);
\draw[-,red] (1,7)..controls (1.2,7.5)..(1,8);
\draw[-,green] (2,3)--(2,4);
\draw[-,red] (2,4)--(2,5);
\draw[-,red] (0,3)--(0,4);
\draw[-,green] (0,4)--(0,5);
\draw[-,green] (1,1)--(0,3);
\draw[-,red] (0,5)--(1,7);
\filldraw [black]  (1,0)    circle (2pt)
[black]  (1,1)    circle (2pt)
[black]  (2,2)    circle (2pt)
[black]  (0,3)    circle (2pt)
[black]  (1,3)    circle (2pt)
[black]  (2,3)    circle (2pt)
[black]  (0,4)    circle (2pt)
[black]  (2,4)    circle (2pt)
[black]  (0,5)    circle (2pt)
[black]  (1,5)    circle (2pt)
[black]  (2,5)    circle (2pt)
[black]  (2,6)    circle (2pt)
[black]  (1,7)    circle (2pt)
[black]  (1,8)    circle (2pt);
\draw (1,-0.4) node {Representation};
\draw (3.4,0.4) node {};
\end{tikzpicture}
\begin{tikzpicture}[scale=1]
\draw[-,line width=2pt,green] (1,0)..controls (0.8,0.5)..(1,1);
\draw[-,line width=2pt,red] (1,0)..controls (1.2,0.5)..(1,1);
\draw[-,red] (1,1)--(2,2);
\draw[-,red] (2,2)--(2,3);
\draw[-,line width=2pt,green] (0,3)..controls (0.5,2.8)..(1,3);
\draw[-,line width=2pt,green] (1,3)..controls (1.5,2.8)..(2,3);
\draw[-,line width=2pt,red] (0,3)..controls (0.5,3.2)..(1,3);
\draw[-,line width=2pt,red] (1,3)..controls (1.5,3.2)..(2,3);
\draw[-,line width=2pt,green] (0,4)..controls (1,3.7)..(2,4);
\draw[-,line width=2pt,red] (0,4)..controls (1,4.3)..(2,4);
\draw[-,line width=2pt,green] (0,5)..controls (0.5,4.8)..(1,5);
\draw[-,line width=2pt,green] (1,5)..controls (1.5,4.8)..(2,5);
\draw[-,line width=2pt,red] (0,5)..controls (0.5,5.2)..(1,5);
\draw[-,line width=2pt,red] (1,5)..controls (1.5,5.2)..(2,5);
\draw[-,green] (2,5)--(2,6);
\draw[-,green] (2,6)--(1,7);
\draw[-,line width=2pt,green] (1,7)..controls (0.8,7.5)..(1,8);
\draw[-,line width=2pt,red] (1,7)..controls (1.2,7.5)..(1,8);
\draw[-,green] (2,3)--(2,4);
\draw[-,red] (2,4)--(2,5);
\draw[-,red] (0,3)--(0,4);
\draw[-,green] (0,4)--(0,5);
\draw[-,green] (1,1)--(0,3);
\draw[-,red] (0,5)--(1,7);
\filldraw [black]  (1,0)    circle (2pt)
[black]  (1,1)    circle (2pt)
[black]  (2,2)    circle (2pt)
[black]  (0,3)    circle (2pt)
[black]  (1,3)    circle (2pt)
[black]  (2,3)    circle (2pt)
[black]  (0,4)    circle (2pt)
[black]  (2,4)    circle (2pt)
[black]  (0,5)    circle (2pt)
[black]  (1,5)    circle (2pt)
[black]  (2,5)    circle (2pt)
[black]  (2,6)    circle (2pt)
[black]  (1,7)    circle (2pt)
[black]  (1,8)    circle (2pt);
\draw (1,-0.4) node {7 ESU's with 2 edges};
\draw (3.4,8.2) node {};
\end{tikzpicture}
\begin{tikzpicture}[scale=1]
\draw[-,green] (1,0)..controls (0.8,0.5)..(1,1);
\draw[-,red] (1,0)..controls (1.2,0.5)..(1,1);
\draw[-,line width=2pt,red] (1,1)--(2,2);
\draw[-,line width=2pt,red] (2,2)--(2,3);
\draw[-,green] (0,3)..controls (0.5,2.8)..(1,3);
\draw[-,green] (1,3)..controls (1.5,2.8)..(2,3);
\draw[-,red] (0,3)..controls (0.5,3.2)..(1,3);
\draw[-,red] (1,3)..controls (1.5,3.2)..(2,3);
\draw[-,green] (0,4)..controls (1,3.7)..(2,4);
\draw[-,red] (0,4)..controls (1,4.3)..(2,4);
\draw[-,green] (0,5)..controls (0.5,4.8)..(1,5);
\draw[-,green] (1,5)..controls (1.5,4.8)..(2,5);
\draw[-,red] (0,5)..controls (0.5,5.2)..(1,5);
\draw[-,red] (1,5)..controls (1.5,5.2)..(2,5);
\draw[-,line width=2pt,green] (2,5)--(2,6);
\draw[-,line width=2pt,green] (2,6)--(1,7);
\draw[-,green] (1,7)..controls (0.8,7.5)..(1,8);
\draw[-,red] (1,7)..controls (1.2,7.5)..(1,8);
\draw[-,line width=2pt,green] (2,3)--(2,4);
\draw[-,line width=2pt,red] (2,4)--(2,5);
\draw[-,line width=2pt,red] (0,3)--(0,4);
\draw[-,line width=2pt,green] (0,4)--(0,5);
\draw[-,line width=2pt,green] (1,1)--(0,3);
\draw[-,line width=2pt,red] (0,5)--(1,7);
\filldraw [black]  (1,0)    circle (2pt)
[black]  (1,1)    circle (2pt)
[black]  (2,2)    circle (2pt)
[black]  (0,3)    circle (2pt)
[black]  (1,3)    circle (2pt)
[black]  (2,3)    circle (2pt)
[black]  (0,4)    circle (2pt)
[black]  (2,4)    circle (2pt)
[black]  (0,5)    circle (2pt)
[black]  (1,5)    circle (2pt)
[black]  (2,5)    circle (2pt)
[black]  (2,6)    circle (2pt)
[black]  (1,7)    circle (2pt)
[black]  (1,8)    circle (2pt);
\draw (1,-0.4) node {ESU with 10 edges};
\end{tikzpicture}

Note that
swapping the color of the edges in a swap unit
doesn't change the intersection vertices, since each intersection vertex has two incident edges of each color.
After the swapping it must still have two incident edges of each color, since otherwise one of the paths would visit this vertex twice.

\begin{lemma} \label{coincidencia de longitud y numero de componentes}
  In a swap unit, the sum of the lengths of the completed components of one color equals the sum of the lengths of
  the completed components of the other color. Moreover, the number of components of each color coincide.
\end{lemma}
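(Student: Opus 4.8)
The plan is to compare the original representation $BT(P,Q)$ with the representation $BT(\widetilde P,\widetilde Q)$ obtained after the swap, and to read off both assertions from the fact that $\widetilde P$ and $P$ have the same length and the same number of completed components. First I would fix notation: write $S$ for the swap unit, let $G_S$ be its green (i.e.\ $P$-colored) completed components and $R_S$ its red ones, and set $g=\#G_S$, $r=\#R_S$, together with $\Lambda_g=\sum_{C\in G_S}L(C)$ and $\Lambda_r=\sum_{C\in R_S}L(C)$. The two assertions of the lemma are then exactly $\Lambda_g=\Lambda_r$ and $g=r$.

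The key structural input is the observation made just before the statement: swapping the colors inside a swap unit does not change the set of intersection vertices $V(P)\cap V(Q)$, because every such vertex keeps two incident edges of each color. Hence $V(\widetilde P)\cap V(\widetilde Q)=V(P)\cap V(Q)$ still has $\ell$ elements, and consequently $\widetilde P$ decomposes into exactly $\ell+1$ completed components $\widetilde P_0,\dots,\widetilde P_\ell$. I would then identify these components explicitly: the green edges of $BT(\widetilde P,\widetilde Q)$ are precisely the edges of the green components of $P$ lying outside $S$ together with the edges of the red components of $Q$ lying inside $S$. Since no intersection vertex is created or destroyed, none of these subpaths acquires an interior intersection vertex (so none splits) and no two of them get glued along a vertex that has stopped being an intersection vertex (so none merges). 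Therefore the completed components of $\widetilde P$ are, as a disjoint collection, exactly $(\{P_0,\dots,P_\ell\}\setminus G_S)\sqcup R_S$.

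With this identification both claims become accounting statements. Counting components gives $\ell+1=(\ell+1-g)+r$, whence $g=r$, which is the second assertion. Summing lengths gives $L(\widetilde P)=(L(P)-\Lambda_g)+\Lambda_r$; since $\widetilde P$ is again a longest path of the same graph we have $L(\widetilde P)=L(P)$, and therefore $\Lambda_g=\Lambda_r$, which is the first assertion. The same computation applied to $\widetilde Q$ yields the corresponding statements phrased from the other color, consistently.

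I expect the only real subtlety --- and hence the step I would write out most carefully --- to be the claim that after recoloring each former completed component remains a single completed component, that is, that there is neither merging nor splitting. This is precisely where the preservation of the intersection-vertex set is used, and it must be invoked for the extremal components as well, whose free endpoints are allowed to move under the swap without affecting either the count or the total length.
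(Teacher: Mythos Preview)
Your proposal is correct and follows essentially the same approach as the paper: the paper also argues that the swap preserves the intersection vertices (hence the number of completed components of each path is unchanged, giving $g=r$) and that the swapped paths must still be longest (giving $\Lambda_g=\Lambda_r$). Your write-up is more explicit---especially the careful justification that no completed component splits or merges---but the underlying ideas are identical to the paper's two-sentence proof.
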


\begin{proof}
  The sum of the lengths coincide, since otherwise one of the new paths resulting from the swap would be longer. The same holds for the number of
  components, since swapping the color of the edges doesn't change the intersection vertices, and so the number of components of each longest
   path remains constant.
\end{proof}

\begin{remark}
  Note that  the union of the ESU's in $BT(P,Q)$ is $BT(P,Q)$ and that
no completed component is in two ESU's at the same time.
\end{remark}

\begin{definition}\label{def NC}
  Let $X$ and $Y$ be components of different colors in an ESU. The pair $X,Y$
  is \textbf{non connectable (NC)}, if a path
  $R$ that connects $X$ with $Y$, satisfying $R\cap V(P)\cap (Q)=\emptyset$ and such that
   $R$ is internally disjoint from the given ESU and in each of the other ESU's
   touches at most one color,
  allows to construct two paths $\widehat P$ and $\widehat Q$ in $BT(P,Q)\cup R$ such that
  $$
  L(\widehat P)+L(\widehat Q)= 2L(R)+ L(P)+L(Q)=2 L(P)+2 L(R).
  $$
  The ESU is called NC, if all the pairs of component of the ESU of different colors are NC.
\end{definition}

\begin{proposition} \label{elementary block is NC}
  The components of an elementary IBB are NC.
\end{proposition}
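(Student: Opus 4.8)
The plan is to reduce the statement to the crossing construction of Lemma~\ref{lema 4.1} and of the proof of Theorem~\ref{resultado caso 1}, adapted to the feature of an elementary IBB that its two completed components share \emph{both} endpoints. Denote the two completed components of the block by $P_i$ (one colour) and $Q_i$ (the other), with common endpoints $a$ and $b$; since the IBB is a swap unit, Lemma~\ref{coincidencia de longitud y numero de componentes} gives $L(P_i)=L(Q_i)$. If one of $P_i'$, $Q_i'$ is empty there is no connecting path and the condition of Definition~\ref{def NC} holds vacuously, so assume both are non-empty and let $R$ be a path as in Definition~\ref{def NC}, with endpoints $x\in P_i'$ and $y\in Q_i'$.

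First I would settle the base case in which $R$ is internally disjoint from all of $V(P)\cup V(Q)$, i.e.\ $R$ meets no other ESU. Splitting $P_i=P_{i,1}+P_{i,2}$ at $x$ and $Q_i=Q_{i,1}+Q_{i,2}$ at $y$, set
\begin{align*}
\widehat P &= P_{<i}+P_{i,1}+R+Q_{i,2}+P_{>i},\\
\widehat Q &= Q_{<i}+Q_{i,1}+R+P_{i,2}+Q_{>i}.
\end{align*}
These agree with $P$ and $Q$ outside the block and cross through $R$ inside it, so they are simple paths, and
$$
L(\widehat P)+L(\widehat Q)=\bigl(L(P)-L(P_i)\bigr)+\bigl(L(Q)-L(Q_i)\bigr)+L(P_i)+L(Q_i)+2L(R)=L(P)+L(Q)+2L(R),
$$
which is the required identity; this is the two-shared-endpoints analogue of Lemma~\ref{lema 4.1}.

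The main obstacle is the case in which $R$ threads through other ESU's, where the naive crossing fails: if $R$ runs through an interior vertex of a completed component $P_j$ (with $j\ne i$) lying on the same coloured stretch of $\widehat P$ that already uses $P_j$, then $\widehat P$ would meet that vertex twice. Here the hypothesis of Definition~\ref{def NC} that $R$ touches \emph{at most one colour} in each other ESU is exactly what saves the construction, combined again with Lemma~\ref{coincidencia de longitud y numero de componentes}. For each other ESU $S$ met by $R$, say $R$ touches only the completed components of one colour in $S$; I would then omit the touched components from both $\widehat P$ and $\widehat Q$ --- their vertices are already covered by the shared subpath $R$ --- and reroute the two paths through components of the opposite colour of $S$, doubling enough of them to recover the omitted length. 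That such a compensating reroute of equal length exists rests on Lemma~\ref{coincidencia de longitud y numero de componentes}, which equates the total lengths of the two colours in the swap unit $S$, while the term $2L(R)$ is unaffected by the trade.

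The delicate point, on which I expect to spend the most care, is to verify that after these local trades $\widehat P$ and $\widehat Q$ are genuine simple paths rather than mere walks: one must check that the portions of $R$ outside the blocks, the omitted one-colour components, and the doubled opposite-colour components join up without repeating a vertex and without disconnecting the route. The one-colour-per-ESU condition is precisely what does this work, since at every vertex shared between $R$ and $BT(P,Q)$ only one colour is involved, leaving the other colour free to carry the rerouted path. Once simplicity is established, the identity $L(\widehat P)+L(\widehat Q)=L(P)+L(Q)+2L(R)$ forces $\max\{L(\widehat P),L(\widehat Q)\}>L(P)$ whenever $L(R)\ge 1$, so the pair $P_i',Q_i'$ is NC and hence the elementary IBB is NC.
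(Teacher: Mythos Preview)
Your base case is correct. The trouble is in your general-case mechanism. The phrase ``their vertices are already covered by the shared subpath $R$'' is the wrong justification: $R$ may meet a component $P_k'$ in a single interior vertex, so omitting $P_k$ from the outside portion of $\widehat P$ would disconnect that portion, and $R$ does not supply the missing piece. Likewise ``doubling enough of them'' is not a well-defined operation on a simple path, and ``omit the touched components from both $\widehat P$ and $\widehat Q$'' is misleading, since if $R$ touches the $P$-colour in $S$ then only $\widehat P$ (which carries $P_{<i},P_{>i}$) has the conflict, not $\widehat Q$. What actually repairs the conflict is a genuine \emph{swap} in each offending ESU $S$: since $S$ is a swap unit, replacing its $P$-coloured completed components by its $Q$-coloured ones in the outside stretch of $\widehat P$ yields another path between the same two intersection vertices, of the same length, now disjoint from the vertices of $R$ in $S$. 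Lemma~\ref{coincidencia de longitud y numero de componentes} alone (equal total lengths) does not give you this; you need the swap-unit property itself to guarantee the rerouted stretch is still a path. Your sketch gestures at rerouting but does not carry out this swap argument, and the simplicity check you flag as ``delicate'' is left undone.

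The paper's proof sidesteps the two separate repairs entirely. It performs a \emph{single} swap on $Q$, in every ESU where $R$ meets the $Q$-colour, obtaining a longest path $\widetilde Q$ from which $R$ is internally disjoint. Both $\widehat P$ and $\widehat Q$ are then built on the same backbone $\widetilde Q_1,\widetilde Q_2$ (the two pieces of $\widetilde Q$ outside the given block), differing only in how they cross the block: $\widehat Q=\widetilde Q_1+Q_{i,1}+R+P_{j,2}+\widetilde Q_2$ and $\widehat P=\widetilde Q_1+P_{j,1}+R+Q_{i,2}+\widetilde Q_2$. Simplicity of both is then immediate from the single disjointness fact, and $L(\widehat P)+L(\widehat Q)=2L(\widetilde Q)+2L(R)=2L(P)+2L(R)$ follows at once. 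Building both paths from one pre-swapped backbone, rather than from $P$ and $Q$ separately, is the missing idea that makes the argument clean.
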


\begin{proof}
  Let $R$ be a path that connects one component of one color in the
  ESU with one component of another color in the ESU, such that $R$ is internally disjoint from the given ESU and in each of the other ESU's touches
  at most one color.

        \begin{tikzpicture}[scale=1]
    \draw[-,green] (1,2.6)..controls (1.3,2.6)..(2,2);
    \draw[-,red] (2,2)..controls (3,2.8)..(4,2);
    \draw[-,green] (4,2)..controls (5,2.8)..(6,2);
    \draw[-,red] (6,2)..controls (7,2.8)..(8,2);
    \draw[-,green] (8,2)..controls (9,2.8)..(10,2);
    \draw[-,red] (10,2)..controls (11,2.8)..(12,2);
    \draw[-,green] (12,2)..controls (13,2.8)..(14,2);
    \draw[-,red] (14,2)..controls (14.7,2.6)..(15,2.6);
    \draw[-,red] (1,1.4)..controls (1.3,1.4)..(2,2);
    \draw[-,green] (2,2)..controls (3,1.2)..(4,2);
    \draw[-,red] (4,2)..controls (5,1.2)..(6,2);
    \draw[-,green] (6,2)..controls (7,1.2)..(8,2);
    \draw[-,red] (8,2)..controls (9,1.2)..(10,2);
    \draw[-,green] (10,2)..controls (11,1.2)..(12,2);
    \draw[-,red] (12,2)..controls (13,1.2)..(14,2);
    \draw[-,green] (14,2)..controls (14.7,1.4)..(15,1.4);
    \draw[-,white, line width=3pt] (7,2.6)..controls (7,4)and(11,4)..(11,2.7);
    \draw[-,blue,line width=2pt] (7,2.6)..controls (7,4)and(11,4)..(11,2.6);
    \draw[-,blue,line width=2pt] (7.5,2.4)..controls (7.5,3)and(8.5,3)..(8.5,2.4);
    \draw[-,blue,line width=2pt] (9,1.4)..controls (9,0)and(13,0)..(13,1.4);
    \draw[-,white,line width=3pt] (11,2.6)..controls (10.5,2.1)and(12,1.1)..(12.5,1.6);
    \draw[-,blue,line width=2pt] (11,2.6)..controls (10.5,2.1)and(12,1.1)..(12.5,1.6);
    \draw[-,blue,line width=2pt] (7,2.6)..controls (7.25,2.56)..(7.5,2.4);
    \draw[-,blue,line width=2pt] (12.5,1.6)..controls (12.75,1.44)..(13,1.4);
    \filldraw     [blue]  (7,2.6)    circle (1.5pt)
    [blue]  (7.5,2.4)    circle (1.5pt)
    [blue]  (8.5,2.4)    circle (1.5pt)
    [blue]  (9,1.4)    circle (1.5pt)
    [blue]  (11,2.6)    circle (1.5pt)
    [blue]  (12.5,1.6)    circle (1.5pt)
    [blue]  (13,1.4)    circle (1.5pt);
    \filldraw [black]  (2,2)    circle (2pt)
    [black]  (4,2)    circle (2pt)
    [black]  (6,2)    circle (2pt)
    [black]  (8,2)    circle (2pt)
    [black]  (10,2)    circle (2pt)
    [black]  (12,2)    circle (2pt)
    [black]  (14,2)    circle (2pt);
    \draw(9,2.3)node{$P_j$};
    \draw(9.5,1.3)node{$Q_i$};
    \draw[blue](6.8,3.2)node{{\huge\bf{$R$}}};
    \draw(8,0)node{$P$ in green, $Q$ in red, $R$ in blue};
  \end{tikzpicture}

  Then $R$ is internally disjoint from $\widetilde Q$, which is the
  path obtained from $Q$ by swapping the colors in the ESU's where $R$ touches only the color of $Q$.

        \begin{tikzpicture}[scale=1]
    \draw[-,green] (1,2.6)..controls (1.3,2.6)..(2,2);
    \draw[-,red,line width=1.5pt] (2,2)..controls (3,2.8)..(4,2);
    \draw[-,green] (4,2)..controls (5,2.8)..(6,2);
    \draw[-,green] (6,2)..controls (7,2.8)..(8,2);
    \draw[-,green] (8,2)..controls (9,2.8)..(10,2);
    \draw[-,green] (10,2)..controls (11,2.8)..(12,2);
    \draw[-,red,line width=1.5pt] (12,2)..controls (13,2.8)..(14,2);
    \draw[-,red,line width=1.5pt] (14,2)..controls (14.7,2.6)..(15,2.6);
    \draw[-,red,line width=1.5pt] (1,1.4)..controls (1.3,1.4)..(2,2);
    \draw[-,green] (2,2)..controls (3,1.2)..(4,2);
    \draw[-,red,line width=1.5pt] (4,2)..controls (5,1.2)..(6,2);
    \draw[-,red,line width=1.5pt] (6,2)..controls (7,1.2)..(8,2);
    \draw[-,red,line width=1.5pt] (8,2)..controls (9,1.2)..(10,2);
    \draw[-,red,line width=1.5pt] (10,2)..controls (11,1.2)..(12,2);
    \draw[-,green] (12,2)..controls (13,1.2)..(14,2);
    \draw[-,green] (14,2)..controls (14.7,1.4)..(15,1.4);
    \draw[-,white, line width=3pt] (7,2.6)..controls (7,4)and(11,4)..(11,2.7);
    \draw[-,blue,line width=2pt] (7,2.6)..controls (7,4)and(11,4)..(11,2.6);
    \draw[-,blue,line width=2pt] (7.5,2.4)..controls (7.5,3)and(8.5,3)..(8.5,2.4);
    \draw[-,blue,line width=2pt] (9,1.4)..controls (9,0)and(13,0)..(13,1.4);
    \draw[-,white,line width=4pt] (11,2.6)..controls (10.5,2.1)and(12,1.1)..(12.5,1.6);
    \draw[-,blue,line width=2pt] (11,2.6)..controls (10.5,2.1)and(12,1.1)..(12.5,1.6);
    \draw[-,blue,line width=2pt] (7,2.6)..controls (7.25,2.56)..(7.5,2.4);
    \draw[-,blue,line width=2pt] (12.5,1.6)..controls (12.75,1.44)..(13,1.4);
    \filldraw     [blue]  (7,2.6)    circle (1.5pt)
    [blue]  (7.5,2.4)    circle (1.5pt)
    [blue]  (8.5,2.4)    circle (1.5pt)
    [blue]  (9,1.4)    circle (1.5pt)
    [blue]  (11,2.6)    circle (1.5pt)
    [blue]  (12.5,1.6)    circle (1.5pt)
    [blue]  (13,1.4)    circle (1.5pt);
    \filldraw [black]  (2,2)    circle (2pt)
    [black]  (4,2)    circle (2pt)
    [black]  (6,2)    circle (2pt)
    [black]  (8,2)    circle (2pt)
    [black]  (10,2)    circle (2pt)
    [black]  (12,2)    circle (2pt)
    [black]  (14,2)    circle (2pt);
    \draw(9,2.3)node{$P_j$};
    \draw(9.5,1.3)node{$Q_i$};
    \draw[blue](6.8,3.2)node{{\huge\bf{$R$}}};
    \draw[red](13,3)node{{\huge\bf{$\widetilde Q$}}};
    \draw(8,0)node{$R$ is internally disjoint from the new path $\widetilde Q$ in red};
  \end{tikzpicture}

   Write $\widetilde Q$ as $\widetilde Q_1+ Q_i+\widetilde Q_2$, where $Q_i$ is the
  subpath of $Q$ in the given ESU, and assume that $P_j$ is the subpath of $P$ in the given ESU. The endpoints of $R$ split the subpaths
  $Q_i$ and $P_j$ into two subpaths each, which we name  $P_{j,1}$, $P_{j,2}$,$Q_{i,1}$,$Q_{i,2}$.
  The two paths
  $$
  \widehat Q=\widetilde Q_{1}+Q_{i,1}+R+P_{j,2}+\widetilde Q_{2}\quad\text{and}\quad \widehat P=\widetilde Q_{1}+P_{j,1}+R+Q_{i,2}+\widetilde Q_{2}
  $$

        \begin{tikzpicture}[scale=1]
    \draw[-] (1,2.6)..controls (1.3,2.6)..(2,2);
    \draw[-,red,line width=1.5pt] (2,2)..controls (3,2.8)..(4,2);
    \draw[-] (4,2)..controls (5,2.8)..(6,2);
    \draw[-] (6,2)..controls (7,2.8)..(8,2);
    \draw[-] (8,2)--(8.5,2.4);
    \draw[-,red,line width=1.5pt] (8.5,2.4)..controls (9,2.8)and(9.5,2.4)..(10,2);
    \draw[-] (10,2)..controls (11,2.8)..(12,2);
    \draw[-,red,line width=1.5pt] (12,2)..controls (13,2.8)..(14,2);
    \draw[-,red,line width=1.5pt] (14,2)..controls (14.7,2.6)..(15,2.6);
    \draw[-,red,line width=1.5pt] (1,1.4)..controls (1.3,1.4)..(2,2);
    \draw[-] (2,2)..controls (3,1.2)..(4,2);
    \draw[-,red,line width=1.5pt] (4,2)..controls (5,1.2)..(6,2);
    \draw[-,red,line width=1.5pt] (6,2)..controls (7,1.2)..(8,2);
    \draw[-,red,line width=1.5pt] (8,2)..controls (8.8,1.4)..(9,1.4);
    \draw[-] (9,1.4)..controls (9.2,1.4)..(10,2);
    \draw[-,red,line width=1.5pt] (10,2)..controls (11,1.2)..(12,2);
    \draw[-] (12,2)..controls (13,1.2)..(14,2);
    \draw[-] (14,2)..controls (14.7,1.4)..(15,1.4);
    \draw[-,white, line width=3pt] (7,2.6)..controls (7,4)and(11,4)..(11,2.7);
    \draw[-,red,line width=1.5pt] (7,2.6)..controls (7,4)and(11,4)..(11,2.6);
    \draw[-,red,line width=1.5pt] (7.5,2.4)..controls (7.5,3)and(8.5,3)..(8.5,2.4);
    \draw[-,red,line width=1.5pt] (9,1.4)..controls (9,0)and(13,0)..(13,1.4);
    \draw[-,white,line width=4pt] (11,2.6)..controls (10.5,2.1)and(12,1.1)..(12.5,1.6);
    \draw[-,red,line width=1.5pt] (11,2.6)..controls (10.5,2.1)and(12,1.1)..(12.5,1.6);
    \draw[-,red,line width=1.5pt] (7,2.6)..controls (7.25,2.56)..(7.5,2.4);
    \draw[-,red,line width=1.5pt] (12.5,1.6)..controls (12.75,1.44)..(13,1.4);
    \filldraw     [green]  (7,2.6)    circle (1.5pt)
    [green]  (7.5,2.4)    circle (1.5pt)
    [green]  (8.5,2.4)    circle (1.5pt)
    [green]  (9,1.4)    circle (1.5pt)
    [green]  (11,2.6)    circle (1.5pt)
    [green]  (12.5,1.6)    circle (1.5pt)
    [green]  (13,1.4)    circle (1.5pt);
    \filldraw [black]  (2,2)    circle (2pt)
    [black]  (4,2)    circle (2pt)
    [black]  (6,2)    circle (2pt)
    [black]  (8,2)    circle (2pt)
    [black]  (10,2)    circle (2pt)
    [black]  (12,2)    circle (2pt)
    [black]  (14,2)    circle (2pt);
    \draw(9,2.3)node{$P_{j,2}$};
    \draw(8.5,1.3)node{$Q_{i,1}$};
    \draw(8,0)node{$\widehat Q=\widetilde Q_{1}+Q_{i,1}+R+P_{j,2}+\widetilde Q_{2}$ in red};
  \end{tikzpicture}

        \begin{tikzpicture}[scale=1]
    \draw[-] (1,2.6)..controls (1.3,2.6)..(2,2);
    \draw[-,green,line width=1.5pt] (2,2)..controls (3,2.8)..(4,2);
    \draw[-] (4,2)..controls (5,2.8)..(6,2);
    \draw[-] (6,2)..controls (7,2.8)..(8,2);
    \draw[-] (8,2)..controls (8.8,1.4)..(9,1.4);
    \draw[-,green,line width=1.5pt] (8,2)--(8.5,2.4);
    \draw[-] (8.5,2.4)..controls (9,2.8)and(9.5,2.4)..(10,2);
    \draw[-,green,line width=1.5pt] (12,2)..controls (13,2.8)..(14,2);
    \draw[-,green,line width=1.5pt] (14,2)..controls (14.7,2.6)..(15,2.6);
    \draw[-,green,line width=1.5pt] (1,1.4)..controls (1.3,1.4)..(2,2);
    \draw[-] (2,2)..controls (3,1.2)..(4,2);
    \draw[-,green,line width=1.5pt] (4,2)..controls (5,1.2)..(6,2);
    \draw[-,green,line width=1.5pt] (6,2)..controls (7,1.2)..(8,2);
    \draw[-] (8,2)..controls (8.8,1.4)..(9,1.4);
    \draw[-,green,line width=1.5pt] (9,1.4)..controls (9.2,1.4)..(10,2);
    \draw[-,green,line width=1.5pt] (10,2)..controls (11,1.2)..(12,2);
    \draw[-] (12,2)..controls (13,1.2)..(14,2);
    \draw[-] (14,2)..controls (14.7,1.4)..(15,1.4);
    \draw[-,white, line width=3pt] (7,2.6)..controls (7,4)and(11,4)..(11,2.7);
    \draw[-,green,line width=1.5pt] (7,2.6)..controls (7,4)and(11,4)..(11,2.6);
    \draw[-,green,line width=1.5pt] (7.5,2.4)..controls (7.5,3)and(8.5,3)..(8.5,2.4);
    \draw[-,green,line width=1.5pt] (9,1.4)..controls (9,0)and(13,0)..(13,1.4);
    \draw[-,white,line width=4pt] (11,2.6)..controls (10.5,2.1)and(12,1.1)..(12.5,1.6);
    \draw[-,green,line width=1.5pt] (11,2.6)..controls (10.5,2.1)and(12,1.1)..(12.5,1.6);
    \draw[-,green,line width=1.5pt] (7,2.6)..controls (7.25,2.56)..(7.5,2.4);
    \draw[-,green,line width=1.5pt] (12.5,1.6)..controls (12.75,1.44)..(13,1.4);
    \filldraw     [green]  (7,2.6)    circle (1.5pt)
    [green]  (7.5,2.4)    circle (1.5pt)
    [green]  (8.5,2.4)    circle (1.5pt)
    [green]  (9,1.4)    circle (1.5pt)
    [green]  (11,2.6)    circle (1.5pt)
    [green]  (12.5,1.6)    circle (1.5pt)
    [green]  (13,1.4)    circle (1.5pt);
    \filldraw [black]  (2,2)    circle (2pt)
    [black]  (4,2)    circle (2pt)
    [black]  (6,2)    circle (2pt)
    [black]  (8,2)    circle (2pt)
    [black]  (10,2)    circle (2pt)
    [black]  (12,2)    circle (2pt)
    [black]  (14,2)    circle (2pt);
    \draw(8.5,2)node{$P_{j,1}$};
    \draw(9.6,1.3)node{$Q_{i,2}$};
    \draw(8,0)node{$\widehat P=\widetilde Q_{1}+P_{j,1}+R+Q_{i,2}+\widetilde Q_{2}$ in green};
  \end{tikzpicture}

  have lengths that sum $2L(\widetilde Q)+2L(R)=2 L(P)+2 L(R)$, as desired.
\end{proof}

\begin{proposition} \label{prop no path exists}
  Let $G$ be a graph and $P$,$Q$ longest paths. If all the ESU's in $BT(P,Q)$ are NC, then there can be no path in
  $G\setminus (V(P)\cap V(Q))$ from a component of one color in one ESU to a component of the other color in the same ESU.
\end{proposition}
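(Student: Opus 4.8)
The plan is to argue by contradiction, using a minimal counterexample so as to reduce an arbitrary connecting path to one that meets the precise hypotheses of Definition~\ref{def NC}. So suppose, contrary to the claim, that some path in $G\setminus(V(P)\cap V(Q))$ joins a component of one color to a component of the other color lying in a common ESU. Among \emph{all} such paths, ranging over every ESU and every admissible pair of endpoints, I would choose one of minimum length, say $S$, joining a component $X$ to a component $Y$ of the opposite color inside a single ESU $E$. The goal is to show that this minimal $S$ already satisfies all the conditions imposed on the auxiliary path $R$ in Definition~\ref{def NC}. Once that is done, the hypothesis that $E$ is NC yields two paths $\widehat P,\widehat Q$ in $BT(P,Q)\cup S$ with
$$
L(\widehat P)+L(\widehat Q)=2L(S)+L(P)+L(Q)=2L(P)+2L(S).
$$
Since $X\ne Y$ forces $L(S)\ge 1$, one of $\widehat P,\widehat Q$ is strictly longer than $P$, contradicting that $P$ and $Q$ are longest paths.

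Thus the heart of the argument is verifying the three requirements on $S$, each by exploiting minimality. The condition $S\cap V(P)\cap V(Q)=\emptyset$ is immediate, since $S$ lies in $G\setminus(V(P)\cap V(Q))$. For internal disjointness from $E$, suppose an interior vertex $v$ of $S$ lies in a completed component $Z$ of $E$. Because $X$ and $Y$ have different colors, $Z$ must differ in color from at least one of them, say from $X$; then the subpath of $S$ from its $X$-endpoint to $v$ joins two different-colored components of the \emph{same} ESU $E$, has positive length, and omits the nonempty segment of $S$ from $v$ to $Y$, so it is strictly shorter than $S$ --- contradicting minimality. For the last condition, suppose $S$ met both colors of some other ESU $E'\ne E$: choose vertices $u,w$ on $S$ lying in components of the two colors of $E'$; the subpath of $S$ between $u$ and $w$ joins different-colored components of $E'$, and since the endpoints of $S$ lie in $E\ne E'$ this subpath is proper, hence strictly shorter than $S$, again contradicting minimality.

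With the three conditions established, $S$ is an admissible connecting path for the NC property of $E$, and the length computation above delivers the contradiction. The main point to be careful about is that each of the two reduction steps produces a path that is again of the \emph{same forbidden type} (different colors within one ESU) and \emph{strictly shorter}; this is exactly what the global minimality over all ESU's is set up to exploit, and it works precisely because the endpoints of $S$ sit in $E$, so any witness of a violation lies on a proper subpath. No appeal to $V(P)\ne V(Q)$ is needed here; that hypothesis enters only later, when one upgrades this non-existence statement to the conclusion that $V(P)\cap V(Q)$ is a separator.
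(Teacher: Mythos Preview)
Your proof is correct and follows essentially the same minimality strategy as the paper's own argument: both reduce to a connecting path that satisfies the hypotheses of Definition~\ref{def NC} and then invoke the length inequality to contradict maximality of $P$. The only cosmetic difference is that the paper first decomposes the path into a sequence of direct hops $X_1\sim X_2\sim\dots\sim X_r$ between components and minimizes the index gap $j_0-i_0$ over pairs $X_{i_0},X_{j_0}$ of opposite color in a common ESU, whereas you minimize the length of the path itself; either choice of minimality parameter yields the three required properties of $R$ by exactly the same kind of ``shorter witness'' contradiction.
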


\begin{proof}
  Assume by contradiction that such a path from $P_{p}'$ to $Q_{q}'$ exists. Then there exist
  $X_1,\dots,X_r\in \{P_i',Q_i': P_i'\ne \emptyset, Q_i'\ne \emptyset\}$, such that $X_i\sim X_{i+1}$ and $X_1=P_{p}'$ and $X_r=Q_{q}'$.
  Let $i_0<j_0$ be such that $X_{j_0}$ and $X_{i_0}$ are in the same ESU and have different colors, and such that
  $$
  j_0-i_0=\min\{j-i: \ i<j, X_j\text{ and }X_i \text{ are in the same ESU and have different colors}\}.
  $$
  Then the subpath $R'$ of $R$ which connects successively $X_{i_0},X_{i_0+1},\dots, X_{j_0}$, is internally disjoint from the
   ESU of $X_{j_0}$ and $X_{i_0}$
  and in each of the other ESU's touches at most one color.
  Since the ESU of $X_{j_0}$ and $X_{i_0}$ is NC, by Definition~\ref{def NC} there exists a path in $BT(P,Q)\cup R$
  that is longer than $P$. This path determines a path of the same length in $G$, a contradiction that
  concludes the proof.
\end{proof}

\begin{remark} \label{large class}
  Proposition~\ref{prop no path exists} characterizes the large class of graphs $BT(P,Q)$,
  in which all the ESU's are NC. When $\#(V(P)\cap V(Q))$ is small, in most cases $BT(P,Q)$ is in this class, in particular we
  will show that this class includes all $BT(P,Q)$ such that $\# (V(P)\cap V(Q))\le 4$.
\end{remark}

\begin{proposition} \label{isu nc implica articulation set}
  Let $G$ be a graph and $P$,$Q$ longest paths. If $V(P)\ne V(Q)$ and all the ESU's in $BT(P,Q)$ are NC, then
  $V(P)\cap V(Q)$ is a separator of $G$.
\end{proposition}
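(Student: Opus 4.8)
The plan is to argue by contradiction: assume that $G\setminus (V(P)\cap V(Q))$ is connected and derive a violation of Proposition~\ref{prop no path exists}. Since $V(P)\ne V(Q)$, the graph $P\cup Q$ has a vertex outside $V(P)\cap V(Q)$, so at least one completed component is nonempty; after possibly exchanging the roles of $P$ and $Q$ (recall that $BT(P,Q)=BT(Q,P)$) I may assume that some green component $P_{i_0}'$ is nonempty. Let $E$ be the ESU containing the completed component $P_{i_0}$.

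The key step I would carry out first is to show that $E$ \emph{also} contains a nonempty red component. Recall the emptiness criterion: a completed component is nonempty exactly when its length is at least $2$ if it is internal, or is positive if it is extremal. Accordingly I would assign to each completed component $C$ of $E$ an \emph{excess}, equal to $L(C)-1$ if $C$ is internal and to $L(C)$ if $C$ is extremal; then every excess is nonnegative (internal completed components have length $\ge 1$), and $C$ is nonempty if and only if its excess is positive. By Lemma~\ref{coincidencia de longitud y numero de componentes} the green and red completed components of $E$ have the same total length and the same number of components; moreover, since $E$ is the ESU of an IBB, of an EBB, or of all of $BT(P,Q)$, it contains the same number of endpoints of $P$ as of $Q$, hence the same number of \emph{extremal} completed components of each color. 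Subtracting the (equal) numbers of internal components of each color from the (equal) total lengths, I conclude that the total green excess in $E$ equals the total red excess in $E$. Since $P_{i_0}'$ is nonempty the green excess is positive, so the red excess is positive, and therefore some red component $Q_{j_0}'$ in $E$ is nonempty.

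Now $P_{i_0}'$ and $Q_{j_0}'$ are nonempty completed components of different colors lying in the same ESU $E$. Being nonempty subsets of the vertex set of the connected graph $G\setminus (V(P)\cap V(Q))$, they are joined by a path in $G\setminus (V(P)\cap V(Q))$. This contradicts Proposition~\ref{prop no path exists}, which, under the hypothesis that all ESU's are NC (Definition~\ref{def NC}), forbids any path in $G\setminus (V(P)\cap V(Q))$ between components of different colors in the same ESU. Hence $G\setminus (V(P)\cap V(Q))$ is disconnected, i.e.\ $V(P)\cap V(Q)$ is a separator of $G$.

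I expect the main obstacle to be the second paragraph: producing a nonempty opposite-color component \emph{within the same ESU}. This is exactly where the internal/extremal bookkeeping is essential, since equality of total lengths alone is not enough (an internal component of length $1$ is empty while an extremal one of length $1$ is not). The argument hinges on combining Lemma~\ref{coincidencia de longitud y numero de componentes} with the color-symmetric structure of building blocks (Definition~\ref{def ESU}), which guarantees the matching count of extremal components per color inside a single ESU. Once this is in place, the contradiction with Proposition~\ref{prop no path exists} is immediate.
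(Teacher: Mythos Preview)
Your argument is correct and follows the same route as the paper: locate a nonempty $P_{i_0}'$, show its ESU contains a nonempty $Q_{j_0}'$, and invoke Proposition~\ref{prop no path exists} for the contradiction. The paper's proof simply cites Lemma~\ref{coincidencia de longitud y numero de componentes} at the second step, whereas you spell out the excess bookkeeping (and the matching count of extremal components per color inside a BB) that makes this citation work; this extra care is justified and your verification of $n_P^{\mathrm{ext}}=n_Q^{\mathrm{ext}}$ from the case split IBB/EBB/whole-$BT(P,Q)$ is exactly what is needed.
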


\begin{proof}
   Since $V(P)\ne V(Q)$, there exists $i_0$ such that $P_{i_0}'\ne \emptyset$. Every $P_i'$ is part of an ESU in
   $BT(P,Q)$, and by
   Lemma~\ref{coincidencia de longitud y numero de componentes} there is a $Q_{j_0}'\ne \emptyset$ in that ESU. By
   Proposition~\ref{prop no path exists} these components cannot be connected in $G\setminus (V(P)\cap V(Q))$, so
   $V(P)\cap V(Q)$ is a separator of $G$, as desired.
\end{proof}

\begin{definition} \label{def LNC}
\begin{itemize}
  \item[a)] Let $B$ be an BB in $BT(P,Q)$ and let $X,Y$ be components of different colors of the corresponding ESU.
  The pair $X,Y$ is called \textbf{locally non connectable (LNC)},
  if for any $x\in X$ and $y\in Y$,
  there exist two pairs of disjoint paths in $B$: one pair of disjoint paths, $X_1$ from $x$ to one endpoint of $B$, and $Y_1$ from $y$ to
  another endpoint of $B$; and another
  pair of disjoint paths, $X_2$ from $x$ to one endpoint of $B$, and $Y_2$ from $y$ to another endpoint of $B$, such that
  $$
  X_1\cup Y_1\cup X_2\cup Y_2=B,
  $$
and such that the intersection of each of $X_1$, $X_2$, $Y_1$, $Y_2$ with an embedded ESU is either empty, or is equal to the
intersection of the embedded ESU with $P$ or with $Q$.

\begin{tikzpicture}[scale=0.5]
\draw[-] (0,2)..controls (4,0)..(5,0);
\draw[-] (0,2)..controls (2,4)..(3,4);
\draw[-,line width=2pt] (10,2)..controls (6,0)..(5,0);
\draw[-] (10,2)..controls (8,4)..(7,4);
\draw[-,line width=2pt] (5,0)..controls (3,3)..(3,4);
\draw[-] (5,0)..controls (7,3)..(7,4);
\draw[-,line width=2pt] (3,4)..controls(5,2.8)..(7,4);
\draw[-] (3,4)..controls(5,5.2)..(7,4);
\draw (5,-1) node {\normalfont $X_1$ and $Y_1$ thickened};
\draw[-,line width=2pt] (0,2)--(1.5,3.5);
\draw (0.4,3.2) node {$X_1$};
\draw (1.4,3.8) node {$x$};
\draw[-,line width=2pt] (7,4)..controls (7.8,4)..(8.5,3.5);
\draw (7.4,1.2) node {$Y_1$};
\draw (8.8,3.7) node {$y$};
\filldraw [black] (3,4) circle (2pt)
[black] (7,4) circle (2pt)
[black] (0,2) circle (2pt)
[black] (10,2) circle (2pt)
[black] (5,0) circle (2pt);
\draw (14,6) node{};
\filldraw [black] (1.5,3.5) circle (2pt)
[black] (8.5,3.5) circle (2pt);
\draw (-0.3,2) node {$a$};
\draw (10.3,2) node {$b$};
\end{tikzpicture}
\begin{tikzpicture}[scale=0.5]
\draw[-,line width=2pt,cyan] (0,2)..controls (4,0)..(5,0);
\draw[-] (0,2)..controls (2,4)..(3,4);
\draw[-] (10,2)..controls (6,0)..(5,0);
\draw[-] (10,2)..controls (8,4)..(7,4);
\draw[-] (5,0)..controls (3,3)..(3,4);
\draw[-,line width=2pt,cyan] (5,0)..controls (7,3)..(7,4);
\draw[-] (3,4)..controls(5,2.8)..(7,4);
\draw[-,line width=2pt,cyan] (3,4)..controls(5,5.2)..(7,4);
\draw (-0.3,2) node {$a$};
\draw (10.3,2) node {$b$};
\draw (5,-1) node {\normalfont $X_2$ and $Y_2$ in blue};
\draw[-,line width=2pt,cyan] (10,2)--(8.5,3.5);
\draw (9.6,3.2) node {$Y_2$};
\draw (1.4,3.8) node {$x$};
\draw[-,line width=2pt,cyan] (3,4)..controls (2.2,4)..(1.5,3.5);
\draw (2.6,1.2) node {$X_2$};
\draw (8.8,3.7) node {$y$};
\filldraw [black] (3,4) circle (2pt)
[black] (7,4) circle (2pt)
[black] (0,2) circle (2pt)
[black] (10,2) circle (2pt)
[black] (5,0) circle (2pt);
\filldraw [black] (1.5,3.5) circle (2pt)
[black] (8.5,3.5) circle (2pt);
\end{tikzpicture}

  \item[b)]  The ESU of a building block $B$ is called LNC, if all the pairs of component of the ESU of different colors are LNC.
  The building block $B$ is called LNC, if its ESU and the ESU's of all embedded IBB's are LNC.
\end{itemize}
 \end{definition}

\begin{remark} \label{path through embedded esu}
Note that if one of the paths of the pair $X_1$, $Y_1$ has an edge contained in one of the embedded IBB's, then it goes
from one endpoint of the IBB to the other, since by definition an IBB can touch the rest of $BT(P,Q)$ only at its endpoints.
Consequently, the other path in the pair cannot touch this embedded IBB. The same holds for the pair $X_2,Y_2$. Since the union is the whole block,
each embedded IBB has to be travelled twice, and so in each of the two pairs one of the paths has to go through the given embedded IBB.
\end{remark}

\begin{proposition}\label{ESU en LNC son NC}
  If a pair of completed components of different colors in an ESU is LNC, then it is NC. Consequently, if an ESU is LNC, then it is NC.
\end{proposition}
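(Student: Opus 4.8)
The plan is to adapt the construction in the proof of Proposition~\ref{elementary block is NC}, replacing the two parallel subpaths of an elementary IBB by the two pairs of paths furnished by the LNC hypothesis. Fix a pair $X,Y$ of completed components of different colors in the ESU of a building block $B$ with endpoints $a,b$, and suppose $R$ connects $X$ to $Y$ with endpoints $x\in X$ and $y\in Y$, where $R\cap V(P)\cap V(Q)=\emptyset$, $R$ is internally disjoint from the given ESU, and $R$ touches at most one color in each of the other ESU's. Exactly as in Proposition~\ref{elementary block is NC}, I would first swap the colors in every other ESU where $R$ meets only the color of $Q$, producing a new representation $BT(\widetilde P,\widetilde Q)$ of the same BT-graph in which $R$ is internally disjoint from $\widetilde Q$. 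Since $B$ meets the rest of the graph only at $a$ and $b$, the path $\widetilde Q$ crosses $B$ in a single subpath, so I may write $\widetilde Q=\widetilde Q_1+\widetilde Q^B+\widetilde Q_2$, where $\widetilde Q^B$ runs from $a$ to $b$ inside $B$ and has length $L(B)$, and $\widetilde Q_1,\widetilde Q_2$ meet $B$ only at $a$ and $b$ and avoid the interior of $R$.

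Next I would apply the LNC hypothesis to the points $x,y$, obtaining two pairs of disjoint paths in $B$: a pair $(X_1,Y_1)$ with $X_1$ from $x$ to one endpoint of $B$ and $Y_1$ from $y$ to the other, and a pair $(X_2,Y_2)$ likewise, subject to $X_1\cup Y_1\cup X_2\cup Y_2=B$ and the compatibility condition on embedded ESU's. Grafting $R$ between the members of each pair yields two paths through $B$ from $a$ to $b$, namely $X_1+R+Y_1$ and $X_2+R+Y_2$; each is a genuine (simple) path because $X_i,Y_i$ are disjoint and $R$ is internally disjoint from $B$. I then set
\[
\widehat P=\widetilde Q_1+X_1+R+Y_1+\widetilde Q_2,\qquad
\widehat Q=\widetilde Q_1+X_2+R+Y_2+\widetilde Q_2,
\]
both lying in $BT(P,Q)\cup R$. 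Each is a valid path: their only possible self-overlaps are at $a,b$, while $R$ is internally disjoint from $\widetilde Q_1,\widetilde Q_2$ by the choice of swaps. (If the LNC pairs send $X_i$ to $b$ and $Y_i$ to $a$ one simply traverses the block in the opposite direction.)

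It remains to compute lengths. As both paths share $\widetilde Q_1$, $\widetilde Q_2$ and a copy of $R$,
\[
L(\widehat P)+L(\widehat Q)=2L(\widetilde Q_1)+2L(\widetilde Q_2)+2L(R)+\bigl(L(X_1)+L(Y_1)+L(X_2)+L(Y_2)\bigr).
\]
Since $\widetilde Q$ is a longest path of length $L(P)$ and $L(\widetilde Q^B)=L(B)$, we have $L(\widetilde Q_1)+L(\widetilde Q_2)=L(P)-L(B)$. The crucial point is that the four paths $X_1,Y_1,X_2,Y_2$ partition the edge set of $B$, whence $L(X_1)+L(Y_1)+L(X_2)+L(Y_2)=2L(B)$, as $B$ carries $L(B)$ edges of each color. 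Granting this, the total telescopes to $2\bigl(L(P)-L(B)\bigr)+2L(R)+2L(B)=2L(P)+2L(R)$, which is precisely the condition of Definition~\ref{def NC}; hence the pair $X,Y$ is NC. The final assertion is then immediate, since by Definition~\ref{def NC} an ESU is NC exactly when all its pairs of differently colored components are NC.

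The main obstacle is the claim that the four paths are edge-disjoint, not merely that their union is $B$. Within a pair this is immediate from vertex-disjointness, so the real work is to exclude an edge shared between the two pairs. On an embedded IBB this is exactly Remark~\ref{path through embedded esu}: the compatibility clause of Definition~\ref{def LNC} forces each of $X_1,Y_1,X_2,Y_2$ to meet an embedded IBB in a full $P$- or $Q$-side; by $2$-connectivity and vertex-disjointness at most one path of each pair can cross it; and since the union is all of $B$ the two crossings must use the two different sides, covering its edges once. For the completed components of the ESU of $B$ itself (those in no embedded IBB) I would run the same bookkeeping after contracting each embedded IBB to an edge, so that the two color-classes of the ESU become edge-disjoint and each remaining edge is used in exactly one path of each pair. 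This verification is the only step that genuinely invokes the embedded-ESU clause of Definition~\ref{def LNC} together with Remark~\ref{path through embedded esu}, and I expect it to be the delicate part of the argument.
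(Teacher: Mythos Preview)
The overall strategy is the same as the paper's, but there is a genuine gap. Your claim that ``$R$ is internally disjoint from $B$'' is not justified and is in general false: the hypothesis in Definition~\ref{def NC} only guarantees that $R$ is internally disjoint from the ESU of $B$ and touches at most one color in each of the \emph{other} ESU's --- and those other ESU's include the ESU's of IBB's embedded in $B$. Thus $R$ may meet interior vertices of an embedded IBB of $B$, and hence meet one of $X_1,Y_1,X_2,Y_2$ (which, by the LNC clause, traverse each embedded ESU along one full color class). In that situation $X_i+R+Y_i$ is not a simple path, and your construction of $\widehat P,\widehat Q$ breaks down. Your initial swap, designed to make $R$ internally disjoint from $\widetilde Q$, does nothing to fix this, since the $X_i,Y_i$ are not subpaths of $\widetilde Q$.

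The paper closes this gap by a \emph{second} round of swaps, performed separately on each of the four paths: whenever $R$ touches $X_1$ inside the ESU of some embedded IBB (necessarily in a single color), one replaces the completed components of that color along $X_1$ in that ESU by those of the other color, obtaining $\widetilde X_1$; similarly for $\widetilde Y_1,\widetilde X_2,\widetilde Y_2$. By Remark~\ref{path through embedded esu}, only one member of each pair $(X_1,Y_1)$ and $(X_2,Y_2)$ crosses a given embedded IBB, so after the swap $\widetilde X_1,\widetilde Y_1$ (resp.\ $\widetilde X_2,\widetilde Y_2$) are still disjoint, keep the same lengths, and are now internally disjoint from $R$. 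One then grafts $R$ between $\widetilde X_i$ and $\widetilde Y_i$ rather than between $X_i$ and $Y_i$. With this correction in place your length computation goes through: the swap preserves lengths by Lemma~\ref{coincidencia de longitud y numero de componentes}, and the edge bookkeeping you sketch (each embedded IBB is crossed once by each pair, on opposite color sides; the remaining ESU edges are handled after contracting the embedded IBB's) gives $L(X_1)+L(Y_1)+L(X_2)+L(Y_2)=2L(B)$.
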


\begin{proof}
  Take a pair of components of different colors $X=P_i'$ and $Y=Q_j'$ of a given ESU. Assume that there exists a path $R$ that
  connects $X$ and $Y$, such that $R$ is internally disjoint from the given ESU and in each of the other ESU's
  touches at most one color. Then $R$ is internally disjoint from $\widetilde Q$, which is the
  path obtained from $Q$ by swapping the colors in the ESU's where $R$ touches only the color of $Q$.
  Let $x$, $y$ be the endpoints of $R$ in $P_i$ and $Q_j$ respectively, and let
  $X_1,X_2,Y_1,Y_2$ be as in Definition~\ref{def LNC}(a).

  Let $\widetilde X_1$ be the path obtained from $X_1$ by swapping the ESU's of the embedded IBB's, where $R$ touches $X_1$. This means that
  if $R$ touches $X_1$ in the ESU of an embedded block in a certain color, then, since all the completed components
  of $X_1$ in this ESU have this one color, we can
  replace these completed components of $X_1$ with the completed components of the ESU of the other color,
  which are not touched by $R$, and we have still a path.
  Similarly we define $\widetilde X_2$, $\widetilde Y_1$ and $\widetilde Y_2$. By Remark~\ref{path through embedded esu}, these are paths
  that have the same endpoints and the same length as the original ones, $\widetilde X_1$ and $\widetilde Y_1$ are disjoint, and
  $\widetilde X_2$ and $\widetilde Y_2$ are disjoint. Hence
  $$
  \widehat P=\widetilde X_1+R+\widetilde Y_1\quad\text{and}\quad
  \widehat Q=\widetilde Y_2+R+\widetilde X_2,
  $$
  are two paths such that
  $$
  L(\widehat P)+L(\widehat Q)= 2L(B)+2L(R),
  $$
  and such that the set of the endpoints of the paths coincides with the set of the endpoints of $B$.
  If the building block $B$ is all of $BT(P,Q)$, then this finishes the proof, since then $L(P)=L(B)$. If $B$ is an EBB or an IBB, then
  we extend $\widehat P$ and $\widehat Q$ using $\widetilde Q$. For this we write $\widetilde Q=\widetilde Q_1+\widetilde Q_2+\widetilde Q_3$,
  where $\widetilde Q_2$ is the intersection of $\widetilde Q$ with the given block. Then we set
  $$
  \widehat{\widehat P\,}:= \widetilde Q_1+ \widehat P+\widetilde Q_3\quad \text{and} \quad
    \widehat{\widehat Q\,}:= \widetilde Q_1+ \widehat Q+\widetilde Q_3,
  $$
  in order to obtain two paths
  $\widehat{\widehat P\,}$ and $\widehat{\widehat Q\,}$ such that
  $$
  L(\widehat{\widehat P\,})+L(\widehat{\widehat Q\,})= 2L(P)+2L(R),
  $$
  which concludes the proof.
\end{proof}

\begin{theorem}\label{concatenation of LNC blocks}
  Let $G$ be a graph and let $P$, $Q$ be two longest paths. If $V(P)\ne V(Q)$ and all the ESU's in $BT(P,Q)$ are LNC, then
  $V(P)\cap V(Q)$ is a separator of $G$.
\end{theorem}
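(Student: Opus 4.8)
The plan is to reduce the statement directly to the two preceding propositions, since all the genuine combinatorial work—constructing, out of a connecting path $R$, two longer paths whose lengths sum to $2L(P)+2L(R)$—has already been carried out in the proofs of Proposition~\ref{ESU en LNC son NC} and Proposition~\ref{isu nc implica articulation set}. The theorem is essentially their composition.

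First I would observe that the hypothesis ``all the ESU's in $BT(P,Q)$ are LNC'' is, by Definition~\ref{def LNC}(b), the statement that for every ESU every pair of completed components of different colors is LNC. Applying Proposition~\ref{ESU en LNC son NC} to each such ESU, I conclude that every ESU in $BT(P,Q)$ is NC. This is the only place where the local-to-global passage happens, and it works ESU by ESU because LNC is a pointwise (``for any $x\in X$ and $y\in Y$'') condition internal to a single building block, whereas NC already carries the global book-keeping of the swaps performed in the \emph{other} ESU's through the auxiliary path $\widetilde{Q}$ used in Proposition~\ref{ESU en LNC son NC}.

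Having upgraded LNC to NC for all ESU's, I would then invoke Proposition~\ref{isu nc implica articulation set} verbatim: with $V(P)\ne V(Q)$ and all ESU's NC, the set $V(P)\cap V(Q)$ is a separator of $G$. This finishes the argument. The one point worth checking carefully, rather than an obstacle, is that the conclusion ``each ESU is NC'' matches exactly the hypothesis required by Proposition~\ref{isu nc implica articulation set}; there is no mismatch, since both statements quantify over all ESU's of $BT(P,Q)$. In short, I expect no substantial difficulty here: the theorem records the convenient fact that verifying the purely local LNC condition on each building block—typically an easy, finite check on the block's internal structure—suffices to deduce that $V(P)\cap V(Q)$ separates $G$, which is precisely the form in which the criterion will be applied to the finitely many configurations with $\ell\le 5$.
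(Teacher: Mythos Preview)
Your proposal is correct and matches the paper's own proof, which simply cites Propositions~\ref{isu nc implica articulation set} and~\ref{ESU en LNC son NC}. You have spelled out exactly the intended two-step reduction: upgrade LNC to NC on each ESU via Proposition~\ref{ESU en LNC son NC}, then apply Proposition~\ref{isu nc implica articulation set}.
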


\begin{proof}
  By Propositions~\ref{isu nc implica articulation set} and~\ref{ESU en LNC son NC}.
\end{proof}

Our next goal is to construct recursively new LNC building blocks out of some given LNC building blocks. For this we
first generalize Lemmas~\ref{lema 4.1} and~\ref{conexion entre extremos}.

\begin{proposition} \label{prop adyacentes}
  If two completed components $X,Y$ of different colors in an ESU are adjacent, then they are LNC.
\end{proposition}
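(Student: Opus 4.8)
The plan is to prove the statement constructively: for arbitrary $x\in X$ and $y\in Y$ I will exhibit the four paths $X_1,Y_1,X_2,Y_2$ demanded by Definition~\ref{def LNC}(a). This is the "within a block" analogue of the rerouting in Lemma~\ref{lema 4.1}: there the crossing of two adjacent components of different colours was used to build two long paths out of an external connector $R$, whereas here the same crossing must be used to split the block $B$ itself into two pairs of disjoint paths. Write $X=P_i$, $Y=Q_j$, let $v\in V(P)\cap V(Q)$ be a common vertex of $X$ and $Y$, and let $\widetilde P$ (green) and $\widetilde Q$ (red) be the two subpaths of $P$ and $Q$ spanning $B$, with endpoints $a,b$ when $B$ is an IBB (and the endpoints of Definition~\ref{def EBB BB} when $B$ is an EBB or all of $BT(P,Q)$). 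I would split into two cases according to whether $v$ is an interior intersection vertex of $B$ (degree $4$ in $B$) or an endpoint of $B$ (degree $2$).

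For the main case $v$ is interior, so there are two green edges at $v$, namely $\alpha$ belonging to $X$ and $\beta$, and two red edges, $\gamma$ belonging to $Y$ and $\delta$. Since $x$ is reached from $v$ through $\alpha$ and $y$ through $\gamma$, I split $\widetilde P$ at $x$ and $\widetilde Q$ at $y$ and reconnect the pieces by crossing only at $v$. Concretely I set $X_1$ to be the green arc of $X$ from $x$ to $v$ followed by the red arc of $\widetilde Q$ leaving $v$ along $\delta$ up to its endpoint of $B$; I set $Y_2$ to be the red arc of $Y$ from $y$ to $v$ followed by the green arc of $\widetilde P$ leaving $v$ along $\beta$ up to its endpoint; and I let $X_2$ be the green arc of $\widetilde P$ from $x$ running away from $v$ (along $\alpha$) to the opposite endpoint, and $Y_1$ the red arc of $\widetilde Q$ from $y$ running away from $v$ (along $\gamma$) to the opposite endpoint. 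Thus the pair $\{X_1,Y_2\}$ realizes the crossed pairing $\{\alpha,\delta\},\{\beta,\gamma\}$ of the four edges at $v$, while $\{X_2,Y_1\}$ uses the complementary, uncrossed arcs.

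The verification then runs as follows. Each of the four objects is a genuine path, because its green portion lies in a single completed component and its red portion in $\widetilde Q$ (or vice versa), and these meet only at $v$. The union is all of $B$ and the four paths are edge-disjoint, since they arise from cutting $\widetilde P$ at $x$ into the three arcs $[\mathrm{end}_\alpha,x],[x,v],[v,\mathrm{end}_\beta]$ and $\widetilde Q$ at $y$ into $[\mathrm{end}_\delta,v],[v,y],[y,\mathrm{end}_\gamma]$, each arc being used exactly once. Within each pair the two paths are vertex-disjoint: their red pieces lie on opposite sides of $v$ in $\widetilde Q$ and their green pieces on opposite sides of $v$ in $\widetilde P$, while a green piece can meet a red piece only at an intersection vertex, whose only candidate is $v$ (completed components have no interior intersection vertices), and $v$ belongs to exactly one path of each pair. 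Finally the endpoints are distinct within each pair, because $\alpha,\beta$ reach opposite ends of $\widetilde P$ and $\gamma,\delta$ reach opposite ends of $\widetilde Q$. I expect this disjointness bookkeeping to be the main obstacle, since it is exactly the place where one must use that $v$ is the \emph{unique} colour change and that the arcs of $\widetilde P,\widetilde Q$ are simple.

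It remains to check the embedded-IBB condition and the degenerate case. For the former, the only colour change occurs at $v$, which lies in the ESU and hence outside every embedded IBB; therefore whenever one of $X_1,Y_1,X_2,Y_2$ meets an embedded IBB it enters and leaves through the two endpoints of that IBB along a single colour, so by Remark~\ref{path through embedded esu} its intersection with the corresponding embedded ESU is either empty or equals that ESU's intersection with $P$ or with $Q$, as Definition~\ref{def LNC}(a) requires. In the boundary case $v$ is an endpoint of $B$, so $X$ and $Y$ are the two extremal arcs of the block meeting at $v$ and there is no crossing to perform; here I would split $\widetilde P$ at $x$ and $\widetilde Q$ at $y$ directly, sending $X_1$ and $Y_2$ to $v$ and $X_2$ and $Y_1$ to the opposite endpoint, exactly as for the elementary blocks and as in the proof of Lemma~\ref{conexion entre extremos}; the disjointness is immediate because the arcs $[v,x]$ and $[v,y]$ contain no intersection vertex other than $v$. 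Since $x\in X$ and $y\in Y$ were arbitrary, the pair $X,Y$ is LNC, and consequently (via Proposition~\ref{ESU en LNC son NC}) also NC.
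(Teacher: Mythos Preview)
Your construction is exactly the paper's: split $\widetilde P$ and $\widetilde Q$ at $x$, $y$ and cross at the common intersection vertex $u=v$, obtaining the four arcs $\widetilde P_1+\widetilde X_1$, $\widetilde Q_1+\widetilde X_2$, $\widetilde P_2+\widetilde Y_1$, $\widetilde Q_2+\widetilde Y_2$; you have merely interchanged the labels, so your $(X_1,Y_1,X_2,Y_2)$ is the paper's $(X_2,Y_2,X_1,Y_1)$. Two small slips to clean up: the parenthetical ``(along $\alpha$)'' in the definition of $X_2$ should read ``away from $\alpha$'', and $v$ may well be an \emph{endpoint} of an embedded IBB---what actually makes the embedded-ESU condition hold is that the pieces $\widetilde X_2,\widetilde Y_1\subset X\cup Y$ lie in the ESU while the remaining arcs are monochromatic subpaths of $\widetilde P$ or $\widetilde Q$.
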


\begin{proof}
  Let one of the subpaths spanning the block be $\widetilde P_1+X+\widetilde P_2$ and the other $\widetilde Q_1+Y+\widetilde Q_2$, where one
  common vertex is
  $$
  u=X\cap \widetilde P_2=Y \cap \widetilde Q_1.
  $$
  Let $x\in X$ and $y\in Y$ be internal vertices, and write $X=\widetilde X_1+\widetilde X_2$, with $x=\widetilde X_1\cap \widetilde X_2$
  and similarly $Y=\widetilde Y_1+\widetilde Y_2$. Then
  $$
  X_1=\widetilde P_1+\widetilde X_1,\quad X_2=\widetilde Q_1+\widetilde X_2, \quad Y_1=\widetilde P_2+\widetilde Y_1
  \quad\text{and}\quad Y_2=\widetilde Q_2+\widetilde Y_2,
  $$

\noindent
\begin{tikzpicture}[scale=0.45]
\draw[-,green] (0,0)..controls (1,0) and (2,2)..(3,3);
\draw[-,green,line width=2pt] (3,3)..controls (4,4)..(5,4);
\draw[-,green,line width=2pt] (5,4)..controls (6,4)..(7,3);
\draw[-,green] (7,3)..controls (8,2) and (9,0)..(10,0);
\draw[-,red] (4,0)..controls (5,0) and (6,2)..(7,3);
\draw[-,red,line width=2pt] (7,3)..controls (8,4)..(9,4);
\draw[-,red,line width=2pt] (9,4)..controls (10,4)..(11,3);
\draw[-,red] (11,3)..controls (12,2) and (13,0)..(14,0);
\filldraw [black] (3,3) circle (2pt)
[black] (7,3) circle (2pt)
[black] (11,3) circle (2pt);
\filldraw [blue] (5,4) circle (2pt)
[blue] (9,4) circle (2pt);
\draw (14,5) node{};
\draw (3.7,4.5) node {$\widetilde X_1$};
\draw (6,4.5) node {$\widetilde X_2$};
\draw (10.3,4.4) node {$\widetilde Y_2$};
\draw (8,4.55) node {$\widetilde Y_1$};
\draw (1,1.5) node {$\widetilde P_1$};
\draw (13,1.5) node {$\widetilde Q_2$};
\draw (9,1.5) node {$\widetilde P_2$};
\draw (5,1.5) node {$\widetilde Q_1$};
\draw (5,3.6) node {$x$};
\draw (9,3.6) node {$y$};
\draw (7,2.5) node {$u$};
\draw (7,-1) node {\normalfont $X=\widetilde X_1+\widetilde X_2$ and $Y=\widetilde Y_1+\widetilde Y_2$ are adjacent};
\draw(15,2)node{};
\end{tikzpicture}
\begin{tikzpicture}[scale=0.45]
\draw[-,line width=2pt] (0,0)..controls (1,0) and (2,2)..(3,3);
\draw[-,line width=2pt] (3,3)..controls (4,4)..(5,4);
\draw[-,cyan,line width=2pt] (5,4)..controls (6,4)..(7,3);
\draw[-,line width=2pt] (7,3)..controls (8,2) and (9,0)..(10,0);
\draw[-,cyan,line width=2pt] (4,0)..controls (5,0) and (6,2)..(7,3);
\draw[-,line width=2pt] (7,3)..controls (8,4)..(9,4);
\draw[-,cyan,line width=2pt] (9,4)..controls (10,4)..(11,3);
\draw[-,cyan,line width=2pt] (11,3)..controls (12,2) and (13,0)..(14,0);
\filldraw [black] (3,3) circle (2pt)
[black] (7,3) circle (2pt)
[black] (11,3) circle (2pt);
\filldraw [red] (5,4) circle (2pt)
[red] (9,4) circle (2pt);
\draw (14,5) node{};
\draw (1,1.5) node {$X_1$};
\draw (13,1.5) node {$Y_2$};
\draw (9,1.5) node {$Y_1$};
\draw (5,1.5) node {$X_2$};
\draw (5,3.6) node {$x$};
\draw (9,3.6) node {$y$};
\draw (7,-1) node {\normalfont $X_1$ and $Y_1$ in black, $X_2$ and $Y_2$ in blue};
\end{tikzpicture}

\noindent  satisfy the conditions of Definition~\ref{def LNC}. Note that if the common vertex is an end point of the building block,
then $\widetilde P_2$   and $\widetilde Q_1$ have length zero.
\end{proof}

\begin{proposition}\label{PROP conexion entre extremos}
  If two extremal components of $P$ and $Q$ are disjoint and in the same BB, then they are LNC.
\end{proposition}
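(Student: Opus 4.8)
The plan is to reproduce, locally inside $B$ and with $x,y$ arbitrary interior points, the path-surgery used in Lemma~\ref{conexion entre extremos} and Proposition~\ref{prop adyacentes}, and then to check that the four resulting paths meet every requirement of Definition~\ref{def LNC}(a). First I would pin down the shape of $B$. The two subpaths defining an IBB share both endpoints, and those endpoints are intersection vertices; hence an IBB contains no endpoint of $P$ or of $Q$, and therefore no extremal completed component. So a BB that carries an extremal component is either an EBB or all of $BT(P,Q)$. After reversing the orientation of $P$ and/or of $Q$ I may assume $X=P_0'$; when $B$ is an EBB its unique $Q$-extremal component is then $Q_0'$, while when $B=BT(P,Q)$ the partner $Y$ is one of $Q_0',Q_\ell'$.

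The heart of the argument is a cross-pairing. The point $x\in X=P_0'$ cuts the $P$-part of $B$ into a short piece $S_P\subseteq P_0'$ from $x$ to the free $P$-end, and a long piece $L_P$ from $x$ to the far end of $B$ --- the common vertex $c$ when $B$ is an EBB, or the opposite free $P$-end when $B=BT(P,Q)$ --- passing through every intervening completed component, and hence through every embedded IBB in the color of $P$. Symmetrically $y$ cuts the $Q$-part into $S_Q\subseteq Y$ and a long piece $L_Q$. The naive pairing $(S_P,S_Q)$, $(L_P,L_Q)$ cannot work, because $L_P$ and $L_Q$ share all the intersection vertices; so instead I would set
$$
(X_1,Y_1)=(S_P,L_Q),\qquad (X_2,Y_2)=(L_P,S_Q),
$$
pairing each short piece with the long piece of the opposite color. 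Then $X_1\cup X_2$ is the whole $P$-part and $Y_1\cup Y_2$ the whole $Q$-part, so $X_1\cup Y_1\cup X_2\cup Y_2=B$; moreover $L_P$ (resp.\ $L_Q$) meets every embedded ESU exactly in its intersection with $P$ (resp.\ with $Q$), while $S_P,S_Q$ avoid the embedded IBBs entirely, so the color condition of Definition~\ref{def LNC}(a) holds and, as in Remark~\ref{path through embedded esu}, each embedded IBB is traversed once by $L_Q$ and once by $L_P$.

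Disjointness of each pair is now immediate, and this is where the hypothesis is used. A short piece lies inside a single extremal component, so it contains no intersection vertex and no vertex of any other component of $(P\cup Q)\setminus(V(P)\cap V(Q))$; the long piece of the opposite color is assembled from intersection vertices and from components of that color. Because $X$ and $Y$ are disjoint, $P_0'$ and the component containing $y$ are distinct components of the complement, whence $S_P\cap L_Q=\emptyset$ and $S_Q\cap L_P=\emptyset$. (Had $X$ and $Y$ been adjacent instead, one would fall back on Proposition~\ref{prop adyacentes}.)

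It remains to match the path-ends to the endpoints of $B$ and to invoke the machinery already in place. When $B=BT(P,Q)$ the four ends are the four free endpoints of $P$ and $Q$, and since $L(B)=L(P)$ the length count in Proposition~\ref{ESU en LNC son NC} already closes the argument, with the case $\{P_0',Q_\ell'\}$ reproducing Lemma~\ref{conexion entre extremos}. When $B$ is an EBB both long pieces end at the common vertex $c$, so $X_1,Y_1$ end at the free $P$-end and at $c$, and $X_2,Y_2$ at $c$ and the free $Q$-end; this is exactly the data needed for the extension $\widehat{\widehat P\,}=\widetilde Q_1+\widehat P+\widetilde Q_3$ of Proposition~\ref{ESU en LNC son NC}, with $\widetilde Q_3$ leaving $B$ through $c$. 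The only real subtlety, and the point I would treat most carefully, is the recognition that cross-pairing (not the naive one) is forced, together with the bookkeeping of $c$ as the shared inner endpoint of an EBB; everything else is routine verification.
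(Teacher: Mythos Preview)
Your proof is correct and follows essentially the same route as the paper's: split each extremal component at $x$ (resp.\ $y$) into a short outer piece and a long inner piece, and cross-pair short with long of the opposite color. The paper's proof records this tersely as $X_1=\widetilde P+\widetilde X_1$, $X_2=\widetilde X_2$, $Y_1=\widetilde Y_1$, $Y_2=\widetilde Q+\widetilde Y_2$, which is your $(L_P,S_Q)$ and $(S_P,L_Q)$ with the pair labels swapped; your additional discussion of the EBB versus $BT(P,Q)$ cases and the verification of disjointness and the color condition just makes explicit what the paper leaves to the reader.
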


\begin{proof}
  The proof is similar to the proof of Lemma~\ref{conexion entre extremos}.
  Let
  $$
  \widetilde P+\widetilde X_1+\widetilde X_2\quad\text{and}\quad \widetilde Q+\widetilde Y_2+\widetilde Y_1
  $$

  \begin{tikzpicture}[scale=1.3]
\draw[-,line width=2pt,green] (-1,1)--(3,1);
\draw[-,line width=2pt,red] (-1,0)--(3,0);
\filldraw [green] (1,1) circle (2pt)
[green] (2,1) circle (2pt)
[green] (3,1) circle (2pt);
\filldraw [red] (1,0) circle (2pt)
[red] (2,0) circle (2pt)
[red] (3,0) circle (2pt);
\draw (1.5,1.3) node {$\widetilde X_1$};
\draw (2.5,1.3) node {$\widetilde X_2$};
\draw (1.5,-0.3) node {$\widetilde Y_2$};
\draw (2.5,-0.3) node {$\widetilde Y_1$};
\draw (0,1.3) node {$\widetilde P$};
\draw (0,-0.3) node {$\widetilde Q$};
\end{tikzpicture}

\noindent  be two paths spanning the BB, such that
  $X=\widetilde X_1+\widetilde X_2$ and $Y=\widetilde Y_1+\widetilde Y_2$ are the disjoint extremal components. Then
  $$
  X_1=\widetilde P+\widetilde X_1,\quad X_2=\widetilde X_2, \quad Y_1=\widetilde Y_1
  \quad\text{and}\quad Y_2=\widetilde Q+\widetilde Y_2
  $$
  satisfy the conditions of Definition~\ref{def LNC}.
\end{proof}

\begin{definition}
  A \textbf{locally non connectable internal building unit (LNC IBU)} is a LNC IBB, or the concatenation of LNC IBB's.
\end{definition}

\begin{proposition} \label{lista}
  The following four constructions yield LNC blocks, starting from some given LNC IBU's.
  \begin{enumerate}
    \item Let $B'$ be a LNC IBU with endpoints $d$ and $e$, as represented in the figure. We can embed $B'$
    as in the figure and obtain a LNC IBB  $B$ with endpoints $a$ and $b$.  The four new subpaths
    $W$ from $a$ to $e$, $X$ from $d$ to $b$, $Y$ from $a$ to $d$ and $Z$ from $e$ to $b$  with the given colors
    are the ESU of the new building block. Moreover,
    $$
    L(W)+L(X)=L(Y)+L(Z).
    $$

\begin{tikzpicture}[scale=0.5]
\draw[-,green] (2,1)--(3,0);
\draw[-,red] (4,1)--(3,0);
\draw[-,red] (2,3)--(3,4);
\draw[-,green] (4,3)--(3,4);
\filldraw [green] (2,1)--(3,1)--(3,2)--(2,2);
\filldraw [red] (3,1)--(4,1)--(4,2)--(3,2);
\filldraw [red] (2,2)--(3,2)--(3,3)--(2,3);
\filldraw [green] (3,2)--(4,2)--(4,3)--(3,3);
\draw (3,4.5) node {$e$};
\draw (3,-0.4) node {$d$};
\draw (7,5) node {};
\draw (3,-1.2) node {\normalfont The LNC IBU $B'$};
\filldraw [black] (3,4) circle (2pt)
[black] (3,0) circle (2pt);
\end{tikzpicture}
    \begin{tikzpicture}[scale=0.5]
\draw[-,green] (0,2)..controls (2,0)..(3,0);
\draw[-,red] (0,2)..controls (2,4)..(3,4);
\draw[-,red] (6,2)..controls (4,0)..(3,0);
\draw[-,green] (6,2)..controls (4,4)..(3,4);
\draw[-,green] (2,1)--(3,0);
\draw[-,red] (4,1)--(3,0);
\draw[-,red] (2,3)--(3,4);
\draw[-,green] (4,3)--(3,4);
\filldraw [green] (2,1)--(3,1)--(3,2)--(2,2);
\filldraw [red] (3,1)--(4,1)--(4,2)--(3,2);
\filldraw [red] (2,2)--(3,2)--(3,3)--(2,3);
\filldraw [green] (3,2)--(4,2)--(4,3)--(3,3);
\draw (-0.3,2) node {$a$};
\draw (6.3,2) node {$b$};
\draw (3,4.5) node {$e$};
\draw (3,-0.4) node {$d$};
\draw (1,3.8) node {$W$};
\draw (1,0.4) node {$Y$};
\draw (5,3.6) node {$Z$};
\draw (5,0.3) node {$X$};
\draw (3,-1.2) node {\normalfont The new IBB $B$ };
\filldraw [black] (0,2) circle (2pt)
[black] (6,2) circle (2pt);
\filldraw [black] (3,4) circle (2pt)
[black] (3,0) circle (2pt);
\end{tikzpicture}

    \item Let $B'$ be a LNC IBU with endpoints $d$ and $e$, as represented in the figure. We can embed $B'$
    as in the figure and obtain a LNC EBB $B$ with common endpoint $c$ and with two other endpoints $a$ and $b$.
    The four new subpaths with the given colors are the ESU of the new building block.

    \begin{tikzpicture}[scale=0.7]
\draw[-,green] (2,1)--(3,0);
\draw[-,red] (4,1)--(3,0);
\draw[-,red] (2,3)--(3,4);
\draw[-,green] (4,3)--(3,4);
\filldraw [green] (2,1)--(3,1)--(3,2)--(2,2);
\filldraw [red] (3,1)--(4,1)--(4,2)--(3,2);
\filldraw [red] (2,2)--(3,2)--(3,3)--(2,3);
\filldraw [green] (3,2)--(4,2)--(4,3)--(3,3);
\draw (3,4.5) node {$e$};
\draw (3,-0.4) node {$d$};
\draw (7,5) node {};
\draw (3,-1.2) node {\normalfont The LNC IBU $B'$};
\filldraw [black] (3,4) circle (2pt)
[black] (3,0) circle (2pt);
\end{tikzpicture}
    \begin{tikzpicture}[scale=0.7]
\draw[-,green] (0,2)..controls (2,0)..(3,0);
\draw[-,red] (0,2)..controls (2,4)..(3,4);
\draw[-,red] (5,0)..controls (4,-0.5)..(3,0);
\draw[-,green] (5,4)..controls (4,4.5)..(3,4);
\draw[-,green] (2,1)--(3,0);
\draw[-,red] (4,1)--(3,0);
\draw[-,red] (2,3)--(3,4);
\draw[-,green] (4,3)--(3,4);
\filldraw [green] (2,1)--(3,1)--(3,2)--(2,2);
\filldraw [red] (3,1)--(4,1)--(4,2)--(3,2);
\filldraw [red] (2,2)--(3,2)--(3,3)--(2,3);
\filldraw [green] (3,2)--(4,2)--(4,3)--(3,3);
\draw (-0.3,2) node {$c$};
\draw (5.3,4) node {$b$};
\draw (5.3,0) node {$a$};
\draw (3,4.5) node {$e$};
\draw (3,-0.4) node {$d$};
\draw (3,-1.2) node {\normalfont The new EBB $B$ };
\filldraw [black] (3,4) circle (2pt)
[black] (3,0) circle (2pt);
\filldraw [black] (0,2) circle (2pt)
[black] (5,4) circle (2pt)
[black] (5,0) circle (2pt);
\end{tikzpicture}

    \item Let $B'$ be an a LNC IBU with endpoints $d$ and $e$, as represented in the figure. We can embed $B'$
    as in the figure and obtain a LNC IBB $B$ with endpoints $a$ and $b$.
    The six new subpaths with the given colors are the ESU of the new building block.

    \begin{tikzpicture}[scale=0.5]
\draw[-,green] (3,4)--(4,3);
\draw[-,red] (6,3)--(7,4);
\draw[-,red] (3,4)--(4,5);
\draw[-,green] (6,5)--(7,4);
\filldraw [green] (4,3)--(5,3)--(5,4)--(4,4);
\filldraw [red] (5,3)--(6,3)--(6,4)--(5,4);
\filldraw [red] (4,4)--(5,4)--(5,5)--(4,5);
\filldraw [green] (5,4)--(6,4)--(6,5)--(5,5);
\draw (3,4.5) node {$d$};
\draw (7,4.5) node {$e$};
\draw (5,1) node {\normalfont The LNC IBU $B'$ };
\draw (9,8) node {};
\filldraw [black] (3,4) circle (2pt)
[black] (7,4) circle (2pt);
\end{tikzpicture}
    \begin{tikzpicture}[scale=0.5]
\draw[-,green] (0,2)..controls (4,0)..(5,0);
\draw[-,red] (0,2)..controls (2,4)..(3,4);
\draw[-,red] (10,2)..controls (6,0)..(5,0);
\draw[-,green] (10,2)..controls (8,4)..(7,4);
\draw[-,green] (5,0)..controls (3,3)..(3,4);
\draw[-,red] (5,0)..controls (7,3)..(7,4);
\draw[-,green] (3,4)--(4,3);
\draw[-,red] (6,3)--(7,4);
\draw[-,red] (3,4)--(4,5);
\draw[-,green] (6,5)--(7,4);
\filldraw [green] (4,3)--(5,3)--(5,4)--(4,4);
\filldraw [red] (5,3)--(6,3)--(6,4)--(5,4);
\filldraw [red] (4,4)--(5,4)--(5,5)--(4,5);
\filldraw [green] (5,4)--(6,4)--(6,5)--(5,5);
\draw (-0.3,2) node {$a$};
\draw (10.3,2) node {$b$};
\draw (3,4.5) node {$d$};
\draw (7,4.5) node {$e$};
\draw (5,-1) node {\normalfont The new IBB $B$ };
\filldraw [black] (3,4) circle (2pt)
[black] (7,4) circle (2pt)
[black] (0,2) circle (2pt)
[black] (10,2) circle (2pt)
[black] (5,0) circle (2pt);
\end{tikzpicture}

    \item Let $B'$ be an a LNC IBU with endpoints $d$ and $e$, as represented in the figure. We can embed $B'$
    as in the figure and obtain a LNC EBB $B$ with common endpoint $c$ and with two other endpoints $a$ and $b$.
    The six new subpaths with the given colors are the ESU of the new building block.

    \begin{tikzpicture}[scale=0.5]
\draw[-,green] (3,4)--(4,3);
\draw[-,red] (6,3)--(7,4);
\draw[-,red] (3,4)--(4,5);
\draw[-,green] (6,5)--(7,4);
\filldraw [green] (4,3)--(5,3)--(5,4)--(4,4);
\filldraw [red] (5,3)--(6,3)--(6,4)--(5,4);
\filldraw [red] (4,4)--(5,4)--(5,5)--(4,5);
\filldraw [green] (5,4)--(6,4)--(6,5)--(5,5);
\draw (3,4.5) node {$d$};
\draw (7,4.5) node {$e$};
\draw (5,1) node {\normalfont The LNC IBU $B'$ };
\draw (9,8) node {};
\filldraw [black] (3,4) circle (2pt)
[black] (7,4) circle (2pt);
\end{tikzpicture}
    \begin{tikzpicture}[scale=0.5]
\draw[-,green] (0,2)..controls (4,0)..(5,0);
\draw[-,red] (0,2)..controls (2,4)..(3,4);
\draw[-,red] (7,0)..controls (6,-0.5)..(5,0);
\draw[-,green] (9,4)..controls (8,4.5)..(7,4);
\draw[-,green] (5,0)..controls (3,3)..(3,4);
\draw[-,red] (5,0)..controls (7,3)..(7,4);
\draw[-,green] (3,4)--(4,3);
\draw[-,red] (6,3)--(7,4);
\draw[-,red] (3,4)--(4,5);
\draw[-,green] (6,5)--(7,4);
\filldraw [green] (4,3)--(5,3)--(5,4)--(4,4);
\filldraw [red] (5,3)--(6,3)--(6,4)--(5,4);
\filldraw [red] (4,4)--(5,4)--(5,5)--(4,5);
\filldraw [green] (5,4)--(6,4)--(6,5)--(5,5);
\draw (-0.3,2) node {$c$};
\draw (7.3,0) node {$a$};
\draw (9.3,4) node {$b$};
\draw (3,4.5) node {$d$};
\draw (7,4.5) node {$e$};
\draw (5,-1) node {\normalfont The new EBB $B$ };
\filldraw [black] (3,4) circle (2pt)
[black] (7,4) circle (2pt)
[black] (0,2) circle (2pt)
[black] (9,4) circle (2pt)
[black] (7,0) circle (2pt)
[black] (5,0) circle (2pt);
\end{tikzpicture}

  \end{enumerate}
\end{proposition}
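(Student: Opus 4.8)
The plan is to handle the four constructions uniformly. For each I must check two things: that $B$ really is a building block of the asserted type, and that it is LNC. That the two colored subpaths spanning $B$ have equal length---which is exactly the relation $L(W)+L(X)=L(Y)+L(Z)$ in (1) and its analogues in (2)--(4)---is immediate from Lemma~\ref{coincidencia de longitud y numero de componentes} applied to the new ESU, whose two color classes therefore have equal total length. For the LNC claim I invoke Definition~\ref{def LNC}(b): since the only embedded IBB is $B'$ (together with the IBB's inside it), which is LNC by hypothesis, it suffices to prove that the \emph{new} ESU---the four or six added subpaths---is LNC, i.e. that every pair of added subpaths of opposite color is LNC.

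So the work reduces to examining these opposite-colored pairs. The guiding observation is that almost all of them are \emph{adjacent}, meeting at one of the distinguished vertices $a,b,c,d,e$ (and the extra vertex $f$ in (3)--(4)); these are LNC by Proposition~\ref{prop adyacentes}. A pair of \emph{disjoint extremal} subpaths is LNC by Proposition~\ref{PROP conexion entre extremos}. In (1) every opposite-colored pair shares a vertex, so the ESU is LNC with nothing further to prove. In (2) the one non-adjacent pair is precisely the two disjoint extremal subpaths reaching $a$ and $b$, so Proposition~\ref{PROP conexion entre extremos} finishes it. In (3) and (4) this accounts for all but one genuinely \emph{diagonal} pair---in (3) the red $a$--$d$ subpath paired with the green $e$--$b$ subpath, and in (4) the analogous pair meeting neither a shared vertex nor both extremal ends---which must be built by hand.

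For that diagonal pair I would proceed as follows, illustrated on (3). Fix $x$ on the red $a$--$d$ subpath and $y$ on the green $e$--$b$ subpath, and pass to the multigraph $\widehat{B}$ obtained by replacing $B'$ with its two monochromatic $d$--$e$ traversals; by Remark~\ref{path through embedded esu} any admissible system must use $B'$ exactly twice, so this doubling is forced. The data required by Definition~\ref{def LNC}(a) is then an edge-decomposition of $\widehat{B}$ into four paths with endpoint multiset $\{x,x,y,y,a,a,b,b\}$ that splits into two vertex-disjoint linkages. I would exhibit the four paths explicitly: the two short direct paths $x$--$a$ and $y$--$b$; a long path $x\to d\to e\to f\to a$ using one colored copy of $B'$; and a long path $y\to e\to d\to f\to b$ using the other colored copy. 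These use every edge of $\widehat{B}$ exactly once, so their lengths sum to $2L(B)$ by Lemma~\ref{coincidencia de longitud y numero de componentes}, and each long path crosses $B'$ monochromatically, which is exactly the embedded-ESU condition of Definition~\ref{def LNC}(a).

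The hard part is the final pairing into disjoint linkages, and it is where the naive attempt breaks down: the central vertex $f$ lies on \emph{both} long paths, so one cannot pair the two short paths together and the two long paths together. The resolution is to pair each short path with the long path coming from the \emph{opposite} side, namely $\{\,x\text{--}a,\ y\to e\to d\to f\to b\,\}$ and $\{\,y\text{--}b,\ x\to d\to e\to f\to a\,\}$; then each linkage contains exactly one $f$-traversing path, the two members of each linkage are vertex-disjoint, and the two colored copies of $B'$ are separated, one into each linkage. Construction (4) is the extremal analogue, with the shared endpoint $c$ in place of one side and the same opposite-pairing applied to its single diagonal pair; in both (3) and (4) one finally extends the two linkages across the rest of $BT(P,Q)$ using $\widetilde Q$ exactly as in the proof of Proposition~\ref{ESU en LNC son NC}, obtaining $\widehat{\widehat P}$ and $\widehat{\widehat Q}$ with $L(\widehat{\widehat P})+L(\widehat{\widehat Q})=2L(P)+2L(R)$.
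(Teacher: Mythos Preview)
Your proof is correct and follows the paper's approach exactly: the embedded ESU's are LNC by hypothesis, adjacent and both-extremal pairs in the new ESU are dispatched by Propositions~\ref{prop adyacentes} and~\ref{PROP conexion entre extremos}, and for the single remaining diagonal pair in (3) and (4) your explicit four-path system (short $x$--$a$, short $y$--$b$, and the two long paths through opposite monochromatic copies of $B'$, paired short-with-opposite-long to avoid the $f$-collision) is precisely the construction the paper exhibits in its diagrams. Your final paragraph, however, is superfluous and slightly confused: LNC (Definition~\ref{def LNC}) is a purely local condition on $B$, so once you have produced $X_1,Y_1,X_2,Y_2$ you are done---the extension across $BT(P,Q)$ via $\widetilde Q$ to obtain paths of total length $2L(P)+2L(R)$ is the content of Proposition~\ref{ESU en LNC son NC} (LNC $\Rightarrow$ NC), not part of this proposition, and indeed there is no path $R$ in the present statement.
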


\begin{proof}
  (1) By assumption all the ESU's in $B'$ are LNC. All the pairs
  of different colors in the new ESU are adjacent, so Proposition~\ref{prop adyacentes} concludes the proof in this case.

  \noindent (2) By assumption, all the ESU's in $B'$ are LNC. On the other hand all the pairs of different colors of
  the new components are either adjacent or both extremal, so Propositions~\ref{prop adyacentes} and~\ref{PROP conexion entre extremos}
  conclude the proof in this case.

  \noindent (3) By assumption, all the ESU's in $B'$ are LNC. On the other hand all the pairs of
  the new components of different color except one, are adjacent, so Propositions~\ref{prop adyacentes} proves that they are LNC.
  The remaining pair is proven to be LNC by the paths $X_1,X_2,Y_1,Y_2$ in the following diagrams, that satisfy the conditions of
  Definition~\ref{def LNC}.

    \begin{tikzpicture}[scale=0.5]
\draw[-] (0,2)..controls (4,0)..(5,0);
\draw[-] (0,2)..controls (2,4)..(3,4);
\draw[-,line width=2pt] (10,2)..controls (6,0)..(5,0);
\draw[-] (10,2)..controls (8,4)..(7,4);
\draw[-,line width=2pt] (5,0)..controls (3,3)..(3,4);
\draw[-] (5,0)..controls (7,3)..(7,4);
\draw[-,line width=2pt] (3,4)--(4,3);
\draw[-] (6,3)--(7,4);
\draw[-] (3,4)--(4,5);
\draw[-,line width=2pt] (6,5)--(7,4);
\filldraw [black] (4,3)--(5,3)--(5,4)--(4,4);
\filldraw [cyan] (5,3)--(6,3)--(6,4)--(5,4);
\filldraw [cyan] (4,4)--(5,4)--(5,5)--(4,5);
\filldraw [black] (5,4)--(6,4)--(6,5)--(5,5);
\draw (-0.3,2) node {$a$};
\draw (10.3,2) node {$b$};
\draw (3,4.5) node {$d$};
\draw (7,4.5) node {$e$};
\draw (5,-1) node {\normalfont The paths $X_1$ and $Y_1$};
\draw[-,line width=2pt] (0,2)--(1.5,3.5);
\draw (0.4,3.2) node {$X_1$};
\draw (1.4,3.8) node {$x$};
\draw[-,line width=2pt] (7,4)..controls (7.8,4)..(8.5,3.5);
\draw (7.4,1.2) node {$Y_1$};
\draw (8.8,3.7) node {$y$};
\filldraw [black] (3,4) circle (2pt)
[black] (7,4) circle (2pt)
[black] (0,2) circle (2pt)
[black] (10,2) circle (2pt)
[black] (5,0) circle (2pt);
\draw (14,6) node{};
\filldraw [black] (1.5,3.5) circle (2pt)
[black] (8.5,3.5) circle (2pt);
\end{tikzpicture}
\begin{tikzpicture}[scale=0.5]
\draw[-,line width=2pt,cyan] (0,2)..controls (4,0)..(5,0);
\draw[-] (0,2)..controls (2,4)..(3,4);
\draw[-] (10,2)..controls (6,0)..(5,0);
\draw[-] (10,2)..controls (8,4)..(7,4);
\draw[-] (5,0)..controls (3,3)..(3,4);
\draw[-,line width=2pt,cyan] (5,0)..controls (7,3)..(7,4);
\draw[-] (3,4)--(4,3);
\draw[-,line width=2pt,cyan] (6,3)--(7,4);
\draw[-,line width=2pt,cyan] (3,4)--(4,5);
\draw[-] (6,5)--(7,4);
\filldraw [black] (4,3)--(5,3)--(5,4)--(4,4);
\filldraw [cyan] (5,3)--(6,3)--(6,4)--(5,4);
\filldraw [cyan] (4,4)--(5,4)--(5,5)--(4,5);
\filldraw [black] (5,4)--(6,4)--(6,5)--(5,5);
\draw (-0.3,2) node {$a$};
\draw (10.3,2) node {$b$};
\draw (3,4.5) node {$d$};
\draw (7,4.5) node {$e$};
\draw (5,-1) node {\normalfont The paths $X_2$ and $Y_2$ };
\draw[-,line width=2pt,cyan] (10,2)--(8.5,3.5);
\draw[cyan] (9.6,3.2) node {$Y_2$};
\draw (1.4,3.8) node {$x$};
\draw[-,line width=2pt,cyan] (3,4)..controls (2.2,4)..(1.5,3.5);
\draw[cyan] (2.6,1.2) node {$X_2$};
\draw (8.8,3.7) node {$y$};
\filldraw [black] (3,4) circle (2pt)
[black] (7,4) circle (2pt)
[black] (0,2) circle (2pt)
[black] (10,2) circle (2pt)
[black] (5,0) circle (2pt);
\filldraw [black] (1.5,3.5) circle (2pt)
[black] (8.5,3.5) circle (2pt);
\end{tikzpicture}

  \noindent (4) By assumption, the ESU's in $B'$ are LNC. On the other hand all the pairs of
  the new components of different color except one, are adjacent or both extremal, so
  Propositions~\ref{prop adyacentes} and~\ref{PROP conexion entre extremos}
   prove that they are LNC.
  The remaining pair is proven to be LNC by the paths in the following diagrams.

    \begin{tikzpicture}[scale=0.5]
\draw[-] (0,2)..controls (4,0)..(5,0);
\draw[-] (0,2)..controls (2,4)..(3,4);
\draw[-,line width=2pt] (8,0)..controls (6.5,-0.5)..(5,0);
\draw[-] (10,4)..controls (8.5,4.5)..(7,4);
\draw[-,line width=2pt] (5,0)..controls (3,3)..(3,4);
\draw[-] (5,0)..controls (7,3)..(7,4);
\draw[-,line width=2pt] (3,4)--(4,3);
\draw[-] (6,3)--(7,4);
\draw[-] (3,4)--(4,5);
\draw[-,line width=2pt] (6,5)--(7,4);
\filldraw [black] (4,3)--(5,3)--(5,4)--(4,4);
\filldraw [cyan] (5,3)--(6,3)--(6,4)--(5,4);
\filldraw [cyan] (4,4)--(5,4)--(5,5)--(4,5);
\filldraw [black] (5,4)--(6,4)--(6,5)--(5,5);
\draw (-0.3,2) node {$c$};
\draw (10.3,4) node {$b$};
\draw (8.3,0) node {$a$};
\draw (3,4.5) node {$d$};
\draw (7,4.5) node {$e$};
\draw (5,-1.2) node {\normalfont The paths $X_1$ and $Y_1$};
\draw[-,line width=2pt] (0,2)--(1.5,3.5);
\draw (0.4,3.2) node {$X_1$};
\draw (1.4,3.8) node {$x$};
\draw[-,line width=2pt] (7,4)..controls (8,4.3)..(8.5,4.3);
\draw (7.2,0.6) node {$Y_1$};
\draw (8.5,4.7) node {$y$};
\filldraw [black] (3,4) circle (2pt)
[black] (7,4) circle (2pt)
[black] (0,2) circle (2pt)
[black] (8,0) circle (2pt)
[black] (10,4) circle (2pt)
[black] (5,0) circle (2pt);
\draw (14,6) node{};
\filldraw [black] (1.5,3.5) circle (2pt)
[black] (8.5,4.3) circle (2pt);
\end{tikzpicture}
\begin{tikzpicture}[scale=0.5]
\draw[-,line width=2pt,cyan] (0,2)..controls (4,0)..(5,0);
\draw[-] (0,2)..controls (2,4)..(3,4);
\draw[-] (8,0)..controls (6.5,-0.5)..(5,0);
\draw[-] (10,4)..controls (8.5,4.5)..(7,4);
\draw[-] (5,0)..controls (3,3)..(3,4);
\draw[-,line width=2pt,cyan] (5,0)..controls (7,3)..(7,4);
\draw[-] (3,4)--(4,3);
\draw[-,line width=2pt,cyan] (6,3)--(7,4);
\draw[-,line width=2pt,cyan] (3,4)--(4,5);
\draw[-] (6,5)--(7,4);
\filldraw [black] (4,3)--(5,3)--(5,4)--(4,4);
\filldraw [cyan] (5,3)--(6,3)--(6,4)--(5,4);
\filldraw [cyan] (4,4)--(5,4)--(5,5)--(4,5);
\filldraw [black] (5,4)--(6,4)--(6,5)--(5,5);
\draw (-0.3,2) node {$c$};
\draw (10.3,4) node {$b$};
\draw (8.3,0) node {$a$};
\draw (3,4.5) node {$d$};
\draw (7,4.5) node {$e$};
\draw (5,-1.2) node {\normalfont The paths $X_2$ and $Y_2$ };
\draw[-,line width=2pt,cyan] (10,4)..controls (9,4.3)..(8.5,4.3);
\draw[cyan] (9.6,4.5) node {$Y_2$};
\draw (1.4,3.8) node {$x$};
\draw[-,line width=2pt,cyan] (3,4)..controls (2.2,4)..(1.5,3.5);
\draw[cyan] (2.6,1.2) node {$X_2$};
\draw (8.5,4.8) node {$y$};
\filldraw [black] (3,4) circle (2pt)
[black] (7,4) circle (2pt)
[black] (0,2) circle (2pt)
[black] (10,4) circle (2pt)
[black] (8,0) circle (2pt)
[black] (5,0) circle (2pt);
\filldraw [black] (1.5,3.5) circle (2pt)
[black] (8.5,4.3) circle (2pt);
\end{tikzpicture}

\end{proof}

\section{Bi-traceable graphs with $\#(V(P)\cap V(Q))\le 4$}

In this section we will prove in Theorem~\ref{k menor igual a 4} that if $G$ is a simple graph, $P,Q$ are longest paths and $\# V(P)\cap V(Q) \le 4$,
then $BT(P,Q)$ is either a concatenation of LNC blocks,
or it is totally disconnected (TD), according to the following definition.

\begin{definition} \label{TD}
  $BT(P,Q)$  is \textbf{totally disconnected (TD)}, if all pairs of components are NDC.
\end{definition}

\begin{remark}\label{en TD todos son NC}
  If $BT(P,Q)$ is TD and $V(P)\ne V(Q)$, then
  $V(P)\cap V(Q)$ is a separator.
\end{remark}

Clearly, if $\ell=\# (V(P)\cap V(Q))=1$, then $BT(P,Q)$ is TD. When $\ell=2$, then the representation of the resulting BT-graph is a
concatenation of LNC
blocks.

When $\ell=3$, then there are two
different representation types for $BT(P,Q)$. Either it is the concatenation of two elementary IBB's and eventually up to two elementary EBB's,
or it is the concatenation of an elementary EBB and an EBB as in
Proposition~\ref{lista}(2). For this we write $(i,j,k)$ for the permutation $\sigma$ with $\sigma(1)=i$, $\sigma(2)=j$ and $\sigma(3)=k$, and
similarly for $\ell>3$.
Consider the equivalence relation generated by $\sigma\sim \sigma^{-1}$ and $\sigma\sim \sigma^\bot$, where $\sigma^{\bot}(j)=\sigma(\ell-j)$.

We will draw generic representations for each equivalence class of permutations for $\ell=3,4,5$. This means that in each completed component
 the number of vertices is left open. Note that two equivalent permutations $(i_1,j_1,k_1)\sim (i_2,j_2,k_2)$
have the same generic representation.

\noindent For the identity permutation $(1,2,3)$ we have
$$
(1,2,3)\sim (3,2,1)
\text{ and the (generic) representation of $BT(P,Q)$ is \ \ }
{\vcenter{\hbox{\begin{tikzpicture}[scale=0.3]
\draw[-,green] (1,2.6)..controls (1.3,2.6)..(2,2);
\draw[-,green] (2,2)..controls (3,2.8)..(4,2);
\draw[-,green] (4,2)..controls (5,2.8)..(6,2);
\draw[-,green] (6,2)..controls (6.7,2.6)..(7,2.6);
\draw[-,red] (1,1.4)..controls (1.3,1.4)..(2,2);
\draw[-,red] (2,2)..controls (3,1.2)..(4,2);
\draw[-,red] (4,2)..controls (5,1.2)..(6,2);
\draw[-,red] (6,2)..controls (6.7,1.4)..(7,1.4);
\filldraw [black]  (2,2)    circle (2pt)
[black]  (4,2)    circle (2pt)
[black]  (6,2)    circle (2pt);
\end{tikzpicture}}}}.
$$
For the remaining permutations we have
$$
(1,3,2)\sim (2,1,3)\sim (2,3,1)\sim (3,1,2)
\text{ and the representation of the BT-graph is \ \ }
{\vcenter{\hbox{\begin{tikzpicture}[scale=0.3]
\draw[-,green] (3,2.6)..controls (3.3,2.6)..(4,2);
\draw[-,green] (4,2)..controls (5.6,3)..(6,3);
\draw[-,green] (6,3)..controls (6.8,2)..(6,1);
\draw[-,red] (6,3)..controls (5.2,2)..(6,1);
\draw[-,red] (3,1.4)..controls (3.3,1.4)..(4,2);
\draw[-,red] (4,2)..controls (5.6,1)..(6,1);
\draw[-,red] (6,3)..controls (6.5,3)..(7,2.7);
\draw[-,green] (6,1)..controls (6.5,1)..(7,1.3);
\filldraw [black]  (4,2)    circle (2pt)
[black]  (6,3)    circle (2pt)
[black]  (6,1)    circle (2pt);
\end{tikzpicture}}}}.
$$

\subsection*{The case $\ell=4$}
Note that if $\ell=4$, then $\sigma^{\bot}=(4,3,2,1)\circ \sigma$, where $(4,3,2,1)$ corresponds to $(14)(23)$ in the standard form, so
our equivalence relation is the same as in~\cite{CCP}. We
 have seven classes, which coincide with the cases in~\cite{CCP}, and are listed in the following table.

 \smallskip
$$
\begin{tabular}{|c|c|c|c|}
  \hline
Case&  Permutations & Representation & Conn.\\ \hline
 1.&  $(1,2,3,4),(4,3,2,1)$&
   \begin{tikzpicture}[scale=0.3]
\draw[-,green] (1,2.6)..controls (1.3,2.6)..(2,2);
\draw[-,green] (2,2)..controls (3,2.8)..(4,2);
\draw[-,green] (4,2)..controls (5,2.8)..(6,2);
\draw[-,green] (6,2)..controls (7,2.8)..(8,2);
\draw[-,green] (8,2)..controls (8.7,2.6)..(9,2.6);
\draw[-,red] (1,1.4)..controls (1.3,1.4)..(2,2);
\draw[-,red] (2,2)..controls (3,1.2)..(4,2);
\draw[-,red] (4,2)..controls (5,1.2)..(6,2);
\draw[-,red] (6,2)..controls (7,1.2)..(8,2);
\draw[-,red] (8,2)..controls (8.7,1.4)..(9,1.4);
\filldraw [black]  (2,2)    circle (2pt)
[black]  (4,2)    circle (2pt)
[black]  (6,2)    circle (2pt)
[black]  (8,2)    circle (2pt);
\draw (2,3)node{};
\end{tikzpicture}
& LNC \\  \hline
2.& $(1,2,4,3),(3,4,2,1),(4,3,1,2),(2,1,3,4)$&
\begin{tikzpicture}[scale=0.3]
\draw[-,green] (1,2.6)..controls (1.3,2.6)..(2,2);
\draw[-,green] (2,2)..controls (3,2.8)..(4,2);
\draw[-,green] (4,2)..controls (5.6,3)..(6,3);
\draw[-,green] (6,3)..controls (6.8,2)..(6,1);
\draw[-,red] (6,3)..controls (5.2,2)..(6,1);
\draw[-,red] (1,1.4)..controls (1.3,1.4)..(2,2);
\draw[-,red] (2,2)..controls (3,1.2)..(4,2);
\draw[-,red] (4,2)..controls (5.6,1)..(6,1);
\draw[-,red] (6,3)..controls (6.5,3)..(7,2.7);
\draw[-,green] (6,1)..controls (6.5,1)..(7,1.3);
\filldraw [black]  (2,2)    circle (2pt)
[black]  (4,2)    circle (2pt)
[black]  (6,3)    circle (2pt)
[black]  (6,1)    circle (2pt);
\end{tikzpicture}
& LNC \\  \hline
3. &$(1,3,2,4),(4,2,3,1)$&
\begin{tikzpicture}[scale=0.3]
\draw[-,green] (1,2.6)..controls (1.3,2.6)..(2,2);
\draw[-,green] (2,2)..controls (3.6,3)..(4,3);
\draw[-,green] (4,3)..controls (3.2,2)..(4,1);
\draw[-,red] (1,1.4)..controls (1.3,1.4)..(2,2);
\draw[-,red] (2,2)..controls (3.6,1)..(4,1);
\draw[-,red] (7,2.6)..controls (6.7,2.6)..(6,2);
\draw[-,red] (6,2)..controls (4.4,3)..(4,3);
\draw[-,red] (4,3)..controls (4.8,2)..(4,1);
\draw[-,green] (7,1.4)..controls (6.7,1.4)..(6,2);
\draw[-,green] (6,2)..controls (4.4,1)..(4,1);
\filldraw [black]  (2,2)    circle (2pt)
[black]  (4,3)    circle (2pt)
[black]  (6,2)    circle (2pt)
[black]  (4,1)    circle (2pt);
\end{tikzpicture}
& LNC \\  \hline
4. & $\begin{array}{c}
  (1,3,4,2), (1,4,2,3), (2,3,1,4), (2,4,3,1),\\
   (3,1,2,4), (3,2,4,1), (4,1,3,2), (4,2,1,3)
\end{array}$&
\begin{tikzpicture}[scale=0.3]
\draw[-,green] (1,2.6)..controls (1.3,2.6)..(2,2);
\draw[-,green] (2,2)..controls (3.6,3)..(4,3);
\draw[-,green] (4,3) -- (4,1);
\draw[-,red] (4,1)..controls (5,2)..(6,2);
\draw[-,red] (1,1.4)..controls (1.3,1.4)..(2,2);
\draw[-,red] (2,2)..controls (3.6,1)..(4,1);
\draw[-,green] (7,2.6)..controls (6.7,2)..(6,2);
\draw[-,red] (6,2)..controls (4.4,3)..(4,3);
\draw[-,red] (4,3)..controls (4.8,3.3)..(5.6,3);
\draw[-,green] (6,2)..controls (5.4,1)..(4,1);
\filldraw [black]  (2,2)    circle (2pt)
[black]  (4,3)    circle (2pt)
[black]  (6,2)    circle (2pt)
[black]  (4,1)    circle (2pt);
\draw (2,3.5) node{};
\end{tikzpicture}
& LNC \\  \hline
5. &$(1,4,3,2), (2,3,4,1), (4,1,2,3), (3,2,1,4)$&
\begin{tikzpicture}[scale=0.3]
\draw[-,red] (2,2)..controls (2.8,3)..(2,4);
\draw[-,red] (2,4)..controls (2.8,5)..(2,6);
\draw[-,red] (2,2)..controls (1.2,2)..(0,4);
\draw[-,green] (2,6)..controls (1.2,6)..(0,4);
\draw[-,green] (-1,4.6)..controls (-0.6,4.6)..(0,4);
\draw[-,red] (-1,3.4)..controls (-0.6,3.4)..(0,4);
\draw[-,green] (3,2.5)..controls (2.7,2)..(2,2);
\draw[-,green] (2,2)..controls (1.2,3)..(2,4);
\draw[-,green] (2,4)..controls (1.2,5)..(2,6);
\draw[-,red] (2,6)..controls (2.7,6)..(3,5.5);
\filldraw [black]  (2,2)    circle (2pt)
[black]  (2,4)    circle (2pt)
[black]  (2,6)    circle (2pt)
[black]  (0,4)    circle (2pt);
\end{tikzpicture}
& LNC \\  \hline
6. & $(2,1,4,3), (3,4,1,2)$&
\begin{tikzpicture}[scale=0.3]
\draw[-,red] (2,2)..controls (3,2.8)..(4,2);
\draw[-,green] (1,1.4)..controls (1.3,1.4)..(2,2);
\draw[-,green] (2,2)..controls (3,1.2)..(4,2);
\draw[-,red] (4,2)..controls (4.7,1.4)..(5,1.4);
\draw[-,green] (2,4)..controls (3,3.2)..(4,4);
\draw[-,green] (1,4.6)..controls (1.3,4.6)..(2,4);
\draw[-,red] (2,4)..controls (3,4.8)..(4,4);
\draw[-,red] (4,4)..controls (4.7,4.6)..(5,4.6);
\draw[-,green] (4,2)--(4,4);
\draw[-,red] (2,2)--(2,4);
\filldraw [black]  (2,2)    circle (2pt)
[black]  (4,2)    circle (2pt)
[black]  (2,4)    circle (2pt)
[black]  (4,4)    circle (2pt);
\end{tikzpicture}
&LNC
\\  \hline
7. & $(2,4,1,3), (3,1,4,2)$
&
\begin{tikzpicture}[scale=0.3]
\draw[-,green] (2,2) --(4,4);
\draw[-,red] (2,6) --(4,4);
\draw[-,red] (6,4) --(4,4);
\draw[-,green] (3,4) --(4,4);
\draw[-,red] (6,4) --(7,4);
\draw[-,green] (2,2)..controls (4.5,2)..(6,4);
\draw[-,green] (2,6)..controls (4.5,6)..(6,4);
\draw[-,red] (2,2)..controls (1,4)..(2,6);
\draw[-,red] (2,2) --(1,2);
\draw[-,green] (2,6) --(1,6);
\filldraw [black]  (2,2)    circle (2pt)
[black]  (2,6)    circle (2pt)
[black]  (4,4)    circle (2pt)
[black]  (6,4)    circle (2pt);
\draw(2,6.3)node{};
\end{tikzpicture}
& TD \\  \hline
\end{tabular}
$$

\begin{theorem}\label{Teorema k menor igual a 4}
  If $G$ is a simple graph, $P,Q$ are longest paths and $\# V(P)\cap V(Q) \le 4$, then $BT(P,Q)$
  is either a concatenation of LNC blocks, or it is TD.
\end{theorem}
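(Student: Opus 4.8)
The plan is to reduce the statement to a finite case analysis driven by the permutation $\sigma$ attached to $P$ and $Q$, using the equivalence classes already tabulated. For $\ell\le 3$ there is nothing essentially new: when $\ell=1$ the graph $BT(P,Q)$ is TD, and for $\ell\in\{2,3\}$ the generic representations drawn before the theorem are concatenations of elementary IBB's and EBB's (with one EBB of type Proposition~\ref{lista}(2) occurring in the second $\ell=3$ type). Each such block is LNC, since an elementary IBB has adjacent completed components (Proposition~\ref{prop adyacentes}) and an elementary EBB has adjacent or disjoint extremal components (Propositions~\ref{prop adyacentes} and~\ref{PROP conexion entre extremos}). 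So I would dispatch these first and concentrate on $\ell=4$.

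For $\ell=4$ I would treat the seven classes of the table one by one, proving that $BT(P,Q)$ is a concatenation of LNC blocks in Cases~1--6 and that it is TD in Case~7. In each LNC case the strategy is to read the block decomposition directly off the generic picture---the embedded elementary IBB's sit precisely where two completed components share both endpoints---and then invoke the established machinery: an elementary IBB/EBB is LNC by Propositions~\ref{prop adyacentes} and~\ref{PROP conexion entre extremos}, while each non-elementary block is the image of an LNC IBU under one of the four constructions of Proposition~\ref{lista}, hence LNC. Concretely, Case~1 $(1,2,3,4)$ is a chain of elementary blocks; Case~2 $(1,2,4,3)$ is an elementary EBB, an elementary IBB and an EBB of type Proposition~\ref{lista}(2); Case~3 $(1,3,2,4)$ is an elementary EBB, an IBB of type Proposition~\ref{lista}(1) and an elementary EBB; Case~4 $(1,3,4,2)$ is an elementary EBB followed by an EBB of type Proposition~\ref{lista}(4); and Case~5 $(1,4,3,2)$ is an elementary EBB followed by an EBB of type Proposition~\ref{lista}(2) whose embedded IBU is a concatenation of two elementary IBB's.

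Case~6 $(2,1,4,3)$ is the subtle LNC case: its representation is a $4$-cycle with two opposite sides doubled and a pendant edge at each vertex, so it is $2$-connected with four distinct path-endpoints and is therefore a single building block of the third type, outside the scope of Proposition~\ref{lista}. Here I would verify LNC of its ESU by hand. The two doubled sides are embedded elementary IBB's; among the six completed components of the ESU the adjacent pairs are LNC by Proposition~\ref{prop adyacentes}, the pairs of extremal components by Proposition~\ref{PROP conexion entre extremos}, and the single remaining pair---the two opposite single edges of the cycle---is handled by exhibiting the quadruple $X_1,Y_1,X_2,Y_2$ of Definition~\ref{def LNC}(a): from the two chosen interior points I would route one pair of disjoint paths around the cycle in one direction and the other pair in the opposite direction, each path crossing exactly one colour of each embedded IBB, so that the four paths cover the whole block and reach the four endpoints. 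Proposition~\ref{ESU en LNC son NC} then upgrades LNC to NC wherever that is needed.

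Case~7 $(2,4,1,3)$, where I claim that $BT(P,Q)$ is TD, is the part I expect to be the main obstacle. Here all pairs of different colour must be shown NDC. The adjacent pairs are NDC by Lemma~\ref{lema 4.1} and the different-coloured extremal pairs by Lemma~\ref{conexion entre extremos}; a direct count shows that exactly five pairs remain---the ``crossing'' pairs that are neither adjacent nor both extremal---and for each of these I would assume a direct connection $R$ and splice it into $P\cup Q$ to build two paths $\widehat P,\widehat Q$ with $L(\widehat P)+L(\widehat Q)=2L(P)+2L(R)$, so that one of them exceeds $L(P)$, contradicting maximality. Since $\sigma$ is the $4$-cycle $(1\,2\,4\,3)$, the symmetries $\sigma\sim\sigma^{-1}\sim\sigma^{\bot}$ act on these five pairs and reduce the genuinely distinct checks to only a few representatives. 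Finally, I would record that whenever a completed component is empty the corresponding pairs are NDC or LNC for free, so all degenerate sub-configurations are subsumed by the generic argument.
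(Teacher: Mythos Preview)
Your plan matches the paper's proof almost exactly: small $\ell$ by inspection, Cases~1--5 via the appropriate items of Proposition~\ref{lista}, Case~6 by a direct LNC verification of the single big ESU (with the two embedded elementary IBB's handled by Proposition~\ref{prop adyacentes} and the one non-adjacent, non-extremal pair by an explicit $X_1,Y_1,X_2,Y_2$ routing around the $4$-cycle), and Case~7 by showing TD. The block identifications you list for Cases~1--5 are precisely the ones the paper uses.

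There is one small slip in Case~7. You write that ``all pairs of different colour must be shown NDC'', but Definition~\ref{TD} requires \emph{all} pairs of components to be NDC, including same-colour ones; this matters for the later three-paths application (Proposition~\ref{tres caminos NC}), where one needs that $R$ cannot jump between any two components. The paper closes this gap exactly by the symmetry you already invoke: since $\sigma\sim\sigma^{-1}$ here, the bi-traceable graph admits a colour-swapping automorphism, so every same-colour pair is carried to a different-colour pair of the same combinatorial type and inherits NDC from it. You should make this explicit before restricting attention to different-colour pairs. After that reduction the paper, like you, discards adjacent pairs via Lemma~\ref{lema 4.1} and extremal--extremal pairs via Lemma~\ref{conexion entre extremos}, and then (using the full dihedral symmetry rather than just counting) is left with only two representative cases---one extremal vs.\ one non-adjacent internal, and two non-adjacent internals---each dispatched by an explicit $\widehat P,\widehat Q$ splice. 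Your ``five pairs reduced to a few representatives'' is the same endpoint reached with slightly different bookkeeping.
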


\begin{proof}
  For $\ell=1$ and $\ell=2$, we know by the discussion after Definition~\ref{TD} that $BT(P,Q)$ is TD.
   For $\ell=3$ and for the first five subcases of the case $\ell=4$, $BT(P,Q)$
  is the concatenation of LNC blocks. In fact, the first case for $\ell=3$ and the first case for $\ell=4$ is a concatenation of
  elementary building blocks which are LNC (for example by Proposition~\ref{prop adyacentes}). The second case for $\ell=3$ and the cases 2. and 5.
  for $\ell=4$ have additional blocks corresponding to Proposition~\ref{lista}(2). The cases 3. and 4. for $\ell=4$ have additional blocks
  corresponding to
  items~(1) and~(4) of Proposition~\ref{lista}. So it remains to prove that for $\ell=4$ in Case 6. $BT(P,Q)$ is LNC and in Case~7. it is TD.
  In the sixth case there is one BB, with two embedded elementary IBB's and an ESU with six completed components. By
  Proposition~\ref{prop adyacentes}, the two elementary ESU's  are LNC.

\noindent \begin{tikzpicture}[scale=1]
\draw[-,red] (2,2)..controls (3,2.8)..(4,2);
\draw[-,line width=2pt,green] (1,1.4)..controls (1.3,1.4)..(2,2);
\draw[-,green] (2,2)..controls (3,1.2)..(4,2);
\draw[-,line width=2pt,red] (4,2)..controls (4.7,1.4)..(5,1.4);
\draw[-,green] (2,4)..controls (3,3.2)..(4,4);
\draw[-,line width=2pt,green] (1,4.6)..controls (1.3,4.6)..(2,4);
\draw[-,red] (2,4)..controls (3,4.8)..(4,4);
\draw[-,red,line width=2pt] (4,4)..controls (4.7,4.6)..(5,4.6);
\draw[-,line width=2pt,green] (4,2)--(4,4);
\draw[-,red,line width=2pt] (2,2)--(2,4);
\filldraw [black]  (2,2)    circle (2pt)
[black]  (4,2)    circle (2pt)
[black]  (2,4)    circle (2pt)
[black]  (4,4)    circle (2pt);
\draw (3,0.8) node {ESU with six edges};
\draw(7,2)node{};
\draw(0,2)node{};
\end{tikzpicture}
\begin{tikzpicture}[scale=1]
\draw[-,line width=2pt] (2,2)..controls (3,2.5)..(4,2);
\draw[-,line width=2pt] (1,1.4)..controls (1.3,1.4)..(2,2);
\draw[-,line width=2pt,cyan] (2,2)..controls (3,1.5)..(4,2);
\draw[-,line width=2pt,cyan] (4,2)..controls (4.7,1.4)..(5,1.4);
\draw[-,line width=2pt,cyan] (2,4)..controls (3,3.5)..(4,4);
\draw[-,line width=2pt,cyan] (1,4.6)..controls (1.3,4.6)..(2,4);
\draw[-,line width=2pt] (2,4)..controls (3,4.5)..(4,4);
\draw[-,line width=2pt] (4,4)..controls (4.7,4.6)..(5,4.6);
\draw[-,line width=2pt,cyan] (4,3)--(4,4);
\draw[-,line width=2pt] (4,3)--(4,2);
\draw[-,line width=2pt,cyan] (2,2)--(2,3);
\draw[-,line width=2pt] (2,4)--(2,3);
\filldraw [black]  (2,2)    circle (2pt)
[black]  (4,2)    circle (2pt)
[black]  (2,4)    circle (2pt)
[black]  (4,4)    circle (2pt);
\filldraw [red]  (4,3)    circle (1.5pt)
[red]  (2,3)    circle (1.5pt);
\draw (1.7,3) node {$x$};
\draw (4.3,3) node {$y$};
\draw (4.1,4.5) node {$X_1$};
\draw (0.8,1.5) node {$Y_1$};
\draw[blue] (0.8,4.6) node {$Y_2$};
\draw[blue] (4.1,1.5) node {$X_2$};
\draw (3,0.8) node {$X_1$ and $Y_1$ in black, $X_2$ and $Y_2$ in blue};
\draw (7,5) node {};
\end{tikzpicture}

In the ESU with six completed components all the pairs of
  the components of different color except one, are adjacent or both extremal, so
  by Propositions~\ref{prop adyacentes} and~\ref{PROP conexion entre extremos} they are LNC.
  The remaining pair is proven to be LNC by the paths in the second diagram.

  Finally we prove that $BT(P,Q)$ in Case 7 is TD. We claim that adjacent components are NDC, and that any pair of extremal components is also NDC.
  In fact, since the graph is
  symmetric, we can assume that these pairs are of different colors and so Lemmas~\ref{lema 4.1} and~\ref{conexion entre extremos} prove the claim.
  Again by symmetry, we are left with two cases:
  \begin{enumerate}
    \item[a)] Either one component is extremal and the other is not adjacent and internal, or
    \item[b)] both components are internal and not adjacent.
  \end{enumerate}
  The following diagrams show that in both cases the pairs are NDC.

 \begin{tikzpicture}[scale=0.7]
\draw[-] (2,2) --(4,4);
\draw[-] (2,2) --(1,2);
\draw[-] (6,2) --(4,4);
\draw[-] (6,2) --(7,2);
\draw[-] (4,7) --(4,3);
\draw[-] (2,2)..controls (2,4.5)..(4,6);
\draw[-] (6,2)..controls (6,4.5)..(4,6);
\draw[-] (2,2)..controls (4,1)..(6,2);
\draw[-,red] (4.04,3.4)--(4.04,3);
\draw[-,red] (4.04,3.4)..controls (4.8,2.5)..(4.8,1.48);
\draw[-,red] (6,2.05)--(4.8,1.48);
\draw[-,red] (5.95,2)..controls (5.95,4.5)..(3.95,6);
\draw[-,red] (4.04,4)--(4.04,6);
\draw[-,red] (2,1.95) --(4,3.95);
\draw[-,red] (2,1.95) --(1,1.95);
\draw[-,green] (3.95,7) --(3.95,6);
\draw[-,green] (1.95,2)..controls (1.95,4.5)..(3.95,6);
\draw[-,green] (2,2.05)..controls (3.85,1.2)..(4.8,1.48);
\draw[-,green] (4.09,3.4)..controls (4.85,2.5)..(4.85,1.48);
\draw[-,green] (4.05,3.4) --(4.05,4);
\draw[-,green] (6,1.95) --(4,3.95);
\draw[-,green] (6,1.95) --(7,1.95);
\filldraw [black]  (2,2)    circle (2pt)
[black]  (6,2)    circle (2pt)
[black]  (4,4)    circle (2pt)
[black]  (4,6)    circle (2pt);
\filldraw [red]  (4.04,3.4)    circle (1pt)
[red]  (4.8,1.48)    circle (1pt);
\draw(4,0.8) node {Case a), with $\widehat P$ in green, $\widehat Q$ in red};
\draw (9,8) node{};
\end{tikzpicture}
 \begin{tikzpicture}[scale=0.7]
\draw[-] (2,2) --(4,4);
\draw[-] (2,2) --(1,2);
\draw[-] (6,2) --(4,4);
\draw[-] (6,2) --(7,2);
\draw[-] (4,7) --(4,3);
\draw[-] (2,2)..controls (2,4.5)..(4,6);
\draw[-] (6,2)..controls (6,4.5)..(4,6);
\draw[-] (2,2)..controls (4,1)..(6,2);
\draw[-,red] (6,2.05)--(4.8,1.48);
\draw[-,red] (5.95,2)..controls (5.95,4.5)..(3.95,6);
\draw[-,green] (2,1.95) --(4,3.95);
\draw[-,red] (2,1.95) --(1,1.95);
\draw[-,green] (6,1.95) --(4,3.95);
\draw[-,red] (1.95,2)..controls (1.95,4.5)..(3.95,6);
\draw[-,green] (2,2.05)..controls (3.85,1.2)..(4.8,1.48);
\draw[-,line width=3pt,white] (4.065,4.8)..controls (4.825,3.5)..(4.825,1.48);
\draw[-,red] (4.09,4.8)..controls (4.85,3.5)..(4.85,1.48);
\draw[-,green] (4.04,4.8)..controls (4.8,3.5)..(4.8,1.48);
\draw[-,green] (4.05,4.8) --(4.05,7);
\draw[-,red] (4.04,4.8)--(4.04,3);
\draw[-,green] (6,1.95) --(7,1.95);
\filldraw [black]  (2,2)    circle (2pt)
[black]  (6,2)    circle (2pt)
[black]  (4,4)    circle (2pt)
[black]  (4,6)    circle (2pt);
\filldraw [red]  (4.04,4.8)    circle (1pt)
[red]  (4.8,1.48)    circle (1pt);
\draw(4,0.8) node {Case b), with $\widehat P$ in green, $\widehat Q$ in red};
\end{tikzpicture}

\noindent  This concludes Case 7 and thus finishes the proof.
\end{proof}

\begin{corollary} \label{k menor igual a 4}
  If $G$ is a simple graph, $P,Q$ are longest paths, $V(P)\ne V(Q)$ and $\# V(P)\cap V(Q) \le 4$, then $V(P)\cap V(Q)$ is a separator.
\end{corollary}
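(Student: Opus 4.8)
The plan is to read the conclusion off directly from Theorem~\ref{Teorema k menor igual a 4}, which has already carried out all the structural work, by splitting into the two alternatives it provides and invoking the appropriate separator criterion in each. First I would apply Theorem~\ref{Teorema k menor igual a 4} to $BT(P,Q)$: since $\#V(P)\cap V(Q)\le 4$, either $BT(P,Q)$ is a concatenation of LNC blocks, or it is TD. These two cases are then treated separately.

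In the first case I would observe that \emph{every} ESU of $BT(P,Q)$ is LNC. Indeed, by the remark following Lemma~\ref{coincidencia de longitud y numero de componentes} the ESU's partition the completed components of $BT(P,Q)$, and each ESU is either the ESU of one of the concatenated blocks $B$ or the ESU of an IBB embedded in such a $B$; since $B$ is LNC, both kinds of ESU are LNC by Definition~\ref{def LNC}(b). As $V(P)\ne V(Q)$ by hypothesis, Theorem~\ref{concatenation of LNC blocks} then gives that $V(P)\cap V(Q)$ is a separator of $G$. In the second case, where $BT(P,Q)$ is TD, the hypothesis $V(P)\ne V(Q)$ together with Remark~\ref{en TD todos son NC} yields immediately that $V(P)\cap V(Q)$ is a separator.

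The corollary is therefore a purely bookkeeping consequence of Theorem~\ref{Teorema k menor igual a 4}; there is no genuine obstacle at this stage, since all the combinatorial difficulty --- the case analysis over the seven configurations for $\ell=4$ and the verification of the LNC or TD property in each --- has already been discharged in the proof of that theorem. The only point requiring a moment's care is the first-case observation that ``concatenation of LNC blocks'' forces every ESU to be LNC, so that the hypothesis of Theorem~\ref{concatenation of LNC blocks} is actually met; once this is noted, the two invocations close out the argument.
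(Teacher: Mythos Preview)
Your proposal is correct and follows essentially the same route as the paper. The only cosmetic difference is that, for the LNC alternative, you invoke Theorem~\ref{concatenation of LNC blocks} directly, whereas the paper cites its two ingredients Proposition~\ref{ESU en LNC son NC} and Proposition~\ref{isu nc implica articulation set} separately; your extra sentence checking that ``concatenation of LNC blocks'' forces every ESU to be LNC makes explicit what the paper leaves implicit.
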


\begin{proof}
  By Theorem~\ref{Teorema k menor igual a 4} and Proposition~\ref{ESU en LNC son NC}
  in the first six cases all the ESU's in $BT(P,Q)$ are
  NC. Hence, Proposition~\ref{isu nc implica articulation set} implies that
  $V(P)\cap V(Q)$ is a separator, as desired. Case 7 follows from Remark~\ref{en TD todos son NC}.
\end{proof}

\begin{corollary}\label{Corollary Hippchen 5}
  Assume that $P$ and $Q$ are two longest paths in a $5$-connected simple graph $G$. Then $\# (V(P)\cap V(Q))\ge 5$.
\end{corollary}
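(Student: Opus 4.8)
The plan is to argue by contradiction: assume $\#(V(P)\cap V(Q))=\ell\le 4$ and derive a contradiction with the $5$-connectivity of $G$. The entire argument splits according to whether or not the two longest paths have the same vertex set, and essentially all of the combinatorial content has already been packaged into Corollary~\ref{k menor igual a 4} (and, behind it, Theorem~\ref{Teorema k menor igual a 4}), so what remains is short.

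First I would dispose of the degenerate case $V(P)=V(Q)$, in which the separator machinery gives no information because the complement of $V(P)\cap V(Q)=V(P)$ is empty. Since $G$ is $5$-connected it has at least $6$ vertices and minimum degree $\delta\ge 5$. A standard maximality argument shows that a longest path has at least $\delta+1$ vertices: writing $P=v_0v_1\cdots v_m$, every neighbour of the endpoint $v_0$ must lie on $P$, for otherwise $P$ could be extended, contradicting that it is longest; hence $v_0$ has at least $\delta$ neighbours among $v_1,\dots,v_m$, so $m\ge\delta\ge 5$. Thus if $V(P)=V(Q)$ we get $\#(V(P)\cap V(Q))=\#V(P)=m+1\ge 6>4$, contradicting $\ell\le 4$.

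Next, in the case $V(P)\ne V(Q)$, Corollary~\ref{k menor igual a 4} applies verbatim: since $V(P)\ne V(Q)$ and $\#(V(P)\cap V(Q))\le 4$, the set $V(P)\cap V(Q)$ is a separator of $G$. But a $5$-connected graph admits no separator of size at most $4$, since removing fewer than $5$ vertices from a $5$-connected graph leaves it connected. This again contradicts $\ell\le 4$. Both cases being impossible, we conclude $\#(V(P)\cap V(Q))\ge 5$, as claimed.

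I do not expect any genuine obstacle here; the result is an immediate corollary once Corollary~\ref{k menor igual a 4} is in hand. The only point requiring a little care is precisely the case $V(P)=V(Q)$, where the separator conclusion is vacuous and one must instead use the elementary lower bound $\#V(P)\ge\delta+1\ge k+1$ on the number of vertices of a longest path in a $k$-connected graph.
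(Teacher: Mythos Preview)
Your proof is correct and follows essentially the same approach as the paper: reduce to $V(P)\ne V(Q)$, then apply Corollary~\ref{k menor igual a 4} to obtain a separator of size at most $4$, contradicting $5$-connectivity. The only minor difference is that the paper first invokes the known $k=4$ case of Hippchen's conjecture (from~\cite{G}) to pin down $\ell=4$ before applying Corollary~\ref{k menor igual a 4}, whereas you handle all $\ell\le 4$ uniformly; since Corollary~\ref{k menor igual a 4} already covers every $\ell\le 4$, your streamlining is harmless and arguably cleaner.
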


\begin{proof}
  We know that $\# V(P)\ge 5$, so we can assume that $V(P)\ne V(Q)$.
  Since a $5$-connected graph is also $4$-connected, by~\cite{G}*{Theorem 4.2} we know that $\# (V(P)\cap V(Q))\ge 4$.
  Assume by contradiction that $\# (V(P)\cap V(Q))<5$, i.e., that $\# (V(P)\cap V(Q))= 4$. Since $G$ is $5$-connected,
  the complement of $V(P)\cap V(Q)$ is connected, which contradicts the fact that by Corollary~\ref{k menor igual a 4}
  $V(P)\cap V(Q)$ is a separator. This contradiction concludes the proof.
\end{proof}

\begin{proposition}\label{tres caminos NC}
Let $P,Q,R$ be three longest paths in a graph $G$.
  If all the ESU's in $BT(P,Q)$ are NC, or $BT(P,Q)$ is TD, then the intersection of the three longest paths $V(P)\cap V(Q)\cap V(R)$ is not empty.
\end{proposition}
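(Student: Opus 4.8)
The plan is to argue by contradiction. Assume $V(P)\cap V(Q)\cap V(R)=\emptyset$ and set $S:=V(P)\cap V(Q)$; then any vertex of $R$ lying on $P$ or on $Q$ would lie in $S$, so in fact $R$ is a longest path contained in $G\setminus S$. First I would dispose of the degenerate case $V(P)=V(Q)$: here $S=V(P)$, and since the longest paths $R$ and $P$ must share a vertex, that vertex lies in $S\cap V(R)$, contradicting $R\subseteq G\setminus S$. So from now on $V(P)\ne V(Q)$, and there exist non-empty components $P_i'$, $Q_j'$.

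Since any two longest paths of a connected graph meet, $R$ meets both $P$ and $Q$; as $R$ avoids $S$, these meeting points lie in $\bigcup_i P_i'$ and $\bigcup_j Q_j'$. Listing the components of $P\cup Q\setminus S$ that $R$ traverses, in the order $X_1,\dots,X_r$ along $R$, both colours therefore occur, and each piece of $R$ running between consecutive $X_t$ and $X_{t+1}$ avoids $S$ and every component, hence is internally disjoint from $P\cup Q$; that is, $X_t\sim X_{t+1}$ is a direct connection. If $BT(P,Q)$ is TD this already closes the argument: as the colours along $X_1,\dots,X_r$ are not constant, some adjacent pair $X_t,X_{t+1}$ has different colours and is directly connected, so by Definition~\ref{TD} (all pairs NDC) this connection produces two paths of total length $2L(P)+2L(R_t)>2L(P)$, i.e. a path in $G$ longer than $P$, a contradiction.

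The case in which all ESU's are NC is the heart of the matter, and the crucial new point is to show that $R$ must meet some single ESU in components of both colours. Suppose not, so that in every ESU all components met by $R$ belong to one of its two colour classes. Since the ESU's partition the completed components and each is a swap unit, prescribing a colour class in each ESU gives a valid representation; I would therefore swap, simultaneously, exactly those ESU's that $R$ meets in their $Q$-class, obtaining longest paths $\widetilde P,\widetilde Q$ with $V(\widetilde P)\cap V(\widetilde Q)=S$. After this swap every component met by $R$ is coloured $\widetilde P$, so $R$ meets no $\widetilde Q$-component; as $R$ avoids $S$ this forces $V(R)\cap V(\widetilde Q)=\emptyset$, contradicting that the longest paths $R$ and $\widetilde Q$ must share a vertex.

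Hence $R$ does meet some ESU in two components $P_a'$ and $Q_b'$ of different colours, and the corresponding subpath of $R$ is a path in $G\setminus S$ joining components of different colours inside a single ESU --- which is exactly what Proposition~\ref{prop no path exists} forbids, giving the final contradiction. The step needing the most care is the global swap: one must check that swapping an arbitrary set of ESU's at once still yields longest paths with the same intersection $S$, which I would justify from the ESU's being independent swap units whose union is all of $BT(P,Q)$. Everything else reduces to results already established, the genuinely new ingredient being the observation that a longest path avoiding $S$ and meeting each ESU in a single colour class could be separated from a recoloured $\widetilde Q$, which is impossible.
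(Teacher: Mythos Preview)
Your proof is correct and follows essentially the same approach as the paper's: the swap argument to force $R$ to meet some ESU in both colours, then Proposition~\ref{prop no path exists} (or the NDC definition in the TD case) for the contradiction. One slip to fix: the clause ``any vertex of $R$ lying on $P$ or on $Q$ would lie in $S$'' is false as written (such a vertex lies in $V(P)\setminus S$ or $V(Q)\setminus S$); what you need, and what you actually use, is simply $V(R)\cap S=\emptyset$, which is the assumption itself.
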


\begin{proof}
  On one hand, $R$
  must touch in at least one ESU the components of different colors, since otherwise we could swap the colors conveniently and obtain a longest
  path $\widetilde Q$ such that $\widetilde Q\cap R=\emptyset$, which is impossible. On the other hand, it is impossible that $R$ touches two
  components of different colors in an ESU. In fact, if all ESU's are NC, then this follows from Proposition~\ref{prop no path exists},
  and if $BT(P,Q)$ is TD, then $R$ can touch only one component of $BT(P,Q)$.
\end{proof}

\begin{corollary} \label{three paths}
  If the intersection of three longest paths $P,Q,R$ is empty, then $\#(V(P)\cap V(Q))\ge 5$.
\end{corollary}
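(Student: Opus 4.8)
The plan is to deduce this directly from the classification in Theorem~\ref{Teorema k menor igual a 4} together with Proposition~\ref{tres caminos NC}, arguing by contradiction. I would suppose the three longest paths satisfy $V(P)\cap V(Q)\cap V(R)=\emptyset$ but $\#(V(P)\cap V(Q))\le 4$. Since two longest paths in a connected graph always meet, we have $1\le \ell:=\#(V(P)\cap V(Q))\le 4$, so $BT(P,Q)$ falls under the hypotheses of Theorem~\ref{Teorema k menor igual a 4}.

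By that theorem, $BT(P,Q)$ is either TD or a concatenation of LNC blocks, and I would handle the two outputs uniformly, the point being that each is precisely a hypothesis of Proposition~\ref{tres caminos NC}. If $BT(P,Q)$ is TD, there is nothing further to do. If $BT(P,Q)$ is a concatenation of LNC blocks, I would first promote this to the statement that \emph{every} ESU of $BT(P,Q)$ is NC: by Definition~\ref{def LNC}(b), a block being LNC means its ESU together with the ESU's of all its embedded IBB's are LNC, and since the ESU's of $BT(P,Q)$ are exactly those of its constituent blocks and their embedded IBB's --- whose union, by the remark following Lemma~\ref{coincidencia de longitud y numero de componentes}, is all of $BT(P,Q)$ --- every ESU is LNC; Proposition~\ref{ESU en LNC son NC} then upgrades LNC to NC.

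In either alternative the hypothesis of Proposition~\ref{tres caminos NC} is met, so $V(P)\cap V(Q)\cap V(R)\neq\emptyset$, contradicting the assumption. Hence $\#(V(P)\cap V(Q))\ge 5$, as claimed.

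The argument is essentially bookkeeping on top of the earlier machinery; the genuine content sits in Theorem~\ref{Teorema k menor igual a 4} (the complete classification of configurations for $\ell\le 4$) and in Proposition~\ref{tres caminos NC}. The only step that demands a moment's care --- and the closest thing to an obstacle --- is the passage from ``concatenation of LNC blocks'' to ``all ESU's NC'', namely checking that the block decomposition really does exhaust every ESU of $BT(P,Q)$ so that none escapes the LNC (hence NC) conclusion; once that is secured, Proposition~\ref{ESU en LNC son NC} and Proposition~\ref{tres caminos NC} close the argument.
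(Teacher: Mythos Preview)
Your proof is correct and follows essentially the same route as the paper's own proof, which simply cites Theorem~\ref{Teorema k menor igual a 4} and Proposition~\ref{tres caminos NC}. You are more explicit than the paper about the passage from ``concatenation of LNC blocks'' to ``all ESU's are NC'' via Proposition~\ref{ESU en LNC son NC}, a step the paper's two-line proof absorbs silently into its citation of the theorem.
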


\begin{proof}
  If $\#(V(P)\cap V(Q))< 5$, then by Theorem~\ref{Teorema k menor igual a 4} we know that in $BT(P,Q)$ all ESU's are NC,
  or $BT(P,Q)$ is TD.
  Proposition~\ref{tres caminos NC} concludes the proof.
\end{proof}

\begin{remark}
  The following representation of a BT-graph with $\ell=V(P)\cap V(Q)=5$ shows that the method for $\ell\le 4$ cannot be carried over to this case,
  since the highlighted ESU is not NC. In fact, the blue path $R$ connects two components of different colors in the same highlighted ESU,
  but cannot be completed to two paths whose lengths sum $2L(R)+2L(P)$, contradicting Definition~\ref{def NC}.
  $$
\begin{tikzpicture}[scale=1]
\draw[-,green] (2,2)..controls (2.8,3)..(2,4);
\draw[-,line width=2pt,red] (0,3)..controls (1.4,2)..(2,2);
\draw[-,red] (2,2)..controls (1.2,3)..(2,4);
\draw[-,line width=2pt,green] (2,4)..controls (1.4,4)..(0,3);
\draw[-,red] (4,2)..controls (3.2,3)..(4,4);
\draw[-,green] (0,3)..controls (-0.7,3.6)..(-1,3.6);
\draw[-,red] (0,3)..controls (-0.7,2.4)..(-1,2.4);
\draw[-,line width=2pt,red] (4.6,1)..controls (4.6,1.3)..(4,2);
\draw[-,green] (4,2)..controls (4.8,3)..(4,4);
\draw[-,line width=2pt,green] (4,4)..controls (4.6,4.7)..(4.6,5);
\draw[-,line width=2pt,red] (2,4)--(4,4);
\draw[-,line width=2pt,green] (2,2)--(4,2);
\filldraw [black]  (2,2)    circle (2pt)
[black]  (4,2)    circle (2pt)
[black]  (2,4)    circle (2pt)
[black]  (0,3)    circle (2pt)
[black]  (4,4)    circle (2pt);
\filldraw [blue]  (4.45,4.6)    circle (1.5pt)
[blue]  (1,2.25)    circle (1.5pt);
\draw[-,blue] (4.45,4.6)..controls (-3.5,5) and (-3.5,1)..(1,2.25);
\draw (-1,4.2) node {$R$};
\end{tikzpicture}
$$
However, we will prove in Corollary~\ref{Corollary Hippchen 6},
that this graph is not a counterexample to the Hippchen conjecture. We will even prove that
in this graph $V(P)\cap V(Q)$ is a separator in Theorem~\ref{teorema principal}.
\end{remark}

\section{Bi-traceable graphs with $\#(V(P)\cap V(Q))=5$}
\label{section table}
In this section we will prove in Theorem~\ref{k igual a 5} that if $G$ is a simple graph, $P,Q$ are longest paths and $\# V(P)\cap V(Q) =5$,
then $BT(P,Q)$ is either a concatenation of LNC building blocks,
or it is TD, or it is one of the three cases 6., 13. or 14. in the following table.

\medskip

\begin{tabular}{|c|c|c|c|}
  \hline
Case&  Permutations & Representation & Conn.\\ \hline
 1.&  $(1,2,3,4,5),(5,4,3,2,1)$&
   \begin{tikzpicture}[scale=0.3]
\draw[-,green] (1,2.6)..controls (1.3,2.6)..(2,2);
\draw[-,green] (2,2)..controls (3,2.8)..(4,2);
\draw[-,green] (4,2)..controls (5,2.8)..(6,2);
\draw[-,green] (6,2)..controls (7,2.8)..(8,2);
\draw[-,green] (8,2)..controls (9,2.8)..(10,2);
\draw[-,green] (10,2)..controls (10.7,2.6)..(11,2.6);
\draw[-,red] (1,1.4)..controls (1.3,1.4)..(2,2);
\draw[-,red] (2,2)..controls (3,1.2)..(4,2);
\draw[-,red] (4,2)..controls (5,1.2)..(6,2);
\draw[-,red] (6,2)..controls (7,1.2)..(8,2);
\draw[-,red] (8,2)..controls (9,1.2)..(10,2);
\draw[-,red] (10,2)..controls (10.7,1.4)..(11,1.4);
\filldraw [black]  (2,2)    circle (2pt)
[black]  (4,2)    circle (2pt)
[black]  (6,2)    circle (2pt)
[black]  (8,2)    circle (2pt)
[black]  (10,2)    circle (2pt);
\draw (2,3)node{};
\end{tikzpicture}
& LNC \\  \hline
2.& $(1,2,3,5,4),(4,5,3,2,1),(2,1,3,4,5),(5,4,3,1,2)$&
\begin{tikzpicture}[scale=0.3]
\draw[-,green] (-1,2.6)..controls (-0.7,2.6)..(0,2);
\draw[-,green] (0,2)..controls (1,2.8)..(2,2);
\draw[-,green] (2,2)..controls (3,2.8)..(4,2);
\draw[-,green] (4,2)..controls (5.6,3)..(6,3);
\draw[-,green] (6,3)..controls (6.8,2)..(6,1);
\draw[-,red] (6,3)..controls (5.2,2)..(6,1);
\draw[-,red] (-1,1.4)..controls (-0.7,1.4)..(0,2);
\draw[-,red] (0,2)..controls (1,1.2)..(2,2);
\draw[-,red] (2,2)..controls (3,1.2)..(4,2);
\draw[-,red] (4,2)..controls (5.6,1)..(6,1);
\draw[-,red] (6,3)..controls (6.7,4)..(7,4);
\draw[-,green] (6,1)..controls (6.7,0)..(7,0);
\filldraw [black]  (0,2)    circle (2pt)
[black]  (2,2)    circle (2pt)
[black]  (4,2)    circle (2pt)
[black]  (6,3)    circle (2pt)
[black]  (6,1)    circle (2pt);
\end{tikzpicture}
& LNC \\  \hline
3. &$(1,2,4,3,5),(5,3,4,2,1),(1,3,2,4,5),(5,4,2,3,1)$&
\begin{tikzpicture}[scale=0.3]
\draw[-,green] (-1,2.6)..controls (-0.7,2.6)..(0,2);
\draw[-,green] (0,2)..controls (1,2.8)..(2,2);
\draw[-,green] (2,2)..controls (3.6,3)..(4,3);
\draw[-,green] (4,3)..controls (3.2,2)..(4,1);
\draw[-,red] (-1,1.4)..controls (-0.7,1.4)..(0,2);
\draw[-,red] (0,2)..controls (1,1.2)..(2,2);
\draw[-,red] (2,2)..controls (3.6,1)..(4,1);
\draw[-,red] (7,2.6)..controls (6.7,2.6)..(6,2);
\draw[-,red] (6,2)..controls (4.4,3)..(4,3);
\draw[-,red] (4,3)..controls (4.8,2)..(4,1);
\draw[-,green] (7,1.4)..controls (6.7,1.4)..(6,2);
\draw[-,green] (6,2)..controls (4.4,1)..(4,1);
\filldraw [black]  (0,2)    circle (2pt)
[black]  (2,2)    circle (2pt)
[black]  (4,3)    circle (2pt)
[black]  (6,2)    circle (2pt)
[black]  (4,1)    circle (2pt);
\end{tikzpicture}
& LNC \\  \hline
4. & $\begin{array}{c}
  (1,2,4,5,3),(3,5,4,2,1),(1,2,5,3,4),(4,3,5,2,1),\\
  (3,1,2,4,5),(5,4,2,1,3),(2,3,1,4,5),(5,4,1,3,2)
\end{array}$&
\begin{tikzpicture}[scale=0.3]
\draw[-,green] (-1,2.6)..controls (-0.7,2.6)..(0,2);
\draw[-,green] (0,2)..controls (1,2.8)..(2,2);
\draw[-,green] (2,2)..controls (3.6,3)..(4,3);
\draw[-,green] (4,3) -- (4,1);
\draw[-,red] (4,1)..controls (5,2)..(6,2);
\draw[-,red] (-1,1.4)..controls (-0.7,1.4)..(0,2);
\draw[-,red] (0,2)..controls (1,1.2)..(2,2);
\draw[-,red] (2,2)..controls (3.6,1)..(4,1);
\draw[-,green] (7,2.6)..controls (6.7,2)..(6,2);
\draw[-,red] (6,2)..controls (4.4,3)..(4,3);
\draw[-,red] (4,3)..controls (4.8,3.3)..(5.6,3);
\draw[-,green] (6,2)..controls (5.4,1)..(4,1);
\filldraw [black]  (0,2)    circle (2pt)
[black]  (2,2)    circle (2pt)
[black]  (4,3)    circle (2pt)
[black]  (6,2)    circle (2pt)
[black]  (4,1)    circle (2pt);
\draw (2,3.5) node{};
\end{tikzpicture}
& LNC \\  \hline
5. &$(1,2,5,4,3),(3,4,5,2,1),(3,2,1,4,5),(5,4,1,2,3)$&
\begin{tikzpicture}[scale=0.3]
\draw[-,red] (2,2)..controls (2.8,3)..(2,4);
\draw[-,red] (2,4)..controls (2.8,5)..(2,6);
\draw[-,red] (2,2)..controls (1.2,2)..(0,4);
\draw[-,green] (2,6)..controls (1.2,6)..(0,4);
\draw[-,green] (0,4)..controls (-1,4.8)..(-2,4);
\draw[-,red] (0,4)..controls (-1,3.2)..(-2,4);
\draw[-,green] (-3,4.6)..controls (-2.6,4.6)..(-2,4);
\draw[-,red] (-3,3.4)..controls (-2.6,3.4)..(-2,4);
\draw[-,green] (3,2.5)..controls (2.7,2)..(2,2);
\draw[-,green] (2,2)..controls (1.2,3)..(2,4);
\draw[-,green] (2,4)..controls (1.2,5)..(2,6);
\draw[-,red] (2,6)..controls (2.7,6)..(3,5.5);
\filldraw [black]  (2,2)    circle (2pt)
[black]  (2,4)    circle (2pt)
[black]  (2,6)    circle (2pt)
[black]  (0,4)    circle (2pt)
[black]  (-2,4)    circle (2pt);
\end{tikzpicture}
& LNC \\  \hline
6. & $(1,3,2,5,4),(4,5,2,3,1),(2,1,4,3,5),(5,3,4,1,2)$&
\begin{tikzpicture}[scale=0.3]
\draw[-,green] (2,2)..controls (2.8,3)..(2,4);
\draw[-,red] (0,3)..controls (1.4,2)..(2,2);
\draw[-,red] (2,2)..controls (1.2,3)..(2,4);
\draw[-,green] (2,4)..controls (1.4,4)..(0,3);
\draw[-,red] (4,2)..controls (3.2,3)..(4,4);
\draw[-,green] (0,3)..controls (-0.7,3.6)..(-1,3.6);
\draw[-,red] (0,3)..controls (-0.7,2.4)..(-1,2.4);
\draw[-,red] (5,2.3)..controls (4.5,2)..(4,2);
\draw[-,green] (4,2)..controls (4.8,3)..(4,4);
\draw[-,green] (4,4)..controls (4.5,4)..(5,3.7);
\draw[-,red] (2,4)--(4,4);
\draw[-,green] (2,2)--(4,2);
\filldraw [black]  (2,2)    circle (2pt)
[black]  (4,2)    circle (2pt)
[black]  (2,4)    circle (2pt)
[black]  (0,3)    circle (2pt)
[black]  (4,4)    circle (2pt);
\end{tikzpicture}
&--
\\  \hline
7. & $(1,3,4,2,5),(5,2,4,3,1),(1,4,2,3,5),(5,3,2,4,1)$&
\begin{tikzpicture}[scale=0.15]
\draw[-,red] (0,2)..controls (4,0)..(5,0);
\draw[-,green] (0,2)..controls (2,4)..(3,4);
\draw[-,green] (10,2)..controls (6,0)..(5,0);
\draw[-,red] (10,2)..controls (8,4)..(7,4);
\draw[-,red] (5,0)..controls (3,3)..(3,4);
\draw[-,green] (5,0)..controls (7,3)..(7,4);
\draw[-,red] (3,4)..controls(5,5.4)..(7,4);
\draw[-,green] (3,4)..controls(5,2.6)..(7,4);
\draw[-,green] (0,2)--(1.5,3.5);
\draw[-,red] (7,4)..controls (7.8,4)..(8.5,3.5);
\draw (14,6) node{};
\draw[-,green] (0,2)..controls (-1.4,3.2)..(-2,3.2);
\draw[-,red] (0,2)..controls (-1.4,0.8)..(-2,0.8);
\draw[-,green] (10,2)..controls (11.4,0.8)..(12,0.8);
\draw[-,red] (10,2)..controls (11.4,3.2)..(12,3.2);
\filldraw [black] (3,4) circle (4pt)
[black] (7,4) circle (4pt)
[black] (0,2) circle (4pt)
[black] (10,2) circle (4pt)
[black] (5,0) circle (4pt);
\end{tikzpicture}
& LNC \\  \hline
8. & $\begin{array}{c}
  (1,3,4,5,2),(2,5,4,3,1),(4,1,2,3,5),(5,3,2,1,4),\\
  (1,5,2,3,4),(4,3,2,5,1),(2,3,4,1,5),(5,1,4,3,2)
\end{array}$
&
\begin{tikzpicture}[scale=0.3]
\draw[-,green] (1,2.6)..controls (1.3,2.6)..(2,2);
\draw[-,green] (2,2)..controls (3.6,3)..(4,3);
\draw[-,green] (4,3) -- (4,1);
\draw[-,red] (8,3)..controls (7,3)..(6,2);
\draw[-,red] (4,1)..controls (5,2)..(6,2);
\draw[-,red] (1,1.4)..controls (1.3,1.4)..(2,2);
\draw[-,red] (2,2)..controls (3.6,1)..(4,1);
\draw[-,green] (9,3.6)..controls (8.7,3)..(8,3);
\draw[-,red] (8,3)..controls (7,3.3)..(4,3);
\draw[-,red] (4,3)..controls (4,3.7)..(4.6,4);
\draw[-,green] (6,2)..controls (5.4,1)..(4,1);
\draw[-,green] (8,3)..controls (7.4,2)..(6,2);
\filldraw [black]  (2,2)    circle (2pt)
[black]  (4,3)    circle (2pt)
[black]  (6,2)    circle (2pt)
[black]  (8,3)    circle (2pt)
[black]  (4,1)    circle (2pt);
\end{tikzpicture}
& LNC \\  \hline
9. & $\begin{array}{c}
(1,3,5,2,4),(4,2,5,3,1),(2,4,1,3,5),(5,3,1,4,2),\\
(1,4,2,5,3),(3,5,2,4,1),(3,1,4,2,5),(5,2,4,1,3)
\end{array}$
&
\begin{tikzpicture}[scale=0.3]
\draw[-,green] (2,2) --(4,4);
\draw[-,red] (2,2)..controls(1.5,2).. (-1,4);
\draw[-,red] (2,6) --(4,4);
\draw[-,green] (2,6) ..controls(1.5,6).. (-1,4);
\draw[-,red] (6,4) --(4,4);
\draw[-,green] (3,4) --(4,4);
\draw[-,red] (6,4) --(7,4);
\draw[-,green] (2,2)..controls (4.5,2)..(6,4);
\draw[-,green] (2,6)..controls (4.5,6)..(6,4);
\draw[-,red] (2,2)..controls (1,4)..(2,6);
\draw[-,green] (-2,4.6)..controls (-1.7,4.6)..(-1,4);
\draw[-,red] (-2,3.4)..controls (-1.7,3.4)..(-1,4);
\filldraw [black]  (2,2)    circle (2pt)
[black]  (2,6)    circle (2pt)
[black]  (4,4)    circle (2pt)
[black]  (-1,4)    circle (2pt)
[black]  (6,4)    circle (2pt);
\end{tikzpicture}
& LNC \\  \hline
10.&$\begin{array}{c}
(1,3,5,4,2),(2,4,5,3,1),(4,2,1,3,5),(5,3,1,2,4),\\
(1,5,2,4,3),(3,4,2,5,1),(3,2,4,1,5),(5,1,4,2,3)
\end{array}$ &
\begin{tikzpicture}[scale=0.3]
\draw[-,green] (1,2.6)..controls (1.3,2.6)..(2,2);
\draw[-,green] (2,2)..controls (3.6,3)..(4,3);
\draw[-,green] (4,3) -- (4,1);
\draw[-,green] (4,1)..controls (5,2.3)..(6,3);
\draw[-,red] (1,1.4)..controls (1.3,1.4)..(2,2);
\draw[-,red] (2,2)..controls (3.6,1)..(4,1);
\draw[-,red] (6,3)..controls (6.5,2)..(6,1);
\draw[-,green] (6,3)..controls (5.5,2)..(6,1);
\draw[-,green] (7,1.6)..controls (6.7,1)..(6,1);
\draw[-,red] (6,3)--(4,3);
\draw[-,red] (4,3)..controls (3.3,3.3)..(3,3.3);
\draw[-,red] (6,1)--(4,1);
\filldraw [black]  (6,1)    circle (2pt)
[black]  (2,2)    circle (2pt)
[black]  (4,3)    circle (2pt)
[black]  (6,3)    circle (2pt)
[black]  (4,1)    circle (2pt);
\draw (2,3.5) node{};
\end{tikzpicture}
& LNC \\  \hline
11.&$(1,4,3,2,5),(5,2,3,4,1)$
&
\begin{tikzpicture}[scale=0.3]
\draw[-,green] (2,2)..controls (2.8,3)..(2,4);
\draw[-,green] (2,4)..controls (2.8,5)..(2,6);
\draw[-,green] (2,2)..controls (2.8,2)..(4,4);
\draw[-,red] (2,6)..controls (2.8,6)..(4,4);
\draw[-,red] (5,4.6)..controls (4.6,4.6)..(4,4);
\draw[-,green] (5,3.4)..controls (4.6,3.4)..(4,4);
\draw[-,green] (-1,4.6)..controls (-0.6,4.6)..(0,4);
\draw[-,red] (-1,3.4)..controls (-0.6,3.4)..(0,4);
\draw[-,red] (0,4)..controls (1.2,2)..(2,2);
\draw[-,red] (2,2)..controls (1.2,3)..(2,4);
\draw[-,red] (2,4)..controls (1.2,5)..(2,6);
\draw[-,green] (2,6)..controls (1.2,6)..(0,4);
\filldraw [black]  (2,2)    circle (2pt)
[black]  (2,4)    circle (2pt)
[black]  (2,6)    circle (2pt)
[black]  (4,4)    circle (2pt)
[black]  (0,4)    circle (2pt);
\end{tikzpicture}
& LNC \\  \hline
12. &$\begin{array}{c}
 (1,4,3,5,2),(2,5,3,4,1),(4,1,3,2,5),(5,2,3,1,4),\\
 (1,5,3,2,4),(4,2,3,5,1),(2,4,3,1,5),(5,1,3,4,2)
 \end{array}$
 &
\begin{tikzpicture}[scale=0.3]
\draw[-,green] (2,4)..controls (2.8,5)..(2,6);
\draw[-,green] (2,2)..controls (2.8,2)..(4,4);
\draw[-,red] (2,6)..controls (2.8,6)..(4,4);
\draw[-,green] (2,2)..controls (2.5,1.7)..(3,2);
\draw[-,green] (-1,4.6)..controls (-0.6,4.6)..(0,4);
\draw[-,red] (-1,3.4)..controls (-0.6,3.4)..(0,4);
\draw[-,red] (0,4)..controls (1.2,2)..(2,2);
\draw[-,red] (2,2)--(2,4);
\draw[-,green] (4,4)--(2,4);
\draw[-,red] (4,4)--(5,4);
\draw[-,red] (2,4)..controls (1.2,5)..(2,6);
\draw[-,green] (2,6)..controls (1.2,6)..(0,4);
\filldraw [black]  (2,2)    circle (2pt)
[black]  (2,4)    circle (2pt)
[black]  (2,6)    circle (2pt)
[black]  (4,4)    circle (2pt)
[black]  (0,4)    circle (2pt);
\end{tikzpicture}
& LNC \\  \hline
13. & $(1,4,5,2,3),(3,2,5,4,1),(3,4,1,2,5),(5,2,1,4,3)$
&
\begin{tikzpicture}[scale=0.3]
\draw[-,green] (2,4)..controls (2.8,5)..(2,6);
\draw[-,red] (2,2)..controls (3,2.5)..(4,4);
\draw[-,green] (2,2)..controls (3,3.5)..(4,4);
\draw[-,red] (2,6)..controls (2.8,6)..(4,4);
\draw[-,red] (2,4)..controls (1.5,3.7)..(1,4);
\draw[-,green] (-1,4.6)..controls (-0.6,4.6)..(0,4);
\draw[-,red] (-1,3.4)..controls (-0.6,3.4)..(0,4);
\draw[-,red] (0,4)..controls (1.2,2)..(2,2);
\draw[-,green] (2,2)--(2,4);
\draw[-,green] (4,4)--(5,4);
\draw[-,red] (2,4)..controls (1.2,5)..(2,6);
\draw[-,green] (2,6)..controls (1.2,6)..(0,4);
\filldraw [black]  (2,2)    circle (2pt)
[black]  (2,4)    circle (2pt)
[black]  (2,6)    circle (2pt)
[black]  (4,4)    circle (2pt)
[black]  (0,4)    circle (2pt);
\end{tikzpicture}
& -- \\  \hline
14. & $\begin{array}{c}
(1,4,5,3,2),(2,3,5,4,1),(4,3,1,2,5),(5,2,1,3,4),\\
(1,5,4,2,3),(3,2,4,5,1),(3,4,2,1,5),(5,1,2,4,3)
\end{array} $
&\begin{tikzpicture}[scale=0.3]
\draw[-,green] (2,2)..controls (3,2.5)..(4,2);
\draw[-,red] (0,3)..controls (1.4,2)..(2,2);
\draw[-,red] (2,2)..controls (3,1.5)..(4,2);
\draw[-,green] (2,4)..controls (1.4,4)..(0,3);
\draw[-,red] (2,4)..controls (3,3.5)..(4,4);
\draw[-,green] (0,3)..controls (-0.7,3.6)..(-1,3.6);
\draw[-,red] (0,3)..controls (-0.7,2.4)..(-1,2.4);
\draw[-,green] (5,2.3)..controls (4.5,2)..(4,2);
\draw[-,green] (2,4)..controls (3,4.5)..(4,4);
\draw[-,red] (2,4)..controls (1.5,4.6)..(1,4.4);
\draw[-,red] (4,2)--(4,4);
\draw[-,green] (2,2)..controls (2,2.6) and (4,3.4)..(4,4);
\filldraw [black]  (2,2)    circle (2pt)
[black]  (4,2)    circle (2pt)
[black]  (2,4)    circle (2pt)
[black]  (0,3)    circle (2pt)
[black]  (4,4)    circle (2pt);
\end{tikzpicture}
& -- \\  \hline
15. & $(1,5,3,4,2),(2,4,3,5,1),(4,2,3,1,5),(5,1,3,2,4)$
&
\begin{tikzpicture}[scale=0.3]
\draw[-,red] (2,2)..controls (1,3.5)..(1,4);
\draw[-,green] (1,4)..controls (1,4.5)..(2,6);
\draw[-,green] (2,2)..controls (2.5,1.8)..(3,2);
\draw[-,red] (2,6)..controls (2.5,6.2)..(3,6);
\draw[-,green] (-1,4.6)..controls (-0.6,4.6)..(0,4);
\draw[-,red] (-1,3.4)..controls (-0.6,3.4)..(0,4);
\draw[-,red] (0,4)..controls (1.2,2)..(2,2);
\draw[-,green] (1,4)..controls (2,3.4)..(3,4);
\draw[-,red] (1,4)..controls (2,4.6)..(3,4);
\draw[-,green] (2,2)..controls (3,3.5)..(3,4);
\draw[-,red] (3,4)..controls (3,4.5)..(2,6);
\draw[-,green] (2,6)..controls (1.2,6)..(0,4);
\filldraw [black]  (2,2)    circle (2pt)
[black]  (1,4)    circle (2pt)
[black]  (2,6)    circle (2pt)
[black]  (3,4)    circle (2pt)
[black]  (0,4)    circle (2pt);
\end{tikzpicture}
& LNC \\  \hline
\end{tabular}

\begin{tabular}{|c|c|c|c|}
  \hline
Case&  Permutations & Representation & Con.\\ \hline
16. &
$(1,5,4,3,2),(2,3,4,5,1),(4,3,2,1,5),(5,1,2,3,4)$
&
\begin{tikzpicture}[scale=0.3]
\draw[-,red] (1,2)..controls (2,2.8)..(3,2);
\draw[-,red] (3,2)..controls (4,2.8)..(5,2);
\draw[-,red] (5,2)..controls (6,2.8)..(7,2);
\draw[-,red] (1,2)..controls (1,2.8)..(4,4);
\draw[-,green] (7,2)..controls (7,2.8)..(4,4);
\draw[-,green] (5,5)..controls (5,4.6)..(4,4);
\draw[-,red] (3,5)..controls (3,4.6)..(4,4);
\draw[-,green] (0,1.4)..controls (0.3,1.4)..(1,2);
\draw[-,green] (1,2)..controls (2,1.2)..(3,2);
\draw[-,green] (3,2)..controls (4,1.2)..(5,2);
\draw[-,green] (5,2)..controls (6,1.2)..(7,2);
\draw[-,red] (7,2)..controls (7.7,1.4)..(8,1.4);
\filldraw [black]  (1,2)    circle (2pt)
[black]  (3,2)    circle (2pt)
[black]  (5,2)    circle (2pt)
[black]  (7,2)    circle (2pt)
[black]  (4,4)    circle (2pt);
\end{tikzpicture}
& LNC\\  \hline
17. &
$(2,1,3,5,4),(4,5,3,1,2)$
&
\begin{tikzpicture}[scale=0.3]
\draw[-,green] (2,3)..controls (0.6,2)..(0,2);
\draw[-,red] (0,4)..controls (0.6,4)..(2,3);
\draw[-,green] (0,2)..controls (0.8,3)..(0,4);
\draw[-,red] (-1,2.3)..controls (-0.5,2)..(0,2);
\draw[-,red] (0,2)..controls (-0.8,3)..(0,4);
\draw[-,green] (0,4)..controls (-0.5,4)..(-1,3.7);
\draw[-,green] (2,3)..controls (3.4,2)..(4,2);
\draw[-,red] (4,4)..controls (3.4,4)..(2,3);
\draw[-,green] (4,2)..controls (3.2,3)..(4,4);
\draw[-,red] (5,2.3)..controls (4.5,2)..(4,2);
\draw[-,red] (4,2)..controls (4.8,3)..(4,4);
\draw[-,green] (4,4)..controls (4.5,4)..(5,3.7);
\filldraw [black]  (0,2)    circle (2pt)
[black]  (4,2)    circle (2pt)
[black]  (0,4)    circle (2pt)
[black]  (2,3)    circle (2pt)
[black]  (4,4)    circle (2pt);
\end{tikzpicture}
& LNC\\  \hline
18. & $\begin{array}{c}
(2,1,4,5,3),(3,5,4,1,2),(3,1,2,5,4),(4,5,2,1,3),\\
(2,1,5,3,4),(4,3,5,1,2),(2,3,1,5,4),(4,5,1,3,2)
\end{array} $
&
\begin{tikzpicture}[scale=0.3]
\draw[-,red] (2,3)--(4,3);
\draw[-,red] (4,3) -- (4,1);
\draw[-,green] (2,1)..controls (2.6,2)..(2,3);
\draw[-,red] (2,1)..controls (1.4,2)..(2,3);
\draw[-,red] (2,1)..controls (1.6,1)..(1,1.3);
\draw[-,green] (2,3)..controls (1.6,3)..(1,2.7);
\draw[-,green] (4,1)..controls (5,2)..(6,2);
\draw[-,green] (2,1)--(4,1);
\draw[-,red] (7,2.6)..controls (6.7,2)..(6,2);
\draw[-,green] (6,2)..controls (4.4,3)..(4,3);
\draw[-,green] (4,3)..controls (4.8,3.3)..(5.6,3);
\draw[-,red] (6,2)..controls (5.4,1)..(4,1);
\filldraw [black]  (2,1)    circle (2pt)
[black]  (2,3)    circle (2pt)
[black]  (4,3)    circle (2pt)
[black]  (6,2)    circle (2pt)
[black]  (4,1)    circle (2pt);
\draw (2,3.5) node{};
\end{tikzpicture}
& LNC\\  \hline
19. &
$(2,1,5,4,3),(3,4,5,1,2),(3,2,1,5,4),(4,5,1,2,3)$
&
\begin{tikzpicture}[scale=0.3]
\draw[-,green] (2,2)..controls (2.8,3)..(2,4);
\draw[-,green] (2,4)..controls (2.8,5)..(2,6);
\draw[-,green] (2,2)..controls (1.2,2)..(0,3);
\draw[-,red] (2,6)..controls (1.2,6)..(0,5);
\draw[-,green] (0,3)..controls (0.8,4)..(0,5);
\draw[-,red] (0,3)..controls (-0.8,4)..(0,5);
\draw[-,red] (0,3)..controls (-0.5,2.8)..(-1,3);
\draw[-,green] (0,5)..controls (-0.5,5.2)..(-1,5);
\draw[-,red] (3,2.5)..controls (2.7,2)..(2,2);
\draw[-,red] (2,2)..controls (1.2,3)..(2,4);
\draw[-,red] (2,4)..controls (1.2,5)..(2,6);
\draw[-,green] (2,6)..controls (2.7,6)..(3,5.5);
\filldraw [black]  (2,2)    circle (2pt)
[black]  (2,4)    circle (2pt)
[black]  (2,6)    circle (2pt)
[black]  (0,3)    circle (2pt)
[black]  (0,5)    circle (2pt);
\end{tikzpicture}
& LNC\\  \hline
20. & $\begin{array}{c}
(2,3,5,1,4),(4,1,5,3,2),(2,5,1,3,4),(4,3,1,5,2),\\
(4,1,2,5,3),(3,5,2,1,4),(3,1,4,5,2),(2,5,4,1,3)
\end{array} $
&
\begin{tikzpicture}[scale=0.3]
\draw[-,red] (4,3)-- (3,4);
\draw[-,red] (4,3) -- (4,1);
\draw[-,red] (2,1)--(2,3);
\draw[-,red] (2,1)..controls (1.6,1)..(1,1.3);
\draw[-,green] (2,3)..controls (1.6,3)..(1,2.7);
\draw[-,red] (4,1)..controls (4.5,0.7)..(5,1);
\draw[-,green] (2,1)--(4,1);
\draw[-,green] (2,3)..controls (2.3,3.7)..(3,4);
\draw[-,red] (2,3)..controls (2.7,3.3)..(3,4);
\draw[-,green] (4,1)..controls (7,2)and(6,4)..(3,4);
\draw[-,green] (4,3)..controls (4.5,3.3)..(5,3);
\draw[-,green] (2,1)--(4,3);
\filldraw [black]  (2,1)    circle (2pt)
[black]  (2,3)    circle (2pt)
[black]  (4,3)    circle (2pt)
[black]  (3,4)    circle (2pt)
[black]  (4,1)    circle (2pt);
\draw (2,3.5) node{};
\end{tikzpicture}
& LNC\\  \hline
21. &
$(2,4,1,5,3),(3,5,1,4,2),(3,1,5,2,4),(4,2,5,1,3)$
&
\begin{tikzpicture}[scale=0.3]
\draw[-,red] (2,3)--(4,3);
\draw[-,red] (4,3) -- (4,1);
\draw[-,green] (2,1)--(2,3);
\draw[-,red] (2,1)..controls (1.6,1)..(1,1.3);
\draw[-,green] (2,3)..controls (1.6,3)..(1,2.7);
\draw[-,green] (4,3)..controls (4.4,3)..(5,2.7);
\draw[-,red] (4,1)..controls (4.4,1)..(5,1.3);
\draw[-,green] (2,1)--(4,1);
\draw[-,red] (2,1)--(3,2);
\draw[-,green] (3,2)--(4,1);
\draw[-,red] (3,2)--(2,3);
\draw[-,green] (3,2)--(4,3);
\filldraw [black]  (2,1)    circle (2pt)
[black]  (2,3)    circle (2pt)
[black]  (4,3)    circle (2pt)
[black]  (3,2)    circle (2pt)
[black]  (4,1)    circle (2pt);
\draw (2,3.5) node{};
\end{tikzpicture}
& TD\\  \hline
22. & $\begin{array}{c}
(2,4,5,1,3),(3,1,5,4,2),(3,5,1,2,4),(4,2,1,5,3),\\
(4,1,5,2,3),(3,2,5,1,4),(3,4,1,5,2),(2,5,1,4,3)
       \end{array}$
&
\begin{tikzpicture}[scale=0.3]
\draw[-,green] (4,3)-- (3,4);
\draw[-,green] (4,3) -- (4,1);
\draw[-,red] (2,1)--(2,3);
\draw[-,green] (2,1)..controls (1.6,1)..(1,1.3);
\draw[-,green] (2,3)..controls (1.6,3)..(1,2.7);
\draw[-,red] (4,1)..controls (4.5,0.7)..(5,1);
\draw[-,green] (2,1)--(4,1);
\draw[-,red] (2,3)..controls (2.3,3.7)..(3,4);
\draw[-,green] (2,3)..controls (2.7,3.3)..(3,4);
\draw[-,red] (4,1)..controls (7,2)and(6,4)..(3,4);
\draw[-,red] (4,3)..controls (4.5,3.3)..(5,3);
\draw[-,red] (2,1)--(4,3);
\filldraw [black]  (2,1)    circle (2pt)
[black]  (2,3)    circle (2pt)
[black]  (4,3)    circle (2pt)
[black]  (3,4)    circle (2pt)
[black]  (4,1)    circle (2pt);
\draw (2,3.5) node{};
\end{tikzpicture}
& LNC\\  \hline
23. &
$(2,5,3,1,4),(4,1,3,5,2)$
&
\begin{tikzpicture}[scale=0.3]
\draw[-,green] (2,3)--(4,3);
\draw[-,red] (4,3) -- (4,1);
\draw[-,red] (2,1)--(2,3);
\draw[-,red] (2,1)..controls (1.6,1)..(1,1.3);
\draw[-,green] (2,3)..controls (1.6,3)..(1,2.7);
\draw[-,red] (4,3)..controls (4.4,3)..(5,2.7);
\draw[-,green] (4,1)..controls (4.4,1)..(5,1.3);
\draw[-,green] (2,1)--(4,1);
\draw[-,green] (2,1)--(3,2);
\draw[-,red] (3,2)--(4,1);
\draw[-,red] (3,2)--(2,3);
\draw[-,green] (3,2)--(4,3);
\filldraw [black]  (2,1)    circle (2pt)
[black]  (2,3)    circle (2pt)
[black]  (4,3)    circle (2pt)
[black]  (3,2)    circle (2pt)
[black]  (4,1)    circle (2pt);
\draw (2,3.5) node{};
\end{tikzpicture}
& TD\\  \hline
\end{tabular}\vspace{0.5cm}

\noindent Now we will verify that for all cases in the table $BT(P,Q)$ is made of LNC blocks, when the last entry is LNC,
and that it is TD, when the last entry is TD.
We know by Proposition~\ref{elementary block is NC} that elementary blocks are LNC, so Case 1. is clear. Cases 2., 5., 15., 16. and 17. follow from
Proposition~\ref{lista}(2) with different embedded LNC IBU's, Cases 3. and 11. from Proposition~\ref{lista}(1),
Cases 4. and 8. from Proposition~\ref{lista}(4), and Case 7. from Proposition~\ref{lista}(3).

\noindent {\bf Case 9.:} In this case we have one elementary EBB and one EBB with 10 components and no embedded IBB.

\noindent % [inline block 0: 33 envs, 39826 chars -> data_tex | \begin{tikzpicture}[scale=0.5] \draw[-,green,line width=2pt] (2,2) --(4,4);...]


\noindent {\bf Case 21.} and {\bf Case 23.} In both cases we have the same bi-traceable graph, which is TD. In order to see this,
we first use Lemmas~\ref{lema 4.1} and~\ref{conexion entre extremos} and the symmetry of the graph,
in order to verify that adjacent pairs and pairs of extremal components are NDC. Using again the symmetry of the graph,
 in order to check
the remaining pairs, it suffices to prove
that the five pairs of components $ab$, $ac$, $ad$, $be$ and $ce$ are NDC, which follows from the paths in the diagrams below,
where $X_1$ and $Y_1$ are in black and $X_2$ and $Y_2$ are in blue. In fact, the paths $\widehat P$ and $\widehat Q$ of
Definition~\ref{def NDC} are given in each case by
$$
\widehat P=X_1+R+Y_1\quad \text{and}\quad \widehat Q=X_2+R+Y_2.
$$

\begin{tikzpicture}[scale=1]
\draw[-] (2,3)--(4,3);
\draw[-] (4,3) -- (4,1);
\draw[-] (2,1)--(2,3);
\draw[-] (2,1)..controls (1.6,1)..(1,1.3);
\draw[-] (2,3)..controls (1.6,3)..(1,2.7);
\draw[-] (4,3)..controls (4.4,3)..(5,2.7);
\draw[-] (4,1)..controls (4.4,1)..(5,1.3);
\draw[-] (2,1)--(4,1);
\draw[-] (2,1)--(3,2);
\draw[-] (3,2)--(4,1);
\draw[-] (3,2)--(2,3);
\draw[-] (3,2)--(4,3);
\filldraw [black]  (2,1)    circle (2pt)
[black]  (2,3)    circle (2pt)
[black]  (4,3)    circle (2pt)
[black]  (3,2)    circle (2pt)
[black]  (4,1)    circle (2pt);
\draw (0.9,2.85) node{a};
\draw (2.35,1.65) node{b};
\draw (3,1.15) node{c};
\draw (3.65,1.65) node{d};
\draw (3,3.2) node{e};
\draw (3,0.4)node{Bi-traceable graph};
\draw (5.6,3.5) node{};
\end{tikzpicture}
%2
\begin{tikzpicture}[scale=1]
\draw[-,cyan,line width=2pt] (2,3)--(4,3);
\draw[-,line width=2pt] (4,3) -- (4,1);
\draw[-,cyan,line width=2pt] (2,1)--(2,3);
\draw[-,line width=2pt] (2,1)..controls (1.6,1)..(1,1.3);
\draw[-,line width=2pt] (2,3)..controls (1.7,3)..(1.5,2.93);
\draw[-,cyan,line width=2pt] (1.5,2.93)--(1,2.7);
\draw[-,line width=2pt] (4,3)..controls (4.4,3)..(5,2.7);
\draw[-,cyan,line width=2pt] (4,1)..controls (4.4,1)..(5,1.3);
\draw[-,cyan,line width=2pt] (2,1)--(4,1);
\draw[-,line width=2pt] (2,1)--(2.5,1.5);
\draw[-,cyan,line width=2pt] (2.5,1.5)--(3,2);
\draw[-,line width=2pt] (3,2)--(4,1);
\draw[-,line width=2pt] (3,2)--(2,3);
\draw[-,cyan,line width=2pt] (3,2)--(4,3);
\filldraw [red]  (1.5,2.93)    circle (1.5pt)
[red]  (2.5,1.5)    circle (1.5pt);
\filldraw [black]  (2,1)    circle (2pt)
[black]  (2,3)    circle (2pt)
[black]  (4,3)    circle (2pt)
[black]  (3,2)    circle (2pt)
[black]  (4,1)    circle (2pt);
\draw (3,0.4)node{The case $ab$};
\end{tikzpicture}
%3
\begin{tikzpicture}[scale=1]
\draw[-,cyan,line width=2pt] (2,3)--(4,3);
\draw[-,line width=2pt] (4,3) -- (4,1);
\draw[-,cyan,line width=2pt] (2,1)--(2,3);
\draw[-,line width=2pt] (2,1)..controls (1.6,1)..(1,1.3);
\draw[-,line width=2pt] (2,3)..controls (1.7,3)..(1.5,2.93);
\draw[-,cyan,line width=2pt] (1.5,2.93)--(1,2.7);
\draw[-,line width=2pt] (4,3)..controls (4.4,3)..(5,2.7);
\draw[-,cyan,line width=2pt] (4,1)..controls (4.4,1)..(5,1.3);
\draw[-,cyan,line width=2pt] (2,1)--(3,1);
\draw[-,line width=2pt] (3,1)--(4,1);
\draw[-,line width=2pt] (2,1)--(3,2);
\draw[-,cyan,line width=2pt] (3,2)--(4,1);
\draw[-,line width=2pt] (3,2)--(2,3);
\draw[-,cyan,line width=2pt] (3,2)--(4,3);
\filldraw [red]  (1.5,2.93)    circle (1.5pt)
[red]  (3,1)    circle (1.5pt);
\filldraw [black]  (2,1)    circle (2pt)
[black]  (2,3)    circle (2pt)
[black]  (4,3)    circle (2pt)
[black]  (3,2)    circle (2pt)
[black]  (4,1)    circle (2pt);
\draw (3,0.4)node{The case $ac$};
\draw(0,2)node{};
\end{tikzpicture}

%1
\begin{tikzpicture}[scale=1]
\draw[-,line width=2pt] (2,3)--(4,3);
\draw[-,line width=2pt] (4,3) -- (4,1);
\draw[-,cyan,line width=2pt] (2,1)--(2,3);
\draw[-,line width=2pt] (2,1)..controls (1.6,1)..(1,1.3);
\draw[-,cyan,line width=2pt] (2,3)..controls (1.7,3)..(1.5,2.93);
\draw[-,line width=2pt] (1.5,2.93)--(1,2.7);
\draw[-,cyan,line width=2pt] (4,3)..controls (4.4,3)..(5,2.7);
\draw[-,cyan,line width=2pt] (4,1)..controls (4.4,1)..(5,1.3);
\draw[-,line width=2pt] (2,1)--(4,1);
\draw[-,cyan,line width=2pt] (2,1)--(3,2);
\draw[-,line width=2pt] (3,2)--(3.5,1.5);
\draw[-,cyan,line width=2pt] (3.5,1.5)--(4,1);
\draw[-,line width=2pt] (3,2)--(2,3);
\draw[-,cyan,line width=2pt] (3,2)--(4,3);
\filldraw [red]  (1.5,2.93)    circle (1.5pt)
[red]  (3.5,1.5)    circle (1.5pt);
\filldraw [black]  (2,1)    circle (2pt)
[black]  (2,3)    circle (2pt)
[black]  (4,3)    circle (2pt)
[black]  (3,2)    circle (2pt)
[black]  (4,1)    circle (2pt);
\draw (3,0.4)node{The case $ad$};
\draw(4,4)node{};
\end{tikzpicture}
%2
\begin{tikzpicture}[scale=1]
\draw[-,line width=2pt] (2,3)--(3,3);
\draw[-,cyan,line width=2pt] (3,3)--(4,3);
\draw[-,line width=2pt] (4,3) -- (4,1);
\draw[-,cyan,line width=2pt] (2,1)--(2,3);
\draw[-,line width=2pt] (2,1)..controls (1.6,1)..(1,1.3);
\draw[-,line width=2pt] (2,3)..controls (1.6,3)..(1,2.7);
\draw[-,cyan,line width=2pt] (4,3)..controls (4.4,3)..(5,2.7);
\draw[-,cyan,line width=2pt] (4,1)..controls (4.4,1)..(5,1.3);
\draw[-,line width=2pt] (2,1)--(4,1);
\draw[-,cyan,line width=2pt] (2,1)--(2.5,1.5);
\draw[-,line width=2pt] (2.5,1.5)--(3,2);
\draw[-,cyan,line width=2pt] (3,2)--(4,1);
\draw[-,cyan,line width=2pt] (3,2)--(2,3);
\draw[-,line width=2pt] (3,2)--(4,3);
\filldraw [red]  (3,3)    circle (1.5pt)
[red]  (2.5,1.5)    circle (1.5pt);
\filldraw [black]  (2,1)    circle (2pt)
[black]  (2,3)    circle (2pt)
[black]  (4,3)    circle (2pt)
[black]  (3,2)    circle (2pt)
[black]  (4,1)    circle (2pt);
\draw (3,0.4)node{The case $be$};
\draw(0.3,2)node{};
\end{tikzpicture}
%3
\begin{tikzpicture}[scale=1]
\draw[-,line width=2pt] (2.82,3)--(4,3);
\draw[-,cyan,line width=2pt] (2,3)--(2.82,3);
\draw[-,cyan,line width=2pt] (4,3) -- (4,1);
\draw[-,line width=2pt] (2,1)--(2,3);
\draw[-,cyan,line width=2pt] (2,1)..controls (1.6,1)..(1,1.3);
\draw[-,line width=2pt] (2,3)..controls (1.6,3)..(1,2.7);
\draw[-,cyan,line width=2pt] (4,3)..controls (4.4,3)..(5,2.7);
\draw[-,line width=2pt] (4,1)..controls (4.4,1)..(5,1.3);
\draw[-,line width=2pt] (2,1)--(2.83,1);
\draw[-,cyan,line width=2pt] (2.83,1)--(4,1);
\draw[-,cyan,line width=2pt] (2,1)--(3,2);
\draw[-,line width=2pt] (3,2)--(4,1);
\draw[-,cyan,line width=2pt] (3,2)--(2,3);
\draw[-,line width=2pt] (3,2)--(4,3);
\filldraw [red]  (2.83,3)    circle (1.5pt)
[red]  (2.83,1)    circle (1.5pt);
\filldraw [black]  (2,1)    circle (2pt)
[black]  (2,3)    circle (2pt)
[black]  (4,3)    circle (2pt)
[black]  (3,2)    circle (2pt)
[black]  (4,1)    circle (2pt);
\draw (3,0.4) node{The case $ce$};
\draw(0.3,2)node{};
\end{tikzpicture}

\noindent {\bf Case 22.:} In this case we have one BB with one embedded elementary IBB and an ESU with 10 components:

\begin{tikzpicture}[scale=0.6]
\draw[-,green,line width=2pt] (4,3)-- (3,4);
\draw[-,green,line width=2pt] (4,3) -- (4,1);
\draw[-,red,line width=2pt] (2,1)--(2,3);
\draw[-,green,line width=2pt] (2,1)..controls (1.6,1)..(1,1.3);
\draw[-,green,line width=2pt] (2,3)..controls (1.6,3)..(1,2.7);
\draw[-,red,line width=2pt] (4,1)..controls (4.5,0.7)..(5,1);
\draw[-,green,line width=2pt] (2,1)--(4,1);
\draw[-,red] (2,3)..controls (2.3,3.7)..(3,4);
\draw[-,green] (2,3)..controls (2.7,3.3)..(3,4);
\draw[-,red,line width=2pt] (4,1)..controls (7,2)and(6,4)..(3,4);
\draw[-,red,line width=2pt] (4,3)..controls (4.5,3.3)..(5,3);
\draw[-,red,line width=2pt] (2,1)--(4,3);
\filldraw [black]  (2,1)    circle (2pt)
[black]  (2,3)    circle (2pt)
[black]  (4,3)    circle (2pt)
[black]  (3,4)    circle (2pt)
[black]  (4,1)    circle (2pt);
\draw[green] (1.5,3.25) node{$5$};
\draw[red] (6,3) node{$b$};
\draw[green] (3,0.7) node{$2$};
\draw[red] (2.75,2.25) node{$d$};
\draw[red] (4.7,2.8) node{$e$};
\draw[green] (0.8,1.3) node{$1$};
\draw[red] (1.7,2) node{$c$};
\draw[green] (3.25,3.25) node{$4$};
\draw[green] (4.3,2) node{$3$};
\draw[red] (5.2,1) node{$a$};
\draw (3,0) node{ESU with 10 components};
\end{tikzpicture}
%2
\begin{tikzpicture}[scale=0.6]
\draw[-,cyan,line width=2pt] (4,3)-- (3,4);
\draw[-,line width=2pt] (4,3) -- (4,1);
\draw[-,cyan,line width=2pt] (2,1)--(2,3);
\draw[-,line width=2pt] (2,1)..controls (1.7,1)..(1.5,1.06);
\draw[-,cyan,line width=2pt] (1.5,1.06)--(1,1.3);
\draw[-,line width=2pt] (2,3)..controls (1.6,3)..(1,2.7);
\draw[-,line width=2pt] (4,1)..controls (4.5,0.7)..(5,1);
\draw[-,cyan,line width=2pt] (2,1)--(4,1);
\draw[-,cyan,line width=2pt] (2,3)..controls (2.3,3.7)..(3,4);
\draw[-,line width=2pt] (2,3)..controls (2.7,3.3)..(3,4);
\draw[-,cyan,line width=2pt] (4,1)..controls (7,2)and(6,4)..(3,4);
\draw[-,line width=2pt] (5,3.6)..controls (4.5,3.87)and (3.7,4)..(3,4);
\draw[-,cyan,line width=2pt] (4,3)..controls (4.5,3.3)..(5,3);
\draw[-,line width=2pt] (2,1)--(4,3);
\filldraw [red]  (1.5,1.06)    circle (1.5pt)
[red]  (5,3.6)    circle (1.5pt);
\filldraw [black]  (2,1)    circle (2pt)
[black]  (2,3)    circle (2pt)
[black]  (4,3)    circle (2pt)
[black]  (3,4)    circle (2pt)
[black]  (4,1)    circle (2pt);
\draw (3,0) node{The case $1b$};
\end{tikzpicture}
%3
\begin{tikzpicture}[scale=0.6]
\draw[-,cyan,line width=2pt] (4,3)-- (3,4);
\draw[-,cyan,line width=2pt] (4,3) -- (4,1);
\draw[-,cyan,line width=2pt] (2,1)--(2,3);
\draw[-,line width=2pt] (2,1)..controls (1.6,1)..(1,1.3);
\draw[-,line width=2pt] (2,3)..controls (1.6,3)..(1,2.7);
\draw[-,cyan,line width=2pt] (4,1)..controls (4.5,0.7)..(5,1);
\draw[-,cyan,line width=2pt] (2,1)--(3,1);
\draw[-,line width=2pt] (3,1)--(4,1);
\draw[-,cyan,line width=2pt] (2,3)..controls (2.3,3.7)..(3,4);
\draw[-,line width=2pt] (2,3)..controls (2.7,3.3)..(3,4);
\draw[-,line width=2pt] (4,1)..controls (7,2)and(6,4)..(3,4);
\draw[-,line width=2pt] (4,3)..controls (4.4,3.225)..(4.5,3.225);
\draw[-,cyan,line width=2pt] (4.5,3.225)..controls (4.6,3.225)..(5,3);
\draw[-,line width=2pt] (2,1)--(4,3);
\filldraw [red]  (4.5,3.225)    circle (1.5pt)
[red]  (3,1)    circle (1.5pt);
\filldraw [black]  (2,1)    circle (2pt)
[black]  (2,3)    circle (2pt)
[black]  (4,3)    circle (2pt)
[black]  (3,4)    circle (2pt)
[black]  (4,1)    circle (2pt);
\draw (3,0) node{The case $2e$};
\end{tikzpicture}

\noindent
  One verifies directly that in this ESU all the pairs of components of different colors except seven, are adjacent or both extremal, so
  Propositions~\ref{prop adyacentes}   and~\ref{PROP conexion entre extremos} prove that they are LNC.
  The two pairs $1b$ and $2e$ are proven to be LNC by the paths in the second and third diagrams above,
  where $X_1$ and $Y_1$ are in black and $X_2$ and $Y_2$ are in blue.
  The remaining pairs $3c,4a,4c,5b,5d$ are proven to be LNC by the paths in the following diagrams,
  where $X_1$ and $Y_1$ are in black and $X_2$ and $Y_2$ are in blue.

\begin{tikzpicture}[scale=0.6]
\draw[-,cyan,line width=2pt] (4,3)-- (3,4);
\draw[-,line width=2pt] (4,3) -- (4,2);
\draw[-,cyan,line width=2pt] (4,2) -- (4,1);
\draw[-,line width=2pt] (2,1)--(2,2);
\draw[-,cyan,line width=2pt] (2,2)--(2,3);
\draw[-,cyan,line width=2pt] (2,1)..controls (1.6,1)..(1,1.3);
\draw[-,line width=2pt] (2,3)..controls (1.6,3)..(1,2.7);
\draw[-,cyan,line width=2pt] (4,1)..controls (4.5,0.7)..(5,1);
\draw[-,line width=2pt] (2,1)--(4,1);
\draw[-,cyan,line width=2pt] (2,3)..controls (2.3,3.7)..(3,4);
\draw[-,line width=2pt] (2,3)..controls (2.7,3.3)..(3,4);
\draw[-,line width=2pt] (4,1)..controls (7,2)and(6,4)..(3,4);
\draw[-,line width=2pt] (4,3)..controls (4.5,3.3)..(5,3);
\draw[-,cyan,line width=2pt] (2,1)--(4,3);
\filldraw [red]  (2,2.03)    circle (1.5pt)
[red]  (4,2.03)    circle (1.5pt);
\filldraw [black]  (2,1)    circle (2pt)
[black]  (2,3)    circle (2pt)
[black]  (4,3)    circle (2pt)
[black]  (3,4)    circle (2pt)
[black]  (4,1)    circle (2pt);
\draw (3,0) node{The case $3c$};
\draw(3,5)node{};
\end{tikzpicture}
%2
\begin{tikzpicture}[scale=0.6]
\draw[-,line width=2pt] (4,3)-- (3.5,3.5);
\draw[-,cyan,line width=2pt] (3.5,3.5)-- (3,4);
\draw[-,cyan,line width=2pt] (4,3) -- (4,1);
\draw[-,cyan,line width=2pt] (2,1)--(2,3);
\draw[-,line width=2pt] (2,1)..controls (1.6,1)..(1,1.3);
\draw[-,line width=2pt] (2,3)..controls (1.6,3)..(1,2.7);
\draw[-,line width=2pt] (4,1)..controls (4.4,0.775)..(4.5,0.775);
\draw[-,cyan,line width=2pt] (4.5,0.775)..controls (4.6,0.775)..(5,1);
\draw[-,cyan,line width=2pt] (2,1)--(4,1);
\draw[-,cyan,line width=2pt] (2,3)..controls (2.3,3.7)..(3,4);
\draw[-,line width=2pt] (2,3)..controls (2.7,3.3)..(3,4);
\draw[-,line width=2pt] (4,1)..controls (7,2)and(6,4)..(3,4);
\draw[-,cyan,line width=2pt] (4,3)..controls (4.5,3.3)..(5,3);
\draw[-,line width=2pt] (2,1)--(4,3);
\filldraw [red]  (4.5,0.775)    circle (1.5pt)
[red]  (3.5,3.5)    circle (1.5pt);
\filldraw [black]  (2,1)    circle (2pt)
[black]  (2,3)    circle (2pt)
[black]  (4,3)    circle (2pt)
[black]  (3,4)    circle (2pt)
[black]  (4,1)    circle (2pt);
\draw (3,0) node{The case $4a$};
\end{tikzpicture}
%3
\begin{tikzpicture}[scale=0.6]
\draw[-,cyan,line width=2pt] (4,3)-- (3.5,3.5);
\draw[-,line width=2pt] (3.5,3.5)-- (3,4);
\draw[-,line width=2pt] (4,3) -- (4,1);
\draw[-,line width=2pt] (2,1)--(2,2);
\draw[-,cyan,line width=2pt] (2,2)--(2,3);
\draw[-,cyan,line width=2pt] (2,1)..controls (1.6,1)..(1,1.3);
\draw[-,line width=2pt] (2,3)..controls (1.6,3)..(1,2.7);
\draw[-,line width=2pt] (4,1)..controls (4.5,0.7)..(5,1);
\draw[-,cyan,line width=2pt] (2,1)--(4,1);
\draw[-,cyan,line width=2pt] (2,3)..controls (2.3,3.7)..(3,4);
\draw[-,line width=2pt] (2,3)..controls (2.7,3.3)..(3,4);
\draw[-,cyan,line width=2pt] (4,1)..controls (7,2)and(6,4)..(3,4);
\draw[-,cyan,line width=2pt] (4,3)..controls (4.5,3.3)..(5,3);
\draw[-,line width=2pt] (2,1)--(4,3);
\filldraw [red]  (2,2)    circle (1.5pt)
[red]  (3.5,3.5)    circle (1.5pt);
\filldraw [black]  (2,1)    circle (2pt)
[black]  (2,3)    circle (2pt)
[black]  (4,3)    circle (2pt)
[black]  (3,4)    circle (2pt)
[black]  (4,1)    circle (2pt);
\draw (3,0) node{The case $4c$};
\end{tikzpicture}

%1
\begin{tikzpicture}[scale=0.6]
\draw[-,cyan,line width=2pt] (4,3)-- (3,4);
\draw[-,line width=2pt] (4,3) -- (4,1);
\draw[-,line width=2pt] (2,1)--(2,3);
\draw[-,cyan,line width=2pt] (2,1)..controls (1.6,1)..(1,1.3);
\draw[-,line width=2pt] (1.5,2.93)--(1,2.7);
\draw[-,cyan,line width=2pt] (2,3)..controls (1.75,3)..(1.5,2.93);
\draw[-,line width=2pt] (4,1)..controls (4.5,0.7)..(5,1);
\draw[-,cyan,line width=2pt] (2,1)--(4,1);
\draw[-,cyan,line width=2pt] (2,3)..controls (2.3,3.7)..(3,4);
\draw[-,line width=2pt] (2,3)..controls (2.7,3.3)..(3,4);
\draw[-,cyan,line width=2pt] (4,1)..controls (7,2)and(6,4)..(3,4);
\draw[-,line width=2pt] (5,3.6)..controls (4.5,3.87)and (3.7,4)..(3,4);
\draw[-,cyan,line width=2pt] (4,3)..controls (4.5,3.3)..(5,3);
\draw[-,line width=2pt] (2,1)--(4,3);
\filldraw [red]  (5,3.6)    circle (1.5pt)
[red]  (1.5,2.93)    circle (1.5pt);
\filldraw [black]  (2,1)    circle (2pt)
[black]  (2,3)    circle (2pt)
[black]  (4,3)    circle (2pt)
[black]  (3,4)    circle (2pt)
[black]  (4,1)    circle (2pt);
\draw (3,0) node{The case $5b$};
\draw(3,5)node{};
\end{tikzpicture}
%2
\begin{tikzpicture}[scale=0.6]
\draw[-,cyan,line width=2pt] (4,3)-- (3,4);
\draw[-,cyan,line width=2pt] (4,3) -- (4,1);
\draw[-,cyan,line width=2pt] (2,1)--(2,3);
\draw[-,line width=2pt] (2,1)..controls (1.6,1)..(1,1.3);
\draw[-,cyan,line width=2pt] (1.5,2.93)--(1,2.7);
\draw[-,line width=2pt] (2,3)..controls (1.75,3)..(1.5,2.93);
\draw[-,cyan,line width=2pt] (4,1)..controls (4.5,0.7)..(5,1);
\draw[-,line width=2pt] (2,1)--(4,1);
\draw[-,cyan,line width=2pt] (2,3)..controls (2.3,3.7)..(3,4);
\draw[-,line width=2pt] (2,3)..controls (2.7,3.3)..(3,4);
\draw[-,line width=2pt] (4,1)..controls (7,2)and(6,4)..(3,4);
\draw[-,line width=2pt] (4,3)..controls (4.5,3.3)..(5,3);
\draw[-,cyan,line width=2pt] (2,1)--(3,2);
\draw[-,line width=2pt] (3,2)--(4,3);
\filldraw [red]  (1.5,2.93)    circle (1.5pt)
[red]  (3,2)    circle (1.5pt);
\filldraw [black]  (2,1)    circle (2pt)
[black]  (2,3)    circle (2pt)
[black]  (4,3)    circle (2pt)
[black]  (3,4)    circle (2pt)
[black]  (4,1)    circle (2pt);
\draw (3,0) node{The case $5d$};
\end{tikzpicture}

\noindent This
  finishes all the cases in the table, and thus we have proven the following theorem.
  \begin{theorem}\label{k igual a 5}
    If $G$ is a simple graph, $P,Q$ are longest paths and $\# V(P)\cap V(Q) =5$,
    then $BT(P,Q)$ is either a concatenation of LNC blocks,
    or it is TD, or it is one of the three cases 6., 13. or 14. in the table above.
  \end{theorem}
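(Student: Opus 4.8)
The plan is to reduce the statement to a finite case check over the equivalence classes of permutations $\sigma$ on $\{1,2,3,4,5\}$ determined by $P$ and $Q$ via Notation~\ref{notation permutation}. First I would enumerate all $120$ permutations and collapse them under the equivalence relation generated by $\sigma\sim\sigma^{-1}$ (exchanging the roles of $P$ and $Q$) and $\sigma\sim\sigma^{\bot}$ (reversing the direction of traversal), producing the $23$ classes displayed in the table. For each class I would draw the generic representation $BT(P,Q)$, leaving the number of vertices in each completed component open, and read off its decomposition into building blocks (elementary and larger IBBs, EBBs, or a single spanning BB) as in Definition~\ref{def EBB BB}.

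With the block structure in hand, the bulk of the work is to certify each block as LNC or to certify the whole representation as TD (Definition~\ref{TD}). For the representations that are concatenations of elementary IBBs and EBBs, or that arise from one of the four embedding constructions of Proposition~\ref{lista}, LNC-ness is immediate. For the remaining blocks I would invoke Propositions~\ref{prop adyacentes} and~\ref{PROP conexion entre extremos}: every pair of different-colored components in an ESU that is either adjacent or jointly extremal is automatically LNC, so only finitely many ``long-range'' pairs can survive in each ESU. For the TD cases I would instead use Lemmas~\ref{lema 4.1} and~\ref{conexion entre extremos} together with the symmetry of the representation to dispose of adjacent and extremal pairs, again reducing to a short list of non-adjacent, non-extremal pairs to be handled by hand.

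The main obstacle is precisely these surviving long-range pairs in the ESUs with many components --- notably the EBBs and BBs with $8$ or $10$ completed components arising in cases $9$, $10$, $12$, $18$, $19$, $20$ and $22$. For each such pair $X=P_i'$, $Y=Q_j'$ I must exhibit, for arbitrary internal $x\in X$ and $y\in Y$, the two disjoint path pairs $X_1,Y_1$ and $X_2,Y_2$ of Definition~\ref{def LNC}(a) whose union is the entire block and which respect the embedded IBBs (cf.\ Remark~\ref{path through embedded esu}). This is a routing problem: viewing the block as two equal-length $a$--$b$ paths, one must verify that it admits a ``crossing'' decomposition through any prescribed interior points. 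I would settle it by drawing explicit paths $X_1,X_2,Y_1,Y_2$ for each listed pair (for example $1b,1e,2e,3c,4a,5a,5d$ in case $9$, and $1d,2a,3d,4c$ in case $10$), checking in each diagram that the two pairs are internally disjoint and together cover every edge of the block, so that Definition~\ref{def LNC} is met.

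Finally, after every LNC and every TD entry of the table has been certified, the only representations that resist this classification are cases $6$, $13$ and $14$, whose distinguished ESU admits a direct connection $R$ between components of different colors that cannot be completed to two paths of total length $2L(P)+2L(R)$, so the ESU fails the NC condition of Definition~\ref{def NC}. This exhausts the $23$ classes and yields the asserted trichotomy. I expect the routing verifications for the three $10$-component ESUs (cases $9$, $20$ and $22$) to be the most delicate part: in case $9$ the block is a single EBB with ten completed components and no embedded IBB, so the sheer number of surviving pairs must be drawn out, whereas in cases $20$ and $22$ the two disjoint path pairs must additionally be threaded through the single embedded IBB in a manner that depends on which side of it the interior points $x$ and $y$ lie.
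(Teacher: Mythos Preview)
Your proposal is correct and follows essentially the same approach as the paper: the proof there is precisely the case-by-case verification of the $23$ permutation classes, with cases $1$--$5$, $7$, $8$, $11$, $15$--$17$ handled directly via Proposition~\ref{lista}, cases $21$ and $23$ shown to be TD by symmetry plus explicit paths for the residual non-adjacent pairs, and cases $9$, $10$, $12$, $18$, $19$, $20$, $22$ certified LNC by exhibiting the $X_1,Y_1,X_2,Y_2$ routings for exactly the surviving pairs you name (e.g.\ $1b,1e,2e,3c,4a,5a,5d$ in case~$9$ and $1d,2a,3d,4c$ in case~$10$). One small remark: for the theorem as stated you do not actually need to argue that cases $6$, $13$, $14$ fail to be NC --- once the other $20$ classes are shown to be LNC or TD, the trichotomy holds automatically.
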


\section{In the three exceptional cases $V(P)\cap V(Q)$ is a separator}

In this section we will prove that none of the three exceptional cases of the previous section is
a counterexample to the Hippchen conjecture, which proves the Hippchen conjecture for $k=6$. Then we prove that
in the three cases $V(P)\cap V(Q)$ is a separator.

\begin{lemma}\label{caso coincidentes}
  Assume $G$ is $\ell+1$-connected and let $a_1,b_1,P_0',Q_0'$ be as in Notation~\ref{notation permutation},
  in particular we assume that $\# (V(P)\cap V(Q)))=\ell$.  If $a_1=b_1$, then
  $P_0'\ne \emptyset$ and  $Q_0'\ne \emptyset$.
\end{lemma}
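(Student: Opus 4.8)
The plan is to argue by contradiction, using the single extremal fact about longest paths that drives the whole section: if $v$ is an endpoint of a longest path $P$, then every neighbour of $v$ in $G$ must already lie on $P$, since otherwise appending such a neighbour would produce a path strictly longer than $P$. I would set $v:=a_1=b_1$, which by Notation~\ref{notation permutation} belongs to $V(P)\cap V(Q)$, and recall the translation from the preliminaries: $P_0'=\emptyset$ is equivalent to $L(P_0)=0$, i.e.\ to $v$ being an endpoint of $P$, and likewise $Q_0'=\emptyset$ means $v$ is an endpoint of $Q$. Since both the hypothesis $a_1=b_1$ and the desired conclusion are symmetric under exchanging $P$ and $Q$ (which interchanges the $a_i$ with the $b_i$ and $P_0'$ with $Q_0'$), it suffices to rule out $P_0'=\emptyset$; the statement $Q_0'\neq\emptyset$ then follows by the mirror argument.

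So I would assume $P_0'=\emptyset$, whence $v$ is an endpoint of $P$ and every neighbour of $v$ lies in $V(P)$, and split into two subcases according to whether $Q_0'$ is empty. In the first subcase, $Q_0'\neq\emptyset$, the vertex $v=b_1$ is not an endpoint of $Q$, so it has a predecessor $q$ along $Q$ inside the segment $Q_0$. Because $b_1$ is by definition the first vertex of $Q$ lying in $V(P)$ and $q$ strictly precedes it, we have $q\notin V(P)$; but $q$ is adjacent to $v$, contradicting that all neighbours of $v$ lie in $V(P)$. In the second subcase, $Q_0'=\emptyset$, the vertex $v$ is simultaneously an endpoint of $P$ and of $Q$, so all its neighbours lie in $V(P)$ and in $V(Q)$, hence in $V(P)\cap V(Q)$. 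As $v$ itself is one of the $\ell$ vertices of this set, we obtain $\deg(v)\le \ell-1$, which contradicts the bound $\delta(G)\ge \ell+1$ coming from $(\ell+1)$-connectivity.

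I expect no serious obstacle here: once the extremal property of endpoints is in place, each subcase closes in a line. The only point deserving care is the bookkeeping that $b_1$ is genuinely the \emph{first} vertex of $Q$ meeting $V(P)$, so that its $Q$-predecessor $q$ really does avoid $V(P)$ in the first subcase; and the routine invocation that an $(\ell+1)$-connected graph (necessarily on at least $\ell+2$ vertices, which holds since $G$ carries two distinct longest paths with $\ell\ge 1$ common vertices) has minimum degree at least $\ell+1$. Combining the two subcases shows $P_0'=\emptyset$ is impossible, hence $P_0'\neq\emptyset$, and the symmetric run of the same argument gives $Q_0'\neq\emptyset$.
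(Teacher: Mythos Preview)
Your proof is correct and essentially the same as the paper's: both hinge on the endpoint-extension property of longest paths together with the minimum-degree bound from $(\ell+1)$-connectivity. The only difference is organizational---the paper first uses the interchange $P_0\leftrightarrow Q_0$ to deduce $P_0'=\emptyset\Leftrightarrow Q_0'=\emptyset$ (your Subcase~1 is precisely the one-edge version of this swap) and then handles the common-endpoint case exactly as in your Subcase~2.
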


\begin{proof}
  Since $a_1=b_1$, we can interchange $P_0$ and $Q_0$, and $P_0'=\emptyset$ if and only if $Q_0'=\emptyset$.
   Assume by contradiction that $P_0'=Q_0'=\emptyset$, and so $a_1=b_1$ is an endpoint of $P$ and of $Q$.
  Since $G$ is $\ell+1$-connected, there is an edge connecting $a_1$ with a point $t\not\in\{a_2,\dots,a_{\ell}\}=(V(P)\cap V(Q))$,
  which we call $a_1t$.
  If $t\notin V(P)$, then $L(P+a_1 t)>L(P)$ which contradicts the fact that $P$ is a longest path; and if  $t\notin V(Q)$, then $L(Q+a_1 t)>L(Q)$,
  which contradicts the fact that $Q$ is a longest path, concluding the proof.
\end{proof}

\begin{proposition}\label{casos excepcionales con puntas}
  In the three cases 6. 13. and 14., we can assume $a_1=b_1$. In any of the three cases, if $P_0'\ne \emptyset$ and $Q_0'\ne \emptyset$,
  then $V(P)\cap V(Q)$ is a separator.
\end{proposition}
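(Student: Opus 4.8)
The first assertion is a matter of choosing the right representative of the configuration. Recall that $BT(P,Q)$ is unchanged under the symmetry operations generating the equivalence relation on permutations (exchanging $P$ and $Q$, and reversing the direction of travel along $P$ and/or $Q$), and that $b_1=a_{\sigma(1)}$, so $a_1=b_1$ is equivalent to $\sigma(1)=1$. I would begin by inspecting the three classes in the table of Theorem~\ref{k igual a 5} and noting that each of them contains a permutation fixing $1$ (namely $(1,3,2,5,4)$, $(1,4,5,2,3)$ and $(1,4,5,3,2)$ for cases 6, 13 and 14). Hence one may relabel the two paths and choose their orientations so that $a_1=b_1$. With this normalization the two extremal completed components $P_0$ and $Q_0$ meet exactly at the common vertex $a_1=b_1$ and form an elementary EBB; being a swap unit, Lemma~\ref{coincidencia de longitud y numero de componentes} gives $L(P_0)=L(Q_0)$, a relation I expect to use below.

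For the second assertion the plan is to argue by contradiction: assume $P_0'\neq\emptyset$, $Q_0'\neq\emptyset$ and that $V(P)\cap V(Q)$ is not a separator, so $G\setminus(V(P)\cap V(Q))$ is connected. Exactly as in the proof of Proposition~\ref{isu nc implica articulation set}, connectedness together with Lemma~\ref{coincidencia de longitud y numero de componentes} produces a chain of direct connections $X_1\sim\cdots\sim X_r$ joining a $P$-component to a $Q$-component lying in the same ESU, and passing to a minimal sub-chain as in Proposition~\ref{prop no path exists} yields a single path $R'$ joining two completed components of different colors inside one ESU, internally disjoint from that ESU and meeting at most one color in every other ESU. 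In cases 6, 13 and 14 every pair of different-colored components in each ESU is LNC by Propositions~\ref{prop adyacentes} and~\ref{PROP conexion entre extremos}, \emph{except} for the finitely many pairs that are neither adjacent nor both extremal and for which no LNC certificate exists; these are precisely the pairs highlighted in the Remark following Corollary~\ref{three paths} (for case 6 the pair $P_5',Q_1'$, and the analogous pairs for cases 13 and 14). For every LNC pair, Proposition~\ref{ESU en LNC son NC} already forbids such an $R'$, so the minimal link must be one of these exceptional pairs.

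It therefore remains to rule out a direct connection $R'$ across an exceptional pair, and this is the step where the hypotheses $P_0'\neq\emptyset$ and $Q_0'\neq\emptyset$ enter. The idea is to exploit the two surplus edge-ends available at the coincident vertex $a_1=b_1$ to reroute $P\cup Q\cup R'$ into a single path strictly longer than $P$, contradicting maximality. Concretely, for each exceptional pair and each admissible position of the endpoints of $R'$, I would exhibit two paths $\widehat P$ and $\widehat Q$ (each traversing $R'$ once, one using the extremal arm $P_0$ and the other the arm $Q_0$, which have equal length by the first part) and verify, using the block-length identities $L(P_i)=L(Q_i)$ for the embedded IBB's, that $L(\widehat P)+L(\widehat Q)>2L(P)$, so that one of them exceeds $L(P)$. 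The hard part will be exactly here: the exceptional pairs are, by their very definition, the ones for which the generic construction of Definition~\ref{def NC} fails to reach $2L(R')+2L(P)$, so the extra length coming from $P_0'$ and $Q_0'$ must be accounted for very carefully, and I expect the verification to require a case-by-case (and sub-case by sub-case, according to where $R'$ attaches) analysis rather than a single uniform length estimate. Once every exceptional link is excluded in this way, no connecting chain can exist, and $V(P)\cap V(Q)$ is a separator.
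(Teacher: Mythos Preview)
Your first paragraph is fine: each of the three equivalence classes contains a permutation with $\sigma(1)=1$, so after the allowed symmetries one may take $a_1=b_1$.

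The gap is in the second half. After the chain reduction of Proposition~\ref{prop no path exists}, the minimal sub-path $R'$ joins two components of different colours inside one ESU, and you correctly isolate the problematic case: this ESU is the big one and the pair is exceptional (not LNC). A minor point first: in Case~6 there are three such pairs, $(P_1',Q_5')$, $(P_3',Q_3')$ and $(P_5',Q_1')$, not just the one displayed in the Remark. More importantly, your plan to dispose of this case via ``extra length coming from $P_0'$ and $Q_0'$'' cannot work. The minimal jump $R'$ is only a sub-chain of the original chain from $P_0'$ to $Q_0'$; nothing forces $R'$ to meet $P_0'$ or $Q_0'$, so at that point the hypothesis is simply unavailable. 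And even if it were, there is no extra length to extract: since $L(P_0)=L(Q_0)$, replacing one extremal arm by the other at $a_1$ leaves every length unchanged, and the Remark after Corollary~\ref{three paths} explicitly displays a configuration with $P_0',Q_0'\neq\emptyset$ in which an $R$ across the exceptional pair cannot be completed to two paths summing to $2L(P)+2L(R)$. The exceptional direct connection is therefore genuinely allowed; what has to be excluded is a full chain from $P_0'$ to $Q_0'$, and that does not reduce to forbidding a single exceptional link.

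The paper accordingly argues differently and never tries to make the big ESU NC. It analyses the NDC relation starting from the component $P_0'$. In Cases~13 and~14, explicit $\widehat P,\widehat Q$ are produced (together with Lemmas~\ref{lema 4.1} and~\ref{conexion entre extremos} and the symmetry of the graph) showing that $P_0'$ is NDC with \emph{every} other component; hence no chain of direct connections leaves $P_0'$, and separation follows immediately. In Case~6 the analysis shows that $P_0'$ is NDC with every component except $P_3'$ and $Q_3'$; a second round of diagrams shows that $P_3'$ and $Q_3'$ are NDC with each other and with everything except $P_0',Q_0'$; and since $\{P_0',Q_0'\}$ is an NC pair, $P_0'$ and $Q_0'$ cannot both be directly connected to the same $X\in\{P_3',Q_3'\}$. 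Together these facts force any chain starting at $P_0'$ to remain inside $\{P_0',P_3',Q_3'\}$ and never reach $Q_0'$. The hypotheses $P_0',Q_0'\neq\emptyset$ serve only to name the two witnesses of the separation; the real work is the component-by-component NDC analysis, not a length boost.
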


\begin{proof}
  Note that in the three exceptional cases can assume $a_1=b_1$, changing the directions of $P$ and/or $Q$, if necessary.

\noindent {\bf Case 6.:} The following diagrams show that $P_0'$ can connect directly only with $Q_3'$ and $P_3'$, and that $Q_3'$ cannot be 
connected directly with $P_3'$.

  \begin{tikzpicture}[scale=0.6]
\draw[-,line width=2pt] (2,2)..controls (2.8,3)..(2,4);
\draw[-,cyan,line width=2pt] (0,3)..controls (1.4,2)..(2,2);
\draw[-,cyan,line width=2pt] (2,2)..controls (1.4,2.8)..(1.4,3);
\draw[-,line width=2pt] (1.4,3)..controls (1.4,3.3)..(2,4);
\draw[-,cyan,line width=2pt] (2,4)..controls (1.4,4)..(0,3);
\draw[-,cyan,line width=2pt] (4,2)..controls (3.2,3)..(4,4);
\draw[-,line width=2pt] (0,3)--(-0.5,3.42);
\draw[-,line width=2pt,cyan] (-0.5,3.42)..controls (-0.8,3.6)..(-1,3.6);
\draw[-,line width=2pt] (0,3)..controls (-0.7,2.4)..(-1,2.4);
\draw[-,cyan,line width=2pt] (5,2.3)..controls (4.5,2)..(4,2);
\draw[-,line width=2pt] (4,2)..controls (4.8,3)..(4,4);
\draw[-,line width=2pt] (4,4)..controls (4.5,4)..(5,3.7);
\draw[-,cyan,line width=2pt] (2,4)--(4,4);
\draw[-,line width=2pt] (2,2)--(4,2);
\draw[-,white,line width=3pt] (-0.4,3.52)..controls (0,3.8)..(1.2,3.1);
\draw[-,cyan] (-0.5,3.47)..controls (0,3.83)..(1.4,3.03);
\draw[-] (-0.5,3.39)..controls (0,3.77)..(1.4,2.97);
\filldraw [red]  (-0.5,3.42)    circle (1.5pt)
[red]  (1.4,3)    circle (1.5pt);
\filldraw [black]  (2,2)    circle (2pt)
[black]  (4,2)    circle (2pt)
[black]  (2,4)    circle (2pt)
[black]  (0,3)    circle (2pt)
[black]  (4,4)    circle (2pt);
\draw(-1,4)node{$P_0'$};
\draw(3,4.35)node{$Q_3'$};
\draw(3,1.6)node{$P_3'$};
\draw(6,2)node{};
\end{tikzpicture}
  %2
  \begin{tikzpicture}[scale=0.6]
\draw[-,line width=2pt] (2,2)..controls (2.8,3)..(2,4);
\draw[-,line width=2pt] (0,3)..controls (1.4,2)..(2,2);
\draw[-,cyan,line width=2pt] (2,2)..controls (1.2,3)..(2,4);
\draw[-,cyan,line width=2pt] (2,4)..controls (1.4,4)..(0,3);
\draw[-,cyan,line width=2pt] (4,2)..controls (3.2,3)..(4,4);
\draw[-,line width=2pt] (0,3)--(-0.5,3.42);
\draw[-,line width=2pt,cyan] (-0.5,3.42)..controls (-0.8,3.6)..(-1,3.6);
\draw[-,cyan,line width=2pt] (0,3)..controls (-0.7,2.4)..(-1,2.4);
\draw[-,line width=2pt] (5,2.3)..controls (4.5,2)..(4,2);
\draw[-,line width=2pt] (4,2)..controls (4.6,2.7)..(4.6,3);
\draw[-,cyan,line width=2pt] (4.6,3)..controls (4.6,3.3)..(4,4);
\draw[-,line width=2pt] (4,4)..controls (4.5,4)..(5,3.7);
\draw[-,line width=2pt] (2,4)--(4,4);
\draw[-,cyan,line width=2pt] (2,2)--(4,2);
\draw[-,cyan] (-0.5,3.42)..controls (1,5.5)and(5.5,5.5)..(5.5,4);
\draw[-,cyan] (5.5,4)..controls (5.5,3.5)..(4.6,3);
\draw[-] (-0.53,3.45)..controls (1,5.55)and(5.52,5.56)..(5.54,4);
\draw[-] (5.54,4)..controls (5.54,3.47)..(4.6,2.96);
\filldraw [red]  (-0.5,3.42)    circle (1.5pt)
[red]  (4.6,3)    circle (1.5pt);
\filldraw [black]  (2,2)    circle (2pt)
[black]  (4,2)    circle (2pt)
[black]  (2,4)    circle (2pt)
[black]  (0,3)    circle (2pt)
[black]  (4,4)    circle (2pt);
\draw(-1,4)node{$P_0'$};
\draw(3,4.35)node{$Q_3'$};
\draw(3,1.6)node{$P_3'$};
\draw(6,2)node{};
\end{tikzpicture}
%3
  \begin{tikzpicture}[scale=0.6]
\draw[-,line width=2pt] (2,2)..controls (2.8,3)..(2,4);
\draw[-,cyan,line width=2pt] (0,3)..controls (1.4,2)..(2,2);
\draw[-,cyan,line width=2pt] (2,2)..controls (1.2,3)..(2,4);
\draw[-,line width=2pt] (2,4)..controls (1.4,4)..(0,3);
\draw[-,cyan,line width=2pt] (4,2)..controls (3.2,3)..(4,4);
\draw[-,line width=2pt] (0,3)..controls (-0.7,3.6)..(-1,3.6);
\draw[-,cyan,line width=2pt] (0,3)..controls (-0.7,2.4)..(-1,2.4);
\draw[-,line width=2pt] (5,2.3)..controls (4.5,2)..(4,2);
\draw[-,line width=2pt] (4,2)..controls (4.8,3)..(4,4);
\draw[-,cyan,line width=2pt] (4,4)..controls (4.5,4)..(5,3.7);
\draw[-,cyan,line width=2pt] (2,4)--(3,4);
\draw[-,line width=2pt] (2,2)--(3,2);
\draw[-,line width=2pt] (3,4)--(4,4);
\draw[-,cyan,line width=2pt] (3,2)--(4,2);
\draw[-] (2.97,4)--(2.97,2);
\draw[-,cyan] (3.03,4)--(3.03,2);
\filldraw [red]  (3,4)    circle (1.5pt)
[red]  (3,2)    circle (1.5pt);
\filldraw [black]  (2,2)    circle (2pt)
[black]  (4,2)    circle (2pt)
[black]  (2,4)    circle (2pt)
[black]  (0,3)    circle (2pt)
[black]  (4,4)    circle (2pt);
\draw(-1,4)node{$P_0'$};
\draw(3,4.35)node{$Q_3'$};
\draw(3,1.6)node{$P_3'$};
\end{tikzpicture}

\noindent In fact, the blue and black path show that the connections shown in the diagrams are impossible. By symmetry of the elementary ESU's,
 $P_0'$ cannot connect with each of the other components of the elementary ESU's. Moreover, the adjacent components cannot be connected with
 $P_0'$ by Lemma~\ref{lema 4.1} and the extremal components cannot be connected by
 Lemma~\ref{conexion entre extremos}. So $Q_3'$ and $P_3'$ are the only left. By symmetry
 $Q_0'$ can connect directly only with $Q_3'$ and $P_3'$. But $P_0'$ and $Q_0'$ form an NC pair, so they cannot connect to the same component
 $X\in \{P_3',Q_3'\}$.
On the other hand, $Q_3'$ can connect only with $P_0'$ and $Q_0'$, since any other pair that contains $Q_3'$  but not $P_3'$,
   is either adjacent, or can be discarded by the paths in the following diagrams.

     \begin{tikzpicture}[scale=0.6]
\draw[-,line width=2pt] (2,2)..controls (2.8,3)..(2,4);
\draw[-,line width=2pt] (0,3)..controls (1.4,2)..(2,2);
\draw[-,cyan,line width=2pt] (2,2)..controls (1.2,3)..(2,4);
\draw[-,cyan,line width=2pt] (2,4)..controls (1.4,4)..(0,3);
\draw[-,cyan,line width=2pt] (4,2)..controls (3.2,3)..(4,4);
\draw[-,cyan,line width=2pt] (0,3)..controls (-0.7,3.6)..(-1,3.6);
\draw[-,line width=2pt] (0,3)..controls (-0.7,2.4)..(-1,2.4);
\draw[-,cyan,line width=2pt] (5,2.3)..controls (4.6,2.06)..(4.5,2.03);
\draw[-,line width=2pt] (4.5,2.03)..controls (4.4,2)..(4,2);
\draw[-,line width=2pt] (4,2)..controls (4.8,3)..(4,4);
\draw[-,line width=2pt] (4,4)..controls (4.5,4)..(5,3.7);
\draw[-,line width=2pt] (2,4)--(3.4,4);
\draw[-,cyan,line width=2pt] (3.4,4)--(4,4);
\draw[-,cyan,line width=2pt] (2,2)--(4,2);
\draw[-,cyan] (4.54,2)..controls (6.6,4.55)and(4.05,5.05)..(3.4,4.06);
\draw[-] (4.5,2.06)..controls (6.5,4.5)and(4,5)..(3.4,3.97);
\filldraw [red]  (3.4,4)    circle (1.5pt)
[red]  (4.5,2.03)    circle (1.5pt);
\filldraw [black]  (2,2)    circle (2pt)
[black]  (4,2)    circle (2pt)
[black]  (2,4)    circle (2pt)
[black]  (0,3)    circle (2pt)
[black]  (4,4)    circle (2pt);
\draw(6,0.5)node{};
\end{tikzpicture}
%2
  \begin{tikzpicture}[scale=0.6]
\draw[-,line width=2pt] (2,2)..controls (2.8,3)..(2,4);
\draw[-,cyan,line width=2pt] (0,3)..controls (1.4,4)..(2,4);
\draw[-,cyan,line width=2pt] (2,4)..controls (1.2,3)..(2,2);
\draw[-,cyan,line width=2pt] (2,2)..controls (1.5,2)..(1,2.3);
\draw[-,line width=2pt] (1,2.3)--(0,3);
\draw[-,cyan,line width=2pt] (4,4)..controls (3.2,3)..(4,2);
\draw[-,line width=2pt] (0,3)..controls (-0.7,2.4)..(-1,2.4);
\draw[-,cyan,line width=2pt] (0,3)..controls (-0.7,3.6)..(-1,3.6);
\draw[-,line width=2pt] (5,3.7)..controls (4.5,4)..(4,4);
\draw[-,line width=2pt] (4,4)..controls (4.8,3)..(4,2);
\draw[-,cyan,line width=2pt] (4,2)..controls (4.5,2)..(5,2.3);
\draw[-,line width=2pt] (2,2)--(4,2);
\draw[-,line width=2pt] (2,4)--(3,4);
\draw[-,cyan,line width=2pt] (3,4)--(4,4);
\draw[-,white,line width=3pt] (1,2.1)..controls (1,0.5)and(3.3,0.5)..(2.95,3.8);
\draw[-,cyan] (0.97,2.3)..controls (0.97,0.4)and(3.33,0.4)..(3.03,4);
\draw[-] (1.03,2.3)..controls (1.03,0.5)and(3.27,0.5)..(2.97,4);
\filldraw [red]  (1,2.3)    circle (1.5pt)
[red]  (3,4)    circle (1.5pt);
\filldraw [black]  (2,2)    circle (2pt)
[black]  (4,2)    circle (2pt)
[black]  (2,4)    circle (2pt)
[black]  (0,3)    circle (2pt)
[black]  (4,4)    circle (2pt);
\draw(6,1)node{};
\end{tikzpicture}

   \noindent The symmetric argument shows that $P_3'$ can connect only with $P_0'$ and $Q_0'$.
 Hence $P_0'$ and $Q_0'$ cannot be connected in $G\setminus (V(P)\cap V(Q))$, which shows that $V(P)\cap V(Q)$ is a separator, as desired.

\noindent {\bf Case 13.:} The following diagrams show that $P_0'$ cannot be connected to any other component.

\begin{tikzpicture}[scale=0.6]
\draw[-,cyan,line width=2pt] (2,4)..controls (2.8,5)..(2,6);
\draw[-,line width=2pt] (2,2)..controls (2.7,2.33)..(3,2.63);
\draw[-,cyan,line width=2pt] (3,2.63)..controls (3.3,2.93)..(4,4);
\draw[-,cyan,line width=2pt] (2,2)..controls (3,3.5)..(4,4);
\draw[-,line width=2pt] (2,6)..controls (2.8,6)..(4,4);
\draw[-,cyan,line width=2pt] (2,4)..controls (1.5,3.7)..(1,4);
\draw[-,line width=2pt] (-1,4.6)..controls (-0.6,4.6)..(0,4);
\draw[-,line width=2pt,cyan] (-1,3.4)..controls (-0.75,3.4)..(-0.5,3.52);
\draw[-,line width=2pt] (-0.5,3.52)--(0,4);
\draw[-,cyan,line width=2pt] (0,4)..controls (1.2,2)..(2,2);
\draw[-,line width=2pt] (2,2)--(2,4);
\draw[-,line width=2pt] (4,4)--(5,4);
\draw[-,line width=2pt] (2,4)..controls (1.2,5)..(2,6);
\draw[-,cyan,line width=2pt] (2,6)..controls (1.2,6)..(0,4);
\draw[-] (-0.5,3.55)..controls (1,1.02)and (2.96,1)..(3,2.63);
\draw[-,cyan] (-0.52,3.5)..controls (1,0.96)and (3,0.96)..(3.04,2.63);
\filldraw [red]  (-0.5,3.52)    circle (1.5pt)
[red]  (3,2.63)    circle (1.5pt);
\filldraw [black]  (2,2)    circle (2pt)
[black]  (2,4)    circle (2pt)
[black]  (2,6)    circle (2pt)
[black]  (4,4)    circle (2pt)
[black]  (0,4)    circle (2pt);
\draw (-1,3)node{$P_0'$};
\draw (6,2)node{};
\end{tikzpicture}
%2
\begin{tikzpicture}[scale=0.6]
\draw[-,cyan,line width=2pt] (2,4)..controls (2.8,5)..(2,6);
\draw[-,cyan,line width=2pt] (2,2)..controls (3,2.5)..(4,4);
\draw[-,line width=2pt] (2,2)..controls (3,3.5)..(4,4);
\draw[-,line width=2pt] (2,6)..controls (2.7,6)..(3,5.575);
\draw[-,cyan,line width=2pt] (3,5.575)--(4,4);
\draw[-,cyan,line width=2pt] (2,4)..controls (1.5,3.7)..(1,4);
\draw[-,cyan,line width=2pt] (-1,4.6)..controls (-0.75,4.6)..(-0.5,4.45);
\draw[-,line width=2pt] (-0.5,4.45)--(0,4);
\draw[-,line width=2pt] (-1,3.4)..controls (-0.6,3.4)..(0,4);
\draw[-,cyan,line width=2pt] (0,4)..controls (1.2,2)..(2,2);
\draw[-,line width=2pt] (2,2)--(2,4);
\draw[-,line width=2pt] (4,4)--(5,4);
\draw[-,line width=2pt] (2,4)..controls (1.2,5)..(2,6);
\draw[-,cyan,line width=2pt] (2,6)..controls (1.2,6)..(0,4);
\draw[-] (-0.5,4.45)..controls (1,6.98)and (2.96,7)..(3,5.575);
\draw[-,cyan] (-0.52,4.5)..controls (1,7.04)and (3,7.04)..(3.04,5.575);
\filldraw [red]  (-0.52,4.46)    circle (1.5pt)
[red]  (3,5.575)    circle (1.5pt);
\filldraw [black]  (2,2)    circle (2pt)
[black]  (2,4)    circle (2pt)
[black]  (2,6)    circle (2pt)
[black]  (4,4)    circle (2pt)
[black]  (0,4)    circle (2pt);
\draw (-1,5)node{$P_0'$};
\draw (6,2)node{};
\end{tikzpicture}
%3
\begin{tikzpicture}[scale=0.6]
\draw[-,green] (2,2)..controls (3,2.5)..(4,2);
\draw[-,red] (0,3)..controls (1.4,2)..(2,2);
\draw[-,red] (2,2)..controls (3,1.5)..(4,2);
\draw[-,green] (2,4)..controls (1.4,4)..(0,3);
\draw[-,red] (2,4)..controls (3,3.5)..(4,4);
\draw[-,green] (0,3)..controls (-0.7,3.6)..(-1,3.6);
\draw[-,red] (0,3)..controls (-0.7,2.4)..(-1,2.4);
\draw[-,green] (5,2.3)..controls (4.5,2)..(4,2);
\draw[-,green] (2,4)..controls (3,4.5)..(4,4);
\draw[-,red] (4,4)..controls (4.5,4.6)..(5,4.4);
\draw[-,green] (2,2)..controls (2,2.6) and (4,3.4)..(4,4);
\draw[-,white,line width=2pt] (4,2)..controls (4,2.5)and(2,3.5)..(2,4);
\draw[-,red] (4,2)..controls (4,2.5)and(2,3.5)..(2,4);

\filldraw [black]  (2,2)    circle (2pt)
[black]  (4,2)    circle (2pt)
[black]  (2,4)    circle (2pt)
[black]  (0,3)    circle (2pt)
[black]  (4,4)    circle (2pt);
\draw (2,1)node{The graph is symmetric};
\end{tikzpicture}

\noindent In fact, the first two diagrams show paths that prevent $P_0'$ to be connected to two components, and by symmetry also to
the other component of the elementary ESU. The third diagram shows that
the graph is symmetric, and so we can discard three more components. The remaining components are either adjacent to $P_0'$ or are extremal,
so they cannot be connected to $P_0'$ by Lemmas~\ref{lema 4.1} and~\ref{conexion entre extremos}. Hence there cannot be a path
from $P_0'$ to $Q_0'$ in $G\setminus (V(P)\cap V(Q))$, and so $V(P)\cap V(Q)$ is a separator, as desired.

\noindent {\bf Case 14.:} The following diagrams show that $P_0'$ cannot be connected to any other component.

\noindent \begin{tikzpicture}[scale=0.5]
\draw[-,line width=2pt] (2,2)..controls (3,2.5)..(4,2);
\draw[-,cyan,line width=2pt] (0,3)..controls (1.4,2)..(2,2);
\draw[-,cyan,line width=2pt] (2,2)..controls (3,1.5)..(4,2);
\draw[-,line width=2pt] (2,4)..controls (1.4,4)..(0,3);
\draw[-,cyan,line width=2pt] (2,4)..controls (3,3.5)..(4,4);
\draw[-,line width=2pt] (0,3)--(-0.5,3.43);
\draw[-,cyan,line width=2pt] (-0.5,3.43)..controls (-0.75,3.6)..(-1,3.6);
\draw[-,cyan,line width=2pt] (0,3)..controls (-0.7,2.4)..(-1,2.4);
\draw[-,line width=2pt] (5,2.3)..controls (4.5,2)..(4,2);
\draw[-,cyan,line width=2pt] (2,4)..controls (2.8,4.38)..(3,4.38);
\draw[-,line width=2pt] (3,4.38)..controls (3.2,4.38)..(4,4);
\draw[-,line width=2pt] (2,4)..controls (1.5,4.6)..(1,4.4);
\draw[-,cyan,line width=2pt] (4,2)--(4,4);
\draw[-,line width=2pt] (2,2)..controls (2,2.6) and (4,3.4)..(4,4);
\draw[-] (-0.5,3.43)..controls (0.02,5.5) and (3,5.5)..(3,4.38);
\draw[-,cyan] (-0.54,3.43)..controls (-0.03,5.56) and (3,5.55)..(3.04,4.4);
\filldraw [red]  (-0.5,3.43)    circle (1.5pt)
[red]  (3,4.38)    circle (1.5pt);
\filldraw [black]  (2,2)    circle (2pt)
[black]  (4,2)    circle (2pt)
[black]  (2,4)    circle (2pt)
[black]  (0,3)    circle (2pt)
[black]  (4,4)    circle (2pt);
\draw (-0.8,4.2)node{$P_0'$};
\draw (5.3,2)node{};
\end{tikzpicture}
%2
\begin{tikzpicture}[scale=0.5]
\draw[-,cyan,line width=2pt] (2,2)..controls (3,2.5)..(4,2);
\draw[-,line width=2pt] (0,3)..controls (1.4,2)..(2,2);
\draw[-,cyan,line width=2pt] (2,2)..controls (2.8,1.62)..(3,1.62);
\draw[-,line width=2pt] (3,1.62)..controls (3.2,1.62)..(4,2);
\draw[-,cyan,line width=2pt] (2,4)..controls (1.4,4)..(0,3);
\draw[-,cyan,line width=2pt] (2,4)..controls (3,3.5)..(4,4);
\draw[-,cyan,line width=2pt] (0,3)..controls (-0.7,3.6)..(-1,3.6);
\draw[-,line width=2pt] (0,3)--(-0.5,2.57);
\draw[-,cyan,line width=2pt] (-0.5,2.57)..controls (-0.75,2.4)..(-1,2.4);
\draw[-,line width=2pt] (5,2.3)..controls (4.5,2)..(4,2);
\draw[-,line width=2pt] (2,4)..controls (3,4.5)..(4,4);
\draw[-,line width=2pt] (2,4)..controls (1.5,4.6)..(1,4.4);
\draw[-,cyan,line width=2pt] (4,2)--(4,4);
\draw[-,line width=2pt] (2,2)..controls (2,2.6) and (4,3.4)..(4,4);
\draw[-] (-0.5,2.57)..controls (0.02,0.5) and (3,0.5)..(3,1.62);
\draw[-,cyan] (-0.54,2.57)..controls (-0.03,0.44) and (3,0.45)..(3.04,1.6);
\filldraw [red]  (-0.5,2.57)    circle (1.5pt)
[red]  (3,1.62)    circle (1.5pt);
\filldraw [black]  (2,2)    circle (2pt)
[black]  (4,2)    circle (2pt)
[black]  (2,4)    circle (2pt)
[black]  (0,3)    circle (2pt)
[black]  (4,4)    circle (2pt);
\draw (-0.8,1.8)node{$P_0'$};
\draw (5.3,4.5)node{};
\end{tikzpicture}
%3
\begin{tikzpicture}[scale=0.5]
\draw[-,line width=2pt] (2,2)..controls (3,2.5)..(4,2);
\draw[-,line width=2pt] (0,3)..controls (1.4,2)..(2,2);
\draw[-,cyan,line width=2pt] (2,2)..controls (3,1.5)..(4,2);
\draw[-,cyan,line width=2pt] (2,4)..controls (1.4,4)..(0,3);
\draw[-,cyan,line width=2pt] (2,4)..controls (3,3.5)..(4,4);
\draw[-,line width=2pt] (0,3)--(-0.5,3.43);
\draw[-,cyan,line width=2pt] (-0.5,3.43)..controls (-0.75,3.6)..(-1,3.6);
\draw[-,cyan,line width=2pt] (0,3)..controls (-0.7,2.4)..(-1,2.4);
\draw[-,line width=2pt] (5,2.3)..controls (4.5,2)..(4,2);
\draw[-,line width=2pt] (2,4)..controls (3,4.5)..(4,4);
\draw[-,line width=2pt] (2,4)..controls (1.5,4.6)..(1,4.4);
\draw[-,cyan,line width=2pt] (4,2)--(4,3);
\draw[-,line width=2pt] (4,3)--(4,4);
\draw[-,cyan,line width=2pt] (2,2)..controls (2,2.6) and (4,3.4)..(4,4);
\draw[-] (-0.5,3.43)..controls (0.02,5.5) and (5,5.5)..(5,4);
\draw[-] (4,3)..controls (4.5,3)and(5,3.5)..(5,4);
\draw[-,cyan] (-0.54,3.43)..controls (-0.03,5.56) and (5.04,5.57)..(5.05,4);
\draw[-,cyan] (4,2.95)..controls (4.53,2.95)and(5.06,3.48)..(5.05,4);
\filldraw [red]  (-0.5,3.43)    circle (1.5pt)
[red]  (4,3)    circle (1.5pt);
\filldraw [black]  (2,2)    circle (2pt)
[black]  (4,2)    circle (2pt)
[black]  (2,4)    circle (2pt)
[black]  (0,3)    circle (2pt)
[black]  (4,4)    circle (2pt);
\draw (-0.9,4.1)node{$P_0'$};
\draw (5.3,1.5)node{};
\end{tikzpicture}
%4
\begin{tikzpicture}[scale=0.5]
\draw[-,line width=2pt] (2,2)..controls (3,2.5)..(4,2);
\draw[-,line width=2pt] (0,3)..controls (1.4,2)..(2,2);
\draw[-,cyan,line width=2pt] (2,2)..controls (3,1.5)..(4,2);
\draw[-,cyan,line width=2pt] (2,4)..controls (1.4,4)..(0,3);
\draw[-,cyan,line width=2pt] (2,4)..controls (3,3.5)..(4,4);
\draw[-,line width=2pt] (0,3)--(-0.5,3.43);
\draw[-,cyan,line width=2pt] (-0.5,3.43)..controls (-0.75,3.6)..(-1,3.6);
\draw[-,cyan,line width=2pt] (0,3)..controls (-0.7,2.4)..(-1,2.4);
\draw[-,line width=2pt] (5,2.3)..controls (4.5,2)..(4,2);
\draw[-,line width=2pt] (2,4)..controls (3,4.5)..(4,4);
\draw[-,line width=2pt] (2,4)..controls (1.5,4.6)..(1,4.4);
\draw[-,cyan,line width=2pt] (4,2)--(4,4);
\draw[-,cyan,line width=2pt] (2,2)..controls (2.3,2.6) ..(3,3);
\draw[-,line width=2pt] (3,3)..controls (3.7,3.4)..(4,4);
\draw[-,white,line width=3pt] (-0.4,3.53)..controls (0,3.96)..(2.8,3.1);
\draw[-,cyan] (-0.53,3.46)..controls (0,4.03)..(3.03,3.03);
\draw[-] (-0.47,3.4)..controls (0,3.97)..(2.97,2.97);
\filldraw [red]  (-0.5,3.43)    circle (1.5pt)
[red]  (3,3)    circle (1.5pt);
\filldraw [black]  (2,2)    circle (2pt)
[black]  (4,2)    circle (2pt)
[black]  (2,4)    circle (2pt)
[black]  (0,3)    circle (2pt)
[black]  (4,4)    circle (2pt);
\draw (-0.9,4.1)node{$P_0'$};
\draw (5.5,1.5)node{};
\end{tikzpicture}

\noindent In fact, the diagrams show paths that prevent $P_0'$ to be connected to four components, and by symmetry also to
the other component of the elementary ESU's. The remaining components are either adjacent to $P_0'$ or are extremal,
so they cannot be connected to $P_0'$ by Lemmas~\ref{lema 4.1} and~\ref{conexion entre extremos}. Hence there cannot be a path
from $P_0'$ to $Q_0'$ in $G\setminus (V(P)\cap V(Q))$, and so $V(P)\cap V(Q)$ is a separator, as desired. This finishes the three
cases and concludes the proof.
\end{proof}

\begin{corollary}\label{Corollary Hippchen 6}
  Assume that $P$ and $Q$ are two longest paths in a $6$-connected simple graph $G$. Then $\# (V(P)\cap V(Q))\ge 6$.
\end{corollary}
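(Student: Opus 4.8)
The plan is to mirror the proof of Corollary~\ref{Corollary Hippchen 5} one level higher, using the classification of the case $\ell=5$ established in Theorem~\ref{k igual a 5} in place of Theorem~\ref{Teorema k menor igual a 4}. Since $G$ is $6$-connected, every vertex has degree at least $6$, so a longest path has at least $6$ vertices and $\#V(P)\ge 6$; hence if $V(P)=V(Q)$ we are immediately done, and we may assume $V(P)\ne V(Q)$. As a $6$-connected graph is in particular $5$-connected, Corollary~\ref{Corollary Hippchen 5} gives $\#(V(P)\cap V(Q))\ge 5$, so it suffices to rule out $\#(V(P)\cap V(Q))=5$. I would argue by contradiction: assuming $\ell=\#(V(P)\cap V(Q))=5$, I will show that $V(P)\cap V(Q)$ is always a separator of $G$, which is impossible because deleting $\ell=5$ vertices from a $6$-connected graph leaves it connected.

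To show that $V(P)\cap V(Q)$ is a separator I would invoke Theorem~\ref{k igual a 5}, which asserts that $BT(P,Q)$ falls into exactly one of three situations. If $BT(P,Q)$ is a concatenation of LNC blocks, then all its ESU's are LNC and Theorem~\ref{concatenation of LNC blocks} (using $V(P)\ne V(Q)$) yields that $V(P)\cap V(Q)$ is a separator. If $BT(P,Q)$ is TD, then Remark~\ref{en TD todos son NC} gives the same conclusion. This disposes of every case except the three exceptional representations $6.$, $13.$ and $14.$ from the table.

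The three exceptional cases are exactly where $6$-connectivity does real work, and I expect them to be the main obstacle. Here I would appeal to Proposition~\ref{casos excepcionales con puntas}: in each of these cases one may arrange $a_1=b_1$ after possibly reversing $P$ and/or $Q$, and the proposition shows that $V(P)\cap V(Q)$ is a separator provided the extremal components $P_0'$ and $Q_0'$ are both nonempty. To guarantee this nonemptiness I would use Lemma~\ref{caso coincidentes} with $\ell=5$: since $G$ is $6$-connected, i.e. $(\ell+1)$-connected, and $a_1=b_1$, the lemma forces $P_0'\ne\emptyset$ and $Q_0'\ne\emptyset$. Thus Proposition~\ref{casos excepcionales con puntas} applies and $V(P)\cap V(Q)$ is again a separator.

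Combining the three cases, $V(P)\cap V(Q)$ is a separator whenever $\ell=5$ and $V(P)\ne V(Q)$, contradicting the $6$-connectivity of $G$. This contradiction shows $\#(V(P)\cap V(Q))\ge 6$, completing the proof. The only subtlety is the bookkeeping in the exceptional cases, but all of it has already been absorbed into Lemma~\ref{caso coincidentes} and Proposition~\ref{casos excepcionales con puntas}, so no new geometric argument is needed at this stage.
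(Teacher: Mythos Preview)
Your argument is correct and follows exactly the same route as the paper: reduce to $\ell=5$ via Corollary~\ref{Corollary Hippchen 5}, then show $V(P)\cap V(Q)$ is a separator (contradicting $6$-connectivity) by invoking Theorem~\ref{k igual a 5} for the non-exceptional configurations and Lemma~\ref{caso coincidentes} plus Proposition~\ref{casos excepcionales con puntas} for the three exceptional ones. If anything, your write-up is more explicit than the paper's, which compresses the non-exceptional cases and cites only Lemma~\ref{caso coincidentes} and Proposition~\ref{casos excepcionales con puntas} in the final step.
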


\begin{proof}
  We know that $\# V(P)\ge 6$, so we can assume that $V(P)\ne V(Q)$.
  Since a $6$-connected graph is also $5$-connected, by Corollary~\ref{Corollary Hippchen 5}
   we know that $\# (V(P)\cap V(Q))\ge 5$.
  Assume by contradiction that $\# (V(P)\cap V(Q))<6$, i.e., that $\# (V(P)\cap V(Q))= 5$. Since $G$ is $6$-connected,
  the complement of $V(P)\cap V(Q)$ is connected, which contradicts the fact that by Lemma~\ref{caso coincidentes}
  and Proposition~\ref{casos excepcionales con puntas} we know that
  $V(P)\cap V(Q)$ is a separator. This contradiction concludes the proof.
\end{proof}

Now we prove that in the three exceptional cases $V(P)\cap V(Q)$ is always a separator.
\begin{lemma} \label{final vacio}
  Let  $P_0'$, $Q_0'$, $P_{\ell}'$ and $Q_{\ell}'$ be as in Notation~\ref{notation permutation},
  and assume that one of $P_0'$, $Q_0'$, $P_{\ell}'$ or $Q_{\ell}'$ is empty. Then the two adjacent partial paths of the other longest path
  have length $1$.
\end{lemma}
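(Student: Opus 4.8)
The plan is to reduce to the single hypothesis $P_0'=\emptyset$ and then produce a path strictly longer than $P$ as soon as a partial path of $Q$ incident to the relevant endpoint has length $\ge 2$. First I would invoke the symmetries already present in the setup: reversing the orientation of $P$ interchanges $P_0'$ with $P_\ell'$, reversing $Q$ interchanges $Q_0'$ with $Q_\ell'$, and the identity $BT(P,Q)=BT(Q,P)$ lets me exchange the two paths; composing these, any of the four hypotheses can be moved to $P_0'=\emptyset$, so it is enough to treat that one case.

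Unwinding the definition, $P_0'=\emptyset$ means $L(P_0)=0$, i.e.\ $P_0=\{a_1\}$, so that $a_1$ is an endpoint of $P$. Writing $a_1=b_j$ with $j=\sigma^{-1}(1)$, the two partial paths of the other path $Q$ meeting $a_1$ are precisely $Q_{j-1}$ and $Q_j$, and I must show both have length $1$. The core is a single ``prepend an edge to $P$'' move, applied to each in turn. Take $Q_j$ and let $w$ be the neighbour of $a_1=b_j$ along $Q_j$. If $L(Q_j)\ge 2$, then $w$ is an interior vertex of the subpath $Q_j$, hence $w\notin V(P)\cap V(Q)$ by the very definition of the decomposition $Q=Q_0+\dots+Q_\ell$; since $w\in V(Q)$ this forces $w\notin V(P)$. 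As $a_1$ is an endpoint of $P$ and $a_1w$ is an edge of $Q\subseteq G$, prepending it yields $w\,a_1+P_1+\dots+P_\ell$, a genuine path (because $w\notin V(P)$) of length $L(P)+1$, contradicting that $P$ is longest. Hence $L(Q_j)\le 1$, and the identical argument applied to the neighbour of $a_1$ along $Q_{j-1}$ gives $L(Q_{j-1})\le 1$.

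For the matching lower bounds I would use that $a_1$ is an interior vertex of $Q$ --- which is exactly what is implicit in the phrase ``two adjacent partial paths'' --- so each of $Q_{j-1}$ and $Q_j$ joins $a_1$ to a vertex different from $a_1$ (an intersection vertex $b_{j\pm 1}$, or, in the boundary indices $j=1$ or $j=\ell$, the far endpoint of the extremal piece $Q_0$ or $Q_\ell$, which differs from $a_1$ precisely because $a_1$ is interior). Thus $L(Q_{j-1}),L(Q_j)\ge 1$, and together with the upper bounds this gives length exactly $1$. The only real obstacle is this bookkeeping of boundary indices and of the degenerate possibility that $a_1$ is simultaneously an endpoint of $Q$: in that degenerate situation only one non-degenerate piece meets $a_1$, but the same prepend argument still shows that piece has length $1$, so the construction is robust across all positions of $a_1$ along $Q$. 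Finally, the symmetry reduction of the first step transports the conclusion back to whichever of $P_0',Q_0',P_\ell',Q_\ell'$ was assumed empty.
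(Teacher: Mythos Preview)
Your proof is correct and follows exactly the paper's approach: the paper's entire proof reads ``Clear, since otherwise one can extend $P$ (respectively $Q$) using the first part of the adjacent partial path,'' which is precisely your prepend-an-edge argument. Your additional care with the symmetry reduction, the verification that the neighbour $w$ lies outside $V(P)$, and the discussion of the lower bound and boundary indices is sound bookkeeping that the paper omits, but the core idea is identical.
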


\begin{proof}
  Clear, since otherwise one can extend $P$ (respectively $Q$) using the first part of the adjacent partial path.
\end{proof}

We will show that in each of the three cases, if $P_0'=Q_0'= \emptyset$, then $BT(P,Q)$ is weakly
disconnected (WD), according to the following definition.

\begin{definition}\label{def WD}
  The graph $BT(P,Q)$  is called \textbf{weakly disconnected (WD)} if each pair $X,Y$ of
  components of different colors  is either NDC, or is \textbf{weakly non directly connectable (WNDC)}, which means that
    for every path $R$ that connects $X$ with $Y$, such that $R$ is internally disjoint from $BT(P,Q)$, we can  construct
    a path $\widehat P$ and a
   cycle $C$ in $BT(P,Q)\cup R$ such that
  $$
  L(\widehat P)+L(C)= 2L(R)+ L(P)+L(Q)=2 L(P)+2 L(R).
  $$
\end{definition}
Clearly in a WD graph with $V(P)\ne V(Q)$, the set $V(P)\cap V(Q)$ is a separator, since the length of $\widehat P$ and the length of the opened 
cycle $\widetilde C$
sum $L(\widehat P)+L(C)-1=2L(P)+2L(R)-1>2L(P)$, which shows that no pair of components of different colors can be connected.

\begin{proposition}\label{casos excepcionales con puntas2}
  Consider the three cases 6. 13. and 14. and assume $a_1=b_1$. If $P_0'=Q_0'= \emptyset$,
  then $V(P)\cap V(Q)$ is a separator.
\end{proposition}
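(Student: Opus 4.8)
The plan is to show that, in each of the three cases, the hypothesis $P_0'=Q_0'=\emptyset$ forces $BT(P,Q)$ to be weakly disconnected (WD) in the sense of Definition~\ref{def WD}; the conclusion then follows from the remark after that definition, since $V(P)\cap V(Q)$ is a separator in any WD graph satisfying the standing hypothesis $V(P)\ne V(Q)$. Thus the whole task reduces to checking that every pair $X,Y$ of non-empty completed components of different colors is either NDC or WNDC.

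First I would record the structure near the common endpoint. Since $a_1=b_1$ and $P_0'=Q_0'=\emptyset$, the vertex $a_1$ is simultaneously an endpoint of $P$ and of $Q$, and by Lemma~\ref{final vacio} the two incident completed components $P_1$ and $Q_1$ are single edges. Hence $a_1$ has degree exactly two in $BT(P,Q)$, its only neighbours being $a_2$ (along $P$) and $b_2$ (along $Q$). The consequence I will use repeatedly is that any walk in $BT(P,Q)\cup R$ which arrives at $a_1$ from both sides must traverse both edges $a_1a_2$ and $a_1 b_2$, and therefore closes up into a cycle through $a_1$.

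Next I would classify the pairs. Adjacent pairs of different colors are NDC by Lemma~\ref{lema 4.1}, and the only extremal components that survive are $P_\ell'$ and $Q_\ell'$, whose pair is NDC by Lemma~\ref{conexion entre extremos}; moreover every pair already known to be NC is NDC as well, because a direct connection is in particular admissible in the sense of Definition~\ref{def NC}. This leaves precisely the finitely many problematic pairs that prevented the relevant ESU from being NC, namely the same pairs singled out in Proposition~\ref{casos excepcionales con puntas}. For each of them, and for each direct connection $R$, I would exhibit the decomposition directly: take the construction that in the non-closed situation would produce two paths $\widehat P,\widehat Q$ of total length $2L(P)+2L(R)$; because $P_0'=Q_0'=\emptyset$, the two ends that would have terminated in $P_0'$ and $Q_0'$ now both land at $a_1$, so one of the two objects has both of its ends at $a_1$ and, by the degree-two observation above, is a cycle $C$ while the other remains a path $\widehat P$. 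The length count is unchanged, giving $L(\widehat P)+L(C)=2L(P)+2L(R)$, which is exactly the WNDC condition of Definition~\ref{def WD}.

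The main obstacle is the explicit, case-by-case verification for cases 6., 13.\ and 14.\ that this path-plus-cycle decomposition really closes up: one must produce, for every flagged pair, the path $\widehat P$ and the cycle $C$ (mirroring the figures in Proposition~\ref{casos excepcionales con puntas}) and check that the cycle runs through $a_1$ via the two length-one edges $a_1a_2$ and $a_1b_2$, that no vertex is repeated, and that the two lengths add to $2L(P)+2L(R)$. This is a finite diagram chase rather than a conceptual difficulty, but it is where all the work lies; the genuinely new ingredient compared with the earlier propositions is precisely that the collapse of $P_0'$ and $Q_0'$ turns the ``second path'' into a closed cycle, which is what WNDC, as opposed to NDC, is designed to capture.
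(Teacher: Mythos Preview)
Your overall strategy—showing that $BT(P,Q)$ is WD in each case and invoking the remark after Definition~\ref{def WD}—is exactly what the paper does, and your observation that $a_1$ has degree~2 (so that a would-be path with both ends at $a_1$ closes into a cycle) is the right structural insight behind the WNDC verifications in Cases~13 and~14. You also correctly deduce from Lemma~\ref{final vacio} that $P_1'=Q_1'=\emptyset$, which the paper uses heavily.

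However, the mechanism you propose has a real gap. You write that the remaining problematic pairs are ``the same pairs singled out in Proposition~\ref{casos excepcionales con puntas}'' and that for each one you can ``take the construction that in the non-closed situation would produce two paths $\widehat P,\widehat Q$'' and watch it close up into a cycle. Neither claim holds. Proposition~\ref{casos excepcionales con puntas} analyzes only pairs that involve $P_0'$ or $Q_0'$; those pairs are now vacuous, and its diagrams are irrelevant here. The pairs you must handle are among $P_2',\dots,P_5',Q_2',\dots,Q_5'$, and for the genuinely exceptional ones there is \emph{no} pre-existing two-path construction to modify—that is precisely what made these cases exceptional. The paper does not recycle anything from Proposition~\ref{casos excepcionales con puntas}; it builds fresh diagrams for each pair, exploiting the new fact that $P_1',Q_1'$ are empty. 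In Case~6 it even obtains TD (two honest paths, no cycle needed), so your cycle mechanism is not the operative idea there at all.

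So while your last paragraph correctly concedes that ``all the work lies'' in a finite diagram chase, you have mis-identified both which pairs need checking and where the constructions come from. The proof really does require producing, from scratch, the explicit $\widehat P$ and $C$ (or $\widehat P,\widehat Q$ in Case~6) for each surviving non-adjacent, non-extremal pair—there is no shortcut through Proposition~\ref{casos excepcionales con puntas}.
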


\begin{proof}
{\bf Case 6.:} The paths in the following diagrams show that the graph is TD.

\noindent  \begin{tikzpicture}[scale=0.6]
\draw[-,line width=2pt] (2,2)..controls (2.8,3)..(2,4);
\draw[-,cyan,line width=2pt] (0,3)..controls (1.4,2)..(2,2);
\draw[-,cyan,line width=2pt] (2,2)..controls (1.2,3)..(2,4);
\draw[-,line width=2pt] (2,4)..controls (1.4,4)..(0,3);
\draw[-,cyan,line width=2pt] (4,2)..controls (3.2,3)..(4,4);
\draw[-,line width=2pt] (5,2.3)..controls (4.5,2)..(4,2);
\draw[-,line width=2pt] (4,2)..controls (4.8,3)..(4,4);
\draw[-,line width=2pt] (4,4)..controls (4.3,4)..(4.5,3.96);
\draw[-,cyan,line width=2pt] (4.5,3.96)..controls (4.7,3.9)..(5,3.7);
\draw[-,cyan,line width=2pt] (2,4)--(4,4);
\draw[-,line width=2pt] (2,2)--(3,2);
\draw[-,cyan,line width=2pt] (3,2)--(4,2);
\draw[-,white,line width=3pt] (3,2.1)..controls (3,5)and (4.5,5)..(4.5,4.1);
\draw[-,cyan] (2.97,2)..controls (2.96,5.06)and (4.53,5.05)..(4.53,3.96);
\draw[-] (3.03,2)..controls (3.04,4.97)and (4.47,5)..(4.47,3.96);
\filldraw [red]  (4.5,3.96)    circle (1.5pt)
[red]  (3,2)    circle (1.5pt);
\filldraw [black]  (2,2)    circle (2pt)
[black]  (4,2)    circle (2pt)
[black]  (2,4)    circle (2pt)
[black]  (0,3)    circle (2pt)
[black]  (4,4)    circle (2pt);
\draw (0.5,4) node{$P_1$};
\draw (5,4.2) node{$P_5'$};
\draw (0.5,2) node{$Q_1$};
\draw (5.5,2)node{};
\end{tikzpicture}
%2
\begin{tikzpicture}[scale=0.6]
\draw[-,line width=2pt] (2,2)..controls (2.6,2.8)..(2.6,3);
\draw[-,cyan,line width=2pt] (2.6,3)..controls (2.6,3.2)..(2,4);
\draw[-,cyan,line width=2pt] (0,3)..controls (1.4,2)..(2,2);
\draw[-,line width=2pt] (2,2)..controls (1.2,3)..(2,4);
\draw[-,line width=2pt] (2,4)..controls (1.4,4)..(0,3);
\draw[-,cyan,line width=2pt] (4,2)..controls (3.2,3)..(4,4);
\draw[-,line width=2pt] (5,2.3)..controls (4.5,2)..(4,2);
\draw[-,line width=2pt] (4,2)..controls (4.8,3)..(4,4);
\draw[-,line width=2pt] (4,4)..controls (4.3,4)..(4.5,3.96);
\draw[-,cyan,line width=2pt] (4.5,3.96)..controls (4.7,3.9)..(5,3.7);
\draw[-,cyan,line width=2pt] (2,4)--(4,4);
\draw[-,cyan,line width=2pt] (2,2)--(4,2);
\draw[-,white,line width=3pt] (2.6,3)..controls (3,5)and (4.5,5)..(4.5,4.1);
\draw[-,cyan] (2.57,3)..controls (2.96,5.06)and (4.53,5.05)..(4.53,3.96);
\draw[-] (2.63,3)..controls (3.04,4.97)and (4.47,5)..(4.47,3.96);
\filldraw [red]  (4.5,3.96)    circle (1.5pt)
[red]  (2.6,3)    circle (1.5pt);
\filldraw [black]  (2,2)    circle (2pt)
[black]  (4,2)    circle (2pt)
[black]  (2,4)    circle (2pt)
[black]  (0,3)    circle (2pt)
[black]  (4,4)    circle (2pt);
\draw (0.5,4) node{$P_1$};
\draw (0.5,2) node{$Q_1$};
\draw (5,4.2) node{$P_5'$};
\draw (5.5,2)node{};
\end{tikzpicture}
%3
\begin{tikzpicture}[scale=0.6]
\draw[-,line width=2pt] (2,2)..controls (2.6,2.8)..(2.6,3);
\draw[-,cyan,line width=2pt] (2.6,3)..controls (2.6,3.2)..(2,4);
\draw[-,cyan,line width=2pt] (0,3)..controls (1.4,2)..(2,2);
\draw[-,line width=2pt] (2,2)..controls (1.2,3)..(2,4);
\draw[-,line width=2pt] (2,4)..controls (1.4,4)..(0,3);
\draw[-,cyan,line width=2pt] (4,2)..controls (3.4,2.8)..(3.4,3);
\draw[-,line width=2pt] (3.4,3)..controls (3.4,3.2)..(4,4);
\draw[-,line width=2pt] (5,2.3)..controls (4.5,2)..(4,2);
\draw[-,line width=2pt] (4,2)..controls (4.8,3)..(4,4);
\draw[-,cyan,line width=2pt] (4,4)..controls (4.5,4)..(5,3.7);
\draw[-,cyan,line width=2pt] (2,4)--(4,4);
\draw[-,cyan,line width=2pt] (2,2)--(4,2);
\draw[-,cyan] (2.6,3.03)--(3.4,3.03);
\draw[-] (2.6,2.97)--(3.4,2.97);
\filldraw [red]  (3.4,3)    circle (1.5pt)
[red]  (2.6,3)    circle (1.5pt);
\filldraw [black]  (2,2)    circle (2pt)
[black]  (4,2)    circle (2pt)
[black]  (2,4)    circle (2pt)
[black]  (0,3)    circle (2pt)
[black]  (4,4)    circle (2pt);
\draw (0.5,4) node{$P_1$};
\draw (0.5,2) node{$Q_1$};
\draw (5.5,2)node{};
\end{tikzpicture}
%4
\begin{tikzpicture}[scale=0.6]
\draw[-,line width=2pt] (2,2)..controls (2.8,3)..(2,4);
\draw[-,cyan,line width=2pt] (0,3)..controls (1.4,2)..(2,2);
\draw[-,cyan,line width=2pt] (2,2)..controls (1.2,3)..(2,4);
\draw[-,line width=2pt] (2,4)..controls (1.4,4)..(0,3);
\draw[-,cyan,line width=2pt] (4,2)..controls (3.2,3)..(4,4);
\draw[-,line width=2pt] (5,2.3)..controls (4.5,2)..(4,2);
\draw[-,line width=2pt] (4,2)..controls (4.8,3)..(4,4);
\draw[-,cyan,line width=2pt] (4,4)..controls (4.5,4)..(5,3.7);
\draw[-,cyan,line width=2pt] (2,4)--(3,4);
\draw[-,line width=2pt] (2,2)--(3,2);
\draw[-,line width=2pt] (3,4)--(4,4);
\draw[-,cyan,line width=2pt] (3,2)--(4,2);
\draw[-] (2.97,4)--(2.97,2);
\draw[-,cyan] (3.03,4)--(3.03,2);
\filldraw [red]  (3,4)    circle (1.5pt)
[red]  (3,2)    circle (1.5pt);
\filldraw [black]  (2,2)    circle (2pt)
[black]  (4,2)    circle (2pt)
[black]  (2,4)    circle (2pt)
[black]  (0,3)    circle (2pt)
[black]  (4,4)    circle (2pt);
\draw (0.5,4) node{$P_1$};
\draw (0.5,2) node{$Q_1$};
\end{tikzpicture}

\noindent In fact, the two first diagrams show that one cannot connect $P_5'$ with any other component. Some components are either adjacent,
or extremal, $P_1'$ and $Q_1'$ are empty, and for the three remaining components, the two diagrams show that they cannot be connected. Note that two
of them are components in an elementary ESU, so by symmetry it suffices to show it for one of them.
By symmetry the same holds for $Q_5'$, and the remaining pairs that are not adjacent are shown to be NDC in the last two diagrams.
Note that by symmetry it suffices to prove it for one horizontal connection.

\noindent {\bf Case 13.:} The paths and cycles in the following diagrams show that the graph is WD.

\noindent \begin{tikzpicture}[scale=0.6]
\draw[-,cyan,line width=2pt] (2,4)..controls (2.8,5)..(2,6);
\draw[-,cyan,line width=2pt] (2,2)..controls (3,2.5)..(4,4);
\draw[-,line width=2pt] (2,2)..controls (3,3.5)..(4,4);
\draw[-,cyan,line width=2pt] (2,6)..controls (2.8,6)..(4,4);
\draw[-,cyan,line width=2pt] (2,4)..controls (1.5,3.7)..(1,4);
\draw[-,line width=2pt] (0,4)..controls (1.2,2)..(2,2);
\draw[-,cyan,line width=2pt] (2,2)--(2,3);
\draw[-,line width=2pt] (2,3)--(2,4);
\draw[-,line width=2pt] (4,4)--(4.5,4);
\draw[-,line width=2pt,cyan] (4.5,4)--(5,4);
\draw[-,line width=2pt] (2,4)..controls (1.2,5)..(2,6);
\draw[-,line width=2pt] (2,6)..controls (1.2,6)..(0,4);
\draw[-,white,line width=3pt] (4.4,4.1)..controls (4,5)..(2.2,3.2);
\draw[-,cyan] (4.5,4.05)..controls (4,5.05)..(2,3.05);
\draw[-] (4.48,3.97)..controls (4,5)..(1.96,2.94);
\filldraw [red]  (4.5,4)    circle (1.5pt)
[red]  (2,3)    circle (1.5pt);
\filldraw [black]  (2,2)    circle (2pt)
[black]  (2,4)    circle (2pt)
[black]  (2,6)    circle (2pt)
[black]  (4,4)    circle (2pt)
[black]  (0,4)    circle (2pt);
\draw (5,4.3)node{$P_5'$};
\draw (2.5,1)node{Cycle $C$ in black};
\draw (5.5,2)node{};
\end{tikzpicture}
%2
\begin{tikzpicture}[scale=0.6]
\draw[-,line width=2pt] (2,4)..controls (2.6,4.8)..(2.6,5);
\draw[-,cyan,line width=2pt] (2.6,5)..controls (2.6,5.2)..(2,6);
\draw[-,cyan,line width=2pt] (2,2)..controls (3,2.5)..(4,4);
\draw[-,line width=2pt] (2,2)..controls (3,3.5)..(4,4);
\draw[-,cyan,line width=2pt] (2,6)..controls (2.8,6)..(4,4);
\draw[-,cyan,line width=2pt] (2,4)..controls (1.5,3.7)..(1,4);
\draw[-,line width=2pt] (0,4)..controls (1.2,2)..(2,2);
\draw[-,cyan,line width=2pt] (2,2)--(2,4);
\draw[-,line width=2pt] (4,4)--(4.5,4);
\draw[-,line width=2pt,cyan] (4.5,4)--(5,4);
\draw[-,line width=2pt] (2,4)..controls (1.2,5)..(2,6);
\draw[-,line width=2pt] (2,6)..controls (1.2,6)..(0,4);
\draw[-,white,line width=3pt] (4.4,4.1)..controls (4.5,5)..(2.7,5);
\draw[-,cyan] (4.53,4.05)..controls (4.5,5.05)..(2.63,5.03);
\draw[-] (4.48,3.97)..controls (4.48,5)..(2.56,4.97);
\filldraw [red]  (4.5,4)    circle (1.5pt)
[red]  (2.6,5)    circle (1.5pt);
\filldraw [black]  (2,2)    circle (2pt)
[black]  (2,4)    circle (2pt)
[black]  (2,6)    circle (2pt)
[black]  (4,4)    circle (2pt)
[black]  (0,4)    circle (2pt);
\draw (5,4.3)node{$P_5'$};
\draw (2.5,1)node{Cycle $C$ in black};
\draw (5.5,2)node{};
\end{tikzpicture}
%3
\begin{tikzpicture}[scale=0.6]
\draw[-,line width=2pt] (2,4)..controls (2.6,4.8)..(2.6,5);
\draw[-,cyan,line width=2pt] (2.6,5)..controls (2.6,5.2)..(2,6);
\draw[-,line width=2pt] (2,2)..controls (3,2.5)..(4,4);
\draw[-,line width=2pt] (2,2)..controls (2.8,3.18)..(3,3.38);
\draw[-,cyan,line width=2pt] (3,3.38)..controls (3.2,3.58)..(4,4);
\draw[-,line width=2pt] (2,6)..controls (2.8,6)..(4,4);
\draw[-,cyan,line width=2pt] (2,4)..controls (1.5,3.7)..(1,4);
\draw[-,cyan,line width=2pt] (0,4)..controls (1.2,2)..(2,2);
\draw[-,cyan,line width=2pt] (2,2)--(2,4);
\draw[-,cyan,line width=2pt] (4,4)--(5,4);
\draw[-,line width=2pt] (2,4)..controls (1.2,5)..(2,6);
\draw[-,cyan,line width=2pt] (2,6)..controls (1.2,6)..(0,4);
\draw[-,white,line width=3pt] (3,3.6)..controls (3.1,5)..(2.7,5);
\draw[-,cyan] (3.03,3.38)..controls (3.13,5.03)..(2.6,5.03);
\draw[-] (2.97,3.38)..controls (3.08,5)..(2.56,4.97);
\filldraw [red]  (3,3.38)    circle (1.5pt)
[red]  (2.6,5)    circle (1.5pt);
\filldraw [black]  (2,2)    circle (2pt)
[black]  (2,4)    circle (2pt)
[black]  (2,6)    circle (2pt)
[black]  (4,4)    circle (2pt)
[black]  (0,4)    circle (2pt);
\draw (2.5,1)node{Cycle $C$ in black};
\draw (5.5,2)node{};
\end{tikzpicture}
%4
\begin{tikzpicture}[scale=0.6]
\draw[-,cyan,line width=2pt] (2,4)..controls (2.8,5)..(2,6);
\draw[-,line width=2pt] (2,2)..controls (3,2.5)..(4,4);
\draw[-,cyan,line width=2pt] (2,2)..controls (3,3.5)..(4,4);
\draw[-,cyan,line width=2pt] (2,6)..controls (2.8,6)..(3.5,4.84);
\draw[-,line width=2pt] (3.5,4.84)--(4,4);
\draw[-,cyan,line width=2pt] (2,4)..controls (1.5,3.7)..(1,4);
\draw[-,line width=2pt] (0,4)..controls (1.2,2)..(2,2);
\draw[-,cyan,line width=2pt] (2,2)--(2,3);
\draw[-,line width=2pt] (2,3)--(2,4);
\draw[-,cyan,line width=2pt] (4,4)--(5,4);
\draw[-,line width=2pt] (2,4)..controls (1.2,5)..(2,6);
\draw[-,line width=2pt] (2,6)..controls (1.2,6)..(0,4);
\draw[-] (2,3.03)--(3.5,4.87);
\draw[-,cyan] (2.03,2.97)--(3.53,4.81);
\filldraw [red]  (2,3)    circle (1.5pt)
[red]  (3.5,4.84)    circle (1.5pt);
\filldraw [black]  (2,2)    circle (2pt)
[black]  (2,4)    circle (2pt)
[black]  (2,6)    circle (2pt)
[black]  (4,4)    circle (2pt)
[black]  (0,4)    circle (2pt);
\draw (2.5,1)node{Cycle $C$ in black};
\draw (5.5,6.5)node{};
\end{tikzpicture}

\noindent In fact, the two first diagrams show that one cannot connect $P_5'$ with any other component. Some components are either adjacent,
or extremal, $P_1'$ and $Q_1'$ are empty, and for the three remaining components, the two diagrams show that they cannot be connected. Note that two
of them are components in an elementary ESU, so by symmetry it suffices to show it for one of them.
By symmetry the same holds for $Q_5'$, and the remaining pairs that are not adjacent are shown to be WNDC in the last two diagrams.
Note that by symmetry it suffices to prove it for one pair of components of different elementary ESU's.

\noindent {\bf Case 14.:} The paths and cycles in the following six diagrams show that the graph is WD.

\noindent %1
\begin{tikzpicture}[scale=0.75]
\draw[-,line width=2pt] (2,2)..controls (3,2.5)..(4,2);
\draw[-,cyan,line width=2pt] (0,3)..controls (1.4,2)..(2,2);
\draw[-,cyan,line width=2pt] (2,2)..controls (3,1.5)..(4,2);
\draw[-,cyan,line width=2pt] (2,4)..controls (1.4,4)..(0,3);
\draw[-,line width=2pt] (2,4)..controls (3,3.5)..(4,4);
\draw[-,cyan,line width=2pt] (5,2.3)..controls (4.5,2)..(4,2);
\draw[-,cyan,line width=2pt] (2,4)..controls (3,4.5)..(4,4);
\draw[-,line width=2pt] (2,4)..controls (1.6,4.46)..(1.5,4.5);
\draw[-,cyan,line width=2pt] (1.5,4.5)..controls (1.4,4.54)..(1,4.4);
\draw[-,line width=2pt] (4,2)--(4,3);
\draw[-,cyan,line width=2pt] (4,3)--(4,4);
\draw[-,line width=2pt] (2,2)..controls (2,2.6) and (4,3.4)..(4,4);
\draw[-] (1.5,4.55)..controls (2.5,5.55) and (6.05,4.5)..(4.03,2.97);
\draw[-,cyan] (1.5,4.5)..controls (2.5,5.5) and (5.95,4.45)..(4,3.03);
\filldraw [red]  (4,3)    circle (1.5pt)
[red]  (1.5,4.5)    circle (1.5pt);
\filldraw [black]  (2,2)    circle (2pt)
[black]  (4,2)    circle (2pt)
[black]  (2,4)    circle (2pt)
[black]  (0,3)    circle (2pt)
[black]  (4,4)    circle (2pt);
\draw (5,2)node{};
\end{tikzpicture}
%2
\begin{tikzpicture}[scale=0.75]
\draw[-,line width=2pt] (2,2)..controls (3,2.5)..(4,2);
\draw[-,cyan,line width=2pt] (0,3)..controls (1.4,2)..(2,2);
\draw[-,cyan,line width=2pt] (2,2)..controls (3,1.5)..(4,2);
\draw[-,cyan,line width=2pt] (2,4)..controls (1.4,4)..(0,3);
\draw[-,line width=2pt] (2,4)..controls (3,3.5)..(4,4);
\draw[-,cyan,line width=2pt] (5,2.3)..controls (4.5,2)..(4,2);
\draw[-,cyan,line width=2pt] (2,4)..controls (3,4.5)..(4,4);
\draw[-,line width=2pt] (2,4)..controls (1.6,4.46)..(1.5,4.5);
\draw[-,cyan,line width=2pt] (1.5,4.5)..controls (1.4,4.54)..(1,4.4);
\draw[-,line width=2pt] (4,2)--(4,4);
\draw[-,cyan,line width=2pt] (3,3)..controls (4,3.7)..(4,4);
\draw[-,line width=2pt] (2,2)..controls (2,2.3)..(3,3);
\draw[-,white,line width=3pt] (1.45,4.4)..controls (1,3.5)..(2.8,3.25);
\draw[-,cyan] (1.53,4.5)..controls (1.04,3.53)..(3.03,3.03);
\draw[-] (1.47,4.5)..controls (0.96,3.5)..(2.97,2.97);
\filldraw [red]  (3,3)    circle (1.5pt)
[red]  (1.5,4.5)    circle (1.5pt);
\filldraw [black]  (2,2)    circle (2pt)
[black]  (4,2)    circle (2pt)
[black]  (2,4)    circle (2pt)
[black]  (0,3)    circle (2pt)
[black]  (4,4)    circle (2pt);
\draw (6,2)node{};
\end{tikzpicture}
%3
\begin{tikzpicture}[scale=0.75]
\draw[-,line width=2pt] (2,2)..controls (3,2.5)..(4,2);
\draw[-,cyan,line width=2pt] (0,3)..controls (1.4,2)..(2,2);
\draw[-,cyan,line width=2pt] (2,2)..controls (2.8,1.62)..(3,1.62);
\draw[-,line width=2pt] (3,1.62)..controls (3.2,1.62)..(4,2);
\draw[-,cyan,line width=2pt] (2,4)..controls (1.4,4)..(0,3);
\draw[-,line width=2pt] (2,4)..controls (3,3.5)..(4,4);
\draw[-,cyan,line width=2pt] (5,2.3)..controls (4.5,2)..(4,2);
\draw[-,cyan,line width=2pt] (2,4)..controls (3,4.5)..(4,4);
\draw[-,line width=2pt] (2,4)..controls (1.6,4.46)..(1.5,4.5);
\draw[-,cyan,line width=2pt] (1.5,4.5)..controls (1.4,4.54)..(1,4.4);
\draw[-,cyan,line width=2pt] (4,2)--(4,4);
\draw[-,line width=2pt] (2,2)..controls (2,2.6) and (4,3.4)..(4,4);
\draw[-] (-0.83,3)..controls (-0.8,5.05) and (1.5,5.05)..(1.53,4.5);
\draw[-] (-0.83,3)..controls (-0.8,1.45) and (3,0.45)..(3.03,1.62);
\draw[-,cyan] (-0.76,3)..controls (-0.8,4.96) and (1.5,4.96)..(1.46,4.5);
\draw[-,cyan] (-0.76,3)..controls (-0.8,1.55) and (3,0.55)..(2.96,1.62);

\noindent %4
\filldraw [red]  (3,1.62)    circle (1.5pt)
[red]  (1.5,4.5)    circle (1.5pt);
\filldraw [black]  (2,2)    circle (2pt)
[black]  (4,2)    circle (2pt)
[black]  (2,4)    circle (2pt)
[black]  (0,3)    circle (2pt)
[black]  (4,4)    circle (2pt);
\draw (6,2)node{};
\end{tikzpicture}

%4
\begin{tikzpicture}[scale=0.8]
\draw[-,line width=2pt] (2,2)..controls (3,2.5)..(4,2);
\draw[-,cyan,line width=2pt] (0,3)..controls (1.4,2)..(2,2);
\draw[-,cyan,line width=2pt] (2,2)..controls (3,1.5)..(4,2);
\draw[-,cyan,line width=2pt] (2,4)..controls (1.4,4)..(0,3);
\draw[-,line width=2pt] (2,4)..controls (3,3.5)..(4,4);
\draw[-,cyan,line width=2pt] (5,2.3)..controls (4.6,2.07)..(4.5,2.04);
\draw[-,line width=2pt] (4.5,2.04)..controls (4.4,2)..(4,2);
\draw[-,line width=2pt] (2,4)..controls (2.8,4.38)..(3,4.38);
\draw[-,cyan,line width=2pt] (3,4.38)..controls (3.2,4.38)..(4,4);
\draw[-,cyan,line width=2pt] (2,4)..controls (1.5,4.6)..(1,4.4);
\draw[-,cyan,line width=2pt] (4,2)--(4,4);
\draw[-,line width=2pt] (2,2)..controls (2,2.6) and (4,3.4)..(4,4);
\draw[-] (4.53,2.04)..controls (5.03,5.5) and (3,5.5)..(2.97,4.38);
\draw[-,cyan] (4.47,2.04)..controls (5,5.45) and (3,5.4)..(3.03,4.38);%
\filldraw [red]  (4.5,2.04)    circle (1.5pt)
[red]  (3,4.38)    circle (1.5pt);
\filldraw [black]  (2,2)    circle (2pt)
[black]  (4,2)    circle (2pt)
[black]  (2,4)    circle (2pt)
[black]  (0,3)    circle (2pt)
[black]  (4,4)    circle (2pt);
\draw (6,2)node{};
\end{tikzpicture}
%5
\begin{tikzpicture}[scale=0.8]
\draw[-,line width=2pt] (2,2)..controls (3,2.5)..(4,2);
\draw[-,line width=2pt] (0,3)..controls (1.4,2)..(2,2);
\draw[-,cyan,line width=2pt] (2,2)..controls (3,1.5)..(4,2);
\draw[-,line width=2pt] (2,4)..controls (1.4,4)..(0,3);
\draw[-,line width=2pt] (2,4)..controls (3,3.5)..(4,4);
\draw[-,cyan,line width=2pt] (5,2.3)..controls (4.6,2.07)..(4.5,2.04);
\draw[-,line width=2pt] (4.5,2.04)..controls (4.4,2)..(4,2);
\draw[-,cyan,line width=2pt] (2,4)..controls (3,4.5)..(4,4);
\draw[-,cyan,line width=2pt] (2,4)..controls (1.5,4.6)..(1,4.4);
\draw[-,cyan,line width=2pt] (4,2)--(4,4);
\draw[-,line width=2pt] (3,3)..controls (4,3.7)..(4,4);
\draw[-,cyan,line width=2pt] (2,2)..controls (2,2.3)..(3,3);
\draw[-,white,line width=3pt] (3.3,3)..controls (4.5,2.96)..(4.5,2.24);
\draw[-] (3,3.03)..controls (4.53,3.03)..(4.53,2.04);
\draw[-,cyan] (3,2.97)..controls (4.47,2.97)..(4.47,2.04);
\filldraw [red]  (4.5,2.04)    circle (1.5pt)
[red]  (3,3)    circle (1.5pt);
\filldraw [black]  (2,2)    circle (2pt)
[black]  (4,2)    circle (2pt)
[black]  (2,4)    circle (2pt)
[black]  (0,3)    circle (2pt)
[black]  (4,4)    circle (2pt);
\draw (5.3,2)node{};
\end{tikzpicture}
%6
\begin{tikzpicture}[scale=0.8]
\draw[-,line width=2pt] (2,2)..controls (2.8,2.38)..(3,2.38);
\draw[-,cyan,line width=2pt] (3,2.38)..controls (3.2,2.38)..(4,2);
\draw[-,cyan,line width=2pt] (0,3)..controls (1.4,2)..(2,2);
\draw[-,line width=2pt] (2,2)..controls (3,1.5)..(4,2);
\draw[-,cyan,line width=2pt] (2,4)..controls (1.4,4)..(0,3);
\draw[-,line width=2pt] (2,4)..controls (3,4.5)..(4,4);
\draw[-,cyan,line width=2pt] (5,2.3)..controls (4.5,2)..(4,2);
\draw[-,line width=2pt] (2,4)..controls (2.8,3.62)..(3,3.62);
\draw[-,cyan,line width=2pt] (3,3.62)..controls (3.2,3.62)..(4,4);
\draw[-,cyan,line width=2pt] (2,4)..controls (1.5,4.6)..(1,4.4);
\draw[-,line width=2pt] (4,2)--(4,4);
\draw[-,cyan,line width=2pt] (2,2)..controls (2,2.6) and (4,3.4)..(4,4);
\draw[-,white,line width=3pt] (3,2.6)--(3,3.4);
\draw[-] (3.03,2.38)--(3.03,3.62);
\draw[-,cyan] (2.97,2.38)--(2.97,3.62);
\filldraw [red]  (3,2.38)    circle (1.5pt)
[red]  (3,3.62)    circle (1.5pt);
\filldraw [black]  (2,2)    circle (2pt)
[black]  (4,2)    circle (2pt)
[black]  (2,4)    circle (2pt)
[black]  (0,3)    circle (2pt)
[black]  (4,4)    circle (2pt);
\draw (6,2)node{};
\end{tikzpicture}

\noindent In fact, one uses the fact that $P_1'$ and $Q_1'$ are empty, discard all adjacent pairs and pairs with two extremal components,
and the remaining pairs are connected in the six diagrams above, where by symmetry we consider the connection with only one of the components of each
elementary ESU.

Thus in all three exceptional cases $V(P)\cap V(Q)$ is a separator, as desired.
\end{proof}

\begin{theorem}\label{teorema principal}
  If $G$ is a simple graph, $P,Q$ are longest paths, $V(P)\ne V(Q)$ and $\# V(P)\cap V(Q) \le 5$, then $V(P)\cap V(Q)$ is a separator.
\end{theorem}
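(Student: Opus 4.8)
The plan is to assemble the statement from the classification and separator results already in hand, splitting on the value of $\ell=\#(V(P)\cap V(Q))$. The case $\ell\le 4$ is immediate: it is exactly Corollary~\ref{k menor igual a 4}. So the substance lies in $\ell=5$, where I would first apply Theorem~\ref{k igual a 5} to reduce to three mutually exclusive possibilities for $BT(P,Q)$: it is a concatenation of LNC blocks, or it is TD, or it is one of the three exceptional configurations 6., 13., 14.

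The first two possibilities are essentially mechanical. If $BT(P,Q)$ is a concatenation of LNC blocks, then in particular every ESU is LNC, so since $V(P)\ne V(Q)$ is assumed, Theorem~\ref{concatenation of LNC blocks} gives that $V(P)\cap V(Q)$ is a separator. If instead $BT(P,Q)$ is TD, then Remark~\ref{en TD todos son NC}, again using $V(P)\ne V(Q)$, yields the same conclusion directly. Neither requires any new argument.

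The delicate part is the three exceptional configurations, and here I would proceed as follows. By Proposition~\ref{casos excepcionales con puntas} one may, after reversing the directions of $P$ and/or $Q$ if necessary, assume $a_1=b_1$. Under this normalization the two extremal completed components $P_0$ and $Q_0$ share the endpoint $a_1=b_1$, and a short longest-path argument (the one used inside the proof of Lemma~\ref{caso coincidentes}) shows that $P_0'=\emptyset$ precisely when $Q_0'=\emptyset$: indeed, if $P_0'=\emptyset$ but $Q_0'\ne\emptyset$, the path $Q_0+P_1+\dots+P_\ell$ would be strictly longer than $P$, since $Q_0'$ is disjoint from $V(P)$. This dichotomy is exactly what makes the two available propositions exhaustive: when $P_0'\ne\emptyset$ and $Q_0'\ne\emptyset$, Proposition~\ref{casos excepcionales con puntas} shows $V(P)\cap V(Q)$ is a separator, and when $P_0'=Q_0'=\emptyset$, Proposition~\ref{casos excepcionales con puntas2} shows the same.

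The step I expect to require the most care is precisely this last dichotomy. The genuine combinatorial content of cases 6., 13., 14.\ (tracing the forbidden direct connections and, in cases 13.\ and 14., producing the path-plus-cycle certificates of weak disconnectedness) is already packaged inside Propositions~\ref{casos excepcionales con puntas} and~\ref{casos excepcionales con puntas2}; the only thing the final proof must supply is the guarantee that these two propositions jointly cover every configuration, which hinges on ruling out the a priori possibility that exactly one of $P_0'$, $Q_0'$ is trivial. Once the equivalence $P_0'=\emptyset\iff Q_0'=\emptyset$ is established, the theorem follows as a direct assembly of Corollary~\ref{k menor igual a 4}, Theorem~\ref{k igual a 5}, Theorem~\ref{concatenation of LNC blocks}, Remark~\ref{en TD todos son NC}, and Propositions~\ref{casos excepcionales con puntas} and~\ref{casos excepcionales con puntas2}.
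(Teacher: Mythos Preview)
Your proposal is correct and follows essentially the same route as the paper's proof: split on $\ell\le 4$ versus $\ell=5$, invoke Theorem~\ref{k igual a 5} to trichotomize, dispose of the LNC and TD branches via Theorem~\ref{concatenation of LNC blocks} and Remark~\ref{en TD todos son NC}, and in the exceptional configurations normalize to $a_1=b_1$ and split on whether $P_0',Q_0'$ are both nonempty or both empty, applying Propositions~\ref{casos excepcionales con puntas} and~\ref{casos excepcionales con puntas2} respectively. Your explicit justification of the equivalence $P_0'=\emptyset\iff Q_0'=\emptyset$ via the longer path $Q_0+P_1+\dots+P_\ell$ is exactly the swap argument the paper alludes to (inside Lemma~\ref{caso coincidentes}) when it says one can interchange $P_0$ and $Q_0$; the paper's own proof of the theorem simply asserts ``otherwise necessarily $P_0'=Q_0'=\emptyset$'' without restating it.
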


\begin{proof}
  If $\# V(P)\cap V(Q) \le 4$, then it is true by Corollary~\ref{k menor igual a 4}.
   If $\# V(P)\cap V(Q) = 5$ and $BT(P,Q)$ is none of the exceptional
  cases, then by Theorem~\ref{k igual a 5} the graph $BT(P,Q)$ is a concatenation of LNC BB, or it is TD. Hence all ESU's are NC, and so by
  Proposition~\ref{isu nc implica articulation set} and Theorem~\ref{concatenation of LNC blocks} the set $V(P)\cap V(Q)$ is a separator.
  Finally, if $BT(P,Q)$ is in one of the exceptional cases, then we can assume $a_1=b_1$. If $P_0'\ne \emptyset$ and $Q_0'\ne \emptyset$,
  then Proposition~\ref{casos excepcionales con puntas} yields the result. Otherwise necessarily $P_0'=Q_0'=\emptyset$, and then
  Proposition~\ref{casos excepcionales con puntas2} concludes the proof.
\end{proof}

Note that Theorem~\ref{teorema principal} implies Corollary~\ref{Corollary Hippchen 6}.

\begin{corollary}\label{Corollary Gr4}
  Assume that $P$ and $Q$ are two longest paths in a simple graph $G$. If
   $V(Q)\ne V(P)$ and $n=|V(G)|\le 7$ then $V(Q)\cap V(P)$ is a separator.
\end{corollary}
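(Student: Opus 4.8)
The plan is to reduce the corollary to Theorem~\ref{teorema principal}, which already establishes that $V(P)\cap V(Q)$ is a separator whenever $V(P)\neq V(Q)$ and $\#(V(P)\cap V(Q))\le 5$. Thus the only thing I need to verify is that the hypothesis $n=\#V(G)\le 7$ forces $\#(V(P)\cap V(Q))\le 5$; everything else is inherited from the earlier theorem.

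First I would set up the numerology. Since $P$ and $Q$ are both longest paths they have equal length, hence the same number of vertices; write $p:=\#V(P)=\#V(Q)$, let $\ell:=\#(V(P)\cap V(Q))$, and set $d:=\#(V(P)\setminus V(Q))$. Because $\#V(P)=\#V(Q)$ we also have $\#(V(Q)\setminus V(P))=d$, and the assumption $V(P)\neq V(Q)$ combined with the equality of cardinalities rules out either vertex set being contained in the other, so $d\ge 1$. By inclusion--exclusion $\#(V(P)\cup V(Q))=2p-\ell$, and since $\ell=p-d$ this equals $p+d$. As $V(P)\cup V(Q)\subseteq V(G)$ we obtain $p+d\le n\le 7$, and combining this with $d\ge 1$ gives
$$
\ell=p-d\le (7-d)-d=7-2d\le 5 .
$$

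With $\#(V(P)\cap V(Q))=\ell\le 5$ and $V(P)\neq V(Q)$ in hand, Theorem~\ref{teorema principal} applies verbatim and yields that $V(P)\cap V(Q)$ is a separator, which is exactly the assertion. Since all the genuine graph-theoretic content is already carried by Theorem~\ref{teorema principal}, there is no substantial obstacle in this argument; the only points requiring any care are the deduction $d\ge 1$ (two distinct vertex sets of equal size must each omit a vertex of the other) and the elementary estimate $\ell\le 7-2d$, both of which are immediate. It is worth noting that the bound is sharp: the extremal case $p=6$, $d=1$ gives paths of length $5$ sharing exactly $5$ vertices with union of size $7$, matching the boundary case $\ell=5$ of Theorem~\ref{teorema principal}.
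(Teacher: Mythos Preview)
Your proof is correct and follows exactly the paper's approach: the paper's proof simply asserts that $V(P)\ne V(Q)$ and $|V(G)|\le 7$ force $\#(V(P)\cap V(Q))\le 5$ and then invokes Theorem~\ref{teorema principal}, whereas you spell out the elementary counting argument behind that assertion in detail.
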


\begin{proof}
  Since $V(Q)\ne V(P)$ and $|V(G)|\le 7$, it follows that $\# (V(P)\cap V(Q))\le 5$, and the result follows from the previous theorem.
\end{proof}

\section{Three longest paths in the exceptional cases}

In this section we will show that in none of the three exceptional cases 6. 13. and 14., we can have three disjoint longest paths.
In order to do this, we will analyze the number of times and the sequential order
in which a third longest path $R$ intersects the components of $BT(P,Q)$.
We will show that in the three cases there is only one pair of components of different colors in $BT(P,Q)$
  connected directly by $R$,
and in the next lemma we prove that this is impossible.

\begin{lemma}\label{lema tecnico tres caminos}
  Assume $P,Q,R$ are disjoint longest paths. Then it is impossible that there is only one pair of components of different colors in $BT(P,Q)$
  connected directly by $R$.
\end{lemma}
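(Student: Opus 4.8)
The plan is to argue by contradiction using the principle, recalled in the introduction, that two longest paths of a connected graph always share a vertex: I will recolour $BT(P,Q)$ so that $R$ becomes disjoint from one of the two recoloured longest paths, which is impossible.

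First I would set up the combinatorics of how $R$ meets $BT(P,Q)$. Since $R$ avoids $V(P)\cap V(Q)$, every maximal piece of $R$ lying on $P\cup Q$ sits inside a single completed component $P_i'$ or $Q_i'$, and consecutive pieces are joined by subpaths of $R$ whose interiors avoid $V(P)\cup V(Q)$; these joining subpaths are exactly the direct connections. Reading off in order the colours of the components met by $R$, a direct connection of different colours is precisely a colour change, whereas a same-colour direct connection leaves the recorded colour unchanged. Because $R$ is a longest path it must meet both $P$ and $Q$, so at least one colour change occurs. Under the hypothesis that a single pair $\{P_p',Q_q'\}$ of different colours is directly connected, every colour change of $R$ is a bridge between these two fixed components; hence the components met by $R$ split into a green family (containing $P_p'$) and a red family (containing $Q_q'$), and all cross-colour bridges of $R$ run between $P_p'$ and $Q_q'$.

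Next I would produce the recolouring. Let $\mathcal S$ be the set of ESU's containing a red ($Q$-) component met by $R$. Swapping the colours of every ESU in $\mathcal S$ simultaneously yields, as in the constructions following Definition~\ref{def ESU} and by Lemma~\ref{coincidencia de longitud y numero de componentes}, a valid representation $BT(\widetilde P,\widetilde Q)$ of the same BT-graph, with $\widetilde P$ and $\widetilde Q$ still longest paths. In this representation every red component met by $R$ has become part of $\widetilde P$, the green components met by $R$ are untouched and remain in $\widetilde P$, and the cross bridge $R'$ (together with any further bridges of $R$ between $P_p'$ and $Q_q'$) now joins two $\widetilde P$-components. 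Thus $R$ would lie entirely in $\widetilde P$ and be disjoint from $\widetilde Q$, contradicting the fact that the two longest paths $R$ and $\widetilde Q$ must share a vertex; this is the desired contradiction.

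The step needing care, and which I expect to be the main obstacle, is the validity of the simultaneous swap: swapping an ESU interchanges its green side with its red side as a block, so the recolouring sends all of $R$'s components to one colour only if $R$ never meets two components of a single ESU lying on opposite sides, i.e.\ if no ESU is touched by $R$ in both colours. A single cross-pair is a priori compatible with such a bichromatic ESU, since the two opposite-coloured touches may be separated along $R$ by the one crossing at $R'$, so excluding this is the crux. Here I would combine the uniqueness of the connectable pair with the explicit description of the three exceptional graphs $BT(P,Q)$: I would show that the only ESU carrying $\{P_p',Q_q'\}$ cannot be re-entered by $R$ in its opposite colour without forcing either a second cross-colour direct connection, contradicting the hypothesis of a unique pair, or a path in $BT(P,Q)\cup R$ longer than $P$. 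Once bichromatic ESU's are ruled out, $\mathcal S$ is disjoint from the ESU's met in green, the swap is clean, and the argument closes.
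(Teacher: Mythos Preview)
Your recolouring idea is attractive, but it has a fatal gap that you identify and then do not close. The swap makes $R$ monochromatic only if no ESU is touched by $R$ in both colours. However, the unique cross-colour pair $\{P_p',Q_q'\}$ already touches \emph{one} ESU in both colours whenever $P_p'$ and $Q_q'$ lie in the same ESU, and no swap of that ESU can help: swapping it turns $P_p'$ red and $Q_q'$ green, so $R$ still meets both colours. Your proposed fix---``show that the ESU carrying $\{P_p',Q_q'\}$ cannot be re-entered by $R$ in its opposite colour''---misses the point: there is no re-entering, the ESU is bichromatic from the start at $P_p'$ and $Q_q'$ themselves. Moreover, the lemma is stated and proved for arbitrary $BT(P,Q)$, not only for the three exceptional configurations, so invoking their explicit structure restricts the scope; and even in those cases the cross-pairs that actually arise (e.g.\ $P_1'\sim Q_5'$ in Case~6) lie in the same large ESU, so your strategy fails exactly where it is needed.

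The paper's argument is entirely different and purely metric. It shows in three short steps that (1) two disjoint subpaths of $R$ bridging the same cross-pair would let one splice $R_1+Q_{[y_1,y_2]}+R_2$ into $P$ and $R_1+P_{[x_1,x_2]}+R_2$ into $Q$, producing two paths of total length $2L(P)+2L(R_1)+2L(R_2)$; (2) with only one cross-pair and no parallel bridges, $R$ has at most one cross-bridge, else $R$ would meet $P$ or $Q$ in a single vertex, contradicting $\#(V(R)\cap V(P))\ge 5$; and (3) a single bridge splits $R=R_1+R_2+R_3$ and $Q=Q_1+Q_2$, and after symmetrising so that $L(R_1)\ge L(R_3)$ and $L(Q_1)\ge L(Q_2)$, the path $R_1+R_2+Q_1$ has length strictly greater than $L(R)$. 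No recolouring is used.
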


\begin{proof}
We will prove the following three statements.
  \begin{enumerate}
     \item Given two components of different colors in $BT(P,Q)$, there cannot be two disjoint subpaths of $R$ joining directly one component with
     the other.
     \item If there is only one pair of components of different colors that are connected directly by $R$, then there is only one subpath of $R$
     connecting directly $P$ and $Q$.
     \item It is impossible that there is only one subpath of $R$ connecting directly $P$ and $Q$.
  \end{enumerate}
Clearly the lemma follows from statements~(2) and~(3).

\medskip

  \noindent \textbf{(1)} The corresponding result for cycles is well known (See for example~\cite{Ch}*{p.145, Claim 1}).
  Assume by contradiction that $R_1$ has one endpoint $x_1$ in $P$ and the other endpoint $y_1$ in $Q$, and
  $R_2$ has one endpoint $x_2$ in $P$ and the other endpoint $y_2$ in $Q$. Then $R_1+Q_{[y_1,y_2]}+R_2$ is internally disjoint from $P$
  and $R_1+P_{[x_1,x_2]}+R_2$ is internally disjoint from $Q$. Then
  $$
  \widehat P=P_{\le x_1}+R_1+Q_{[y_1,y_2]}+R_2+P_{\ge x_2}\quad \text{and}\quad
  \widehat Q=Q_{\le y_1}+R_1+P_{[x_1,x_2]}+R_2+Q_{\ge y_2}
  $$

\noindent
    \begin{tikzpicture}[scale=0.8]
     \draw[-,green] (0,2)..controls (0.5,2.5)and (1,1.5)..(1.5,2);
     \draw[-,green] (1.5,2)..controls (1.7,3)and (2,1)..(2.3,2);
     \draw[-,green] (2.3,2)..controls (2.3,4)and (4,4)..(4,2);
     \draw[-,green] (4,2)..controls (4,1.5)and (2,1)..(2,0.5);
     \draw[-,green] (2,0.5)..controls (2,0)and (2.5,0)..(3,0.4);
     \draw[-,green] (3,0.4)..controls (4,1.2)and (4.5,1.3)..(4.5,1);
     \draw[-,red] (0.5,1.5)..controls (1,2.5)and (2.5,3)..(2.5,2.5);
     \draw[-,red] (2.5,2.5)..controls (2.5,2)and (1.7,1.7)..(1.7,1.3);
     \draw[-,red] (1.7,1.3)--(1.7,0.5);
     \draw[-,red] (1.7,0.5)..controls (1.7,-0.5)and (3,-0.5)..(3,0);
     \draw[-,red] (3,0)..controls (3,2.5)..(4.8,2.5);
     \draw[-,white,line width=2pt] (3.8,0.97)--(3.6,2.43);
     \draw[-,white,line width=2pt] (3.15,2)--(3.4,0.7);
     \draw[-,blue] (3.8,0.97)--(3.6,2.43);
     \draw[-,blue] (3.15,2)--(3.4,0.7);
     \filldraw [blue]  (3.6,2.43)    circle (1.5pt)
    [blue]  (3.15,2)    circle (1.5pt);
     \filldraw [blue]  (3.8,0.97)    circle (1.5pt)
    [blue]  (3.4,0.7)    circle (1.5pt);
\filldraw [black]  (0.83,1.97)    circle (2pt)
    [black]  (2.39,2.67)    circle (2pt)
    [black]  (2.3,2.07)    circle (2pt)
    [black]  (2.02,1.78)    circle (2pt)
    [black]  (3,0.42)    circle (2pt)
    [black]  (3,1.27)    circle (2pt)
    [black]  (3.95,2.5)    circle (2pt);
    \draw(3.4,0.4)node{$x_1$};
    \draw(4,0.7)node{$x_2$};
    \draw(2.9,2)node{$y_1$};
    \draw(3.6,2.7)node{$y_2$};
    \draw(2.5,-1)node{$P$ in green, $Q$ in red, $R_1$, $R_2$ in blue};
  \end{tikzpicture}
  %2
    \begin{tikzpicture}[scale=0.8]
     \draw[-,blue,line width=1.5pt] (0,2)..controls (0.5,2.5)and (1,1.5)..(1.5,2);
     \draw[-,blue,line width=1.5pt] (1.5,2)..controls (1.7,3)and (2,1)..(2.3,2);
     \draw[-,blue,line width=1.5pt] (2.3,2)..controls (2.3,4)and (4,4)..(4,2);
     \draw[-,blue,line width=1.5pt] (4,2)..controls (4,1.5)and (2,1)..(2,0.5);
     \draw[-,blue,line width=1.5pt] (2,0.5)..controls (2,0)and (2.5,0)..(3,0.4);
     \draw[-,green] (3,0.4)..controls (4,1.2)and (4.5,1.3)..(4.5,1);
     \draw[-,blue,line width=1.5pt] (3,0.4)--(3.4,0.7);
     \draw[-,blue,line width=1.5pt] (3.8,0.97)..controls (4.4,1.2)..(4.5,1);

     \draw[-,red] (0.5,1.5)..controls (1,2.5)and (2.5,3)..(2.5,2.5);
     \draw[-,red] (2.5,2.5)..controls (2.5,2)and (1.7,1.7)..(1.7,1.3);
     \draw[-,red] (1.7,1.3)--(1.7,0.5);
     \draw[-,red] (1.7,0.5)..controls (1.7,-0.5)and (3,-0.5)..(3,0);
     \draw[-,red] (3,0)..controls (3,2.5)..(4.8,2.5);
     \draw[-,blue,line width=1.5pt] (3.15,2)..controls (3.32,2.35)..(3.6,2.43);
     \draw[-,white,line width=3pt] (3.8,0.97)--(3.6,2.43);
     \draw[-,white,line width=3pt] (3.15,2)--(3.4,0.7);
     \draw[-,blue,line width=1.5pt] (3.8,0.97)--(3.6,2.43);
     \draw[-,blue,line width=1.5pt] (3.15,2)--(3.4,0.7);
     \filldraw [blue]  (3.6,2.43)    circle (1.5pt)
    [blue]  (3.15,2)    circle (1.5pt);
     \filldraw [blue]  (3.8,0.97)    circle (1.5pt)
    [blue]  (3.4,0.7)    circle (1.5pt);
\filldraw [black]  (0.83,1.97)    circle (2pt)
    [black]  (2.39,2.67)    circle (2pt)
    [black]  (2.3,2.07)    circle (2pt)
    [black]  (2.02,1.78)    circle (2pt)
    [black]  (3,0.42)    circle (2pt)
    [black]  (3,1.27)    circle (2pt)
    [black]  (3.95,2.5)    circle (2pt);
    \draw(3.4,0.4)node{$x_1$};
    \draw(4,0.7)node{$x_2$};
    \draw(2.9,2)node{$y_1$};
    \draw(3.6,2.7)node{$y_2$};
    \draw(2.5,-1)node{$\widehat P$ in blue};
  \end{tikzpicture}
  %3
    \begin{tikzpicture}[scale=0.8]
     \draw[-,green] (0,2)..controls (0.5,2.5)and (1,1.5)..(1.5,2);
     \draw[-,green] (1.5,2)..controls (1.7,3)and (2,1)..(2.3,2);
     \draw[-,green] (2.3,2)..controls (2.3,4)and (4,4)..(4,2);
     \draw[-,green] (4,2)..controls (4,1.5)and (2,1)..(2,0.5);
     \draw[-,green] (2,0.5)..controls (2,0)and (2.5,0)..(3,0.4);
     \draw[-,green] (3,0.4)..controls (4,1.2)and (4.5,1.3)..(4.5,1);
     \draw[-,blue,line width=1.5pt] (0.5,1.5)..controls (1,2.5)and (2.5,3)..(2.5,2.5);
     \draw[-,blue,line width=1.5pt] (2.5,2.5)..controls (2.5,2)and (1.7,1.7)..(1.7,1.3);
     \draw[-,blue,line width=1.5pt] (1.7,1.3)--(1.7,0.5);
     \draw[-,blue,line width=1.5pt] (1.7,0.5)..controls (1.7,-0.5)and (3,-0.5)..(3,0);
     \draw[-,blue,line width=1.5pt] (3,0)..controls (3,2.5)..(4.8,2.5);
     \draw[-,white,line width=3pt] (3.15,2)..controls (3.32,2.35)..(3.6,2.43);
     \draw[-,red] (3.15,2)..controls (3.32,2.35)..(3.6,2.43);
     \draw[-,white,line width=3pt] (3.8,0.97)--(3.6,2.43);
     \draw[-,white,line width=3pt] (3.15,2)--(3.4,0.7);
     \draw[-,blue,line width=1.5pt] (3.8,0.97)--(3.6,2.43);
     \draw[-,blue,line width=1.5pt] (3.15,2)--(3.4,0.7);
     \draw[-,blue,line width=1.5pt] (3.8,0.97)--(3.4,0.7);
     \filldraw [blue]  (3.6,2.43)    circle (1.5pt)
    [blue]  (3.15,2)    circle (1.5pt);
     \filldraw [blue]  (3.8,0.97)    circle (1.5pt)
    [blue]  (3.4,0.7)    circle (1.5pt);
\filldraw [black]  (0.83,1.97)    circle (2pt)
    [black]  (2.39,2.67)    circle (2pt)
    [black]  (2.3,2.07)    circle (2pt)
    [black]  (2.02,1.78)    circle (2pt)
    [black]  (3,0.42)    circle (2pt)
    [black]  (3,1.27)    circle (2pt)
    [black]  (3.95,2.5)    circle (2pt);
    \draw(3.4,0.4)node{$x_1$};
    \draw(4,0.7)node{$x_2$};
    \draw(2.9,2)node{$y_1$};
    \draw(3.6,2.7)node{$y_2$};
    \draw(2.5,-1)node{$\widehat Q$ in blue};
  \end{tikzpicture}

  \noindent are two paths that have lengths that sum $L(P)+L(Q)+2L(R_1)+2L(R_2)$, a contradiction that proves the first statement.

\medskip

  \noindent \textbf{(2)} By item~(1) the only other possibility is that
  two consecutive subpaths of $R$ go back and forth between the components of the only pair connected by $R$.
  But then $R$ intersects one of the paths $P$ or $Q$ in only one point,
  which is impossible, for example by Corollary~\ref{three paths} applied to $V(R)\cap V(P)$ or $V(R)\cap V(Q)$.

\medskip

  \noindent \textbf{(3)} Assume there is only one subpath $R_2$ of $R$
  connecting directly $P$ and $Q$. Let the endpoint of that subpath of $R$ in $P$ be $a$ and the other
  endpoint in $Q$ be $b$.
  Then $a$ and $b$ partition $R$ into three subpaths $R_1,R_2,R_3$, the point $a$ partitions the path $P$ into $P_1$, $P_2$ and $b$ partitions
  the path $Q$ into $Q_1$, $Q_2$. Interchanging if necessary $P$ with $Q$, and $Q_1$ with $Q_2$, we can assume without loss of generality
  that $L(R_1)\ge L(R_3)$ and that $L(Q_1)\ge L(Q_2)$. But then the path
  $R_1+R_2+Q_1$ has length
  $$
  L(R_1)+L(R_2)+L(Q_1)>L(R)/2+L(Q)/2=L(Q)=L(R),
  $$
 $$
 \begin{tikzpicture}[scale=0.8]
     \draw[-,green] (0,4)..controls(0,2)and(1,2)..(1,4);
     \draw[-,green] (1,4)..controls(1,5)and(4.5,7)..(4.5,6);
     \draw[-,green] (4.5,6)..controls(4.5,5)and(3,4)..(3,3);
     \draw[-,green] (3,3)..controls(3,0)and(0.5,0)..(0.5,1.5);
     \draw[-,blue] (0,3)--(8,3);
     \draw[-,blue,line width=2pt] (0,3)--(5,3);
     \draw[-,red] (7,4)..controls(6,4)..(6,3);
     \draw[-,red] (6,3)..controls(6,2)and(5,1)..(4,1);
     \draw[-,red] (4,1)..controls(2,1)and(2,2)..(4,2);
     \draw[-,red] (4,2)..controls(5,2)..(5,3);
     \draw[-,red,line width=2pt] (5,3)..controls(5,4)..(4,4);
     \draw[-,red,line width=2pt] (4,4)..controls(2,4)and(2,5.5)..(5,5.5);
     \draw[-,red,line width=2pt] (5,5.5)..controls(5.5,5.5)and(5.5,6.5)..(6,6.5);
     \draw[-,red,line width=2pt] (6,6.5)..controls(7,6.5)and(7,5.5)..(8,5.5);
     \filldraw [black]  (3,3)    circle (1.5pt)
    [black]  (5,3)    circle (1.5pt);
    \draw[blue](1.5,3.3)node{$R_1$};
    \draw[blue](4,3.3)node{$R_2$};
    \draw[blue](6.5,3.3)node{$R_3$};
    \draw[blue](2.7,3.3)node{$a$};
    \draw[blue](5.3,3.3)node{$b$};
    \draw(1.2,5)node{$P_1$};
    \draw(1.2,1)node{$P_2$};
    \draw[red](7.5,6)node{$Q_1$};
    \draw[red](6,1.4)node{$Q_2$};
    \draw(4,0)node{$L(R_1)\ge L(R_3)$ and $L(Q_1)\ge L(Q_2)$};
    \draw(9,2)node{};
  \end{tikzpicture}
$$
\noindent contradicting that $R$ is a longest path and thus proving statement~(3).
\end{proof}

\begin{proposition} \label{prop caso 6}
  If $P,Q,R$ are disjoint longest paths, then $BT(P,Q)$ cannot have the configuration of Case 6 in the table of section~\ref{section table}.
\end{proposition}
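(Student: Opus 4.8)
The plan is to follow the strategy announced at the start of this section: assuming $P,Q,R$ are three disjoint longest paths with $BT(P,Q)$ in Case 6, I would show that $R$ connects directly exactly one pair of completed components of different colors, and then invoke Lemma~\ref{lema tecnico tres caminos} to reach a contradiction. By Proposition~\ref{casos excepcionales con puntas} I may assume $a_1=b_1$, so that $\{P_0,Q_0\}$ is an elementary EBB; being a swap unit, Lemma~\ref{coincidencia de longitud y numero de componentes} gives $L(P_0)=L(Q_0)$, whence $P_0'=\emptyset$ if and only if $Q_0'=\emptyset$. If $P_0'=Q_0'=\emptyset$, then by the Case 6 part of Proposition~\ref{casos excepcionales con puntas2} the graph $BT(P,Q)$ is TD, so Proposition~\ref{tres caminos NC} forces $V(P)\cap V(Q)\cap V(R)\ne\emptyset$, contradicting disjointness. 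This reduces everything to the main case $P_0'\ne\emptyset$ and $Q_0'\ne\emptyset$.

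In this case I would first catalogue every pair of completed components of different colors admitting a direct connection that keeps $P$ and $Q$ longest. Adjacent pairs are excluded by Lemma~\ref{lema 4.1} and pairs of extremal components by Lemma~\ref{conexion entre extremos}; the remaining pairs are discarded by rerouting diagrams exactly as in the proof of Proposition~\ref{casos excepcionales con puntas} and in the Remark following Corollary~\ref{three paths}. I expect the admissible different-color connections to reduce to the matching $\{P_0',Q_3'\}$, $\{Q_0',P_3'\}$, $\{P_5',Q_1'\}$, $\{P_1',Q_5'\}$, together with the joint constraints from Proposition~\ref{casos excepcionales con puntas} that $P_0'$ and $Q_0'$ cannot both connect to $Q_3'$, nor both to $P_3'$. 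Note that $\{P_5',Q_1'\}$ is precisely the connection witnessing that the large ESU of Case 6 is not NC, and $\{P_1',Q_5'\}$ is its image under the color-swap symmetry $BT(P,Q)=BT(Q,P)$.

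Next I would establish that $R$ realises \emph{at least} one such pair: if $R$ touched only one color, a convenient swap would yield a longest path $\widetilde Q$ disjoint from $R$, which is impossible, exactly as in Proposition~\ref{tres caminos NC}. The core is then that $R$ realises \emph{at most} one pair. First, $R$ cannot pass through $Q_3'$ nor through $P_3'$: passing through $Q_3'$ would connect it to its only two admissible neighbours $P_0'$ and $Q_0'$, violating the joint constraint, and similarly for $P_3'$. Hence once $R$ meets the cluster $\{P_0',Q_0',P_3',Q_3'\}$ it is forced into a short terminating itinerary such as $Q_3'\to P_0'\to P_3'$, realising at most one color crossing. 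It then remains to rule out $R$ realising both $\{P_5',Q_1'\}$ and $\{P_1',Q_5'\}$, and to verify that the two clusters $\{P_0',Q_0',P_3',Q_3'\}$ and $\{P_5',Q_1',P_1',Q_5'\}$ are not joined by any admissible (in particular same-color) connection, so that the connected path $R$ lives inside a single cluster.

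I expect this final point to be the main obstacle. Since a single crossing $P_5'\sim Q_1'$ genuinely fails to create a longer path (that is what makes Case 6 exceptional), I cannot appeal to the NC machinery here; instead I would have to show by an explicit construction in $BT(P,Q)\cup R$, in the spirit of Lemma~\ref{lema tecnico tres caminos}(1), that two simultaneous crossings $P_5'\sim_{R_1} Q_1'$ and $P_1'\sim_{R_2} Q_5'$ can be assembled into two \emph{simple} paths of total length $2L(P)+2L(R_1)+2L(R_2)$, contradicting that $P$ is a longest path; the delicate part is guaranteeing simplicity of the rerouted paths. Granting this and the disconnection of the clusters, $R$ realises exactly one pair of components of different colors, and Lemma~\ref{lema tecnico tres caminos} yields the desired contradiction.
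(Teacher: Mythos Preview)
Your overall strategy matches the paper's: catalogue the admissible direct connections, argue that $R$ realises at most one different-color pair, then invoke Lemma~\ref{lema tecnico tres caminos}. Your identification of the two ``clusters'' $\{P_0',Q_0',P_3',Q_3'\}$ and $\{P_1',Q_1',P_5',Q_5'\}$, and of the simultaneous pair $P_1'\sim Q_5'$, $Q_1'\sim P_5'$ as the key obstruction, is exactly what the paper does; the paper resolves that obstruction with one explicit two-path diagram of total length exceeding $2L(P)$, just as you anticipate. Your initial case split on $P_0'=\emptyset$ is valid but unnecessary: the paper's enumeration works uniformly, and when $P_0',Q_0'$ are empty the first cluster simply contributes nothing.

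There is, however, a genuine gap in your cataloguing. In Case~6 the large EBB contains two embedded elementary IBBs $\{P_2,Q_2\}$ and $\{P_4,Q_4\}$, and the components $P_2',Q_2',P_4',Q_4'$ belong to neither of your two clusters. Your references to Proposition~\ref{casos excepcionales con puntas} and the Remark after Corollary~\ref{three paths} do not exclude, for instance, $P_2'\sim Q_4'$, $P_2'\sim Q_5'$, or the same-color connection $P_1'\sim P_4'$; none of these are adjacent, and Proposition~\ref{casos excepcionales con puntas} only treats connections incident to $P_0',Q_0',P_3',Q_3'$. If such a connection were admissible, $R$ could link your second cluster to (or through) an embedded IBB, and your ``$R$ lives inside a single cluster'' step would fail. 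The paper closes this gap with three further rerouting diagrams (for $P_2'\sim P_5'$, $P_1'\sim P_4'$, and $P_2'\sim Q_4'$), which together with the elementary-IBB swap symmetry and the $P\leftrightarrow Q$ color symmetry eliminate every connection incident to $P_2',Q_2',P_4',Q_4'$. Once that is done, the paper in fact shows $R$ cannot touch the first cluster at all, so only the four connections among $\{P_1',Q_1',P_5',Q_5'\}$ remain---two same-color and the two different-color ones whose coexistence you then rule out.
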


\begin{proof}
  We can assume $a_1=b_1$.

  \begin{tikzpicture}[scale=1]
\draw[-,green,line width=2pt] (2,2)..controls (2.8,3)..(2,4);
\draw[-,red,line width=2pt] (0,3)..controls (1.4,2)..(2,2);
\draw[-,red,line width=2pt] (2,2)..controls (1.2,3)..(2,4);
\draw[-,green,line width=2pt] (2,4)..controls (1.4,4)..(0,3);
\draw[-,red,line width=2pt] (4,2)..controls (3.2,3)..(4,4);
\draw[-,green,line width=2pt] (0,3)..controls (-0.7,3.6)..(-1,3.6);
\draw[-,red,line width=2pt] (0,3)..controls (-0.7,2.4)..(-1,2.4);
\draw[-,red,line width=2pt] (5,2.3)..controls (4.5,2)..(4,2);
\draw[-,green,line width=2pt] (4,2)..controls (4.8,3)..(4,4);
\draw[-,green,line width=2pt] (4,4)..controls (4.5,4)..(5,3.7);
\draw[-,red,line width=2pt] (2,4)--(4,4);
\draw[-,green,line width=2pt] (2,2)--(4,2);
\filldraw [black]  (2,2)    circle (2pt)
[black]  (4,2)    circle (2pt)
[black]  (2,4)    circle (2pt)
[black]  (0,3)    circle (2pt)
[black]  (4,4)    circle (2pt);
\draw[green](-1,3.9)node{$P_0$};
\draw[red](-1,2.1)node{$Q_0$};
\draw[red](0.8,2)node{$Q_1$};
\draw[green](0.8,4)node{$P_1$};
\draw[green](3,1.7)node{$P_3$};
\draw[red](3,4.3)node{$Q_3$};
\draw[red](5,2)node{$Q_5$};
\draw[green](5,4.1)node{$P_5$};
\draw[red](3.8,3)node{$Q_4$};
\draw[green](4.9,3)node{$P_4$};
\draw[red](1.1,3)node{$Q_2$};
\draw[green](2.2,3)node{$P_2$};
\draw(5.6,2)node{};
\end{tikzpicture}

   From the proof of Proposition~\ref{casos excepcionales con puntas}
   we know that if $R$ touches $P_0'$, then it can touch only $P_3'$ and $Q_3'$, and that $P_3'$ cannot be connected directly with $Q_3'$, and that
   $Q_3'$ and $P_3'$ can be connected only with $P_0'$ or $Q_0'$.
   Since $P_0'$ and $Q_0'$ are NC, if $R$ touches one of $P_0'$, $Q_0'$, $P_3'$ or $Q_3'$, then the only possibilities for the set of components
   touched by $R$ are four sets with two elements and two sets with three elements, which are connected in the following way:
   $$
   P_0'\sim P_3',\quad Q_0'\sim Q_3', \quad Q_0'\sim P_3',\quad P_0'\sim Q_3', \quad P_3'\sim Q_0'\sim Q_3', \quad\text{or}\quad
   P_3'\sim P_0'\sim Q_3'.
   $$
   In all cases there is at most one pair of components of different colors connected by $R$, which is impossible by
   Lemma~\ref{lema tecnico tres caminos}.
   Hence $R$ cannot touch $P_0'$, $Q_0'$, $P_3'$ nor $Q_3'$.

   We are searching for pairs of components that can be connected directly by a subpath of $R$.
   We have discarded all pairs that contain $P_0'$, $Q_0'$, $P_3'$  or $Q_3'$,
   and we can also discard all pairs that are adjacent
   and have different colors, and all pairs of extremal components of different colors.
   Using the paths in the following 3 diagrams,

  % [inline block 1: 30 envs, 41338 chars -> data_tex | \begin{tikzpicture}[scale=0.6] \draw[-,line width=2pt] (2,2)..controls (2.6,2.8)..(2.6,3);...]


\noindent
Since $P_2$ and $Q_4$ are the components of an elementary ESU, we also have the connections $P_2'\sim Q_1'$ and $Q_4'\sim P_5$. So we have two
possibilities: either $R$ touches some of $P_2',P_5',Q_1',Q_4'$, or $R$ touches some of $P_1',P_3',Q_3',Q_5'$. In the first case, since
$P_2'$ and $Q_4'$ are NC, they cannot be connected simultaneously by $R$ to the same component, thus we have two possibilities
$$
Q_1'\sim P_2'\sim P_5'\quad \text{or}\quad Q_1'\sim Q_4'\sim P_5'.
$$
But both cases are impossible by Lemma~\ref{lema tecnico tres caminos}.

So $R$ must touch some of $P_1',P_3',Q_3',Q_5'$. The paths in the following diagram show that $R$ cannot connect simultaneously $P_1'\sim Q_3'$
and $P_3'\sim Q_5'$.

\noindent
    \begin{tikzpicture}[scale=0.8]
\draw[line width=2pt,-] (2,2)..controls (3,2.5)..(4,2);
\draw[line width=2pt,-,cyan] (0,3)..controls (1.4,2)..(2,2);
\draw[line width=2pt,-,cyan] (2,2)..controls (3,1.5)..(4,2);
\draw[line width=2pt,-] (0,3)--(1,3.7);
\draw[line width=2pt,-,cyan] (1,3.7)..controls (1.5,4)..(2,4);
\draw[line width=2pt,-,cyan] (2,4)..controls (3,3.5)..(4,4);
\draw[line width=2pt,-] (0,3)..controls (-0.7,3.6)..(-1,3.6);
\draw[line width=2pt,-,cyan] (0,3)..controls (-0.7,2.4)..(-1,2.4);
\draw[line width=2pt,-] (5,2.3)..controls (4.5,2)..(4,2);
\draw[line width=2pt,-] (2,4)..controls (3,4.5)..(4,4);
\draw[line width=2pt,-,cyan] (2,4)..controls (2,3.5)..(1.7,3);
\draw[line width=2pt,-,cyan] (4,2)--(4,3);
\draw[line width=2pt,-] (4,3)--(4,4);
\draw[-,line width=2pt] (2,4)..controls (1.99,3.6)..(1.96,3.5);
\draw[-,cyan,line width=2pt] (3,3)..controls (4,3.7)..(4,4);
\draw[-,line width=2pt] (2,2)..controls (2,2.3)..(3,3);
\draw[-] (1.03,3.67)..controls(1.5,5.5)and(4.5,5.5)..(4.5,4);
\draw[-] (4.5,4)..controls(4.5,3.04)..(4,3.04);
\draw[-,cyan] (0.97,3.73)..controls(1.44,5.58)and(4.56,5.58)..(4.58,4);
\draw[-,cyan] (4.58,4)..controls(4.58,2.96)..(4,2.96);
\draw[-,cyan] (2,3.54)--(3.04,3.04);
\draw[-] (1.92,3.46)--(2.96,2.96);
\filldraw [red]  (1,3.7)    circle (1.5pt)
[red]  (4,3)    circle (1.5pt);
\filldraw [red]  (1.96,3.5)    circle (1.5pt)
[red]  (3,3)    circle (1.5pt);
\filldraw [black]  (2,2)    circle (2pt)
[black]  (4,2)    circle (2pt)
[black]  (2,4)    circle (2pt)
[black]  (0,3)    circle (2pt)
[black]  (4,4)    circle (2pt);
\draw(2,1)node{$P_1'\sim Q_3'$ and $P_3'\sim Q_5'$};
\draw(-7,3)node{};
\end{tikzpicture}

\noindent Since $P_1$ and $Q_5$ are adjacent and $P_3$ and $Q_3$ are also adjacent,
 there is only one pair of components of different colors in $BT(P,Q)$ connected by $R$, which contradicts
Lemma~\ref{lema tecnico tres caminos} and concludes the proof.
\end{proof}

\begin{theorem} \label{teorema three paths mayor que 5}
  If the intersection of three longest paths $P,Q,R$ is empty, then $\#(V(P)\cap V(Q))\ge 6$.
\end{theorem}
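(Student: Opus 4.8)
The plan is to combine the earlier reduction of the three-paths problem to the structure of $BT(P,Q)$ with the case-by-case analysis just completed. By Corollary~\ref{three paths} we already know that if $P,Q,R$ have empty intersection then $\#(V(P)\cap V(Q))\ge 5$, so it suffices to rule out $\#(V(P)\cap V(Q))=5$. I would therefore assume for contradiction that $P,Q,R$ are three longest paths with $V(P)\cap V(Q)\cap V(R)=\emptyset$ and $\#(V(P)\cap V(Q))=5$, and derive a contradiction.

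First I would invoke Theorem~\ref{k igual a 5}, which classifies all representations $BT(P,Q)$ with $\#(V(P)\cap V(Q))=5$: either $BT(P,Q)$ is a concatenation of LNC blocks, or it is TD, or it is one of the three exceptional cases 6., 13., or 14. The first two possibilities are disposed of immediately by Proposition~\ref{tres caminos NC}: if all ESU's in $BT(P,Q)$ are NC (which holds for a concatenation of LNC blocks by Proposition~\ref{ESU en LNC son NC}) or if $BT(P,Q)$ is TD, then $V(P)\cap V(Q)\cap V(R)\ne\emptyset$, contradicting our assumption. Hence the only surviving possibility is that $BT(P,Q)$ falls into one of the three exceptional cases.

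The heart of the argument is then precisely the content of Propositions~\ref{prop caso 6}, \ref{prop caso 13}, and \ref{prop caso 14}. Each of these propositions states that, for disjoint longest paths $P,Q,R$, the representation $BT(P,Q)$ cannot have the configuration of Case 6, Case 13, or Case 14 respectively. Applying these three propositions rules out every remaining configuration, so no such triple $P,Q,R$ can exist, which is the desired contradiction. Thus $\#(V(P)\cap V(Q))\ge 6$.

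The proof itself is therefore short, since all the substantive work has been carried out in the earlier results; I would phrase it as a clean assembly:

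\begin{proof}
  Assume by contradiction that $P,Q,R$ are three longest paths with $V(P)\cap V(Q)\cap V(R)=\emptyset$ and $\#(V(P)\cap V(Q))<6$. By Corollary~\ref{three paths} we have $\#(V(P)\cap V(Q))\ge 5$, so $\#(V(P)\cap V(Q))=5$. By Theorem~\ref{k igual a 5}, the representation $BT(P,Q)$ is either a concatenation of LNC blocks, or it is TD, or it is one of the three exceptional cases 6., 13. or 14. In the first case all ESU's in $BT(P,Q)$ are NC by Proposition~\ref{ESU en LNC son NC}, so Proposition~\ref{tres caminos NC} gives $V(P)\cap V(Q)\cap V(R)\ne\emptyset$, a contradiction. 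In the second case $BT(P,Q)$ is TD, and again Proposition~\ref{tres caminos NC} yields $V(P)\cap V(Q)\cap V(R)\ne\emptyset$, a contradiction. Finally, the three exceptional cases are excluded by Propositions~\ref{prop caso 6}, \ref{prop caso 13} and~\ref{prop caso 14}, respectively, since $P,Q,R$ are disjoint. In all cases we reach a contradiction, so $\#(V(P)\cap V(Q))\ge 6$.
\end{proof}

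The only genuine obstacle in this final theorem is conceptual bookkeeping rather than new mathematics: one must be certain that Theorem~\ref{k igual a 5} exhausts all configurations and that Propositions~\ref{prop caso 6}--\ref{prop caso 14} cover exactly the three exceptional cases, so that the trichotomy is complete with no gaps. The heavy lifting — the combinatorial verification that a third longest path $R$ can connect at most one pair of differently colored components in each exceptional configuration — has already been absorbed into those three propositions via Lemma~\ref{lema tecnico tres caminos}.
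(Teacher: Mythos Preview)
Your proof is correct and follows essentially the same approach as the paper: reduce to $\#(V(P)\cap V(Q))=5$ via Corollary~\ref{three paths}, invoke the trichotomy of Theorem~\ref{k igual a 5}, dispose of the LNC and TD cases via Proposition~\ref{tres caminos NC}, and eliminate the three exceptional configurations using Propositions~\ref{prop caso 6}, \ref{prop caso 13} and~\ref{prop caso 14}. Your write-up is slightly more explicit (you spell out the use of Proposition~\ref{ESU en LNC son NC} to pass from LNC to NC), but the logical structure is identical to the paper's.
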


\begin{proof}
  Assume that the intersection of three longest paths $P,Q,R$ is empty, then $\#(V(P)\cap V(Q))\ge 5$ by Corollary~\ref{three paths}.
  If  $\#(V(P)\cap V(Q))= 5$, then we know by
  Theorem~\ref{k igual a 5} that in $BT(P,Q)$ all the ESU's are NC or $BT(P,Q)$ is TD, or it has the representation type of
  one of the cases 6., 13 or 14. If all the ESU's are NC or $BT(P,Q)$ is TD, then Proposition~\ref{tres caminos NC} yields a contradiction.
  Else Propositions~\ref{prop caso 6}, \ref{prop caso 13} and~\ref{prop caso 14} lead to contradictions and thus
  finish the proof.
\end{proof}

\begin{remark}
  In each of the cases that couldn't be discarded constructing longer paths in the three exceptional cases, we proved
  that $R$ connects only one pair of components of different colors of $BT(P,Q)$, which
   is impossible by Lemma~\ref{lema tecnico tres caminos}.

  It would be interesting to characterize the configurations in which this strategy works. One could start trying to find all configurations
  (in which not all ESU's are NC), in which $R$ can connect only one pair of components of different colors of $BT(P,Q)$.
\end{remark}
\begin{bibdiv}
	\begin{biblist}

\bib{A}{article}{
   author={Axenovich, Maria},
   title={When do three longest paths have a common vertex?},
   journal={Discrete Math. Algorithms Appl.},
   volume={1},
   date={2009},
   number={1},
   pages={115--120},
   issn={1793-8309},
   review={\MR{2725743}},
   doi={10.1142/S1793830909000038},
}

\bib{B79}{article}{
   author={Babai, L\'{a}szl\'{o}},
   title={Long cycles in vertex-transitive graphs},
   journal={J. Graph Theory},
   volume={3},
   date={1979},
   number={3},
   pages={301--304},
   issn={0364-9024},
   review={\MR{542553}},
   doi={10.1002/jgt.3190030314},
}

\bib{Ch}{article}{
   author={Chen, Guantao},
   author={Faudree, Ralph J.},
   author={Gould, Ronald J.},
   title={Intersections of longest cycles in $k$-connected graphs},
   journal={J. Combin. Theory Ser. B},
   volume={72},
   date={1998},
   number={1},
   pages={143--149},
   issn={0095-8956},
   review={\MR{1604717}},
   doi={10.1006/jctb.1997.1802},
}
		
\bib{CCP}{article}{
   author={Cho, Eun-Kyung},
   author={Choi, Ilkyoo},
   author={Park, Boram},
   title={Improvements on Hippchen's Conjecture},
   journal={arXiv:2011.09061 [math.CO]},
   date={2020},
}

\bib{dR}{article}{
   author={de Rezende, Susanna F.},
   author={Fernandes, Cristina G.},
   author={Martin, Daniel M.},
   author={Wakabayashi, Yoshiko},
   title={Intersecting longest paths},
   journal={Discrete Math.},
   volume={313},
   date={2013},
   number={12},
   pages={1401--1408},
   issn={0012-365X},
   review={\MR{3061125}},
   doi={10.1016/j.disc.2013.02.016},
}

\bib{FFNO}{article}{
   author={Fujita, Shinya},
   author={Furuya, Michitaka},
   author={Naserasr, Reza},
   author={Ozeki, Kenta},
   title={A new approach towards a conjecture on intersecting three longest
   paths},
   journal={J. Comb.},
   volume={10},
   date={2019},
   number={2},
   pages={221--234},
   issn={2156-3527},
   review={\MR{3912212}},
   doi={10.4310/JOC.2019.v10.n2.a2},
}

\bib{Gr}{article}{
   author={Gr\"{o}tschel, Martin},
   title={On intersections of longest cycles},
   conference={
      title={Graph theory and combinatorics},
      address={Cambridge},
      date={1983},
   },
   book={
      publisher={Academic Press, London},
   },
   date={1984},
   pages={171--189},
   review={\MR{777174}},
}

\bib{G}{article}{
   author={Gutierrez, Juan},
   title={On the intersection of two longest paths in k-connected graphs},
    journal={Pro Mathematica},
   volume={XXXI, 62},
   date={2021},
   pages={11--23},
   issn={2305-2430},
}

\bib{H}{thesis}{
title={Intersections of Longest Paths and Cycles},
author={Hippchen, Thomas},
school={Master thesis, Georgia State University},
year={2008},
}

\bib{S}{article}{
   author={Schmitz, Werner},
   title={\"{U}ber l\"{a}ngste Wege und Kreise in Graphen},
   journal={Rend. Sem. Mat. Univ. Padova},
   volume={53},
   date={1975},
   pages={97--103},
   issn={0041-8994},
   review={\MR{427139}},
}

\bib{SZZ}{article}{
   author={Shabbir, Ayesha},
   author={Zamfirescu, Carol T.},
   author={Zamfirescu, Tudor I.},
   title={Intersecting longest paths and longest cycles: a survey},
   journal={Electron. J. Graph Theory Appl. (EJGTA)},
   volume={1},
   date={2013},
   number={1},
   pages={56--76},
   review={\MR{3093252}},
   doi={10.5614/ejgta.2013.1.1.6},
}
	
\bib{ST}{article}{
   author={Stewart, Iain A.},
   author={Thompson, Ben},
   title={On the intersections of longest cycles in a graph},
   journal={Experiment. Math.},
   volume={4},
   date={1995},
   number={1},
   pages={41--48},
   issn={1058-6458},
   review={\MR{1359416}},
}	

\bib{Th}{article}{
   author={Thomassen, Carsten},
   title={Hypohamiltonian and hypotraceable graphs},
   journal={Discrete Math.},
   volume={9},
   date={1974},
   pages={91--96},
   issn={0012-365X},
   review={\MR{347682}},
   doi={10.1016/0012-365X(74)90074-0},
}
		
\end{biblist}
\end{bibdiv}

\end{document}